\title{A twist on ring morphisms and crepant contractions}
\author{Marina Godinho}
\providecommand{\lang}{UKenglish}
\providecommand{\geomoptions}{hmargin=3cm,vmargin=3.5cm}
\crefname{section}{\S}{\S\S} 
\newcommand{\donothing}[1]{}
\setlist[enumerate]{format=\normalfont}
\renewcommand{\geq}{\geqslant}
\renewcommand{\leq}{\leqslant}
\renewcommand{\alpha}{\upalpha}
\renewcommand{\beta}{\upbeta}
\renewcommand{\chi}{\upchi}
\renewcommand{\delta}{\updelta}
\renewcommand{\epsilon}{\upvarepsilon}
\renewcommand{\eta}{\upeta}
\renewcommand{\gamma}{\upgamma}
\renewcommand{\iota}{\upiota}
\renewcommand{\kappa}{\upkappa}
\renewcommand{\lambda}{\uplambda}
\renewcommand{\mu}{\upmu}
\renewcommand{\nu}{\upnu}
\renewcommand{\omega}{\upomega}
\renewcommand{\phi}{\upphi}
\renewcommand{\pi}{\uppi}
\renewcommand{\psi}{\uppsi}
\renewcommand{\rho}{\uprho}
\renewcommand{\sigma}{\upsigma}
\renewcommand{\tau}{\uptau}
\renewcommand{\theta}{\uptheta}
\renewcommand{\upsilon}{\upupsilon}
\renewcommand{\Delta}{\Updelta}
\renewcommand{\Gamma}{\Upgamma}
\renewcommand{\Lambda}{\Uplambda}
\renewcommand{\Omega}{\Upomega}
\renewcommand{\Psi}{\Uppsi}
\renewcommand{\Sigma}{\Upsigma}
\renewcommand{\Theta}{\Uptheta}
\renewcommand{\Upsilon}{\Upupsilon}
\renewcommand{\Xi}{\Upxi}
\DeclareMathOperator{\depth}{depth}
\DeclareMathOperator{\radd}{rad}
\DeclareMathOperator{\id}{id}
\DeclareMathOperator{\Hom}{Hom}
\DeclareMathOperator{\uHom}{\underline{Hom}}
\DeclareMathOperator{\End}{End}
\DeclareMathOperator{\uEnd}{\underline{End}}
\DeclareMathOperator{\Coker}{Coker}
\DeclareMathOperator{\Img}{Im}
\newcommand\Dtensor{\otimes^{\Lderived{}\kern-0.5em}
}
\newcommand{\op}{\mathrm{op}}
\newcommand{\bdd}{\mathrm{b}}
\DeclareMathOperator{\chain}{Ch}
\DeclareMathOperator{\Dcat}{D}
\DeclareMathOperator{\Kcat}{K}
\newcommand\Db{\Dcat^\bdd}
\newcommand\Kb{\Kcat^\bdd}
\DeclareMathOperator{\modu}{mod}
\DeclareMathOperator{\proj}{proj}
\DeclareMathOperator{\inj}{inj}
\DeclareMathOperator{\add}{add}
\DeclareMathOperator{\fl}{f\kern -.5pt l}
\DeclareMathOperator{\CM}{CM}
\DeclareMathOperator{\uCM}{\underline{CM}}
\DeclareMathOperator{\tr}{tr}
\DeclareMathOperator{\act}{act}
\DeclareMathOperator{\Mod}{Mod}
\DeclareMathOperator{\coh}{coh}
\DeclareMathOperator{\qcoh}{Qcoh}
\DeclareDocumentCommand \Rderived { o m } {%
  \IfNoValueTF {#1} {%
    {\rm\bf R}{#2}%
  }{%
    {\rm\bf R}^{#1}{#2}%
  }%
}
\DeclareDocumentCommand \Lderived { o m } {%
  \IfNoValueTF {#1} {%
    {\rm\bf L}{#2}%
  }{%
    {\rm\bf L}^{#1}{#2}%
  }%
}
\DeclareMathOperator{\Ext}{Ext}
\DeclareMathOperator{\Tor}{Tor}
\newcommand{\cohom}[1]{\mathrm{H}^{#1}}
\DeclareMathOperator{\cone}{cone}
\DeclareMathOperator{\pdim}{p.dim}
\DeclareMathOperator{\tot}{Tot}
\DeclareMathOperator{\ev}{ev}
\newcommand{\rmL}{\mathrm{L}}
\newcommand{\rmR}{\mathrm{R}}
\newcommand{\RA}{\mathrm{RA}}
\newcommand{\LA}{\mathrm{LA}}
\DeclareMathOperator{\spec}{Spec}
\DeclareMathOperator{\maxspec}{MaxSpec}
\newcommand{\con}{\mathrm{con}}
\newcommand*{\cHom}{\EuScript{H}\kern -.5pt \mathit{om}}
\newcommand*{\cEnd}{\EuScript{E}\kern -.5pt \mathit{nd}}
\newcommand{\tow}{\xrightarrow}
\newcommand{\N}{\mathbb{N}}
\newcommand{\Z}{\mathbb{Z}}
\newcommand{\F}{\mathbb{F}}
\newcommand{\cA}{\EuScript{A}}
\newcommand{\cB}{\EuScript{B}}
\newcommand{\cC}{\EuScript{C}}
\newcommand{\cD}{\EuScript{D}}
\newcommand{\cE}{\EuScript{E}}
\newcommand{\cF}{\EuScript{F}}
\newcommand{\cG}{\EuScript{G}}
\newcommand{\cI}{\EuScript{I}}
\newcommand{\cJ}{\EuScript{J}}
\newcommand{\cM}{\EuScript{M}}
\newcommand{\cN}{\EuScript{N}}
\newcommand{\cO}{\EuScript{O}}
\newcommand{\cP}{\EuScript{P}}
\newcommand{\cQ}{\EuScript{Q}}
\newcommand{\cR}{\EuScript{R}}
\newcommand{\cS}{\EuScript{S}}
\newcommand{\cT}{\EuScript{T}}
\newcommand{\cV}{\EuScript{V}}
\newcommand{\cX}{\EuScript{X}}
\newcommand{\cY}{\EuScript{Y}}
\DeclareMathOperator{\uE}{\underline{\cE}}
\appto{\bibsetup}{\sloppy}
\tt \mkbibbrackets{\thefield{eprintclass}}}}}}}
\theoremstyle{plain}
\newtheorem{theorem}{Theorem}[section]
\newtheorem{theorem*}{Theorem}
\newtheorem{lemma}[theorem]{Lemma}
\newtheorem{cor}[theorem]{Corollary}
\newtheorem{cor*}{Corollary}
\newtheorem{prop}[theorem]{Proposition}
\newtheorem{thmx}{Theorem}
\theoremstyle{definition}
\newtheorem{definition}[theorem]{Definition}
\newtheorem{example}[theorem]{Example}
\newtheorem{setup}[theorem]{Setup}
\newtheorem{remark}[theorem]{Remark}
\newtheorem*{definition*}{Definition}
\theoremstyle{remark}
\newtheorem{notation}[theorem]{Notation}
\numberwithin{equation}{section}
\begin{document}

\begin{abstract}
    Given a ring morphism, this paper constructs the twist functor around the induced derived restriction of scalars functor. We prove that the twist around ring morphisms is a derived autoequivalence in the setting of twists induced by Frobenius exact categories. As a corollary, it is shown that the noncommutative twist introduced by Donovan and Wemyss is in fact a spherical twist around the restriction of scalars functor. We then use this technology to obtain new spherical twists for singular schemes, and discuss how our result extends previous works on spherical twists induced by crepant contractions. 
\end{abstract}

\maketitle

\tableofcontents

\section{Introduction}

Autoequivalence groups of derived categories, through their connections with the stability manifold and mirror symmetry, have become important objects in algebraic geometry and representation theory. The description of these autoequivalence groups in general is difficult, and so many authors have contributed to constructing autoequivalences for certain classes of derived categories (See, for example, \cite{add, BaBri, BO, Don, BT, DW4, O, ST} in algebraic geometry and \cite{BDA, GM, J, KS, MY, Qiu} in representation theory).

In this work, we are interested in constructing autoequivalences as spherical twists and cotwists. Twists around spherical objects were introduced by Seidel and Thomas \cite{ST} in order to obtain derived autoequivalences of complex smooth projective varieties. The construction was generalised by various authors to twists around spherical functors. For a survey on the development of this technology, see, for example, \cite{add} or \cite{seg}.

Consider a functor $S \colon \cA \to \cB$ between triangulated categories which admits a right adjoint $R$ and a left adjoint $L$. Then, the twist around $S$ is a functor $T$ which sits in a functorial triangle
    \begin{align*}
       SR \tow{\epsilon^\rmR} 1_{\cB} \to T \to^+  
    \end{align*}
where $\epsilon^\rmR$ is the counit of the adjoint pair $(S, R)$. For the precise definition of  twists and cotwists around functors see \ref{twist def} and \ref{funct cones}. When both the twist and cotwist are autoequivalences, then $T$ is said to be a spherical twist and $S$ is said to be a spherical functor.

\subsection{Twists around ring morphisms}

 The setting of this paper is that of twists around ring morphisms, in view of their applications to geometric settings induced by crepant contractions. First, we will use this technology to answer the open question of whether the noncommutative twist functor introduced by Donovan and Wemyss \cite[\S 5.3]{DW1} is a twist around a certain functor. We also obtain spherical twists in a more general settings, particularly for very singular schemes. In the process, we construct new spherical twists in algebraic settings.

Given a ring morphism $p \colon A \to B$ with $B \in \Kb(\proj A)$, our first result constructs the twist around the functor $F \colonequals - \Dtensor_B B {}_A \colon \Dcat(B) \to \Dcat(A)$. The cotwist around $F$ is computed, but details are left to \ref{cotwist gen}. 

\begin{thmx}[{\ref{twist t res}}]
    The functor
    \[ \cT \colonequals \Rderived{\Hom}_A({}_A \cone(p)[-1] {}_A, -) \colon \Dcat(A) \to \Dcat(A) \]
    is the twist around $F$. Note that when $p$ is an epimorphism, then $\cT \cong \Rderived{\Hom}_A({}_A \ker p {}_A, -)$.
\end{thmx}

In particular, the above result proves that the noncommutative twist introduced in \cite[section 5.3]{DW1} is in fact the twist around the appropriate derived restriction of scalars functor. This characterisation of the noncommutative twist affords much control over the functor. 

\subsection{Spherical twists induced by Frobenius exact categories}

Let $\cR$ be a ring. Given an object $X$ in an $\cR$-linear Frobenius exact category $\cE$  satisfying mild conditions (see \cref{frob set up sect} for the precise details), we are able to specify when the twist around the quotient morphism $\pi \colon \End_\cE(X) \to \uEnd_\cE(X)$ is spherical. The key point is roughly that $F$ is spherical if the action of the suspension functor on the object is periodic up to additive closure. The precise statement is the following.

\begin{thmx}[{\ref{summary theorem}}] \label{summary intro}
    Suppose that $\add_{\uE} \Sigma^t X = \add_{\uE} X$ for some integer $2 \leq t \leq \dim \cR$, and $\Ext^{i}_{\uE}(X, X) = 0$ for all $0 < i < t$. Write $\Lambda = \End_\cE(X)$ and $\Lambda_\con = \uEnd_\cE(X)$. Then, the functor
    \[ F \colonequals - \otimes_{\Lambda_\con} \Lambda_\con \colon \Db(\modu \Lambda_\con ) \to \Db(\modu \Lambda) \]
    is spherical. Moreover, the twist around $F$ is $\cT = \Rderived{\Hom_\Lambda}(\ker \pi, -)$ and the cotwist is
    \begin{align*}
        \cC = \Rderived{\Hom}_{\Lambda_\con}(\Tor^\Lambda_{k+1}(\Lambda_\con, \Lambda_\con), -)[-k+2]  \colon \Db(\modu \Lambda_\con) \to \Db(\modu \Lambda_\con)
    \end{align*}
\end{thmx}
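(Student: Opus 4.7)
The strategy is to derive the result from the earlier sphericity criterion \ref{spherical condition 4} applied with $A = \Lambda$ and $B = \Lambda_\con$, the ring morphism being the quotient $\pi \colon \Lambda \to \Lambda_\con$. Almost all of the work consists of translating the Frobenius-category hypotheses on $X$ into the Gorenstein-order and self-injective conditions required there; the explicit formulas for the twist $\cT$ and the cotwist $\cC$ will then follow from \ref{twist t res} and \ref{cotwist gen} respectively.

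The key technical step is to use the periodicity $\add_{\uE} \Sigma^k X = \add_{\uE} X$ together with the vanishing $\Ext^i_{\uE}(X, X) = 0$ for $0 < i < k$ to construct a finite projective resolution of $\Lambda_\con$ as a right $\Lambda$-module. The plan is to build an appropriate conflation-theoretic resolution of $X$ in $\cE$ realising its syzygies in the stable category $\uE$: the periodicity forces the process to terminate after $k+1$ steps, since $\Sigma^{-k} X$ lands back in $\add_{\uE} X$ and hence lifts to something in $\add_{\cE} X$ modulo projectives, while the $\Ext$-vanishing ensures that applying $\Hom_\cE(X, -)$ yields an exact complex and thus a genuine projective $\Lambda$-resolution. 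This simultaneously establishes that $\Lambda_\con$ is perfect over $\Lambda$ with projective dimension $k+1$, and singles out $\Tor^\Lambda_{k+1}(\Lambda_\con, \Lambda_\con)$ as the only nonzero higher Tor group.

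Next, I would verify the remaining hypotheses of \ref{spherical condition 4}. Self-injectivity of $\Lambda_\con = \uEnd_\cE(X)$ should follow from Auslander-Reiten type duality for the Hom-finite Frobenius category $\cE$, once one notes that the additive-closure assumption makes $X$ compatible with the Serre structure on $\uE$. The Gorenstein-order isomorphism $\Lambda \cong \Rderived{\Hom}_\cR(\Lambda, \omega_\cR)$ is a standard property of endomorphism rings of generators in the $\cR$-linear Frobenius categories of the form considered in \cref{frob set up sect}. The residual tor-vanishing conditions can then be checked directly by applying $\Hom_\cE(X, -)$ to the conflations constructed in the previous step.

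Once the hypotheses are in place, \ref{spherical condition 4} immediately yields that $F$ is spherical with $\cT = \Rderived{\Hom}_\Lambda(\ker \pi, -)$. For the cotwist, the general description in \ref{cotwist gen} expresses $\cC$ in terms of $\Lambda_\con \Dtensor_\Lambda \Lambda_\con$; by the resolution produced above, this derived tensor product has cohomology concentrated in degrees $0$ and $-(k+1)$, and tracking the degree shifts in the defining triangle for $\cC$ isolates $\Tor^\Lambda_{k+1}(\Lambda_\con, \Lambda_\con)$ while producing the claimed overall shift $[-k+2]$. The main obstacle I anticipate is the resolution step: controlling the length of the projective resolution and checking that the induction closes up cleanly at the endpoint requires combining the additive-closure condition with the intermediate $\Ext$-vanishing in a careful way. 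The rest of the proof is then largely bookkeeping and standard homological algebra.
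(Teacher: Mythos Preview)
Your plan has a genuine gap: the route through \ref{spherical condition 4} only works under hypotheses that are strictly stronger than those of the theorem. Proposition~\ref{spherical condition 4} requires (i) that $\Lambda$ be a Gorenstein $\cR$-order, (ii) that $\Lambda_\con$ be self-injective, and (iii) that the Tor-vanishing occur precisely in degrees $0$ and $d = \dim \cR$. None of these are available in the general Frobenius setup of \cref{frob set up sect}. In particular, $\Lambda_\con$ need not be finite-dimensional here, so the self-injectivity argument you sketch (``Auslander--Reiten type duality for the Hom-finite Frobenius category'') does not apply; indeed the paper explicitly motivates \cref{spherical by frob} by the need to handle $B = \Lambda_\con$ that is \emph{not} finite-dimensional. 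Likewise the Gorenstein-order property of $\Lambda$ is only established in the geometric setting (Lemma~\ref{alg conditions hold}), not for arbitrary $\cE$. Finally, the theorem allows any $2 \le t \le d$, whereas \ref{spherical condition 4} is tied to $t = d$. So your argument would at best recover the special case already handled in \cref{symmetric alg}.

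The paper's proof avoids \ref{spherical condition 4} entirely. The twist and cotwist formulas do come from \ref{twist t res} and \ref{ctw res cohom in 2 degrees}, as you say, and the Tor-vanishing is established via the partially minimal resolution (\ref{K}), much as you outline. But sphericity is obtained by verifying conditions (1) and (2) of \ref{spherical criteria} directly: the twist is an equivalence because $[\proj \cE]$ is shown to be a tilting bimodule by factoring it as a composite of tilting bimodules $I_k$, $D_k$ (\cref{autoequivalence section}, culminating in \ref{autoequivalence theorem}); the cotwist is an equivalence by Morita theory, since $\Tor^\Lambda_t(\Lambda_\con,\Lambda_\con) \cong \cD(X,\Sigma^{-t+1}X)$ is a projective generator for $\modu \Lambda_\con$ once $\add_\cD \Sigma^{-t+1}X = \add_\cD X$. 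This tilting-theoretic step is the substantive content you are missing.
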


In \cref{periodic suspension section}, we show that the suspension functor acts periodically on $X$ up to additive closure (and $X$ satisfies the Ext-vanishing assumptions) if and only if $\uEnd_\cE(X)$ has relatively spherical properties in the sense of \ref{def rel sph}. 

\begin{thmx}[{\ref{syz theorem}}]
    Fix $t \in \Z$ with $2 \leq t \leq d$. Then, the following are equivalent
    \begin{enumerate}
        \item  $\Lambda_\con = \uEnd_\cE(X)$ is perfect over $\Lambda = \End_\cE(X)$ and, further, $\Ext^k_\Lambda(\Lambda_\con, S) = 0$ for all $k \neq 0, t$ and all simple $\Lambda_\con$-modules $S$.
        \item   $\Ext^k_{\uE}(X, X) = 0$ for $0 < k < t-1$, and $\add_{\uE} \Sigma^k X = \add_{\uE} X$. 
    \end{enumerate}
\end{thmx}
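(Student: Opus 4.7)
The plan is to construct an explicit projective resolution of $\Lambda_\con$ over $\Lambda$ from the Frobenius resolution of $X$ in $\cE$, and translate between (1) and (2) via this resolution together with the triangulated structure of $\uE$.

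For each $i \geq 0$, take a conflation $0 \to \Omega^{i+1} X \to Q_i \to \Omega^i X \to 0$ in $\cE$ with $Q_i$ a projective-injective lying in $\add X$ (possible after enlarging $X$ by its projective generator). Applying $\Hom_\cE(X, -)$, and using $\Ext^{\geq 1}_\cE(X, Q_i) = 0$ together with the identification $\Ext^1_\cE(X, \Omega^{i+1} X) \cong \uHom_\cE(X, \Omega^i X)$, yields a four-term exact sequence
\[
0 \to \Hom_\cE(X, \Omega^{i+1} X) \to \Hom_\cE(X, Q_i) \to \Hom_\cE(X, \Omega^i X) \to \uHom_\cE(X, \Omega^i X) \to 0.
\]
Splicing these produces a complex $\cdots \to \Hom_\cE(X, Q_1) \to \Hom_\cE(X, Q_0) \to \Lambda \to \Lambda_\con \to 0$ of projective $\Lambda$-modules whose homology at position $i \geq 1$ is exactly $\uHom_\cE(X, \Omega^i X)$. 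Because the idempotent $e \in \Lambda$ corresponding to the projective summand of $X$ annihilates every simple $\Lambda_\con$-module $S$, applying $\Hom_\Lambda(-, S)$ kills all internal projective terms; hence $\Ext^\bullet_\Lambda(\Lambda_\con, S)$ reduces to the contribution $S$ in degree $0$, the corrections from the intermediate stable terms $\uHom_\cE(X, \Omega^i X)$, and a contribution from the terminal object of any truncation.

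The bridge between the stable terms and the conditions in (2) uses $\Omega = \Sigma^{-1}$ in $\uE$:
\[
\uHom_\cE(X, \Omega^i X) = \Hom_\uE(X, \Sigma^{-i} X) \cong \Hom_\uE(\Sigma^{t-1} X, \Sigma^{t-1-i} X).
\]
Under the periodicity $\add_\uE \Sigma^{t-1} X = \add_\uE X$, the objects $X$ and $\Sigma^{t-1} X$ are mutual direct summands of finite powers of each other, so the last group vanishes if and only if $\Ext^{t-1-i}_\uE(X, X) = \Hom_\uE(X, \Sigma^{t-1-i} X)$ does. For $1 \leq i \leq t-2$ this ranges over $1 \leq t-1-i \leq t-2$, so intermediate stable vanishing for these $i$ is equivalent to the ext-vanishing hypothesis in (2). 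The same periodicity also implies $\Omega^{t-1} X \in \add X$ in $\cE$ (modulo projective-injective summands), so $\Hom_\cE(X, \Omega^{t-1} X)$ is projective and the complex above truncates into a projective resolution of length $t$ precisely when (2) holds.

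Combining the three ingredients yields both directions of the equivalence. The main difficulty I expect is the direction (1) $\Rightarrow$ (2): one must show that the minimal projective resolution of $\Lambda_\con$ actually arises from the Frobenius resolution above, so that the $t$-th syzygy being projective translates into $\Omega^{t-1} X \in \add X$. This is handled via the Yoneda-type equivalence $\Hom_\cE(X, -) \colon \add X \xrightarrow{\sim} \proj \Lambda$ together with minimality of projective covers: at each step the projective cover lifts uniquely to a projective-injective envelope in $\cE$, and the chain of syzygies so produced coincides with iterated $\Omega$ applied to $X$.
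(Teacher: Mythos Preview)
Your direction (2) $\Rightarrow$ (1) is essentially what the paper does: splice the conflations $0 \to \Sigma^{-j-1}X \to Q_j \to \Sigma^{-j}X \to 0$ after applying $\cE(X,-)$, use the rigidity to kill the tail terms, and obtain a length-$t$ projective resolution whose $\Hom_\Lambda(-,S)$ computes the required Ext groups. Your periodicity translation between positive and negative Ext vanishing is also correct.

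The direction (1) $\Rightarrow$ (2), however, has a genuine gap. You write that ``at each step the projective cover lifts uniquely to a projective-injective envelope in $\cE$, and the chain of syzygies so produced coincides with iterated $\Omega$ applied to $X$.'' But in the setup of the paper (\ref{frob set up}, \ref{X setup}), $\Lambda$ is \emph{not} assumed semi-perfect, so projective covers and minimal projective resolutions of $\Lambda$-modules need not exist; only the stable category $\cD$ is assumed Krull--Schmidt. Without a minimal resolution you cannot conclude from $\Ext^k_\Lambda(\Lambda_\con,S)=0$ for $k\neq 0,t$ that the $k$-th projective in \emph{some specific} resolution lies in $\add e_0\Lambda$, nor that the $t$-th syzygy corresponds to $\Sigma^{-t+1}X$. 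Relatedly, your intermediate claim that $\Ext^\bullet_\Lambda(\Lambda_\con,S)$ can be read off from your spliced complex ``with corrections from the stable terms'' is not a computation: that complex is not a resolution when the stable Homs are nonzero, and you have not set up the spectral sequence or dimension-shifting argument that would make this precise.

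The paper handles exactly this difficulty by developing \emph{partially minimal projective resolutions} (\S\ref{partial min res section}): an ideal $\radd_0$ replacing the radical, an adapted notion of essential epimorphism (\ref{partially essential}), and existence (\ref{existence_min_res}). The key structural result is \ref{res lemma} and \ref{min res gen}: any partially minimal resolution of $e_i\Lambda_\con$ is of the form $\cE(X,-)$ applied to a chain of admissible epimorphisms in $\cE$, and the $t$-relatively spherical hypothesis forces the intermediate terms into $\add P$ and the last term to be $\cE(X,Q_t)\oplus\cE(X,\Sigma^{-t+1}X_i)$. The rigidity then follows by comparing this resolution with the one coming from conflations. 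Your sketch skips this entire layer; to repair it you would need to either reproduce the partially minimal machinery or add a hypothesis (e.g.\ $\cR$ complete local so that $\Lambda$ is semi-perfect) that the paper deliberately avoids.
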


It turns out that this theorem is the key technology which allows us, in \cref{crepant contractions affine section}, to apply \cref{summary intro} in geometric settings to construct autoequivalences induced by crepant contractions $X \to Y$, where $Y$ is a scheme which need not be affine over a complete local ring. 

In the process of proving \cref{summary intro}, the main novelty introduced is that  we construct and control what we call \textit{partially minimal projective resolutions} (defined in \cref{partial min res section}) in non Krull-Schmidt settings. 

\subsection{Spherical twists induced by crepant contractions onto affine schemes}

In \cref{crepant contractions affine section}, we apply section \ref{spherical by frob} to obtain derived autoequivalences of schemes with at worst Gorenstein singularities. 

Let $f \colon X \to Y \colonequals \spec R$ be a crepant contraction satisfying mild conditions and admitting a tilting bundle $\cV$ (see \ref{z local tilting setup} for the precise setup), then there is an equivalence
\[ \Psi_{\cV} \colonequals \Rderived{\Hom_{\Dcat(\coh \cX)}}(\cV, -) \colon \Db(\coh \cX) \to \Db(\modu \End_R(f_* \cP)). \]
Write $\Lambda \colonequals \End_R(f_* \cP)$, and let $[\add R]$ be the ideal of $\Lambda$ consisting of morphisms $f_*\cP \to f_* \cP$ which factor through a finite projective $R$-module. Consider the contraction algebra given by $\Lambda_\con \colonequals \End_R(f_*\cP) / [\add R]$.

\begin{thmx} [{\ref{z local tw and ctw}}] \label{z local intro}
    If $\Lambda_\con$ is perfect over $\Lambda$ and $t$-relatively spherical, then the functor 
    \[ \Psi^{-1}_\cV \cdot (-) \Dtensor_{\Lambda_\con} \Lambda_\con {}_\Lambda \colon \Db(\modu \Lambda_\con) \to \Db(\coh X). \]
    is spherical. Moreover, the twist $\cT_R$ and cotwist $\cC_R$ around $\Psi^{-1}_\cV \cdot (-) \Dtensor_{\Lambda_\con} \Lambda_\con {}_\Lambda$ are as in \ref{z local tw and ctw}. 
\end{thmx}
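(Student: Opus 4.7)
The plan is to reduce the theorem to \ref{summary theorem} via the Frobenius exact category $\CM R$, then transport the resulting spherical data across the derived equivalence $\Psi_\cP$. Since $\Psi_\cP$ is an equivalence, the composite $\Psi_\cP^{-1} \circ F$, with $F \colonequals - \Dtensor_{\Lambda_\con} \Lambda_\con{}_\Lambda$, is spherical if and only if $F \colon \Db(\modu \Lambda_\con) \to \Db(\modu \Lambda)$ is; moreover the twist around the composite is obtained by conjugating the twist of $F$ with $\Psi_\cP$, while the cotwist, being an endofunctor of $\Db(\modu \Lambda_\con)$, is unchanged. It therefore suffices to prove that $F$ is spherical.

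For this, I would take $\cE = \CM R$ with distinguished object $X \colonequals f_*\cP$. Under the assumptions of \ref{crepant setup}, $f$ being crepant forces $f_*\cP$ to be a Cohen--Macaulay $R$-module containing $R$ as a summand. Since the projective--injective objects of $\CM R$ are exactly $\add R$, we obtain $\End_\cE(X) = \End_R(f_*\cP) = \Lambda$ and $\uEnd_\cE(X) = \Lambda/[\add R] = \Lambda_\con$. Applying \ref{syz theorem} then converts the hypothesis that $\Lambda_\con$ is perfect over $\Lambda$ and $t$-relatively spherical into the equivalent conditions $\Ext^i_{\uE}(X,X) = 0$ for $0 < i < t-1$ and $\add_{\uE} \Sigma^{t-1} X = \add_{\uE} X$. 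Setting $k = t-1$, the range $0 \leq k \leq \dim R$ required by \ref{summary theorem} is satisfied because $2 \leq t \leq d = \dim R$, and so \ref{summary theorem} yields that $F$ is spherical, with the twist and cotwist described there.

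The main obstacle I expect is the setup step: verifying cleanly that $\CM R$ together with the object $X = f_*\cP$ genuinely fulfils the Frobenius exact category hypotheses of \ref{frob set up sect}, and identifying $\uEnd_\cE(X)$ with $\Lambda_\con$ on the nose. This rests on standard but non-trivial input from tilting and NCCR theory (Cohen--Macaulayness of $f_*\cP$ under a crepant contraction, and the fact that $R$ appears as a summand of $f_*\cP$, so that quotienting by maps factoring through $\add R$ recovers the contraction algebra), together with careful bookkeeping to match the index $t$ in the relatively spherical hypothesis with the periodicity integer $k$ used in \ref{summary theorem} and \ref{syz theorem}. Once this is in place, the transport through $\Psi_\cP$ is formal: it produces the twist $\cT_R = \Psi_\cP^{-1} \circ \Rderived{\Hom}_\Lambda(\ker \pi, -) \circ \Psi_\cP$ on $\Db(\coh X)$ and keeps the cotwist $\cC_R$ on the $\Lambda_\con$-side, as stated in \ref{z local tw and ctw}.
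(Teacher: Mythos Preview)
Your argument is essentially the paper's proof of the \emph{complete local} case (\ref{complete local twist}), but the statement in question is \ref{z local tw and ctw}, which lives in the Zariski local setting of \S6.2, where $Y=\spec R$ with $R$ merely Gorenstein and \emph{not assumed complete local}. The Frobenius setup you invoke (\ref{frob set up}, \ref{X setup}) explicitly requires $\cR$ to be local and the stable category $\underline{\CM}\,\cR$ to be Krull--Schmidt; both typically fail for a non-complete or non-local $R$. Consequently \ref{syz theorem} and \ref{summary theorem} do not apply directly to $\cE=\CM R$ here, and the sentence ``$\CM R$ together with the object $X=f_*\cP$ genuinely fulfils the Frobenius exact category hypotheses'' is exactly the step that breaks.

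The paper circumvents this by \emph{reducing to the complete local case} at each closed point. The twist formula $\cT=\Rderived{\Hom}_\Lambda([\add R],-)$ is general (\ref{twist t res}), so what remains is to show (i) the cotwist has the claimed form, and (ii) both are equivalences. For (i) one needs $\Tor^\Lambda_k(\Lambda_\con,\Lambda_\con)=0$ for $k\neq 0,t$; this is checked after completion via \ref{complete local tor} and \ref{K}. For (ii) the paper shows $[\add R]$ and $\Tor^\Lambda_t(\Lambda_\con,\Lambda_\con)$ are tilting by proving their completions are tilting at every maximal ideal (\ref{c local implies z local pre}, \ref{c local implies z local}, \ref{tor is tilting z locally}), using that the relatively spherical and perfectness hypotheses pass to completions (\ref{can check on stalks}). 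Your transport-through-$\Psi_\cP$ step is fine and matches \ref{spherical composed with equiv}; the missing ingredient is this local-to-global reduction replacing the direct appeal to the Frobenius machinery.
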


We explain in \ref{BB example} that this theorem extends \cite[5.18]{BB} by dropping the one-dimensional fibre assumption on $f$ and the hypersurface singularity assumption on $Y$.

\subsection{Spherical twists induced by crepant contractions more generally}

In \cref{crepant contractions section}, we consider more general crepant contractions $f \colon X \to Y$, admitting a \textit{relative tilting bundle} and where $Y$ need not be affine. For the precise setup, see \ref{crepant setup}. 

Through the technology of noncommutative schemes, we obtain a global analogue of \cref{z local intro}. Consider the sheaf of $\cO_Y$-algebras $\cA \colonequals f_* \cEnd_X(\cP)$ and write $\cA_ \con = \cA / \cI$ where $\cI$ is the ideal sheaf constructed in \cite[2.8]{DW4} and recalled in \ref{sheafy notation}.  Then, the main result of this section extends \cite[5.7]{DW4} and is the following.

\begin{thmx}[{\ref{twist geometric}}]
     Suppose that $\cA_\con$ is perfect in $\Db(\coh (Y, \cA))$. Then, the functor
    \[ \cT_Y \colonequals \Psi_\cP^{-1} \cdot \Rderived{\cHom_\cA}(\cI, -) \cdot \Psi_\cP \colon \Db(\coh X) \to \Db(\coh X)  \]
    is the twist around the functor $\Psi_\cP^{-1} \cdot (-) \Dtensor_{\cA_\con} \cA_\con$. Moreover, if $\cA_\con$ is $t$-relatively spherical, then 
    \[ \Rderived{\cHom_\cA}(\cI, -) \colon \Db(\coh (Y, \cA)) \to \Db(\coh (Y, \cA)) \]
    is an equivalence. Consequently,
       \begin{align*}
       \cT_Y \colon \Db(\coh X) \tow{\sim} \Db(\coh X) 
    \end{align*}
    is an autoequivalence of $\Db(\coh X)$.
\end{thmx}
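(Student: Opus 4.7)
The plan is to globalise the affine result Theorem~\ref{z local intro} by working Zariski-locally on $Y$. Since $\Psi_\cP$ is already an equivalence by hypothesis, the entire statement reduces to understanding the functor $\Rderived{\cHom}_\cA(\cI, -)$ on $\Db(\coh(Y, \cA))$.

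First I would identify $\cT_Y$ as the twist around $\Psi_\cP^{-1} \cdot S$, where $S \colonequals (-) \Dtensor_{\cA_\con} \cA_\con$. Perfectness of $\cA_\con$ supplies the right adjoint $R \colonequals \Rderived{\cHom}_\cA(\cA_\con, -)$, so the twist $T$ of $S$ is defined and fits in a functorial triangle $SR \to 1 \to T \to^+$. For $Z \in \Db(\coh(Y, \cA))$, $SR(Z) \simeq \Rderived{\cHom}_\cA(\cA_\con, Z)$, and applying $\Rderived{\cHom}_\cA(-, Z)$ to the bimodule triangle $\cI \to \cA \to \cA_\con \to^+$ in $\Db(\coh(Y, \cA))$ then identifies $T(Z) \simeq \Rderived{\cHom}_\cA(\cI, Z)$. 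This is the sheaf-theoretic transcription of the proof of Theorem~\ref{twist t res}: nothing there uses affineness of $Y$, only the existence of adjoints and the kernel triangle. Conjugating by $\Psi_\cP$ then exhibits $\cT_Y$ as the twist around $\Psi_\cP^{-1} \cdot S$.

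For the autoequivalence assertion I would reduce to the affine case. Choose an affine open cover $\{U_i = \spec R_i\}$ of $Y$ and write $\Lambda_i \colonequals \cA(U_i)$, $\Lambda_{\con, i} \colonequals \cA_\con(U_i)$. Since $\cI$ is perfect (being the fibre of $\cA \to \cA_\con$ between perfect objects), $\Rderived{\cHom}_\cA(\cI, -)$ commutes with restriction to each $U_i$; on $U_i$ it becomes $\Rderived{\Hom}_{\Lambda_i}(\cI(U_i), -)$. The $t$-relatively spherical property is Zariski-local, hence restricts to each $\Lambda_{\con, i}$, and Theorem~\ref{z local intro} (through its input Theorem~\ref{summary theorem}) yields each affine restriction as an autoequivalence of $\Db(\modu \Lambda_i)$. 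Fully faithfulness and essential surjectivity of $\Rderived{\cHom}_\cA(\cI, -)$ on $\Db(\coh(Y, \cA))$ then follow by a standard descent argument on the cover, and conjugation by $\Psi_\cP$ delivers the desired autoequivalence $\cT_Y$ of $\Db(\coh X)$.

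The main obstacle is the compatibility of the relatively spherical hypothesis with Zariski localisation. The condition of \ref{def rel sph} is phrased in terms of Ext-vanishing on simple modules, and one must verify that simples of $\Lambda_{\con, i}$ on each affine piece $U_i$ faithfully witness the global condition, i.e.\ that the $t$-relatively spherical property is itself a local property on $Y$. A secondary technical issue is ensuring that $\Rderived{\cHom}_\cA(\cI, -)$ together with its various adjunctions behaves correctly on the noncommutative scheme $(Y, \cA)$; perfectness of $\cA_\con$, and hence of $\cI$, makes the standard machinery applicable, but one must check that the triangle $\cI \to \cA \to \cA_\con \to^+$ and the counit map are preserved under the localisations used in the descent step.
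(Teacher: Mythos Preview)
Your proposal is correct and follows essentially the same route as the paper: identify the twist via the bimodule triangle $\cI \to \cA \to \cA_\con$ (the paper does this as Lemma~\ref{twist cd sheafy} and Corollary~\ref{twist t res sheafy}, checking the counit diagram affine-locally), then prove the equivalence by restricting to an affine cover and invoking the Zariski-local result (Proposition~\ref{zariski local equiv}), with a cone argument on the unit of the adjunction playing the role of your ``standard descent''. The two technical obstacles you flag are precisely the lemmas the paper supplies: compatibility of the $t$-relatively spherical condition with localisation is handled by defining the global condition stalk-locally and then appealing to Lemma~\ref{can check on stalks}, and compatibility of the counit/unit with restriction is Lemma~\ref{adjunction is local}.
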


\subsection{Conventions}

Modules will, by convention, be right modules. We will write $\Mod R$ for the category of modules over the ring $R$, $\modu R$ for the category of finitely generated modules, $\proj R$ for the category of finitely generated projective modules, and $\fl R$ for the category of finite length modules. An $R$-$S$-bimodule $L$ will be denoted ${}_R L {}_S$ and will by convention be a left $R$-module and a right $S$-module. We will often refer to an $R$-$S$-bimodule as an $S \otimes_\Z R^\op$-module. Consider bimodules ${}_R L {}_S$, ${}_R M {}_S$, and ${}_S N {}_R$. We will sometimes abbreviate the Hom-set $\Hom_S( {}_R L {}_S, {}_R M {}_S)$ as $\Hom_S( {}_R L, {}_R M)$. Similarly, the tensor product ${}_R L {}_S \, \otimes_S \, {}_S M {}_T$ will often be written as ${}_R L \otimes_S M {}_T$.

Let $\cA$ be a category. If $f$ and $g$ are morphisms in $\cA$, then we will denote the composition "$g$ then $f$" as $f \cdot g$. If, moreover, $\cA$ is additive and $X$ is an object in $\cA$, then $\add_\cA X$ denotes the full subcategory of $\cA$ whose objects are finite direct sums of summands of $X$.

Finally, given a triangulated category $\cT$ with suspension functor $\Sigma$, we will often abbreviate a distinguished triangle
\[ a \to b \to c \to \Sigma a \]
as
\[a \to b \to c \to^+. \]

\subsection*{Acknowledgements} This work was carried out as a part of the author's PhD thesis, and the author would like to thank her supervisor Michael Wemyss for his helpful guidance and relentless optimism. Moreover, many thanks goes to Wahei Hara, Matthew Pressland, and Franco Rota for helpful discussions and to Timothy De Deyn for suggestions which improved this work.

\subsection*{Funding} The author was supported by ERC Consolidator Grant 101001227 (MMiMMa).

\subsection*{Open access} For the purpose of open access, the author has applied a Creative Commons Attribution (CC:BY) licence to any Author Accepted Manuscript version arising from this submission.

    \section{Background on derived functors} \label{derived functors}
    
    The functors this paper considers often involve the derived Hom and derived tensor product functors, and our proof methods often involve taking $K$-projective, $K$-injective and $K$-flat resolutions. In this section, we briefly recall these standard constructions. 
    
    \begin{notation}
        To fix notation, let $A$ and $B$ be rings. By an $A$-$B$-bimodule, we mean a module over the ring $B \otimes_\Z A^\op$. Given any ring $S$, we will consider its associated homotopy category $\Kcat(S) \colonequals \Kcat(\Mod S)$, as well as its derived category $\Dcat(S) \colonequals \Dcat(\Mod S)$.
    \end{notation}
    
    \begin{definition} \label{hom-cplx}
    The Hom-complex functor 
\[ \Hom_A^*(-, -) \colon \Kcat(A \otimes_\Z B^\op)^\op \times \Kcat(A) \to \Kcat(B) \]
 sends complexes $L \in \Kcat(A \otimes_\Z B^\op)$ and $M \in \Kcat(A)$ to the complex $\Hom_A^*(L, M)$ which, at degree $k$, is given by the $B$-module
\[ \Hom_A^*(L, M)^k = \prod_{p + q = k} \Hom_A(L^{-p}, M^q). \]
The differential is defined by the rule
\[ d^k(f) = d_M \cdot f + (-1)^{k+1} f \cdot d_L \]
for all $f \in  \Hom_A^*(L, M)^k$.
    \end{definition}

\begin{definition} \label{tot tensor}
     Similarly, there is a tensor product functor
\[ \tot(- \otimes_B -) \colon \Kcat(B) \times \Kcat(A \otimes_\Z B^\op) \to \Kcat(A) \]
sending complexes $L \in \Kcat(B)$ and $M \in \Kcat(A \otimes_\Z B^\op)$ to the complex $\tot(L \otimes_B M)$ which, at degree $k$, is given by the $A$-module
\[ \tot(L \otimes_B M)^k = \bigoplus_{p+q=k} L^p \otimes_B M^q. \]
with differential at $k$ defined as
\[ d^k = \sum_{p+q =k} d_L^p \otimes 1_{M^q} + (-1)^p 1_{L^p} \otimes d_M^q \]
\end{definition}

\medskip
The functors in \ref{hom-cplx} and \ref{tot tensor} are used to define the derived Hom and derived tensor products as follows.

First, recall that a chain complex $I$ over an abelian category $\cA$ is \textit{$K$-injective} if for every acyclic complex $M \in \Kcat(\cA)$, then $\Hom_{\Kcat(\cA)}(M, I) = 0$.  Similarly, a complex $P$ over $\cA$ is \textit{$K$-projective} if  $\Hom_{\Kcat(\cA)}(P, M) = 0$ for every acyclic complex $M$.

Given a complex $M \in \Dcat(\cA)$, then a \textit{$K$-injective resolution} of $M$ is a chain map $M \to I$ which is a quasi-isomorphism and where $I$ is a $K$-injective complex. Moreover, we will say that $\Dcat(\cA)$ has \textit{enough $K$-injectives} if every complex $M \in \Dcat(\cA)$ admits a $K$-injective resolution. 

Furthermore, a complex $Q$ of $A^\op$-modules is \textit{$K$-flat} if for every acyclic complex $M$ of right $A$-modules, the total complex $\tot(M \otimes_A Q)$ is acyclic.

Given a complex $M$ of $A$-modules, then a $K$-flat (resp.\ $K$-projective) resolution of $M$ is a a chain map $P \to M$ and which is a quasi-isomorphism and where $P$ is a $K$-flat (resp.\ $K$-projective) complex.  

\begin{definition} \label{derived hom def}
    For complexes $M \in \Dcat(A \otimes_\Z B^\op)$ and $N \in \Dcat(\Mod A)$, let $N \to I$ be a $K$-injective resolution. Then
    \[ \Rderived{\Hom_A}(M, N) \colonequals \Hom^*_A(M, I) \in \Dcat(B). \]
    Further, the complex $\Rderived{\Hom_A}(M, N)$ does not depend (up to quasi-isomorphism) on the choice of $K$-injective resolution.
\end{definition}

\medskip
Let $P \to M$ be a $K$-projective resolution of $M$. Then, it is a classical fact that there is an quasi-isomorphism
\[ \Rderived{\Hom_A}(M, N) \cong \Hom^*_A(Q, N) \]
which is natural in $M$ and $N$.

\begin{definition}  \label{derived tensor def}
    For complexes $M \in \Dcat(\Mod A)$ and $N \in \Dcat(\Mod B \otimes_\Z A^\op)$, let $P \to N$ be a resolution in $\Dcat(\Mod B \otimes_\Z A^\op)$ which is $K$-flat as a complex of $B^\op$-modules. Then
    \[ M \Dtensor_A N \colonequals \tot(M \otimes_A P) \in \Dcat(B). \] 
\end{definition}

\section{Twist and cotwist around ring morphisms} \label{twist res scalars}

\subsection{Setting}

In this section, unless mentioned otherwise, let $p \colon A \to B$ be a ring morphism, and assume throughout that $B$ is perfect as an $A$-module. Then, there is a triangle 
\begin{equation} \label{ses for res ext twist}
    \cone(p)[-1] \tow{i} A \tow{p} B \to^+
\end{equation} 
of $A$-bimodules.

The morphism $p \colon A \to B$ induces the restriction-extension of scalars adjoint pairs $(F^\LA, F)$, $(F, F^\RA)$ on the derived category, as highlighted below. 

\begin{equation} \label{res-ext diagram}
    \begin{tikzcd}[column sep=10em]
        \Dcat(B) \arrow[rr, "F  = - \otimes^{\Lderived{}} {}_{B}  B {}_A \cong \Rderived{\Hom_{B}}{({}_A B {}_{B}, -)}" description, ""{name=F,above}] & {} & \arrow[ll, "F^{\mathrm{RA}} = \Rderived{\Hom_A}{( {}_{B} B {}_A, -)}"{name=RA}, bend left=12] \arrow[ll, "F^{\mathrm{LA}}  = - \otimes^{\Lderived{}} {}_A B {}_{B}"{name=LA}, swap, bend right=12] \Dcat(A)
        \arrow[phantom, from=LA, to=F, "\scriptstyle\boldsymbol{\perp}"description]
        \arrow[phantom, from=F, to=RA, "\scriptstyle\boldsymbol{\perp}"description]
    \end{tikzcd}
\end{equation}

In what follows, we will characterise the twist and cotwist around the restriction of scalars functor $F$, motivated by applications to geometric settings induced by crepant contractions. 

\begin{definition} \label{twist def} Following \cite{An, AL}, let $S \colon \cA \to \cB$ be a functor between triangulated categories which admits a right adjoint $R$ and a left adjoint $L$. Then,

\begin{enumerate}
    \item The \textit{twist} around $S$ is a functor $T$ which sits in a functorial triangle
    \begin{align} \label{twist triang in D}
       SR \tow{\epsilon^\rmR} 1_{\cB} \tow{\alpha_1} T \tow{\alpha_2}^+  
    \end{align}
   where $\epsilon^\rmR \colon SR \to 1_{\cB}$ is the counit of the adjunction $(S, R)$, so that there is
        \item The \textit{cotwist} around $S$ is a functor $C$  which sits in a functorial triangle
    \begin{align} \label{cotwist triang in D}
       C \tow{\beta_1} 1_{\cA} \tow{\eta^\rmR} RS \tow{\beta_2}^+  
    \end{align}
     where $\eta^\rmR \colon 1_{\cA} \to RS$ is the unit of the adjunction $(S, R)$.
\end{enumerate}
\end{definition}

\begin{remark} \label{funct cones}
    Definition \ref{twist def} is adapted from the definition in \cite{AL}. Since taking cones in triangulated categories is not functorial, it is not clear, without any additional structure on the triangulated categories $\cA$ and $\cB$, whether the functors $T$ and $C$ exist or whether they are uniquely defined (even up to isomorphism). However, this paper concerns bimodules and so taking cones is sensible in our context. In the remainder of this remark, we will specify more precisely why "taking cones is sensible in our context". Mostly, this discussion is for the reader who would like a more rigorous treatment of twists around a functors.
    
    Let $\cA$ be the DG-category with one object $x$ and with morphisms defined by $\cA(x, x) \colonequals A$, with $A$  seen as a DG-algebra concentrated in degree zero. Then, a DG-module over $\cA$ corresponds to a chain complex of $A$-modules, and so we can easily check that $\cA$ is a Morita enhancement of $\Dcat(\Mod A)$. We may define a similar enhancement for $B$ and $A \otimes_\Z B^\op$. 

    Next, consider the DG-bimodules $s \colonequals {}_B B {}_A$, $r = \Rderived{\Hom_A}({}_B B, {}_A A)$ and $l = {}_A B {}_B$. Then, the underlying exact functors are $F$, $F^{\RA}$, and $F^{\LA}$ which induce the standard restriction-extension of scalars adjunction \eqref{res-ext diagram}. 
    
    Following the definitions in \cite[\S 5]{AL}, the twist and cotwist around $s$ are the DG-bimodules
\begin{align*}
    &  t = \cone(\epsilon^\mathrm{R}_A \colon \Rderived{\Hom_A}({}_B B, {}_A B) \Dtensor_B {}_B B {}_A \to {}_A A {}_A ) \\
    &  c = \cone(\eta^{\mathrm{R}}_B \colon {}_B B {}_B \to\Rderived{\Hom_A}({}_B B, {}_B B \Dtensor_B B {}_A ) )[-1]
\end{align*}
where $\epsilon^\mathrm{R}_A$ and $\eta^{\mathrm{R}}_B$ are the counit and unit of the adjunction $(-) \Dtensor_B {}_B B {}_A \dashv \Rderived{\Hom_A}({}_B B, -)$ evaluated at the bimodules ${}_A A {}_A$ and ${}_B B {}_B$, respectively. Thus, the twist and cotwist around $F$ are $ T =  (-) \Dtensor_A t$ and $ C =  (-) \Dtensor_B c$, respectively. 
\end{remark}

\bigskip
The twist and the cotwist around a functor are closely related, so that properties of $C$ afford control over properties of $T$. Therefore, it is an interesting  problem to characterise a functor $\cT \colon \Dcat(\Mod A) \to \Dcat(\Mod B)$ as a twist around a functor $S \colon \Dcat(\Mod B) \to \Dcat(\Mod A)$ whose domain category $\Dcat(\Mod B)$ is simpler (e.g. $\Dcat(\Mod B)$ is the derived category of a finite dimensional algebra) than the codomain category $\Dcat(\Mod A)$ (e.g. $\Dcat(\Mod A)$ is the category of an infinite dimensional algebra). In particular, it is often possible to prove that the twist around a functor $S$ is an equivalence, given knowledge of the cotwist. This intuition is made precise by the "2 out of 4" lemma. 

\begin{lemma}[{\cite[5.1]{AL}}] \label{spherical criteria}
    Let $S \colon \Dcat(\Mod B) \to \Dcat(\Mod A)$ be as in \ref{twist def}. If any two of the following conditions hold, then the other two also hold. 
\begin{enumerate}
    \item The twist $T \colon \Dcat(\Mod A) \to \Dcat(\Mod A)$ is an equivalence,
    \item The cotwist $C \colon \Dcat(\Mod B) \to \Dcat(\Mod B)$ is an equivalence,
    \item With $\alpha_2$ as in \eqref{twist triang in D}, there is a natural isomorphism
    \[LT[-1] \tow{L(\alpha_2[-1])} LSR \tow{\epsilon^\rmR_R} R \]
    \item \label{iso c relates adjoints} With $\beta_2$ as in \eqref{cotwist triang in D} and letting $\eta^\rmL$ denote the unit of the adjunction $(L, S)$, there is a natural isomorphism
    \[ R \tow{\eta^\rmL_R} RSL \tow{(\beta_2)_L} CL[1] \]
\end{enumerate}
\end{lemma}

\begin{definition}[{\cite[1.1]{AL}}] \label{spherical def}
    A functor $S \colon \Dcat(\Mod B) \to \Dcat(\Mod A)$ as above is \textit{spherical} if any two of the conditions in \ref{spherical criteria} hold. When $S$ is spherical, $T$ is called a \textit{spherical twist}.
\end{definition}

In practice, it is hard to check whether the specific natural transformation in \ref{spherical criteria} \eqref{iso c relates adjoints} is an isomorphism, and so the following result will be key. 

\begin{theorem}[{\cite[1]{add}, see also \cite{An} and \cite{Rou}}] \label{sph criterion add}
       With notation as above, if the cotwist $C \colon \Dcat(\Mod B) \to \Dcat(\Mod B)$ is an equivalence, and if there exists an isomorphism of functors $R \cong CL[1]$, then $S$ is spherical. 
\end{theorem}

\subsection{The twist}

The goal of this subsection is to show that 
\[ \cT \colonequals \Rderived{\Hom}_A({}_A \cone(p)[-1] {}_A, -) \colon \Dcat(A) \to \Dcat(A) \]
is the twist around $F$ in the sense of \ref{twist def}. In order to do so, the following lemma will be important.

\begin{lemma} \label{standard counit}
     The counit $\epsilon$ of the derived tensor-hom adjunction has component at $a \in \Dcat(A)$
       \[ \epsilon_a \colon \Rderived{ \Hom_{A} }({}_{B} B {}_{A}, a) \Dtensor_{B}  B {}_{A} \to a \]
      given as the composition $j_a^{-1} \cdot \gamma_a$, where $j_a \colon a \to I$ is a $K$-injective resolution of $a$ and $\gamma_a$ is the chain map 
     \[ \gamma_a = (\gamma^j_a) \colon \tot( \Hom_{A}^*({}_{B} B {}_{A}, I)  \otimes_{B}  B {}_{A} ) \to I \]
     which at degree $j$ is specified by the evaluation map
     \[ \gamma^j_a \colon \Hom_{A}({}_{B} B {}_{A}, I^j) \otimes_{B} B {}_{A} \to I^j \]
     sending $f \otimes x \mapsto f(x)$.
\end{lemma}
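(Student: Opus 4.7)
The plan is to identify the derived counit $\epsilon_a$ by deriving the underived tensor-hom adjunction and recognising which variables require resolution. At the abelian level, the counit of the adjoint pair $\bigl(-\otimes_B {}_B B_A,\ \Hom_A({}_B B_A, -)\bigr)$ on module categories is the evaluation map $\Hom_A(B, M) \otimes_B B \to M,\ f \otimes x \mapsto f(x)$. The first step is to promote this to chain complexes using the functors $\Hom_A^*(-, -)$ and $\tot(- \otimes_B -)$ introduced before the statement. A degreewise evaluation together with the usual sign computation shows that the maps $\gamma^j_a$ assemble into a chain map $\gamma_a$, natural in $a$ when a functorial K-injective resolution is chosen, and that this is the counit of the induced adjunction at the level of $\Kcat$.

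Next, I would pin down the derived functors via concrete models. Since ${}_B B_A$ is free as a left $B$-module, tensoring with it preserves quasi-isomorphisms, so no replacement is needed and $F$ is modelled by the underived $\tot(- \otimes_B {}_B B_A)$. For the right adjoint, I take the K-injective resolution $j_a \colon a \to I$ and model $F^\RA(a)$ by $\Hom_A^*({}_B B {}_A, I)$. Composing, the object $F F^\RA(a)$ is represented by $\tot\bigl(\Hom_A^*({}_B B {}_A, I) \otimes_B {}_B B {}_A\bigr)$, which is precisely the source of $\gamma_a$.

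The final step is to recognise $j_a^{-1} \cdot \gamma_a$ as the derived counit. The guiding principle is that whenever $a$ is already K-injective, the derived adjunction at $a$ agrees with the underived one, so at the complex $I$ the derived counit is literally the chain-level evaluation $\gamma_a$. Transferring along the quasi-isomorphism $j_a$, which is invertible in $\Dcat(A)$, yields the morphism $\epsilon_a = j_a^{-1} \cdot \gamma_a \colon F F^\RA(a) \to a$. Independence of this morphism from the choice of K-injective resolution follows from the standard uniqueness-up-to-homotopy of resolutions together with the naturality of $\gamma$. Alternatively, one can verify directly that this map satisfies the triangle identity against the (manifestly described) derived unit of $(F, F^\RA)$.

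The main obstacle is essentially bookkeeping: verifying that the sign convention on $d^k$ in the Hom and totalisation complexes makes $\gamma_a$ a genuine chain map, and that the construction is sufficiently natural that the resulting transformation on $\Dcat(A)$ agrees, on K-injective objects, with the underived counit. Both points are routine once the complexes are written out, and no deeper homological input is required beyond the existence of K-injective resolutions and the flatness of ${}_B B$.
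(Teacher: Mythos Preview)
Your proposal is correct and follows essentially the same route as the paper. The paper's proof in the body is a one-line pointer (``diagram chase of the identity along the adjunction isomorphism''), with the detailed verification deferred to Appendix~\ref{dth adj}, where the adjunction isomorphism $\psi^0_{L,M,N}$ is built explicitly from the chain-level tensor--hom adjunction (Lemma~\ref{hom complex and total tensor adj}), the identification $\cohom{0}\Hom^* \cong \Hom_{\Kcat}$, and the passage $\Kcat \to \Dcat$ on $K$-injective targets; the identity is then chased through this composite. Your organising principle---that the derived counit agrees with the underived one on $K$-injective objects and is then transported along $j_a^{-1}$---is exactly the content of that chase, so the two arguments coincide once the bookkeeping you flag is written out.
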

\begin{proof}
This lemma is a straightforward diagram chase of the identity along the isomorphism
\[  \scriptstyle
\begin{tikzcd}[column sep=1.5em]
    \Hom_{\Dcat(B)}(\Rderived{ \Hom_{A} }(B, -), \Rderived{ \Hom_{A} }(B, -) ) \arrow[r, "\sim"] &
    \Hom_{\Dcat(A )}(\Rderived{ \Hom_{A} }(B, -) \Dtensor_{B}  B, -) 
\end{tikzcd}
\]
which defines the derived tensor-hom adjunction. As we could not find a proof in the literature, we present one in \cref{dth adj}.
\end{proof}

\begin{lemma} \label{twist cd}
There is a commutative diagram
\[
\begin{tikzcd}[column sep =4em]
    \Rderived{\Hom_A}({}_{A} B, -) \arrow[r, "{\Rderived{\Hom_A}(p, -)}"] \arrow[d, "\sim"{anchor=north, rotate=90}] & \Rderived{\Hom_A}({}_{A} A, -) \arrow[d, "\sim"{anchor=north, rotate=90}] \\
   \Rderived{\Hom_A}({}_{B} B, -) \Dtensor_{B} B {}_A \arrow[r, "\epsilon"] &  1_{\Dcat(A)} 
\end{tikzcd}
\]
of endofunctors on $\Dcat(A)$.
\end{lemma}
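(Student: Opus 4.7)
The plan is to verify commutativity at the chain level, for an arbitrary $a \in \Dcat(A)$, using a $K$-injective resolution $j_a \colon a \to I$. The right vertical isomorphism is implemented by $j_a^{-1} \circ \mathrm{ev}_{1_A}$, where $\mathrm{ev}_{1_A} \colon \Hom_A({}_A A, I^j) \to I^j$ sends $g \mapsto g(1_A)$. The left vertical isomorphism is the degreewise map
\[ \alpha^j \colon \Hom_A({}_A B {}_A, I^j) \to \Hom_A({}_B B {}_A, I^j) \otimes_B B {}_A, \qquad f \longmapsto f \otimes 1, \]
with inverse sending $g \otimes b \mapsto (x \mapsto g(bx))$. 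It is routine to check that $\alpha^j$ is right $A$-linear --- using the identity $f \otimes a = (f \cdot a) \otimes 1$ in $\otimes_B$, where the right $B$-action on the Hom factor comes from the left $A$-action on $B$ via $p$ --- and commutes with the differentials, so it assembles into an isomorphism of complexes.

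With these identifications pinned down, tracing an arbitrary $f \in \Hom_A({}_A B, I^j)$ around both paths gives the same answer. Along the top: $\Rderived{\Hom_A}(p, -)$ sends $f$ to $f \circ p \in \Hom_A({}_A A, I^j)$, which evaluates at $1_A$ to yield $(f \circ p)(1_A) = f(p(1_A)) = f(1_B)$, using that $p$ is a unital ring morphism. Along the bottom: $\alpha^j$ sends $f$ to $f \otimes 1$, and the counit $\epsilon$, described by \ref{standard counit} via $\gamma_a^j \colon f \otimes b \mapsto f(b)$, produces $f(1_B)$. The two results coincide, so the square commutes pointwise, modulo the identification $j_a^{-1} \colon I \to a$.

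Naturality in $a$ then follows from the functoriality of all ingredients at the chain level: $K$-injective resolutions can be chosen functorially on $\Dcat(A)$, and $\alpha$, $\mathrm{ev}_{1_A}$, $\gamma_a$, and $\Rderived{\Hom_A}(p, -)$ are all natural in $I$. Thus the pointwise commutativity upgrades to a $2$-commutative square of endofunctors on $\Dcat(A)$. The only real obstacle is bookkeeping: several canonical-looking maps exist between $\Rderived{\Hom_A}({}_A B, -)$ and $\Rderived{\Hom_A}({}_B B, -) \Dtensor_B B {}_A$, and one must pick the formulation compatible with the explicit counit formula from \ref{standard counit}; after fixing $\alpha$ as above, the entire commutativity reduces to the trivial identity $p(1_A) = 1_B$.
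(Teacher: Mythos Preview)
Your proof is correct and follows essentially the same approach as the paper: fix a $K$-injective resolution, identify the vertical isomorphisms explicitly at the chain level (your $\alpha$ is precisely the inverse of the paper's multiplication map $m$, and your $\mathrm{ev}_{1_A}$ is the paper's $\ev_1$), then trace an element around both paths using the explicit counit from \ref{standard counit} to find that both yield $f(1_B)$. The only point you leave implicit that the paper spells out is that ${}_B B {}_A$ is $K$-flat over $B^\op$, which justifies computing the derived tensor as an ordinary tensor; this is immediate since $- \otimes_B B {}_A$ is exact.
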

\begin{proof}
    To prove the lemma, it suffices to construct natural isomorphisms
    \begin{align*}
        & m \colon  \Rderived{\Hom_A}({}_{B} B, -) \Dtensor_{B} B {}_A \to \Rderived{\Hom_A}({}_{A} B, -) \\
        & n \colon \Rderived{\Hom_A}({}_{A} A, -) \to 1_{\Dcat(A)}
    \end{align*}
    such that the diagram
    \[
\begin{tikzcd}[column sep =4em]
    \Rderived{\Hom_A}({}_{A} B, a) \arrow[r, "{\Rderived{\Hom_A}(p, a)}"] \arrow[d, "\sim"{anchor=north, rotate=90}, "m_a^{-1}"'] & \Rderived{\Hom_A}({}_{A} A, a) \arrow[d, "\sim"{anchor=north, rotate=90}, "n_a"'] \\
   \Rderived{\Hom_A}({}_{B} B, a) \Dtensor_{B} B {}_A \arrow[r, "\epsilon_a"] &  a
\end{tikzcd}
    \]
    commutes for all  $a \in \Dcat(A)$. For each such $a$, fix a $K$-injective resolution $j_a \colon a \to I$ of $a$. Moreover, note that the functor $- \otimes_B B {}_A \colon B \to A$ is exact and so for any acyclic complex $M$ in $\Dcat(B)$, $\tot(M \otimes_B B {}_A)$ is acyclic. Thus, $B$ is $K$-flat in $\Dcat(B^\op)$. Hence, by the definition of the derived hom and tensor functors,
    \begin{align*}
        &  ( \Rderived{\Hom_A}({}_{B} B, a) \Dtensor_{B} B {}_A)^j = \Hom_A({}_{B} B, I^j) \otimes_{B} B {}_A, \\
        & \Rderived{\Hom_A}({}_{A} B, a)^j = \Hom_A({}_{A} B, I^j), \\
        & \Rderived{\Hom_A}({}_{A} A, a)^j = \Hom_A({}_{A} A, I^j),
    \end{align*}
    Let $m_a = (m_a^j) \colon \Rderived{\Hom_A}({}_{B} B, a) \Dtensor_{B} B {}_A \to \Rderived{\Hom_A}({}_{A} B,a)$ be the chain map which, at degree $j$, is the natural multiplication isomorphism
    \[ m_a^j \colon \Hom_A({}_{B} B, I^j) \otimes_{B} B {}_A \to \Hom_A({}_{B} B, I^j). \]
    Furthermore, set $n_a  \colon \Rderived{\Hom_A}({}_{A} A, a) \to a$ as the composition of the quasi-isomorphism $j_a^{-1}$ with the chain isomorphism 
    \[ \ev_1 = (\ev_1^j) \colon  \Rderived{\Hom_A}({}_{A} A, a) \tow{\sim} I \]
    where 
    \[ \ev_1^j \colon \Hom_A({}_{A} A, I^j) \tow{\sim} I^j \colon f \mapsto f(1). \]
    
    It is easy to check by diagram chasing that both $m = (m_a)_{a \in \Dcat(A)}$ and $n = (n_a)_{a \in \Dcat(A)}$ define natural transformations. We claim that for every $a \in \Dcat(A)$, the diagram
       \[
\begin{tikzcd}[column sep =4em]
    \Hom^*_A({}_{A} B, I) \arrow[r, "{\Rderived{\Hom_A}(p, a)}"] \arrow[d, "\sim"{anchor=north, rotate=90}, "m_a^{-1}"'] & \Hom^*_A({}_{A} A, I) \arrow[d, "\sim"{anchor=north, rotate=90}, "n_a"'] \\
     \tot( \Hom^*_A({}_{B} B, I) \otimes_{B} B {}_A) \arrow[r, "\epsilon_a"] &  a \\
\end{tikzcd}
    \]
    commutes. From \ref{standard counit}, $\epsilon_a = j_a^{-1} \cdot \gamma_a$. Hence, the above diagram commutes if and only the outer diagram in the following
       \[
\begin{tikzcd}[column sep =4em]
    \Hom^*_A({}_{A} B, I) \arrow[r, "{\Rderived{\Hom_A}(p, a)}"] \arrow[d, "\sim"{anchor=north, rotate=90}, "m_a^{-1}"'] & \Hom^*_A({}_{A} A, I) \arrow[d, "\sim"{anchor=north, rotate=90}, "\ev_1"'] \\
     \tot( \Hom^*_A({}_{B} B, I) \otimes_{B} B {}_A) \arrow[r, "\gamma_a", dashed] \arrow[d, "\gamma_a"] &  I \arrow[d,"j_a^{-1}"] \\
     I \arrow[r, "j_a^{-1}"] &  a 
\end{tikzcd}
    \]
    commutes. To see that this holds, consider first the composition $\gamma_a \cdot m^{-1}_a$. We may write this as a chain map which at degree $j$ is
    \[ \gamma_a^j \cdot (m^{-1}_a)^j \colon \Hom_A( {}_A B, I^j) \to I^j \]
    where, for $f \in \Hom_A( {}_A B, I^j)$, 
    \[  \gamma_a^j \cdot (m^{-1}_a)^j(f) = \gamma_a^j( f \otimes 1 ) = f(1) \]
    Next, consider the composition $\ev_1 \cdot \Rderived{\Hom_A(p, a)}$. At degree $j$, 
    \[ \ev_1^j \cdot \Rderived{\Hom_A(p, a)}^j(f) = \ev_1^j( f \cdot p) = f( p(1) ) = f(1). \]
    Thus, since $(\gamma_a \cdot m^{-1}_a)^j = (\ev_1 \cdot \Rderived{\Hom_A(p, a)})^j$ for all $j$, it follows that the diagram commutes. 
\end{proof}

\begin{cor} \label{twist t res}
    The functor
    \[ \cT \cong \Rderived{\Hom}_A({}_A \cone(p)[-1] {}_A, -) \colon \Dcat(A) \to \Dcat(A) \]
    is the twist around $F$.
\end{cor}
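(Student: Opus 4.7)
The plan is to read off the twist directly from the commutative square in Lemma~\ref{twist cd}. By definition, the twist around $F$ fits in a functorial triangle
\[ F F^{\RA} \xrightarrow{\epsilon^{\rmR}} 1_{\Dcat(A)} \to \cT \to^+, \]
so it suffices to identify $\cT$ as the cone of $\epsilon^{\rmR}$. Since $F^{\RA} = \Rderived{\Hom_A}({}_B B {}_A, -)$, the top row of the square from Lemma~\ref{twist cd} provides a natural isomorphism between the morphism $\epsilon^{\rmR}$ and the morphism $\Rderived{\Hom_A}(p,-) \colon \Rderived{\Hom_A}({}_A B, -) \to \Rderived{\Hom_A}({}_A A, -)$, so their cones are isomorphic as functors.

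Next, I would apply the contravariant functor $\Rderived{\Hom_A}(-, a)$ to the short exact sequence (\ref{ses for res ext twist}) of $A$-bimodules
\[ 0 \to \ker p \xrightarrow{i} A \xrightarrow{p} B \to 0 \]
to produce a functorial triangle
\[ \Rderived{\Hom_A}({}_A B, a) \xrightarrow{\Rderived{\Hom_A}(p, a)} \Rderived{\Hom_A}({}_A A, a) \xrightarrow{\Rderived{\Hom_A}(i, a)} \Rderived{\Hom_A}({}_A \ker p, a) \to^+. \]
Combining with the previous step, the cone of $\epsilon^{\rmR}_a$ is naturally isomorphic to $\cT(a) = \Rderived{\Hom_A}({}_A \ker p {}_A, -)$, which establishes the claim.

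The only subtle point is the functoriality of this cone, flagged in Remark~\ref{funct cones}. In our bimodule setting this is not an issue: the short exact sequence lives in the abelian category of $A$-bimodules, and $\Rderived{\Hom_A}(-, a)$ may be computed by taking a $K$-projective (equivalently, any termwise projective) bimodule resolution of $\ker p \to A \to B$, which yields an honest short exact sequence of complexes whose mapping-cone construction is strictly functorial in $a$ (via the $K$-injective resolutions fixed in the proof of Lemma~\ref{twist cd}). Once this is observed, the identification of $\cT$ with the twist is just transport of structure along the natural isomorphisms $m$ and $n$ of that lemma, so there is no substantive obstacle beyond carefully stringing together the two triangles.
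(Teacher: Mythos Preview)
Your proposal is correct and follows essentially the same approach as the paper: both arguments apply $\Rderived{\Hom_A}(-,a)$ to the bimodule short exact sequence \eqref{ses for res ext twist} to obtain a functorial triangle, and then use the commutative square of Lemma~\ref{twist cd} to identify $\Rderived{\Hom_A}(p,-)$ with the counit $\epsilon$, thereby recognising $\Rderived{\Hom_A}(\ker p,-)$ as the twist. Your added remark on functoriality via bimodule resolutions is a reasonable elaboration of what the paper leaves implicit in Remark~\ref{funct cones}.
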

\begin{proof}
     Applying $\Rderived{\Hom_A}(-, a)$ to the triangle \eqref{ses for res ext twist} for each $a \in \Dcat(A)$ induces the functorial triangle
   \[ \Rderived{\Hom_A}({}_A B, -) \tow{f}  \Rderived{\Hom_A}({}_A A, -)  \tow{g} \Rderived{\Hom_A}(\cone(p)[-1], -) \to^+ \]
   where $f = \Rderived{\Hom_A}(p, -)$ and $g = \Rderived{\Hom_A}(i, -)$. 
   
   Given \ref{twist cd}, there is commutative diagram
   \[ \begin{tikzcd}[column sep =2.5em] \label{diagram twist}
       \Rderived{\Hom_A}(B, -) \arrow[d, "\sim"{anchor=north, rotate=90}]&[-3.5em] {}  \arrow[r, "{f}"{yshift=0.5}] & {} &[-3.5em]  \Rderived{\Hom_A}(A, -) \arrow[d, "\sim"{anchor=north, rotate=90}] &[-3.5em] {} \arrow[r, "{g}"{yshift=0.5}] &  {} &[-3.5em] \Rderived{\Hom_A}(\cone(p)[-1], -) \arrow[d, equals] &[-3.5em] {} \arrow[r] & {} &[-3.5em] {}^+ \\
        \Rderived{\Hom_A}(B, -) \Dtensor_{B} B  & {} \arrow[r, "\epsilon"{yshift=0.5}] &  {} & 1_{\Dcat(A)} & {}  \arrow[r, "u"{yshift=0.5}]  &  {} & \Rderived{\Hom_A}(\cone(p)[-1], -) &  {}  \arrow[r] & {} &[-3.5em] {}^+ 
   \end{tikzcd}
   \]
   where $u$ is defined so that the diagram commutes. Hence, since the first row is exact, so is the bottom. Thus, because $\epsilon$ is the counit, it follows from \ref{twist def} that $\cT$ is the twist around $F$.

   Alternatively, if taking the definition of the twist around $F$ as in \ref{funct cones}, then the twist around $F$ is $T = - \Dtensor_A t$ for a specific complex $t$ of $A$-bimodules. Since $\epsilon$ is the counit of the appropriate tensor-hom adjunction, and the vertical morphisms in the diagram \eqref{diagram twist} are functorial, then there is an isomorphism $t \cong \Rderived{\Hom}_A(\cone(p)[-1], A)$ of complexes of bimodules. Moreover, notice that $B \in \Kb(\proj A)$, and so 
   \[ T = - \Dtensor_A \Rderived{\Hom_A}(\cone(p)[-1], A) \cong \Rderived{\Hom_A}(\cone(p)[-1], -) = \cT \]
   by \cite[2.10(1)]{IR}.
\end{proof}

\begin{cor} \label{twist for epi}
    Suppose further that $p \colon A \to B$ is a surjection. Then, the twist around $F$ is 
    \[ \cT \colonequals \Rderived{\Hom}_A({}_A \ker p {}_A, -) \colon \Dcat(A) \to \Dcat(A) \]
\end{cor}
\begin{proof}
    If $p$ is a surjection, then the short exact sequence
    \[ 0 \to \ker p \to A \to B \to 0 \]
    induces a triangle
    \[ \ker p \to A \to B \to^+ \]
    so that we may conclude that $\ker p = \cone(p)[-1]$. 
\end{proof}

\subsection{The cotwist} 

The goal of this section is to compute the cotwist around $F$. This requires a handful of lemmas.

First, let $Q \to B$ be a $K$-projective resolution of $B \in \Dcat(A \otimes_\Z B^\op)$. Then, it is straightforward to check that the quasi-isomorphism $Q \to B$ induces a quasi-isomorphism
 \[ s \colon {}_B B \Dtensor_A B {}_B \to \tot({}_B Q \otimes_A B {}_B ).\]

\begin{lemma}
    There are functorial isomorphisms
    \begin{align}
        \nu & \colon \Rderived{\Hom_B}( {}_B B {}_B, -) \to 1_{\Dcat(B)}, \label{id iso B} \\
        \Rderived{\Hom_B}(s, -) & \colon \Rderived{\Hom_B}(\tot({}_B Q \otimes_A B {}_B), -) \to \Rderived{\Hom_B}({}_B B \Dtensor_A B {}_B, -). \label{qiso 2}
    \end{align}
\end{lemma}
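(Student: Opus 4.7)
The plan is to handle the two isomorphisms separately, each following the paradigm already used in \ref{standard counit} and \ref{twist cd}: fix an appropriate resolution, define a chain-level map, verify it is a degreewise isomorphism (or a quasi-isomorphism), and invoke functoriality to pass to the derived category.

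For $\nu$, given $M \in \Dcat(B)$ I would fix a $K$-injective resolution $j_M \colon M \to I$, so that $\Rderived{\Hom_B}({}_B B {}_B, M)$ is computed by $\Hom^*_B({}_B B {}_B, I)$. Setting $\nu_M \colonequals j_M^{-1} \cdot \ev_1$, where $\ev_1$ is the chain map whose degree-$j$ component is the $B$-module map $\Hom_B({}_B B {}_B, I^j) \to I^j$ sending $f \mapsto f(1)$, each such component is a $B$-module isomorphism, and a direct check using $B$-$B$-bilinearity shows $\ev_1$ commutes with the differentials. Hence $\nu_M$ is an isomorphism in $\Dcat(B)$. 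Naturality in $M$ then follows from the functoriality of $K$-injective resolutions, via a diagram chase analogous to the one carried out in \ref{twist cd}.

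For the second isomorphism, I would interpret $s$ as a quasi-isomorphism of $B$-$B$-bimodules comparing two representatives of the derived tensor product. Concretely, I would take $Q \to {}_B B {}_A$ to be a $K$-projective resolution of the $B$-$A$-bimodule $B$, so that $\tot({}_B Q \otimes_A B {}_B)$ is a valid model for ${}_B B \Dtensor_A B {}_B$ in $\Dcat(B \otimes_\Z B^\op)$, and take $s$ to be the resulting canonical comparison quasi-isomorphism. The conclusion is then immediate from the fact that $\Rderived{\Hom_B}(-, -)$ sends quasi-isomorphisms in its first argument to natural isomorphisms of derived functors.

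The only technical point worth flagging is ensuring that $Q$ is $K$-flat as a complex of right $A$-modules, so that $\tot({}_B Q \otimes_A B {}_B)$ genuinely computes the derived tensor product rather than merely a total complex that happens to have the right cohomology. This is automatic if $Q$ is chosen $K$-projective over $B \otimes_\Z A^\op$, since each term is then a summand of a free bimodule and hence flat on both sides. Beyond this, both statements are entirely formal, and I do not anticipate a substantive obstacle.
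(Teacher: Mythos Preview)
Your proposal is correct and matches the paper's proof essentially verbatim: the paper constructs $\nu$ by explicitly invoking the same evaluation-at-$1$ map from \ref{twist cd}, and handles the second isomorphism by taking $Q \to B$ as a $K$-projective resolution in $\Dcat(A \otimes_\Z B^\op)$ and applying $\Rderived{\Hom_B}(-,c)$ to the comparison quasi-isomorphism $s$. Your remark on $K$-flatness is a helpful gloss that the paper leaves implicit.
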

\begin{proof}
    Let $q \colon Q \to B$ be a $K$-projective resolution of $B \in \Dcat(A \otimes_\Z B^\op)$ and fix a complex $c \in \Dcat(B)$.
    
   The construction of $\nu$ in \eqref{id iso B} is essentially the same as the construction of $n$ in \ref{twist cd}. To construct the quasi-isomorphism in \eqref{qiso 2}, observe that the quasi-isomorphism
   \[ s \colon {}_B B \Dtensor_A B {}_B \to \tot({}_B Q \otimes_A B {}_B )\]
    induces a quasi-isomorphism
   \[ \Rderived{\Hom_B}(s, c) \colon \Rderived{\Hom_B}( \tot({}_B Q \otimes_A B {}_B), c) \to \Rderived{\Hom_B}({}_B B \Dtensor_A B {}_B, c) \]
   which is natural in $c$.
\end{proof}

\begin{lemma}
    Let $\cI$ denote the subcategory of $\Dcat(B)$ consisting of $K$-injective complexes. Then, in $\cI$, there are quasi-isomorphisms 
    \begin{align}
        & f \colon  \Hom_B^*( \tot({}_B Q \otimes_A B {}_B), -) \to \Hom_A^*({}_B Q {}_A, \tot(- \otimes_B B {}_A)), \label{iso3} \\
        &  g \colon \Hom_A^*({}_B Q {}_A, \tot(- \otimes_B Q {}_A) ) \to \Hom_A^*({}_B Q {}_A, \tot(- \otimes_B B {}_A) ). \label{iso4}
    \end{align}
\end{lemma}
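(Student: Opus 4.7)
The plan is to construct $f$ via the chain-level tensor-hom adjunction together with the evaluation isomorphism $\Hom_B^*({}_A B {}_B, -) \cong - \otimes_B B {}_A$, and to construct $g$ as the functoriality of $\Hom_A^*({}_B Q {}_A, \tot(c \otimes_B -))$ applied to the resolution quasi-isomorphism $q \colon Q \to B$.

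For $f$, I would invoke the standard tensor-hom adjunction at the chain level: for any $c \in \cI$ there is a natural chain isomorphism
\[ \Hom_B^*(\tot({}_B Q \otimes_A B {}_B), c) \xrightarrow{\sim} \Hom_A^*({}_B Q {}_A, \Hom_B^*({}_A B {}_B, c)) \]
given in each degree by the usual adjunction bijection, with Koszul signs ensuring compatibility with differentials. I would then compose with the chain isomorphism $\Hom_B^*({}_A B {}_B, c) \xrightarrow{\sim} \tot(c \otimes_B B {}_A)$ induced by evaluation at $1$; both complexes are canonically identified, as right $A$-complexes, with the restriction of $c$ along $p$. The resulting $f$ is in fact a chain isomorphism, hence a fortiori a quasi-isomorphism.

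For $g$, I would set $g = \Hom_A^*(Q, \tot(c \otimes_B q))$ and argue it is a quasi-isomorphism in two steps. First, $\tot(c \otimes_B q) \colon \tot(c \otimes_B Q {}_A) \to \tot(c \otimes_B B {}_A)$ is a quasi-isomorphism of right $A$-complexes: since $Q$ is $K$-projective as a $B \otimes_\Z A^\op$-module, it may be arranged so that its components are direct summands of free bimodules of the form $B \otimes_\Z A$ (which are free on each individual side), so $Q$ is in particular $K$-flat as a left $B$-module, and hence $c \otimes_B -$ preserves the quasi-isomorphism $q$. Second, the same one-sided argument shows $Q$ is $K$-projective as a right $A$-complex, so $\Hom_A^*(Q, -)$ preserves quasi-isomorphisms of right $A$-complexes.

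The main obstacle I anticipate is the technical step of deducing, from $K$-projectivity of $Q$ as a bimodule, both the $K$-projectivity on the right and the $K$-flatness on the left needed above. This is immediate componentwise for bounded-above resolutions; in the unbounded case it can be side-stepped by choosing $Q$ at the outset to be a bimodule resolution whose terms are direct summands of free bimodules, a standard device. One must also track the Koszul signs in the tensor-hom adjunction carefully to ensure $f$ is actually a chain map rather than only a degreewise bijection.
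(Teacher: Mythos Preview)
Your approach matches the paper's. For $f$, the paper writes out exactly the composite you describe: the degreewise tensor--hom adjunction $\Hom_B(Q^{-p}\otimes_A B_B, I^q)\cong \Hom_A(Q^{-p},\Hom_B({}_A B_B, I^q))$ followed by the evaluation isomorphism $\Hom_B({}_A B_B, I^q)\cong I^q\otimes_B B_A$, packaged as the single formula $\alpha\mapsto(x\mapsto \alpha(x\otimes 1)\otimes 1)$, and observes this is a chain isomorphism. For $g$, the paper's proof is the one-line assertion that the quasi-isomorphism $q$ induces $g$, with no further justification; your two-step breakdown (first $\tot(I\otimes_B q)$ is a quasi-isomorphism, then $\Hom_A^*(Q,-)$ preserves it) is strictly more explicit than what the paper provides.

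One caution on your proposed fix for the one-sided issue: a free $A\otimes_\Z B^{\op}$-module, i.e.\ a sum of copies of $B\otimes_\Z A$, is free as a right $A$-module only when $B$ is $\Z$-free (and dually on the left $B$-side), since $(B\otimes_\Z A)\otimes_A M\cong B\otimes_\Z M$ is exact in $M$ exactly when $B$ is $\Z$-flat. So the sentence ``which are free on each individual side'' needs the rings to be flat over $\Z$. In the paper's downstream applications the algebras are over a field or a complete local ring and this is harmless, but the lemma as stated carries no such hypothesis; the paper's own terse proof does not address this point either.
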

\begin{proof}
    Let $I$ be a $K$-injective complex in $\Dcat(B)$ and let $q \colon Q \to B$ be a $K$-projective resolution of $B \in \Dcat(A \otimes_\Z B^\op)$.
    
   To prove \eqref{iso3}, define the chain map 
   \[ f_I = (f_I^j)\colon \Hom_B^*( \tot({}_B Q \otimes_A B {}_B), I) \to \Hom_A^*(Q, \tot(I \otimes_B  B {}_A)). \]
    where
   \[ f_I^j \colon \prod_{p+q = j} \Hom_B(Q^{-p} \otimes_A B_B, I^q) \to \prod_{p+q=j} \Hom_A(Q^{-p}, I^q \otimes_B B_A) \]
  is such that, given a tuple $(\alpha^{p, q}) \in \prod_{p+q = j} \Hom_B(Q^{-p} \otimes_A B_B, I^q)$, 
   \[f^j_I(\alpha^{p, q}) \colon x \mapsto \alpha^{p, q}(x \otimes_A 1) \otimes_B 1. \]
   The morphism $f_I^j$ is an isomorphism because it is a product of the composition of isomorphisms
   \begin{align*}
       & v_I^{p,q} \colon \Hom_B(Q^{-p} \otimes_A B_B, I^q) \to \Hom_A(Q^{-p}, \Hom_B( {}_A B {}_B, I^q))  \\
       & w_I^{p, q} \colon (Q^{-p}, \Hom_B( {}_A B {}_B, I^q)) \to (Q^{-p}, I^q \otimes_B B_{A})
   \end{align*}
   where $(v_I^{p,q}(\alpha^{p,q}))(x)(y) = \alpha^{p,q}(x \otimes y)$ and $(w+I^{p, q}(\beta^{p,q}))(x) = \beta^{p, q}(x)(1) \otimes_B 1$. It is not hard to check that $f$ defined this way commutes with the differentials, so that it is indeed a chain map.

   To prove \eqref{iso4}, note that the quasi-isomorphism $q \colon Q \to {}_B B {}_A$ induces the quasi-isomorphism
   \[ g \colon \Hom_A^*({}_B Q {}_A, \tot(- \otimes_B Q {}_A) ) \to \Hom_A^*({}_B Q {}_A, \tot(- \otimes_B B {}_A) ). \qedhere \] 
\end{proof}

To compute the cotwist, it will be helpful to know an explicit formulation of the unit of the derived tensor-hom adjunction. 

\begin{lemma}
    Let $c \in \Dcat(B)$ and $p_c \colon P \to c$ be a $K$-projective resolution of $c$. Moreover, let $q \colon Q \to B$ be a $K$-projective resolution of $B \in \Dcat(A \otimes_\Z B^\op)$. Then, under the quasi-isomorphism 
    \[ h \colon \Rderived{\Hom_A}( {}_B B_{A}, c \Dtensor_B B_{A} ) \tow{\sim} \Hom_A^*({}_B Q {}_A, \tot( P \otimes_B Q {}_A) ), \]
    the unit $\eta$ of the derived tensor-hom adjunction has component $ \eta_c = \lambda_c \cdot j_c$ where $\lambda_c$ is the chain map 
    \[ \lambda_c  = (\lambda_c^j) \colon P \to \Hom_A^*({}_B Q {}_A, \tot( P \otimes_B Q {}_A)  ) \]
    with
    \[ \lambda_c^j \colon P^j \to \prod_{p+q = j} \Hom_A( Q^{-p}, \bigoplus_{r+s = q} P^r \otimes_B Q^s) \]
    sending $x \mapsto x \otimes - \in \Hom_A(Q^{-p}, P^j \otimes_B Q^{-p})$. 
\end{lemma}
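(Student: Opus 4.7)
The plan is to mimic the strategy used in \ref{standard counit}, which identified the counit through a direct chain-level computation, and adapt it to the unit. Recall that the derived tensor-hom adjunction arises from the natural isomorphism
\[ \Hom_{\Dcat(A)}(c \Dtensor_B B{}_A, -) \cong \Hom_{\Dcat(B)}(c, \Rderived{\Hom_A}({}_B B {}_A, -)), \]
so $\eta_c$ is obtained by tracing $\id_{c \Dtensor_B B {}_A}$ backwards through this isomorphism. The aim is therefore to present this isomorphism at the chain level using the $K$-projective resolutions $p_c \colon P \to c$ and $q \colon Q \to {}_B B {}_A$, and then read off what $\id$ corresponds to.

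First I would fix the chain-level models. By definition $c \Dtensor_B B {}_A$ is computed (since ${}_B B {}_A$ is $K$-flat over $B$, as noted inside \ref{twist cd}) by $\tot(P \otimes_B B {}_A)$, but using the $K$-projective resolution of ${}_B B {}_A$ as well, the quasi-isomorphism $q$ gives the quasi-isomorphism $\tot(P \otimes_B Q {}_A) \to \tot(P \otimes_B B {}_A)$; we work with the former. Similarly, $\Rderived{\Hom_A}({}_B B {}_A, \tot(P \otimes_B Q {}_A))$ is computed by $\Hom_A^*(Q, \tot(P \otimes_B Q {}_A))$, which yields the stated isomorphism $h$. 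The candidate chain map $\lambda_c$ is the assignment $x \mapsto (y \mapsto x \otimes y)$.

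Next I would check that $\lambda_c$ is genuinely a chain map: this is a direct verification from the definitions of the differentials on $\Hom_A^*$ and on the total tensor complex, using only the Koszul sign rule and the fact that the "tensor with $x$" map has degree $0$. Naturality in $c$ is immediate from functoriality of $- \otimes_B Q$. The composition $\lambda_c \cdot p_c^{-1}$ (here $j_c$ in the statement, coming from the resolution) then defines a natural transformation $1_{\Dcat(B)} \to \Rderived{\Hom_A}({}_B B {}_A, - \Dtensor_B B {}_A)$.

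Finally, to identify this natural transformation as the unit $\eta$, I would verify one of the triangle identities against the counit $\epsilon$ described in \ref{standard counit}. Concretely, the composition
\[ P \tow{\lambda_c} \Hom_A^*(Q, \tot(P \otimes_B Q {}_A)) \tow{- \otimes 1} \tot(\Hom_A^*(Q, \tot(P \otimes_B Q {}_A)) \otimes_B B {}_A) \tow{\gamma} \tot(P \otimes_B Q {}_A) \]
should reduce, at each degree, to $x \mapsto (y \mapsto x \otimes y) \mapsto \ev_1(y \mapsto x \otimes y) = x \otimes -$ evaluated at $1$, giving $x \otimes 1$, which modulo the identification $\tot(P \otimes_B Q {}_A) \simeq \tot(P \otimes_B B {}_A) \simeq P$ is the identity on $P$. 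Since an adjoint pair is determined by either the unit or the counit together with one triangle identity, this is enough to conclude.

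The main obstacle is bookkeeping: the derived tensor-hom adjunction is realised here through three intermediate quasi-isomorphisms ($h$, the choice of $K$-projective resolutions, and the substitution $Q \to B$), and one needs to confirm that $\lambda_c$ is compatible with all of them and with the signs coming from the total complex, so that the triangle identity is visible on the nose. This is essentially the same diagram chase as in \ref{standard counit}, performed on both sides simultaneously.
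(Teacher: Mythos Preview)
Your proposal is correct and workable, but it takes a different route from the paper. The paper's own proof is a single sentence: it says to diagram chase $\id_{c \Dtensor_B B_A}$ directly through the chain of isomorphisms defining the adjunction, exactly as was done in detail for the counit in Appendix~\ref{dth adj}. In other words, one writes down the composite
\[
\Hom_{\Dcat(A)}(c \Dtensor_B B_A, c \Dtensor_B B_A) \xrightarrow{\sim} \Hom_{\Dcat(B)}(c, \Rderived{\Hom_A}(B_A, c \Dtensor_B B_A))
\]
at the chain level using the resolutions $P$ and $Q$, and pushes the identity through.

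You instead propose to guess the candidate $\lambda_c$, check it is a natural chain map, and then verify one triangle identity against the counit already computed in \ref{standard counit}. This is legitimate: once the counit is fixed as part of a genuine adjunction, any natural transformation satisfying the triangle identity $(\epsilon F)(F\eta') = \id_F$ must equal the unit, since under the adjunction bijection $\eta'_c$ then corresponds to $\id_{Fc}$. Your approach has the advantage of reusing the counit computation rather than repeating the full chain of isomorphisms; the paper's approach is more self-contained and parallel to the appendix.

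One small point to watch: the counit of \ref{standard counit} is written using $B$ itself (exploiting that $B$ is $K$-flat over $B$), whereas your models for $F(c)$ and $GF(c)$ use the resolution $Q$. When you run the triangle identity you will need the more general form of the counit from \ref{standard counit gen}, or else pass through the quasi-isomorphism $\tot(P \otimes_B Q) \to \tot(P \otimes_B B)$ carefully. You flag this under ``bookkeeping'', which is accurate, but it is the one place where the verification is not literally ``evaluate at $1$''.
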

\begin{proof}
    Similar to the proof of \ref{standard counit},  this lemma is proved by diagram chasing the identity morphism $\id \in \Hom_{\Dcat(A)}(c \Dtensor_B B {}_A, c \Dtensor_B B {}_A)$ across the standard isomorphism defining the tensor-hom adjunction.
\end{proof}

\begin{remark} \label{beta remark}
    Note that we may take the resolution $q \colon Q \to {}_B B {}_A$ to be a chain map $(q^j)$ which vanishes in all degrees but zero. In which case, there is a chain map
\[ \beta = (\beta^j) \colon \tot({}_B Q \otimes_A B {}_B) \to {}_B B {}_B \] 
where $\beta^j = 0$ for $j \neq 0$ and $\beta^0$ is induced by composing $q^0 \otimes_A 1_B \colon Q^0 \otimes_A B {}_B \to {}_B B \otimes_A B {}_B$ with the multiplication isomorphism $m \colon  {}_B B \otimes_A B {}_B \to  {}_B B {}_B$.
\end{remark}

Since there is a quasi-isomorphism $s \colon {}_B B \Dtensor_A B {}_B \to \tot(Q \otimes_A B {}_A)$, it thus follows from \ref{beta remark} that there is a map 
\[ s \cdot \beta \colon {}_B B \Dtensor_A B {}_B \to {}_B B {}_B. \] 

\begin{lemma} \label{cotwist cd}
    There is a commutative diagram 
    \[
    \begin{tikzcd}[column sep=5.5em]
        \Rderived{\Hom_B}({}_B B {}_B, -) \arrow[d, "\sim"{anchor=north, rotate=90}] &[-6
        em] {} \arrow[r, "{\Rderived{\Hom_B}(s \cdot \beta, -)}"] & {} &[-6.5em] 
 \Rderived{\Hom_B}( {}_B B \Dtensor_A B {}_B, -) \arrow[d, "\sim"{anchor=north, rotate=90}] \\
        1_{\Dcat(B)} & {} \arrow[r, "\eta"] & {} & 
 \Rderived{\Hom_A}({}_B B {}_A, - \Dtensor_B B {}_A)
    \end{tikzcd}
    \]
\end{lemma}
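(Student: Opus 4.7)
The plan is to mirror the proof of \ref{twist cd}, but with the additional chain-level isomorphisms supplied by the preceding two lemmas. The left-hand vertical will be the natural isomorphism $\nu$ of \eqref{id iso B}. The right-hand vertical will be the composition
\[ h^{-1} \circ g^{-1} \circ f \circ \Rderived{\Hom_B}(s, -)^{-1} \]
built from \eqref{qiso 2}, \eqref{iso3}, \eqref{iso4}, and the isomorphism $h$ of the preceding lemma. Once the verticals are replaced by these explicit composites, the square becomes a diagram of honest chain maps and it suffices to verify commutativity on representatives.

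To do this I will fix $c \in \Dcat(B)$, a $K$-projective resolution $p_c \colon P \to c$, a $K$-injective resolution $j_c \colon c \to I$, and take the resolution $q \colon Q \to {}_B B {}_A$ to be the chain map of \ref{beta remark}, concentrated in degree $0$. With this choice, the map $s \cdot \beta$ has only one nonzero component, in degree $0$, equal to $m \circ (q^0 \otimes 1_B)$, where $m \colon B \otimes_A B \to B$ is multiplication. Tracing an element $\phi \in \Hom_B({}_B B {}_B, I^j)$ clockwise around the square, one applies $\Rderived{\Hom_B}(s \cdot \beta, -)$ followed by $f$ and $g^{-1}$ to land on the map in $\Hom_A(Q^0, P^j \otimes_B Q^0)$ given, after the usual identifications, by $y \mapsto \phi(1) \otimes y$. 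Tracing counterclockwise, $\nu$ identifies $\phi$ with $\phi(1) \in I^j$, and then the explicit formula for $\eta_c$ from the preceding lemma, composed with $h^{-1}$, produces the same map $y \mapsto \phi(1) \otimes y$. Agreement on representatives then yields the commutativity claim.

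The main obstacle will be careful bookkeeping: several of the quasi-isomorphisms $s$, $f$, $g$, $h$ point against the direction of the square, so one has to invert them correctly, and the sign conventions of the $\Hom^*$ and $\tot$ differentials must be tracked consistently to guarantee that the intermediate chain maps really do compose to the identity. Once these identifications are in place, the verification reduces to the elementary identity $m(q^0(1) \otimes y) = y$ together with the definition of $\ev_1$ implicit in $\nu$, and the argument becomes the exact dual of the counit comparison carried out in \ref{twist cd}.
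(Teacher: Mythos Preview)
Your proposal is correct and follows essentially the same approach as the paper: both fix the resolutions $P \to c \to I$ and $q \colon Q \to B$ concentrated in degree zero, assemble the right-hand vertical from the chain-level isomorphisms $\Rderived{\Hom_B}(s,-)$, $f$, $g$, and the explicit description of $\eta_c$ via $\lambda_c$, and then chase a degree-$j$ element $\phi \in \Hom_B(B,I^j)$ around the square to reduce to the elementary identity $\phi(1)\cdot q^0(y) = \phi(q^0(y))$. One bookkeeping point to watch: after $f$ and $g^{-1}$ you land in $\Hom_A(Q^0, I^j \otimes_B Q^0)$, not $P^j \otimes_B Q^0$, and the counterclockwise route through $\nu$ and $\lambda_c$ requires the extra identification $I \simeq P$ via $p_c \cdot j_c$ before $\lambda_c$ applies; the paper inserts this step explicitly as $\Hom_A^*(Q, \tot(p_c \cdot j_c \otimes_B Q)^{-1})$, and you should too.
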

\begin{proof}
    
    Let $c \in \Dcat(B)$. Fix $K$-injective and $K$-projective resolutions $j_c \colon c \to I$ and $p_c \colon P \to c$, respectively. We claim that the following diagram commutes.

    \begin{tikzcd}
        \Rderived{\Hom_B}({}_B B {}_B, c) \arrow[d, equals] \arrow[r, "{\Rderived{\Hom_B}(s \cdot \beta, c)}"] &  \Rderived{\Hom_B}( {}_B B \Dtensor_A B {}_B, c) \arrow[d, "{\Rderived{\Hom_B}(s, c)^{-1}}"] \\
        \Rderived{\Hom_B}({}_B B {}_B, c) \arrow[d, equals] \arrow[r, "{\Rderived{\Hom_B}(\beta, c)}"] &  \Rderived{\Hom_B}( \tot({}_B Q \otimes_A B_B), c)  \arrow[d, equals] \\
        \Hom^*_B({}_B B {}_B, I) \arrow[r, "{\Hom_B^*(\beta, I)}"] \arrow[d, "\ev_1"', "\sim"{anchor=north, rotate=90}] & \Hom_B^*(\tot({}_B Q \otimes_A B_B), I ) \arrow[d, equals] \\
        I \arrow[d, "p_c \cdot j_c"', "\sim"{anchor=north, rotate=90}] & \Hom_B^*(\tot({}_B Q \otimes_A B_B), I )  \arrow[d, equals] \\
        P \arrow[d, "\lambda_c"'] & \Hom_B^*(\tot({}_B Q \otimes_A B_B), I ) \arrow[d, "f"] \\
        \Hom_A^*({}_B Q {}_A, \tot( P \otimes_B Q {}_A) ) \arrow[d, "{\Hom_A^*({}_B Q {}_A, \tot(p_c \cdot j_c \otimes_B Q {}_A)^{-1})}"', "\sim"{anchor=north, rotate=90}] &  \Hom_A^*({}_B Q {}_A, \tot(I \otimes_B B_A) ) \arrow[d, equals] \\
        \Hom_A^*({}_B Q {}_A, \tot( I \otimes_B Q {}_A) ) \arrow[d, "{\Hom_A^*({}_B Q {}_A, \tot(I \otimes_B q))}"', "\sim"{anchor=north, rotate=90}] & \Hom_A^*({}_B Q {}_A, \tot(I \otimes_B B_A) ) \arrow[d, equals] \\
        \Hom_A^*({}_B Q {}_A, \tot( I \otimes_B B {}_A) ) \arrow[r, equals] & \Hom_A^*({}_B Q {}_A, \tot( I \otimes_B B {}_A) ) \\
    \end{tikzcd}

    It is clear that the top two squares commute. Let $\alpha \in \Hom^*_A({}_B B {}_B, I)$ be a degree $j$ morphoism. Then, $\alpha = \alpha^j \in \Hom_A(B, I^j)$. Let us first chase the map $\alpha$ along the left-hand column. Well, $\ev_1(\alpha^j) = \alpha^j(1) \in I^j$. Hence, $p_c \cdot i_c( \alpha^j(1) ) \in P^j$ so that 
    \[ \lambda_c \cdot p_c \cdot i_c( \alpha^j(1) ) =  \lambda^j_c \cdot p_c \cdot i_c( \alpha^j(1) ) = p_c \cdot i_c( \alpha^j(1) ) \otimes (-) \in \prod_{p} \Hom_A( Q^{-p}, P^j \otimes_B Q^{-p}). \]
    Note that we may rewrite this map as
    \[\tot(p_c \cdot i_c \otimes_B Q {}_A)( \alpha^j(1) \otimes(-) )\]
    Hence,
    \[ \big( \Hom_A^*({}_B Q {}_A, \tot(p_c \cdot i_c \otimes_B Q {}_A)^{-1} \big)( p_c \cdot i_c( \alpha^j(1) ) \otimes (-) ) = \alpha^j(1) \otimes (-). \]
    Finally, 
    \[ \big(\Hom_A^*({}_B Q {}_A, \tot(I \otimes_B q)) \big)(\alpha^j(1) \otimes (-) ) = \prod_{p} \alpha^j(1) \otimes_B q^{-p}(-) = \alpha^j(1) \otimes_B q^0(-) \]
    because $q^j$ vanish for all $j \neq 0$ by the choice of $q$ in \ref{beta remark}. Moreover, Since $q_0(x) \in B$ for all $x \in Q^0$ and $\alpha^j$ is a right $B$-module homomorphism, 
    \[ \alpha^j(1) \otimes_B q^0(-) = \alpha^j(q^0(-)) \otimes_B 1. \]

    Next, let us chase $\alpha = \alpha^j$ along the right-hand column. Well, 
    \[ \Hom^*_B(\beta, I)(\alpha^j) \in \Hom_B(Q^{0} \otimes B, I^j) \] 
    is such that 
    \[ \Hom^*_B(\beta, I)(\alpha^j)(x \otimes y) = \alpha^j \cdot \beta(x \otimes y) = \alpha^j \cdot m \cdot (q^0 \otimes 1)(x \otimes y) = \alpha^j(q^0(x) y) = \alpha^j \cdot q^0(xy).\] 
    Moreover, 
    \[ f( \alpha^j \cdot \beta)(x) = \alpha^j \cdot \beta(x \otimes_A 1) \otimes_B 1 = \alpha^j(q^0(x)) \otimes_B 1. \]
    Thus, the diagram commutes. By composing each column with the functorial isomorphisms
    \[ \Hom_A^*({}_B Q {}_A, \tot( I \otimes_B B {}_A) ) \tow{\sim} \Rderived{\Hom_A}({}_B B {}_A, I \Dtensor_B B {}_A) \tow{\sim} \Rderived{\Hom_A}({}_B B {}_A, c \Dtensor_B B {}_A) \]
    it follows that the diagram in the statement commutes.  
\end{proof}

\begin{cor} \label{cotwist gen}
    Let $C(s \cdot \beta)$ denote the cone of the morphism $s \cdot \beta \colon {}_B B \Dtensor_A B {}_B \to {}_B B {}_B$ in $\Dcat(B \otimes_\Z B^\op)$. Then, the functor
   \[ \cC \cong \Rderived{\Hom_B}( {}_B C(s \cdot \beta) {}_B, -) \]
   is the cotwist around $F$.
\end{cor}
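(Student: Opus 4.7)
The plan is to mirror the proof of \ref{twist t res} almost verbatim, with the short exact sequence \eqref{ses for res ext twist} replaced by the defining triangle of the cone $C(s \cdot \beta)$, and \ref{twist cd} replaced by \ref{cotwist cd}.

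First, by construction of the cone in $\Dcat(B \otimes_\Z B^\op)$, there is a distinguished triangle
\[ {}_B B \Dtensor_A B {}_B \tow{s \cdot \beta} {}_B B {}_B \to C(s \cdot \beta) \to^+. \]
I would apply the contravariant functor $\Rderived{\Hom_B}(-, c)$ to this triangle for each $c \in \Dcat(B)$, producing the functorial exact triangle
\[ \Rderived{\Hom_B}(C(s \cdot \beta), -) \to \Rderived{\Hom_B}({}_B B {}_B, -) \tow{\Rderived{\Hom_B}(s \cdot \beta, -)} \Rderived{\Hom_B}({}_B B \Dtensor_A B {}_B, -) \to^+ \]
of endofunctors on $\Dcat(B)$.

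Next, by \ref{cotwist cd} there are natural isomorphisms identifying the middle and right-hand terms with $1_{\Dcat(B)}$ and $F^\rmR F = \Rderived{\Hom_A}({}_B B {}_A, - \Dtensor_B B {}_A)$ respectively, under which the middle map $\Rderived{\Hom_B}(s \cdot \beta, -)$ corresponds to the unit $\eta$ of the adjoint pair $(F, F^\rmR)$. Exactly as in the proof of \ref{twist t res}, I would introduce a dashed fill-in morphism on the left and invoke the (TR3)-style axiom to build a morphism of triangles, producing the functorial exact triangle
\[ \Rderived{\Hom_B}(C(s \cdot \beta), -) \to 1_{\Dcat(B)} \tow{\eta} F^\rmR F \to^+. \]
By \ref{twist def}(2), this identifies the leftmost term as the cotwist around $F$, giving $\cC = \Rderived{\Hom_B}(C(s \cdot \beta), -)$ as claimed.

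I do not foresee any significant obstacle, since all the substantive work has already been performed in \ref{cotwist cd}; what remains is purely formal diagram manipulation analogous to \ref{twist t res}. The only point requiring care is tracking the arrow orientations after applying the contravariant hom functor to the cone triangle, so that the resulting triangle appears in the orientation prescribed by \ref{twist def}(2) (and in particular so that no unwanted shift is introduced).
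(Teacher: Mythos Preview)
Your proposal is correct and follows essentially the same approach as the paper: apply $\Rderived{\Hom_B}(-,c)$ to the defining triangle of $C(s\cdot\beta)$, then use \ref{cotwist cd} to identify the two right-hand terms with $1_{\Dcat(B)}$ and $F^\RA F$ and the middle map with $\eta$, and conclude via \ref{twist def}(2). The only cosmetic difference is that the paper does not invoke TR3 for a fill-in; instead it takes the identity as the left vertical arrow and simply \emph{defines} the bottom-left horizontal map $v$ so that the left square commutes, whence exactness of the bottom row follows from that of the top row together with the vertical isomorphisms.
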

\begin{proof}
    For each $c \in \Dcat(B)$, applying $\Rderived{\Hom_B}(-, c)$ to the triangle 
    \[ {}_B B \Dtensor_A B {}_B \tow{s \cdot \beta} {}_B B {}_B \to C(s \cdot \beta) \to^+ \]
    induces first exact row in the diagram
    \[ \begin{tikzcd}[column sep=4.5em] \label{diagram cotwist}
        \Rderived{\Hom_B}(C(s \cdot \beta, c), c) \arrow[d, equals] &[-5.5em] \arrow[r] &[-2em] {} &[-5.5em] \Rderived{\Hom_B}(B , c) \arrow[d, "\sim"{anchor=north, rotate=90}] &[-5.5em] \arrow[r, "{\Rderived{\Hom_B}(s \cdot \beta, c)}"{yshift=0.4em}] & {} &[-5.6em] \Rderived{\Hom_B}( B \Dtensor_A B, c) \arrow[d, "\sim"{anchor=north, rotate=90}]&[-5.5em] \arrow[r] &[-3em] {} &[-5.5em] {}^+\\
        \Rderived{\Hom_B}(C(s \cdot \beta, c) {}_B, c) & \arrow[r, "v"] & {} & 1_{\Dcat(B)} & \arrow[r, "\eta"] & {} & \Rderived{\Hom_A}(B, - \Dtensor_B B) & \arrow[r] & {} & {}^+
    \end{tikzcd}
    \]
    where the morphism $v$ is defined so that the left-hand square commutes. The right-hand square commutes by \ref{cotwist cd}, and so the bottom row is also exact. Thus, we may conclude from \ref{twist def} that $\cC \cong  \Rderived{\Hom_B}(C(s \cdot \beta, c) {}_B, c)$. 

    Alternatively, if taking the definition of the cotwist as in \ref{funct cones}, recall that the cotwist around $F$ is $C = - \Dtensor_A c [-1]$ for a specific complex $c$ of $B$-bimodules. Since $\eta$ is the unit of the appropriate tensor-hom adjunction, and since the vertical morphisms in diagram \eqref{diagram cotwist} are functorial, then $c[-1] \cong \Rderived{\Hom_B}( {}_B C(s \cdot \beta) {}_B, B)$ as complexes of bimodules. Because $B \in \Kb(\proj A)$, then 
   \[ C = - \Dtensor_B \Rderived{\Hom_B}( {}_B C(s \cdot \beta) {}_B, B) \cong \Rderived{\Hom_B}( {}_B C(s \cdot \beta) {}_B, -) = \cC \]
   by \cite[2.10(1)]{IR}.
\end{proof}

\begin{prop} \label{ctw res cohom in 2 degrees}
    Fix $t \in \Z$ with $t \notin \{1, 0, -1\}$. Suppose that $\cohom{k}( {}_B B \Dtensor_A B {}_B) = 0$ for all $k \neq 0, t$. Then,
    \[ \cC = \Rderived{\Hom_B}( {}_B \Tor_{-t}^A(B, B) {}_B, -)[t-1]  \]
\end{prop}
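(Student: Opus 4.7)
The plan is to identify the cone $C(s \cdot \beta) \in \Dcat(B \otimes_\Z B^\op)$ as a single shifted bimodule and then invoke \ref{cotwist gen}. First, I would show that the chain map $s \cdot \beta$ induces the multiplication isomorphism on $\cohom{0}$. Unwinding \ref{beta remark}, the degree-$0$ component $\beta^0$ is the composition of $q^0 \otimes_A 1_B$ with the multiplication $m \colon B \otimes_A B \to B$, and $s$ is a quasi-isomorphism by construction, so the induced map
\[ \cohom{0}(s \cdot \beta) \colon B \otimes_A B = \cohom{0}({}_B B \Dtensor_A B {}_B) \to \cohom{0}({}_B B {}_B) = B \]
is precisely multiplication, which is an isomorphism of $B$-bimodules because $p \colon A \to B$ is surjective.

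Next, I would apply cohomology to the defining triangle
\[ {}_B B \Dtensor_A B {}_B \tow{s \cdot \beta} {}_B B {}_B \to C(s \cdot \beta) \to^+ \]
to obtain a long exact sequence of $B$-bimodules. Using the hypothesis $\cohom{k}({}_B B \Dtensor_A B {}_B) = 0$ for $k \neq 0, t$, the vanishing $\cohom{k}({}_B B {}_B) = 0$ for $k \neq 0$, and the conclusion of the first step, a routine case analysis (where the condition $t \notin \{1, -1\}$ ensures the degree ranges $\{-1, 0\}$ and $\{t-1, t\}$ are disjoint, so the two relevant connecting maps decouple) yields
\[ \cohom{k}(C(s \cdot \beta)) \cong \begin{cases} {}_B \Tor_t^A(B, B) {}_B & \text{if } k = t-1, \\ 0 & \text{otherwise}. \end{cases} \]

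Since $C(s \cdot \beta) \in \Dcat(B \otimes_\Z B^\op)$ has cohomology concentrated in the single degree $t-1$, applying the truncation functors in the abelian category of $B$-bimodules produces a canonical quasi-isomorphism $C(s \cdot \beta) \cong {}_B \Tor_t^A(B, B) {}_B [1-t]$ in $\Dcat(B \otimes_\Z B^\op)$. Combining with \ref{cotwist gen} and the identity $\Rderived{\Hom_B}(X[n], -) = \Rderived{\Hom_B}(X, -)[-n]$ then immediately yields the stated formula for $\cC$. The main obstacle is the first step: one has to trace the constructions of $\beta$ (from \ref{beta remark}) and $s$ carefully enough to confirm that $\cohom{0}(s \cdot \beta)$ really is the multiplication map $B \otimes_A B \to B$, and in particular a map of bimodules. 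Once this identification is in place, the long exact sequence computation and the truncation argument are entirely standard.
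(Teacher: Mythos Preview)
Your proposal is correct and follows essentially the same approach as the paper: both arguments run the long exact cohomology sequence of the cone triangle, use that $\cohom{0}(s\cdot\beta)$ is an isomorphism together with the vanishing hypothesis to see that $C(s\cdot\beta)$ has cohomology only in degree $t-1$, then truncate and invoke \ref{cotwist gen}. Your version is slightly more careful in tracing $\beta$ and $s$ to identify $\cohom{0}(s\cdot\beta)$ with multiplication, whereas the paper simply asserts this; since the triangle lives in $\Dcat(B\otimes_\Z B^\op)$ the bimodule compatibility you worry about is automatic from the long exact sequence.
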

\begin{proof}
    The triangle of $B$-bimodules
    \[ {}_B B \Dtensor_A B {}_B \tow{s \cdot \beta} {}_B B {}_B \to {}_B C(s \cdot \beta) {}_B \to^+ \]
    induces the long exact sequence
    \[ \hdots \to \cohom{k}(B \Dtensor_A B) \tow{\cohom{k}(s \cdot \beta)} \cohom{k}(B) \to \cohom{k}(C(s \cdot \beta)) \to \cohom{k+1}(B \Dtensor_A B) \to \hdots \]
    on cohomology. Since $B$ is concentrated in degree zero and $\cohom{k}( {}_B B \Dtensor_A B {}_B)$ vanishes for all $k \neq 0, t$, it must be that $\cohom{k}(C(s \cdot \beta)) = 0$ for all $k \neq -1, 0, t-1$. Moreover, note that $\cohom{0}(s \cdot \beta)$ is an isomorphism. Hence,
    \[ \cohom{-1}(C(s \cdot \beta)) = \cohom{0}(C(s \cdot \beta)) = 0. \]
    Therefore, $C(s \cdot \beta)$ is concentrated in degree $t-1$. It  follows from this and the long exact cohomology sequence that
    \[ C(s \cdot \beta) \cong \cohom{t-1}(C(s \cdot \beta))[t-1] \cong \cohom{t}(B \Dtensor_A B)[t-1]. \]
    Thus, the statement holds.
\end{proof}

\section{Frobenius exact setting and fundamentals} \label{setup and fun}

With geometric applications in mind, we would like to construct a spherical twist around the restriction of scalars functor. This section introduces an algebraic setting which allows us to do so.

Given an object $X$ in a Frobenius exact category $\cE$  satisfying mild conditions, we will specify when the restriction of scalars functor $F$ induced by quotient morphism $\pi \colon \End_\cE(X) \to \uEnd_\cE(X)$ is spherical. The key point is roughly that $F$ is spherical if the action of the suspension functor on the object $x$ is periodic up to additive closure. This is proved in \cref{spherical by frob}. 

In this section, we introduce the setup and some fundamental results necessary for \cref{spherical by frob}. Most notably,  we show that the suspension functor acts periodically on $X$ up to additive closure if and only if $X$ has relatively spherical properties. This characterisation is important to our applications because relatively spherical properties are easier to control in the geometric setting. In the process, \cref{partial min res section} constructs what we call partially minimal projective resolutions. 

\subsection{Setup} \label{frob set up sect}

 Recall that an additive category $\cA$ is said to be $\cR$-linear, where $\cR$ is a ring, if the Hom-sets in the category are finitely generated $\cR$-modules, and composition respects this structure. Given such an additive category, the collection of projective objects in $\cA$ will be denoted by $\proj \cA$, and an object $P \in \cA$ will be called an additive progenerator if $\add_\cE P = \proj \cA$.

With an eye towards applications, we will often work within the following set up.
\begin{setup} \label{frob set up}
   Let $\cR$ be a Cohen-Macaulay (CM) noetherian local ring of Krull dimension $d$ with coefficient field $k$ and canonical module $\omega_R$. Let $\cE$ be an $\cR$-linear idempotent complete Frobenius exact category with an additive progenerator $P$. Assume that the stable category $\cD$ of $\cE$ is Krull-Schmidt (see \cref{app: self inj} for a brief overview of Frobenius exact categories and their stable categories).
\end{setup}

\begin{notation}
    Since $\cD$ is triangulated, we will denote its suspension functor by $\Sigma$. 
\end{notation}

We are particularly interested in examples where $\cE = \CM \cR$, the category of maximal Cohen-Macaulay modules over a Gorenstein ring $\cR$, as these find natural applications in geometry.

\begin{notation} \label{notation for algs}
Suppose that $\cE$ is as in Setup \ref{frob set up}. Then, given an object $X \in \cE$, set $\Lambda = \End_\cE(X)$ and let $[\proj \cE]$ be the ideal in $\Lambda$ of maps $X \to X$ which factor through a projective object in $\cE$. Furthermore, let 
\[ \Lambda_\con = \End_\cD(X) = \uEnd_{\cE}(X) \colonequals \Lambda / [\proj \cE]. \]
\end{notation}

   As the algebra $\Lambda$ is module finite over $\cR$, its derived category is equipped with the Nakayama functor defined in \cite[\S 3]{IR}, namely 
    \[ - \Dtensor_{\Lambda} \omega_{\Lambda}  \colon \Dcat^{-}(\modu \Lambda) \to \Dcat^{-}(\modu \Lambda) \] 
    where $\omega_{\Lambda} \colonequals \Rderived{\Hom}_\cR(\Lambda, \omega_\cR)$ is a dualizing complex for $\Dcat(\modu \Lambda)$ in the sense of \cite{Ye}. Moreover, \cite[3.5,3.7]{IR} proves a form of Serre duality by showing the existence of the following functorial isomorphisms.
    \begin{enumerate} 
        \item For any $a \in \Db(\modu \Lambda)$ and $b \in \Kb(\proj \Lambda)$
        \begin{align} \label{derived serre duality} 
             \Rderived{\Hom}_{\Lambda}(a, b \Dtensor_{\Lambda} \omega_{\Lambda}) \cong \Rderived{\Hom}_\cR(\Rderived{\Hom}_{\Lambda}(b, a), \omega_\cR).
        \end{align}
        \item For any $a \in \Db(\fl \Lambda)$ and $b \in \Kb(\proj \Lambda)$
        \begin{align} \label{serre duality} 
            \Hom_{\Dcat(\modu \Lambda)}(a, b \Dtensor_{\Lambda} \omega_{\Lambda} [d]) \cong D  \Hom_{\Dcat(\modu \Lambda)}(b, a) .
        \end{align}
    \end{enumerate}

    \begin{remark}
        Although \cite[\S 3]{IR} states the above results for Gorenstein rings, their proof applies word for word to any CM ring equipped with a canonical module once $\cR$ is replaced by $\omega_\cR$.
    \end{remark}
    
    We will often place additional assumptions on $X$, which are naturally satisfied in the applications we have in mind.
    
    \begin{setup} \label{X setup}
        With the assumptions as in \ref{frob set up} and notation as above, let $X \in \cE$ be such that $X \cong P \oplus \bigoplus_{i = 1}^n X_i^{a_i}$, where $a_i \in \N$ with $a_i > 0$, and $X_i$ are pairwise non-isomorphic, indecomposable objects in $\cD$.
    \end{setup} 
   
    \begin{remark} \label{sigma X satisfies X setup}
        Since the suspension functor $\Sigma$ is an automorphism of $\cD$, it is clear that if $X \in \cE$ satisfies \ref{X setup}, then so does $P \oplus \Sigma^k X$.
    \end{remark}
    
    Note that for any $X \in \cE$, there is a $Q \in \proj \cE$ such that $Q \oplus X$ satisfies \cref{X setup}. To see this, note that, since $\cD$ is Krull-Schmidt, any $X \in \cE$ is stably isomorphic to $\bigoplus_{i = 1}^n X_i^{a_i}$ with $X_i$ and $a_i$ as in \cref{X setup}. Consequently, by \ref{lift of stable iso}, there is an isomorphism $Q_1 \oplus X \cong Q_2 \oplus \bigoplus_{i = 1}^n X_i^{a_i}$ in $\cE$ with $Q_1, Q_2 \in \proj \cE$. Thus, letting $Y_1 = Q_2 \oplus X_1$ and $Y_i = X_i$ for $i \neq 1$, we may write
    \[ (P \oplus Q_1 \oplus Q_2^{a_1-1}) \oplus X \cong P \oplus \bigoplus_{i = 1}^k Y_i^{b_i}. \]

\begin{notation} \label{def of ei}
    Given $X = P \oplus X'$ with $X' = \bigoplus_{i=1}^n X_i^{a_i}$ as in \ref{X setup}, let $I = \{1, 2, \hdots, n\}$ and consider the following idempotents in $\Lambda$
    \begin{enumerate}
        \item $e_i \colon X \to X_i$, projection onto the summand $X_i$ for $i \in I$,
        \item $e_0 \colon X \to P$, projection onto $P$,
        \item $e_{X'} \colon X \to X'$, projection onto $X'$. 
    \end{enumerate}
    With this notation, 
    \begin{align*}
        & \cE(X, X_i) = e_i \Lambda \ \ \text{ and } \cD(X, X_i) = e_i \Lambda_\con, \\
        & \cE(X, X') = e_{X'} \Lambda \text{ and } \cD(X, X') = e_{X'} \Lambda_\con, \\
        & \cE(X, P) = e_0 \Lambda.
    \end{align*}
\end{notation}

\begin{lemma}  \label{finitely many simples}
        Under \cref{X setup}, $\Lambda_\con$ is semi-perfect (see e.g.\ \cref{app: self inj} for some background on semi-perfect algebras) and, therefore, has finitely many simple modules up to isomorphism. 
\end{lemma}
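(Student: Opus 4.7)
The plan is to prove semi-perfectness by exhibiting a complete set of pairwise orthogonal primitive idempotents in $\Lambda_\con$ whose corner rings are local, after which the finiteness of the number of simples is a standard consequence.

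First, I would use \ref{def of ei} to decompose the identity of $\Lambda$: writing $1_\Lambda = e_0 + \sum_{i=1}^n e_{X_i^{a_i}}$, with $e_0$ the projection onto $P$ and $e_{X_i^{a_i}}$ the projection onto $X_i^{a_i}$, and then refining each $e_{X_i^{a_i}}$ into a sum $e_{i,1} + \cdots + e_{i,a_i}$ of orthogonal idempotents corresponding to a choice of internal splitting of $X_i^{a_i}$ into its $a_i$ copies of $X_i$.

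Next, I would pass to the quotient $\Lambda_\con$. Since $e_0$ factors through $P \in \proj\cE$, its image $\overline{e_0}$ in $\Lambda_\con$ is zero, and one obtains a complete orthogonal decomposition $1_{\Lambda_\con} = \sum_{i,j} \overline{e_{i,j}}$. For each pair $(i,j)$, there is a natural ring isomorphism $\overline{e_{i,j}}\, \Lambda_\con\, \overline{e_{i,j}} \cong \End_\cD(X_i)$, which is a local ring because $X_i$ is indecomposable in the Krull--Schmidt category $\cD$. This is precisely what it means for $\Lambda_\con$ to be semi-perfect.

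The final assertion is then a standard consequence: over a semi-perfect ring, the isomorphism classes of simple modules are in bijection with the isomorphism classes among the primitive idempotents appearing in any complete orthogonal decomposition of $1$, and this bijection yields at most $n$ classes since $X_1,\ldots,X_n$ are pairwise non-isomorphic in $\cD$. The only conceptual step is identifying the corner ring $\overline{e_{i,j}}\, \Lambda_\con\, \overline{e_{i,j}}$ with $\End_\cD(X_i)$; this reduces to unpacking the definition of the stable category, so no serious obstacle arises.
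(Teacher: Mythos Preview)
Your proof is correct. The paper takes a slightly more categorical route: rather than explicitly decomposing $1_{\Lambda_\con}$ into primitive idempotents, it invokes the additive equivalence $\cD(X,-)\colon \add_\cD(X)\to\proj\Lambda_\con$ (citing \cite[2.3]{HK14}), transfers the Krull--Schmidt property from $\add_\cD(X)$ to $\proj\Lambda_\con$, and then applies the characterisation \cite[4.1]{HK14} of semi-perfect rings as those whose category of finitely generated projectives is Krull--Schmidt. Your argument is the hands-on version of the same idea: the idempotents $\overline{e_{i,j}}$ realise concretely the decomposition that the equivalence predicts, and the identification of the corner rings with $\End_\cD(X_i)$ is precisely where the Krull--Schmidt hypothesis on $\cD$ enters. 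Both proofs therefore pivot on the same observation; yours is more self-contained, while the paper's is shorter but relies on external references.
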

\begin{proof}
        Since $\add_{\cD}(X)$ is Krull-Schmidt (and thus idempotent complete), there is an equivalence $\cD(X, -) \colon \add_\cD (X) \to \proj \Lambda_\con$ \cite[2.3]{HK14}. Therefore, $\proj \Lambda_\con$ is Krull-Schmidt. It follows from \ref{semi perf def} that  $\Lambda_\con$ is semi-perfect. The fact that semi-perfect rings have finitely many simple modules up to isomorphism is well-known (see, for example, \ref{semi perfect gives finitely many simples}). 
\end{proof}

Since $\Lambda_\con$ is semi-perfect and $e_i \Lambda_\con$ is an indecomposable summand of $\Lambda_\con$, it follows that $e_i \Lambda_\con / \radd(e_i \Lambda_\con)$ is simple by \cite[4.1 (3) $\Rightarrow$ (1)]{HK14}. In fact, these are all of the simple $\Lambda_\con$ modules up to isomorphism.

\begin{notation} \label{simples not}
    Write $S_i \colonequals e_i \Lambda_\con / \radd(e_i \Lambda_\con)$.
\end{notation}

 We conclude this section by recalling well-known results which will often be used throughout the paper. 

    \begin{lemma} \label{F is faithful on ext1}
    For any $A, B \in \modu \Lambda_\con$,
        \[ \Ext^1_{\Lambda_\con}(A, B) \cong \Ext^1_{\Lambda}(A, B) \]
    \end{lemma}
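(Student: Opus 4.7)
The strategy is to show that the ideal $I := [\proj \cE] \subseteq \Lambda$ is idempotent, i.e.\ $I^2 = I$, and then deduce the statement by the standard fact that a surjection of rings with idempotent kernel induces an isomorphism on $\Ext^1$ between modules of the quotient.

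First I would identify $I$ explicitly as $\Lambda e_0 \Lambda$, where $e_0 \in \Lambda$ is the idempotent from Notation \ref{def of ei} projecting $X$ onto its summand $P$. Since $\proj \cE = \add P$, any morphism $X \to Q \to X$ factoring through a projective $Q$ may, after decomposing $Q$ into copies of $P$ and using the inclusion $s_0'\colon P \to X$ and projection $p_0\colon X \to P$ with $p_0 s_0' = 1_P$ and $s_0'p_0 = e_0$, be rewritten as a sum of compositions of the form $(b p_0)(s_0'a) \in \Lambda e_0 \cdot e_0 \Lambda \subseteq \Lambda e_0 \Lambda$. Conversely, every element of $\Lambda e_0 \Lambda$ visibly factors through $P$. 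With this description, idempotency is immediate: $e_0 = e_0 \cdot e_0 \in \Lambda e_0 \Lambda e_0 \Lambda$, so $I = \Lambda e_0 \Lambda \subseteq \Lambda e_0 \Lambda e_0 \Lambda = I^2 \subseteq I$.

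Next I would apply this to the comparison map $\Ext^1_{\Lambda_\con}(A,B) \to \Ext^1_\Lambda(A,B)$ induced by restriction along $\pi\colon \Lambda \to \Lambda_\con$. For surjectivity, given an extension $0 \to B \to E \to A \to 0$ of $\Lambda$-modules with $A, B \in \modu \Lambda_\con$, right multiplication by $\alpha \in I$ sends $E$ into $B$ (since $A \cdot I = 0$), so $E \cdot I \subseteq B$, and then $E \cdot I^2 \subseteq B \cdot I = 0$. By idempotency, $E \cdot I = 0$, so $E$ is a $\Lambda_\con$-module and the extension lifts to $\modu \Lambda_\con$. For injectivity, any $\Lambda$-linear section of an extension of $\Lambda_\con$-modules is automatically $\Lambda_\con$-linear because both source and target are annihilated by $I$.

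The only step that uses the specific Frobenius structure is the identification $I = \Lambda e_0 \Lambda$, which relies on the hypothesis in Setup \ref{X setup} that $P$ is a summand of $X$; everything else is formal. I expect no real obstacle here, only some bookkeeping to make the decomposition of a factorization through $Q \in \add P$ into factorizations through $P$ precise.
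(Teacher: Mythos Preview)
Your proposal is correct and follows essentially the same approach as the paper. The paper's proof is a two-line sketch (``multiplying by idempotents is exact, so $\modu \Lambda_\con$ is extension closed in $\modu \Lambda$''), whereas you spell out the same mechanism in detail: identifying $[\proj \cE] = \Lambda e_0 \Lambda$ and using that this ideal is idempotent to force any $\Lambda$-extension of $\Lambda_\con$-modules to be annihilated by it.
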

    \begin{proof}
        This is standard. Since multiplying by idempotents is exact, we may view $\modu \Lambda_\con$ as an extension closed subcategory of $\modu \Lambda$, and so the result follows.
    \end{proof}   

\begin{lemma} \label{useful lemma}
     Let $\cA$ be an additive idempotent complete category and consider objects $X, Y, Z \in \cA$. 
     \begin{enumerate}
         \item If $Y \in \add_\cA X$, then functor $\Hom_\cA(X, -)$ induces an isomorphism
    \[ \Hom_\cA(Y, Z) \to \Hom_{\End_{\cA}(X)}( \Hom_{\cA}(X, Y), \Hom_{\cA}(X, Z)). \]
    which is natural in $Y$ and $Z$. \label{yoneda}
        \item The canonical composition map
    \[ \Hom_\cA(Y, Z) \otimes_{\End_\cA(Y)} \Hom_\cA(X, Y) \to \Hom_\cA(X, Z) \]
    is an isomorphism if $Z \in \add_\cA(Y)$ or $X \in \add_\cA(Y)$. \label{composition is iso}
     \end{enumerate}
\end{lemma}

\subsection{Partially minimal projective resolutions} \label{partial min res section}

The key tool for characterising when the action of the suspension functor on an object is periodic, up to additive closure, will be a (partially) minimal projective resolution of $\Lambda_\con$.  Ideally, we would like to construct the minimal projective resolution of $\Lambda_\con$ as a $\Lambda$-module, but since $\Lambda$ is not necessarily semi-perfect, such a resolution need not exist. However, it is possible to construct a projective resolution which has sufficiently nice properties.

\begin{notation} \label{maximal semisimple}
   Under \cref{X setup} and assuming the notation \ref{simples not}, write 
    \[ \cS \colonequals \bigoplus_{i \in I} S_i. \]
\end{notation}

\begin{definition} 
     Under \cref{X setup}, we will say that a projective resolution
     \[ \hdots \tow{f_3} Q_2 \tow{f_2} Q_1 \tow{f_1} Q_0 \tow{f_0} M \to 0 \]
     of finitely generated $\Lambda$-modules is \textit{partially minimal} if $\Hom_\Lambda(f_i, \cS) = 0$ for all $i >0$. 
\end{definition}

We will prove in \ref{existence_min_res} that under \cref{X setup} every finitely generated $\Lambda$-module admits such a resolution. In order to construct it, we will need to establish some technical results. Our approach is to define an ideal which is somewhat similar to the radical and then to adapt the standard proof that minimal resolutions exist to our more general setting. 

\begin{definition} \label{rad0 def}
   Under \cref{X setup}, let $M \in \modu \Lambda$. Set 
    \[ I(M) = \left \{ Y \subseteq M \: \middle\vert \: \parbox{0.6 \linewidth}{$Y$ is a submodule with $M e_0 \Lambda \subseteq Y$, and \\ if there is a submodule $V$ with $Y \subseteq V \subseteq M$, then $V = M$} \right\} \]
    and  
    \[ \radd_0 M = \bigcap_{Y \in I(M)} Y. \]
\end{definition}

\begin{remark} \label{dragons}
    Note that in \ref{rad0 def}, $Y$ is not necessarily proper. Consequently, $\radd_0 M$ is the intersection of $M$ with all of the maximal submodules of $M$  which contain the set $M e_0$. It is clear that $\radd M \subseteq \radd_0 M$, since $\radd M$ is the intersection of all maximal submodules of $M$. Moreover, if $M$ does not have any proper submodules which contain the set $M e_0$, then $\radd_0 M = M$. For example, $\radd_0 (e_0 \Lambda) = e_0 \Lambda$.
\end{remark}

The following lemma is an adaptation of the usual characterisation of essential epimorphisms.

\begin{lemma} \label{partially essential}
    Under \cref{X setup}, let $\phi \colon M \to N$ be an epimorphism in $\modu \Lambda$. Then, the following are equivalent.
    \begin{enumerate}
        \item $\ker \phi \subseteq \radd_0 M$. \label{pe c1}
        \item For any submodule $V \subseteq M$ with $M e_0 \subseteq V$ and $\ker \phi + V = M$, then $V = M$. \label{pe c2}
        \item Let $\alpha \colon L \to M$ be a morphism in $\modu \Lambda$ with $M e_0 \subseteq \Img \alpha$. Then, $\alpha$ is an epimorphism provided that the composition $\phi \cdot \alpha$ is an epimorphism. \label{pe c3}
    \end{enumerate}
\end{lemma}
\begin{proof}
    Suppose that \eqref{pe c1} holds. and consider $V$ as in \eqref{pe c2}. If $V = M$, then there is nothing to prove. If $V \subsetneq M$ with $M e_0 \subseteq V$ and $\ker \phi + V = M$, then there exists a maximal submodule $W \subsetneq M$ with $M e_0 \subseteq V \subseteq W$. It follows that $M = \ker \phi + W = W$, which is a contradiction. Hence, $V = M$, so that \eqref{pe c2} holds.

    Next, suppose that \eqref{pe c2} holds. Then, given a morphism $\alpha \colon L \to M$ with $M e_0 \subseteq \Img \alpha$ and $\phi \cdot \alpha$ an epimorphism, it easy to see that $\Img \alpha + \ker \phi = M$, since for all $m \in M$, there exists an $\ell \in L$ such that $\phi(m) = \phi(\alpha(\ell))$. Hence, $m - \alpha(\ell) \in \ker \phi$ and we conclude that $\Img \alpha + \ker \phi = M$. Thus, by \eqref{pe c2}, $\Img \alpha = M$ and \eqref{pe c3} holds.

    Finally, suppose that \eqref{pe c3} holds. If $M$ does not have proper submodules $V \subseteq M$ with $M e_0 \subseteq M$, then by \ref{dragons}, $\radd_0 M = M$ and so \eqref{pe c1} clearly holds. So, suppose that there is a maximal submodule $V \subset M$ with $M e_0 \subseteq V$. Suppose for a contradiction that $\ker \phi \nsubseteq V$. Note that the inclusion $\alpha \colon V \to M$ is such that $M e_0 \subseteq \Img \alpha$. Moreover, since $V$ is maximal, $\ker \phi + V = M$. This implies that the composition $\phi \cdot \alpha$ is an epimorphism. It thus follows that $\alpha$ is an epimorphism. That is, $V = M$. This is a contradiction, and so $\ker \phi \subseteq V$. Since this holds for any $V \in I(M)$, then $\ker \phi \subseteq  \bigcap_{V \in I(M)} V = \radd_0 M$ and so \eqref{pe c1} holds.
\end{proof}

\begin{lemma} \label{composition of part essential}
     Under \cref{X setup}, let $f \colon X \to Y$ and $g \colon Y \to Z$ be epimorphisms which satisfy the equivalent assumptions of \ref{partially essential}. Then, $g \cdot f$ satisfies \ref{partially essential}.
\end{lemma}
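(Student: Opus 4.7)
The plan is to verify condition \eqref{pe c3} of Lemma \ref{partially essential} for $g \cdot f$, since this formulation composes most cleanly. So I would start with a morphism $\alpha \colon M' \to X$ satisfying $X e_0 \subseteq \Img \alpha$ and with $g \cdot f \cdot \alpha$ an epimorphism, and I would aim to conclude that $\alpha$ is an epimorphism by peeling off $g$ and $f$ in turn.

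The first step is to apply the hypothesis to $g$. For this I need to know that $f \cdot \alpha$ satisfies the input conditions of \eqref{pe c3} relative to $g$. The epi-ness of the outer composite is immediate since $g \cdot (f \cdot \alpha) = (g \cdot f) \cdot \alpha$. The image condition is the one small computation: because $f$ is surjective and $e_0$ acts on the right, we have $Y e_0 = f(X) e_0 = f(X e_0) \subseteq f(\Img \alpha) = \Img(f \cdot \alpha)$. Condition \eqref{pe c3} applied to $g$ then gives that $f \cdot \alpha$ is an epimorphism.

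The second step is the analogous application to $f$: we now have $\alpha \colon M' \to X$ with $X e_0 \subseteq \Img \alpha$ and $f \cdot \alpha$ an epimorphism, and condition \eqref{pe c3} for $f$ yields that $\alpha$ is an epimorphism. This verifies \eqref{pe c3} for $g \cdot f$, and by the equivalence in Lemma \ref{partially essential} the composition satisfies all three conditions.

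There is no real obstacle here; the only place where anything needs to be checked is the identity $f(X e_0) = Y e_0$, which relies on $f$ being surjective (so $Ye_0 = f(X)e_0$) together with $e_0$ being a right idempotent (so that $f(X) e_0 = f(Xe_0)$, as $f$ is a right $\Lambda$-module map). Everything else is formal composition of epimorphisms.
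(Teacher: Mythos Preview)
Your proof is correct and essentially identical to the paper's own argument: both verify condition \eqref{pe c3} of Lemma \ref{partially essential} directly, using the same computation $Y e_0 = f(X) e_0 = f(X e_0) \subseteq \Img(f \cdot \alpha)$ to peel off $g$, and then invoking \eqref{pe c3} for $f$ to conclude.
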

\begin{proof}
    We test the third condition in \ref{partially essential}. Let $\alpha \colon A \to X$ be a morphism with $X e_0 \subseteq \Img \alpha$ such that $g \cdot f \cdot \alpha$ is an epimorphism. Since $f$ is an epimorphism, it is clear that
    \[ Y e_0 = f(X) e_0 = f(X e_0) \subseteq \Img (f \cdot \alpha). \]
    Now since $g$ satisfies \ref{partially essential}, it follows that $f \cdot \alpha$ is an epimorphism. Whence $\alpha$ is an epimorphism since $f$ satisfies \ref{partially essential}.
\end{proof}

The strategy now is to show that each $\Lambda$-module admits an epimorphism as in \ref{partially essential}, and then use this fact to show that any $\Lambda$-module admits a partially minimal projective resolution. 

\begin{lemma} \label{ss admits partial cover}
    Under \cref{X setup}, let $M$ be a semi-simple $\Lambda$-module. Then, there exists a $\Lambda$-module epimorphism $\phi \colon Q \to M$, where $Q$ is a projective $\Lambda$-module and $\ker \phi \subseteq \radd_0 Q$.
\end{lemma}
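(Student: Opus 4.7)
The plan is to decompose $M$ into simple summands, build a partial projective cover for each simple separately, and then sum them up, using a reduction modulo $[\proj \cE] = \Lambda e_0 \Lambda$ to compute $\radd_0$ on the nose.

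Since $\Lambda$ is module-finite over the Noetherian ring $\cR$, it is Noetherian, so $M \in \modu \Lambda$ semi-simple is a finite direct sum $M = \bigoplus_{\alpha=1}^{m} S_\alpha$ of simple $\Lambda$-modules. I would first observe that each simple $\Lambda$-module $S$ satisfies a dichotomy: either $S e_0 = 0$, in which case $S$ is annihilated by $[\proj \cE] = \Lambda e_0 \Lambda$ and is therefore a simple $\Lambda_\con$-module, hence isomorphic to some $S_i$ with $i \in I$ by the classification recorded after \ref{finitely many simples}; or $Se_0 \neq 0$. For each summand, define
\begin{itemize}
\item if $S_\alpha \cong S_i$ with $i \in I$: set $Q_\alpha = e_i \Lambda$ and let $\phi_\alpha$ be the composition $e_i\Lambda \to e_i \Lambda_\con \to S_i$;
\item if $S_\alpha e_0 \neq 0$: set $Q_\alpha = e_0 \Lambda$ and pick any non-zero map $\phi_\alpha \colon e_0 \Lambda \to S_\alpha$, which exists because $\Hom_\Lambda(e_0\Lambda, S_\alpha) \cong S_\alpha e_0 \neq 0$ by \ref{useful lemma}, and is surjective by simplicity of $S_\alpha$.
\end{itemize}
Then take $Q = \bigoplus_\alpha Q_\alpha$ and $\phi = \bigoplus_\alpha \phi_\alpha$.

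The crux is then to compute $\radd_0 Q$. Since a submodule containing $Q e_0$ automatically contains $Q e_0 \Lambda = Q[\proj\cE]$, the maximal submodules of $Q$ that contain $Qe_0$ correspond bijectively to maximal submodules of $\bar Q \colonequals Q/Q[\proj\cE] \cong Q \otimes_\Lambda \Lambda_\con$. Computing summand by summand, $\overline{e_0\Lambda} = 0$ (since $e_0 \in [\proj\cE]$), so those summands contribute no maximal submodules at all and lie entirely in $\radd_0 Q$, matching the convention in \ref{dragons} that $\radd_0(e_0\Lambda) = e_0\Lambda$; meanwhile, $\overline{e_i\Lambda} = e_i\Lambda_\con$ is indecomposable projective over the semi-perfect ring $\Lambda_\con$, with unique maximal submodule $\radd(e_i\Lambda_\con)$, whose preimage in $e_i\Lambda$ is exactly $\ker \phi_\alpha$.

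Semi-perfection of $\Lambda_\con$ ensures $\radd \bar Q = \bigoplus_\alpha \radd \overline{Q_\alpha}$ for $\bar Q$ a finite direct sum of indecomposable projective $\Lambda_\con$-modules. Pulling back yields $\radd_0 Q = \bigoplus_\alpha \radd_0 Q_\alpha$, and the inclusion $\ker \phi_\alpha \subseteq \radd_0 Q_\alpha$ verified case by case gives $\ker \phi = \bigoplus_\alpha \ker \phi_\alpha \subseteq \radd_0 Q$. The main obstacle is the asymmetric behavior of the $e_0\Lambda$ summand, which is not of the form $e_i\Lambda$ for $i \in I$ and so has no indecomposable projective image in $\Lambda_\con$; this is precisely why $\radd_0$ is defined to collapse on $Qe_0$, and once this is understood the direct-sum compatibility falls out of the semi-perfection of $\Lambda_\con$.
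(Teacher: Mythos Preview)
Your proof is correct and uses the same construction as the paper: the same dichotomy on simples, the same choice of $Q_\alpha=e_i\Lambda$ or $e_0\Lambda$, and the same $\phi_\alpha$. The verification that $\ker\phi\subseteq\radd_0 Q$ differs slightly. The paper checks $\ker\phi_\alpha\subseteq\radd_0 Q_\alpha$ summand by summand, using for the $e_i\Lambda$ case the factorisation $e_i\Lambda\to e_i\Lambda_\con\to S_i$ together with the composition lemma~\ref{composition of part essential}, and then simply asserts $\bigoplus_\alpha\radd_0 Q_\alpha=\radd_0\bigl(\bigoplus_\alpha Q_\alpha\bigr)$. You instead identify $\radd_0 Q$ globally as the preimage of $\radd\bar Q$ under $Q\to\bar Q=Q\otimes_\Lambda\Lambda_\con$, which handles both the individual kernels and the direct-sum compatibility in one stroke and bypasses \ref{composition of part essential}. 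Your route is marginally more conceptual and actually justifies the direct-sum identity that the paper leaves implicit; the paper's route has the advantage of reusing the composition lemma it needs elsewhere anyway. Two small remarks: the decomposition $\radd(\bigoplus N_\alpha)=\bigoplus\radd N_\alpha$ holds for any finite direct sum over any ring, so semi-perfectness of $\Lambda_\con$ is not needed there; and the isomorphism $\Hom_\Lambda(e_0\Lambda,S)\cong Se_0$ is the elementary idempotent fact rather than \ref{useful lemma}.
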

\begin{proof}
    To first see that each simple $\Lambda_\con$-module admits such an epimorphism, consider the quotient morphisms $\pi_i \colon e_i \Lambda \to e_i \Lambda_\con$ and $\alpha_i \colon e_i \Lambda_\con \to e_i \Lambda_\con / \radd( e_i \Lambda_\con) \cong S_i$ for $i \in I$. Note that 
    \begin{align*}
        & \ker \alpha_i = \radd( e_i \Lambda_\con) \subseteq \radd_0(e_i \Lambda_\con), \\
        & \ker \pi_i = e_i [\proj \cE] = e_i \Lambda e_0 \Lambda.
    \end{align*}
    Since $\radd_0(e_i \Lambda)$ is the intersection of modules $Y$ with $(e_i \Lambda) e_0 \Lambda \subseteq Y \subseteq e_i \Lambda$, then necessarily $\ker \pi_i =  e_i \Lambda e_0 \Lambda \subseteq \radd_0(e_i \Lambda)$.
 
    It thus follows from \ref{composition of part essential} that the composition 
    \[\phi_i \colon e_i \Lambda \tow{\pi_i} e_i \Lambda_\con \tow{\alpha_i} S_i \]
    is such that $\ker \phi_i \subseteq \radd_0(e_i \Lambda)$. 

    Next, let $S$ be a simple $\Lambda$-module which is not a $\Lambda_\con$-module. Then, $S$ is not annihilated by $e_0$, so that  
    \[ \Hom_\Lambda(e_0 \Lambda, S) \cong \Hom_\Lambda(\Lambda, S) e_0 \cong S e_0 \cong S \neq 0. \]
    Hence, we may choose a nonzero $\phi \colon e_0 \Lambda \to S$ which, since $S$ is simple, is surjective. By \ref{dragons}, $\ker \phi \subseteq e_0 \Lambda = \radd_0 (e_0 \Lambda).$ 

    In summary, given a simple $\Lambda$-module $T$, write 
    \[ P(T) \colonequals \begin{cases}
        e_i \Lambda & \text{ if $T \cong S_i$ for $i \in I$,} \\
        e_0 \Lambda & \text{ otherwise.}
    \end{cases}\]
    
   Then, given an arbitrary semi-simple module $M = \bigoplus_{j =1}^k T_j$, by the above there exists an epimorphism $\phi = \bigoplus_{j =1}^k \phi_j$ with 
    \[ \ker \phi \subseteq \bigoplus_i^k \radd_0 P(T_j)= \radd_0 \left( \bigoplus_i^k P(T_j) \right). \qedhere \] 
\end{proof}

\begin{prop} \label{module admits partial cover}
    Under \cref{X setup}, let $M \in \modu \Lambda$. There exists projective $\Lambda$-module $Q$ and an epimorphism $\psi \colon Q \to M$ such that $\ker \psi \subseteq \radd_0(Q)$.
\end{prop}
\begin{proof}
    Since $\Lambda$ is module finite over a local ring, it is semilocal by \cite[20.6]{Lam}. In particular, for any $M \in \modu \Lambda$, it is well-known that $M / \radd M$ is semi-simple. To see this, note that $M/ \radd M = M / M \radd \Lambda$ is a semi-simple $\Lambda / \radd \Lambda$-module. Since simple $\Lambda$-modules are precisely the simple $\Lambda/\radd \Lambda$ modules, it follows that $M/\radd M$ is a semi-simple $\Lambda$-module. 
    
    By \ref{ss admits partial cover}, there is an epimorphism $\phi \colon W \to M/\radd M$ with $W$ projective and $\ker \phi \subseteq \radd_0 W$. Since $W$ is projective, $\phi$ must factor as  $\psi \colon W \to M$ followed by $q \colon M \to M / \radd M$. Since $\ker q \subseteq \radd M$ and $\phi$ is an epimorphism, so is $\psi$, by the classical version of \ref{partially essential} (see, for example, \cite[3.3]{HK14}). Finally, since $\phi = q \cdot \psi$, we have $\ker \psi \subseteq \ker \phi \subseteq \radd_0 W$, as required.
\end{proof}

\begin{theorem} \label{existence_min_res}
    Under \cref{X setup}, let $M \in \modu \Lambda$. Then, $M$ admits a partially minimal projective resolution.
\end{theorem}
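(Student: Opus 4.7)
The plan is to identify the partial minimality condition $\Hom_\Lambda(f_i, \cS) = 0$ with the containment $\Img f_i \subseteq \radd_0 Q_{i-1}$, and then to iterate Proposition \ref{module admits partial cover} to construct the resolution one syzygy at a time. The key preliminary observation I would record first is that for every finitely generated projective $\Lambda$-module $Q$ there is an identification
\[ \radd_0 Q \;=\; \bigcap_{\alpha \colon Q \to \cS} \ker \alpha. \]
Indeed, any nonzero $\alpha \colon Q \to S_i$ with $i \in I$ is surjective (since $S_i$ is simple), and its kernel is a maximal submodule of $Q$ containing $Q e_0 \Lambda$, because $S_i$ is a $\Lambda_\con$-module and is hence annihilated by $e_0$; this exhibits $\ker \alpha$ as a proper element of $I(Q)$. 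Conversely, given a proper $Y \in I(Q)$, the simple $\Lambda$-module $Q/Y$ is killed by $e_0$, so by \ref{finitely many simples} and \ref{simples not} it is isomorphic to some $S_i \subseteq \cS$, giving a nonzero map $Q \to \cS$ with kernel $Y$.

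With this in hand, I would construct the resolution inductively. Applying Proposition \ref{module admits partial cover} to $M$ produces an epimorphism $f_0 \colon Q_0 \to M$ with $Q_0$ projective and $\ker f_0 \subseteq \radd_0 Q_0$. Since $\Lambda$ is module-finite over the noetherian ring $\cR$ it is itself noetherian, so $\ker f_0$ remains finitely generated. Assuming one has constructed
\[ Q_i \tow{f_i} Q_{i-1} \tow{f_{i-1}} \cdots \tow{f_1} Q_0 \tow{f_0} M \to 0, \]
with each $\ker f_j$ finitely generated and contained in $\radd_0 Q_j$, I apply \ref{module admits partial cover} once more to $\ker f_i$ to produce an epimorphism $\pi_{i+1} \colon Q_{i+1} \to \ker f_i$ with $\ker \pi_{i+1} \subseteq \radd_0 Q_{i+1}$, and set $f_{i+1}$ to be the composition of $\pi_{i+1}$ with the inclusion $\ker f_i \hookrightarrow Q_i$. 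Then $\ker f_{i+1} = \ker \pi_{i+1} \subseteq \radd_0 Q_{i+1}$, so the inductive hypothesis propagates, and noetherianity of $\Lambda$ again guarantees that $\ker f_{i+1}$ is finitely generated.

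Partial minimality then follows immediately from the first paragraph: for each $i \geq 0$ one has $\Img f_{i+1} = \ker f_i \subseteq \radd_0 Q_i$, so for any $\alpha \colon Q_i \to \cS$ the containment $\Img f_{i+1} \subseteq \ker \alpha$ forces $\alpha \cdot f_{i+1} = 0$, i.e.\ $\Hom_\Lambda(f_{i+1}, \cS) = 0$. The bulk of the work is not this inductive step, which mirrors the classical argument in the semi-perfect setting, but rather the preparatory technology of \ref{rad0 def}--\ref{module admits partial cover} that supplies a suitable substitute for projective covers in the non-semi-perfect context. The main subtlety I would expect to watch is keeping track of which morphisms must be ``partially essential'' and which need not be: the epimorphism $f_0$ onto $M$ is unconstrained, while only the maps $f_i$ with $i > 0$ are required to kill the homomorphisms into $\cS$, and this asymmetry is precisely what makes it suffice to arrange $\ker f_{i-1} \subseteq \radd_0 Q_{i-1}$ at every previous stage.
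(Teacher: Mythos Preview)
Your proof is correct and follows essentially the same route as the paper's: iterate Proposition \ref{module admits partial cover} to build the resolution, then deduce partial minimality from the containment $\Img f_{i+1} = \ker f_i \subseteq \radd_0 Q_i$. The only cosmetic difference is that you isolate the identity $\radd_0 Q = \bigcap_{\alpha \colon Q \to \cS} \ker \alpha$ up front (and prove both inclusions), whereas the paper argues the needed inclusion $\radd_0 Q_n \subseteq \ker g$ inline at the end; you also make the noetherianity of $\Lambda$ explicit where the paper leaves it implicit.
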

\begin{proof}
    From \ref{module admits partial cover}, there is an epimorphism $f_0 \colon Q_0 \to M$ with $\ker f_0 \subseteq \radd_0 Q_0$ and $Q_0$ projective. We will define a projective resolution $f \colon Q \to X$ inductively. Suppose that $f_n \colon Q_n \to Q_{n-1}$ is such that $\ker f_n \subseteq \radd_0 Q_n$. Then, by \ref{module admits partial cover}, we may choose an epimorphism $\pi_{n+1} \colon Q_{n+1} \to \ker f_n$ with $Q_{n+1}$ projective and $\ker \pi_{n+1} \subseteq \radd_0 Q_{n+1}$. Let $i_n \colon \ker f_n \to Q_n$ denote the natural inclusion and set $f_{n+1} = i_n \cdot \pi_{n+1}$.
    
    We claim that this projective resolution is partially minimal. To see this, let $S$ be a simple $\Lambda_\con$-module and suppose that $g \colon Q_n \to S$ is nonzero. Then, $\ker g$ is maximal and, moreover, contains $Q_n e_0$ since $S$ is annihilated by $e_0$. Therefore, $\radd_0 Q_n \subseteq \ker g$ and so
    \[ \Img f_{n+1} = \ker f_n \subseteq \radd_0 Q_n \subseteq \ker g. \]
    It follows that $g \cdot f_{n+1} = 0$, and thus $\Hom_\Lambda(f_{n+1}, \cS) = 0$.
\end{proof}

\subsection{Relatively spherical properties} \label{relatively sph section}

The goal of this section is to prove that if $\Lambda_\con$ has certain $\Ext$-vanishing properties, then it admits a projective resolution as in \eqref{pres ei acon t} below. The existence of this partially minimal resolution is an essential step to understand the main theorem in \cref{periodic suspension section}.

\begin{definition} \label{def rel sph}
    Under \cref{X setup}, fix $t \in \Z$ with $2 \leq t \leq d$. We will say that $\Lambda_\con$ is \textit{t-relatively spherical} if $\Ext^k_\Lambda(\Lambda_\con, \cS) = 0$ for $k \neq 0, t$.
\end{definition}

The definition we use of $t$-relatively spherical is based on \cite[4.3]{DW4}. In the case where $\Lambda_\con$ is basic, the definitions are the same.

We require the following two technical lemmas. 

\begin{lemma}\label{projective cover of cS}
     Under \cref{X setup}, let $S_i$ be the simple $\Lambda_\con$-module as in \ref{simples not}. Then, the following hold.
     \begin{enumerate}
         \item $\Hom_\Lambda(\cE(X, X_i), S_j) \cong \delta_{ij} S_j$ as vector spaces for all $i \in \{0, 1, \hdots, n\}$ and $j \in I$. 
         \item If $U \in \add e_0 \Lambda$, then $\Hom_\Lambda(U, S_j) = 0$ for all $j \in I$.
         \item If $U \in \add \Lambda$ and $\Hom_\Lambda(U, S_j) = 0$ for all $j \in I$, then $U \in \add e_0 \Lambda$.
     \end{enumerate}
\end{lemma}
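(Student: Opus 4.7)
\emph{Proof plan.} My plan for (1) is to invoke the Yoneda isomorphism, which gives $\Hom_\Lambda(e_i \Lambda, S_j) \cong S_j e_i$ via evaluation at $e_i$. When $i = 0$, the idempotent $e_0$ factors the identity of $X$ through $P \in \proj \cE$ and hence lies in $[\proj \cE]$, so its image in $\Lambda_\con$ is zero; since $S_j$ is a $\Lambda_\con$-module, $S_j e_0 = 0$. When $i \in I$, by \ref{finitely many simples} the ring $\Lambda_\con$ is semi-perfect, and (using $\bar{e_0} = 0$) the images $\{\bar{e_i}\}_{i \in I}$ form a complete set of orthogonal idempotents summing to $1$. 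Krull--Schmidt on $\proj \Lambda_\con \simeq \add_\cD X$ moreover makes each $\bar{e_i}$ primitive and shows the $S_i$ are pairwise non-isomorphic, so the standard structure theory of simple modules over a semi-perfect ring yields $S_j \bar{e_i} = \delta_{ij} S_j$. Part (2) then follows immediately from (1) by additivity of $\Hom_\Lambda(-, S_j)$.

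For part (3), my plan is to transfer the $\Hom$-vanishing across the restriction--extension adjunction: there is an isomorphism $\Hom_\Lambda(U, S_j) \cong \Hom_{\Lambda_\con}(U \otimes_\Lambda \Lambda_\con, S_j)$, and since $U$ is projective over $\Lambda$, the tensor $U \otimes_\Lambda \Lambda_\con$ is projective over the semi-perfect ring $\Lambda_\con$. Vanishing of $\Hom$'s to every simple over a semi-perfect ring forces a projective module to be zero, so $U \otimes_\Lambda \Lambda_\con = 0$. The standard equivalence $\cE(X, -) \colon \add_\cE X \xrightarrow{\sim} \add \Lambda$, fully faithful by \ref{useful lemma}\eqref{yoneda} and essentially surjective by idempotent completeness of $\cE$, lets me write $U \cong \cE(X, Z)$ for some $Z \in \add_\cE X$. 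A direct check on the summands $P$ and $X_i$ of $X$ shows that the additive functors $Z \mapsto \cE(X, Z) \otimes_\Lambda \Lambda_\con$ and $Z \mapsto \cD(X, Z)$ from $\add_\cE X$ to $\modu \Lambda_\con$ agree, so $\cD(X, Z) = 0$.

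The main obstacle is the final step: deducing $Z \in \proj \cE$ from $\cD(X, Z) = 0$. To this end I plan a splitting argument. Realising $Z$ as a summand of $X^n$, the components of a chosen retraction $X^n \to Z$ are elements of $\cE(X, Z)$; each of them vanishes in $\cD(X, Z)$ and therefore factors through some object of $\proj \cE$. Reassembling these factorisations presents the retraction as a composition $X^n \to Q \to Z$ with $Q \in \proj \cE$, so $1_Z$ factors through $Q$, exhibiting $Z$ as a summand of a projective. Thus $Z \in \proj \cE$, and applying $\cE(X, -)$ places $U = \cE(X, Z)$ in $\add \cE(X, \proj \cE) = \add e_0 \Lambda$.
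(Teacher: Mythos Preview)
Your proof is correct. Parts (1) and (2) are essentially the same as the paper's: both use the evaluation isomorphism $\Hom_\Lambda(e_i\Lambda,S_j)\cong S_j e_i$ together with the idempotent structure of $\Lambda_\con$ (the paper phrases the off-diagonal vanishing as a dimension count on $\Hom_\Lambda(\Lambda,S_j)\cong S_j$, you phrase it via primitivity of the $\bar e_i$ in the semi-perfect ring $\Lambda_\con$).

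For part (3) you take a genuinely different route. The paper writes $U=\cE(X,Y)$, invokes the Krull--Schmidt property of $\cD$ to obtain a stable decomposition $Y\cong\bigoplus X_i^{b_i}$, lifts it to $\cE$ via \ref{lift of stable iso}, and then uses (1) and (2) directly to force each $b_j=0$; this shows $Y$ is stably zero, hence $Y\in\add P$. Your argument instead passes through the extension--restriction adjunction to identify $\Hom_\Lambda(U,S_j)$ with $\Hom_{\Lambda_\con}(U\otimes_\Lambda\Lambda_\con,S_j)$, kills the projective $\Lambda_\con$-module $U\otimes_\Lambda\Lambda_\con$ by semi-perfectness, recognises it as $\cD(X,Z)$, and finishes with the retraction-splitting trick to exhibit $Z$ as a summand of a projective. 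Both arguments ultimately rest on the semi-perfectness of $\Lambda_\con$, but yours avoids explicitly decomposing $Y$ in $\cD$ and lifting back to $\cE$, at the cost of the extra functorial identification $\cE(X,-)\otimes_\Lambda\Lambda_\con\cong\cD(X,-)$ on $\add_\cE X$. One small remark: your justification ``check on the summands $P$ and $X_i$'' is better phrased as ``check on $X$ as a $\Lambda$-module'', since $\cE$ need not be Krull--Schmidt; the natural map $\cE(X,Z)\otimes_\Lambda\Lambda_\con\to\cD(X,Z)$ is an isomorphism for $Z=X$ (both sides being $\Lambda_\con$ with its canonical bimodule structure), and this propagates to all of $\add_\cE X$ by additivity.
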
 
\begin{proof}    
    (1) First, note that $\Hom_\Lambda(e_i \Lambda, S_i) \neq 0$ since the composition of quotient maps 
    \[ e_i \Lambda \tow{\pi_i} e_i \Lambda_\con \tow{\alpha_i} S_i \]
    is nonzero.Thus, there is a vector space isomorphism
    $\Hom_\Lambda(e_i \Lambda, S_i) \cong S_i$
    sending $\alpha \mapsto \alpha(e_i)$. Similarly, $\Hom_\Lambda(\Lambda, S_i) \cong S_i$ so that, by counting dimensions, it must be that $\Hom_\Lambda(e_i \Lambda, S_j) = 0$ for $i \neq j$.

    (2) The statement is clear from the fact that $\Hom_\Lambda(e_0 \Lambda, S_j) = 0$ for all $j \in I$. 

    (3) Suppose that $U \in \add \Lambda$. Then, by additive equivalence \ref{useful lemma} \eqref{yoneda}, $U = \cE(X, Y)$ for some $Y \in \add_\cE X$. It follows that $Y$ is stably isomorphic to $\bigoplus_{i = 1}^n X_i^{b_i}$ for $b_i \in \N$. Consequently, by \ref{lift of stable iso}, there is an isomorphism $Q \oplus Y \cong Q' \oplus \bigoplus_{i = 1}^n X_i^{b_i}$ in $\cE$ with $Q, Q' \in \add P$. Now, 
    \[ \Hom_\Lambda( \cE(X, Q \oplus Y), S_j) = \Hom_\Lambda( \cE(X, Q), S_j) \oplus \Hom_\Lambda(\cE(X, Y), S_j) \]
    Since $\cE(X, Q) \in \add e_0 \Lambda$, (2) implies that $\Hom_\Lambda( \cE(X, Q), S_j) = 0$. Moreover, since $U = \cE(X, Y)$, it follows by assumption that $\Hom_\Lambda(\cE(X, Y), S_j) = 0$. Thus, 
    \[0 = \Hom_\Lambda( \cE(X, Q \oplus Y), S_j) = \Hom_\Lambda( \cE(X,  Q' \oplus \bigoplus_{i = 1}^n X_i^{b_i}), S_j). \]
    In particular, this implies that
    \[ \Hom_\Lambda( \cE(X, X_j^{b_j}), S_j) = 0 \]
    for all $j \in \{1, \hdots, n\}$. By (1), it must be that $b_j = 0$ for all $j$. Thus, $Y$ vanishes in $\cD$. That is, $Y \in \add P$ and $U = \cE(X, Y) \in \add \cE(X, P)  = \add e_0 \Lambda$.
\end{proof}

\begin{lemma} \label{res lemma}
    Under \cref{X setup}, fix $i \in I$. Then, there exists a partially minimal projective resolution of $e_i \Lambda_\con$ 
    \begin{align} \label{p res U}
         \hdots \tow{q_2} \cE(X, W_1) \tow{q_1} \cE(X, X_i) \to e_i \Lambda_\con \to 0
    \end{align}
    such that 
    \begin{enumerate}
        \item $q_1 = \cE(X, \alpha_1)$ for some morphism $\alpha_1 \colon W_1 \to X_i  \in \cE$. 
        \item $\alpha_1 \colon W_1 \to X_i$ is an admissible epi and, hence,  admits a kernel $ \ker \alpha_1 \in \cE$.
        \item $K_1 \colonequals  \ker q_1 \cong \cE(X, \ker \alpha_1)$
    \end{enumerate}
    Further, for $j >1$, the following hold.
    \begin{enumerate}[resume]
        \item $q_j = \cE(X, \alpha_j)$ for some morphism $\alpha_j \colon W_j \to W_{j-1}  \in \cE$.
        \item $\alpha_j$ is an admissible epimorphism onto $\ker \alpha_{j-1}$, and so it admits a kernel $\ker \alpha_j$.
        \item $K_j \colonequals \ker q_j \cong \cE(X, \ker \alpha_j)$.
    \end{enumerate}
\end{lemma}
\begin{proof}
   We will build the projective resolution inductively. 

    \begin{enumerate}[label=(\alph*)]
        \item Consider first the case where $j = 1$. By the proof of \ref{existence_min_res} and \ref{ss admits partial cover}, there is an exact sequence
    \begin{align}
        & 0  \to K_1 \tow{j_1} \cE(X, U_1) \tow{f_1} \cE(X, X_i) \tow{q_0} e_i \Lambda_\con \to 0 \label{splitting 0}
    \end{align}
    where $\Hom_\Lambda(f_1, \cS) = 0$. 
    
    Additive equivalence applied to $X \in \cE$ implies that $f_1 = \cE(X, \beta_1)$ for some morphism $\beta_1 \colon U_1 \to X_i$. By \ref{dugas lemma}, there is an exact sequence
    \[ 0 \to \ker (p_1, \beta_1) \to Q_1 \oplus U_1 \tow{(p_1, \beta_1)} X_i \to 0 \]
     in $\cE$ where $Q_1 \in \add_\cE P$. Applying $\cE(X, -)$ to this sequence induces the exact sequence
     \[
     \begin{tikzcd} 
     0 \arrow[r] & \cE(X, \ker (p_1, \beta_1)) \arrow[r, "{\cE(X, (p_1, \beta_1) )}"]  &[2em] \cE(X, X_i) \arrow[r] & \Coker \cE(X, (p_1, \beta_1) ) \arrow[r] & 0. 
     \end{tikzcd}
     \]
     We claim that there is an isomorphism $e_i \Lambda_\con \cong \Coker \cE(X, (p_1, \beta_1) )$,which holds if and only if $\Img \cE(X, (p_1, \beta)) = e_i [\proj \cE]$. Let $g = \begin{pmatrix} g_1 \\ g_2 \end{pmatrix} \in \cE(X, Q_1 \oplus U_1)$.
     
    First, observe that by exactness of \eqref{splitting 0}, $\Img  \cE(X, \beta_1)  = \ker q_0$ and by the proof of \ref{ss admits partial cover}, $\ker q_0 = e_i [\proj \cE]$. Consequently, $\beta_1 g_1 \colon X \to X_i$ factors through $\proj \cE$. Since $Q_1 \in \proj \cE$, it is clear that $p_1 g_1$ also factors through $\proj \cE$. Hence, $(p_1, \beta_1) \cdot g = p_1 g_1 + \beta_1 g_1\in e_i [\proj \cE]$. That is, $\Img \cE(X, (p_1, \beta_1) ) \subseteq e_i [\proj \cE]$. 

    Next, suppose that $f \colon X \to X_i$ factors through $Q \in \proj \cE$, say via
    \[
    \begin{tikzcd}[row sep = small, column sep = small]
        X \arrow[rr, "f"] \arrow[dr, "a"] & {} & X_i \\
        {} & Q \arrow[ur, "b"] & {}
    \end{tikzcd}
    \]
    Then, since $(p_1, \beta_1) \colon Q_1 \oplus U_1 \to X_i$ is an epi, there is a map $c \colon Q \to  Q_1 \oplus U_1$ such that $b = (p_1, \beta_1) \cdot c$. Thus, $f = (p_1, \beta_1) \cdot c \cdot a$. Hence, $f \in \Img  \cE(X, (p_1, \beta_1))$, and so $\Img \cE(X, (p_1, \beta_1) ) = e_i [\proj \cE]$. Consequently, $e_i \Lambda_\con = \Coker \cE(X, (p_1, \beta_1) )$, as desired. 
    
    Let $\alpha_1 = (p_1, \beta_1)$, $q_1 = \cE(X, \alpha_1)$ and $W_1 = Q_1 \oplus U_1$. Then, there is an exact sequence 
    \begin{equation} \label{splitting 0 new}
        \begin{tikzcd}
            0 \arrow[r] &  \cE(X, \ker \alpha_1) \arrow[r, "i_1"] &  \cE(X, Q_1) \oplus \cE(X, U_1) \arrow[d, phantom,""{coordinate,name=Z}] \arrow[r, "{\cE(X, \alpha_1)}"] & \cE(X, X_i) \arrow[dl, rounded corners, to path={ -- ([xshift=2ex]\tikztostart.east) |- (Z) [near end]\tikztonodes -| ([xshift=-2ex]\tikztotarget.west) -- (\tikztotarget)}] \\
            {} & {} & e_i \Lambda_\con \arrow[r] & 0
        \end{tikzcd}
    \end{equation}
    where $\alpha_1 \colon U_1 \to X_i$ is an admissible epimorphism. This proves (1)-(3). Furthermore, to prove the partially minimal property, consider a map of $\Lambda$-modules $f \colon \cE(X, X_i) \to \cS$. Then, 
    \[ f \cdot \cE(X, \alpha_1) = f \cdot (\cE(X, p_1), \cE(X, \beta_1)) = (f \cdot \cE(X, p_1), f \cdot \cE(X, \beta_1)). \]
    Since $\Hom_\Lambda(f_1, \cS) = 0$ by choice of $f_1 = \cE(X, \beta_1)$, necessarily $f \cdot \cE(X, \beta_1) = 0$. In addition, $f \cdot \cE(X, p_1) \in \Hom_\Lambda(\cE(X, Q_1), \cS)$ where $\cE(X, Q_1) \in \add e_0 \Lambda$. Therefore, by \ref{projective cover of cS}(2), $f \cdot \cE(X, p_1) = 0$. Consequently, $\Hom_\Lambda((\cE(X, \alpha_1), \cS) = 0$.

    \item Consider the case $j =2$. Observe that the proof of \ref{existence_min_res} implies that there is an exact sequence
    \[ 0 \to K_2 \to \cE(X, U_2) \tow{f_2} \cE(X, \ker \alpha_1) \to 0. \]
    where $\Hom_\Lambda(i_1 \cdot f_2, \cS) = 0$. 
    
    Additive equivalence applied to $X \in \cE$ implies that $f_2 = \cE(X, \beta_2)$ for some morphism $\beta_2 \colon U_2 \to \ker \alpha_1$. By \ref{dugas lemma}, there is an exact sequence
    \[ 0 \to \ker (p_2, \beta_2) \tow{i_2} Q_2 \oplus U_2 \tow{(p_2, \beta_2)} \ker \alpha_1 \to 0. \]
    with $Q_2 \in \add_\cE P$. Applying $\cE(X, -)$ induces the exact sequence
    \[ 0 \to \cE(X, \ker (p_2, \beta_2)) \to \cE(X, Q_2) \oplus \cE(X, U_2) \tow{(\cE(X, p_2), f_2)} \cE(X, \ker \alpha_1) \]
    Moreover, the last map must be an epi because component $f_2$ is an epi. Hence, by letting $W_2 = Q_2 \oplus U_2$, $\alpha_2 = (p_2, \beta_2)$ and $q_2 = \cE(X, \alpha_2)$,  there is an exact sequence
    \begin{align*}
        0 \to \cE(X, \ker \alpha_2) \tow{i_2} \cE(X, Q_2) \oplus \cE(X, U_2) \tow{q_2} \cE(X, \ker \alpha_1) \to 0
    \end{align*}
    which satisfies (4)-(6). Additionally, given $h \colon \cE(X, W_1) \to \cS$, then 
    \[ h \cdot i_1 \cdot q_2 = (h \cdot i_1 \cdot \cE(X, p_2), h \cdot i_1 \cdot \cE(X, \beta_2)). \] 
    Since $Q_2 \in \add P$, the map $h \cdot i_1 \cdot \cE(X, p_2) \colon \cE(X, Q_2) \to \cS$ vanishes by \ref{projective cover of cS}(2). Moreover, by choice of $f_2 = \cE(X, \beta_2)$, $\Hom_\Lambda(i_1 \cdot f_2, \cS) = 0$, and so $h \cdot i_1 \cdot f_2 = 0$. We may conclude that $\Hom_\Lambda(i_1 \cdot q_2, \cS) =0$, proving the partially minimal property.
    
    \item The case for general $j \geq 2$ is similar and yields exact sequences
     \begin{align}
        0 \to \cE(X, \ker \alpha_j) \to \cE(X, Q_j) \oplus \cE(X, U_j) \tow{q_j} \cE(X, \ker \alpha_j) \to 0 \label{splitting j}
    \end{align}
    which satisfy (4)-(6) and, moreover, $\Hom_\Lambda(i_{j-1} \cdot q_j, \cS) =0$.

\medskip

By splicing \eqref{splitting 0 new} and \eqref{splitting j}, we obtain the required partially minimal projective resolution. \qedhere
    \end{enumerate}
\end{proof}

\begin{setup} \label{spherical setup}
    Under \cref{X setup}, assume further that $\Lambda_\con$ is $t$-relatively spherical and is perfect over $\Lambda$.
\end{setup}

\begin{prop} \label{min res gen}
    Under \cref{spherical setup}, fix  $i \in I$. Then, $e_i \Lambda_\con$ admits a partially minimal projective resolution
    \begin{equation} \label{pres ei acon t}
     \begin{tikzcd}[column sep=0.8em]
     0 \arrow[r] &[0.8em] \cE(X, Q_{t}) \oplus \cE(X, \Sigma^{-t+1} X_i) \arrow[r] &[0.8em] \cE(X, Q_{t-1})  \arrow[d, phantom, ""{coordinate, name=Z}] \arrow[r] & \hdots \arrow[r] & \cE(X, Q_{1}) \arrow[dll, rounded corners, to path={ -- ([xshift=2ex]\tikztostart.east) |- (Z) [near end]\tikztonodes -| ([xshift=-2ex]\tikztotarget.west) -- (\tikztotarget)}] \\ 
     {} & {} & \cE(X, X_i) \arrow[r] &  e_i \Lambda_\con \arrow[r] & 0 
     \end{tikzcd}
\end{equation}
where 
    \begin{enumerate}
    \item Each $Q_j \in \add P$,
    \item There exists a permutation $\tau$ of the set $I$ such that $X_{\tau(i)} \cong \Sigma^{-t+1} X_i $ in $\cD$.
    \end{enumerate}
\end{prop}
\begin{proof}
    If $X$ is projective, then $\Lambda_\con$ vanishes and so it is clear \eqref{pres ei acon t} can be constructed. So, suppose that $X$ is not projective. 
    
    By \ref{existence_min_res}, $\Lambda_\con$ admits a partially minimal projective resolution
     \begin{align} \label{p res W}
         \hdots \tow{q_2} \cE(X, W_1) \tow{q_1} \cE(X, X_i) \to e_i \Lambda_\con \to 0
    \end{align}
    and, since  under \cref{spherical setup} $\Lambda_\con$ is perfect as a $\Lambda$-module, we may assume that \eqref{p res W} has finite length $s$. We will break this proof into several steps, where each step further refines \eqref{p res W}.  

    \begin{enumerate}[label=(\alph*)]
        \item We will first show that $W_j \in \add P$ for $0 < j \leq s$ with $j \neq t$. 
        
        Recall that by the definition of partially minimal projective resolution, applying the functor $\Hom_\Lambda(-, \cS)$ to \eqref{p res W} yields 
        \begin{align*}
            \Hom_\Lambda( \cE(X, W_j), \cS) \cong \Ext^j_\Lambda(\Lambda_\con, \cS)
    \end{align*} 
    which is zero since $j \neq 0, t$ and $\Lambda_\con$ is relatively spherical. Thus, by \ref{projective cover of cS} (3), the module $\cE(X, W_j) \in \add e_0 \Lambda$. By additive equivalence \ref{useful lemma} \eqref{yoneda}, $W_j \in \add P$.  \label{item a}
    
    \item \label{item b} The next step is to show that we may assume $s=t$. This requires some technical results. Observe that by \ref{item a} and \ref{res lemma}, \eqref{p res W} can be split into sequences 
    \begin{align}
        & 0  \to \cE(X, \ker \alpha_1) \tow{i_1} \cE(X, W_1) \tow{\cE(X, \alpha_1)} \cE(X, X_i) \tow{q_0} e_i \Lambda_\con \to 0 \label{splitting 0 2}\\
        & 0 \to \cE(X, \ker \alpha_j) \tow{i_j} \cE(X, W_j) \tow{\cE(X, \alpha_j)} \cE(X, \ker \alpha_{j-1}) \to 0. \label{splitting n 2} \\
        & 0 \to \cE(X, W_s) \tow{i_j} \cE(X, W_{s-1} ) \tow{\cE(X, \alpha_{s-1})} \cE(X, \ker \alpha_{s-2}) \to 0. \label{splitting s 2}
    \end{align}
    for $1 < j < s$. Here, $\alpha_1 \colon W_1 \to X_i$ and $\alpha_j \colon W_k \to \ker \alpha_{j-1}$ are admissible epimorphisms in $\cE$ such that there are exact sequences 
    \begin{align}
        & 0 \to \ker \alpha_1 \to W_1 \to X_i \to 0 \label{ker alpha 1} \\
        & 0 \to \ker \alpha_{j} \to W_j \to \ker \alpha_{j-1} \to 0. \label{ker alpha j}
    \end{align}
    in $\cE$. 
    
    We will specify $\ker \alpha_j$ for $1 \leq j \leq s$.
    \begin{enumerate}[label*=(\roman*)]
        \item When $1 \leq j < t$, then $W_j \in \add P$ by \ref{item a}. Hence, \eqref{ker alpha 1} implies that there is a projective $P_1$ with $\ker \alpha_1 \cong P_1 \oplus \Sigma^{-1} X$. Thus, inductively, there is an isomorphism $\ker \alpha_j \cong P_j \oplus \Sigma^{-j} X$ with $P_j \in \add P$. \label{b i}
        \item For $j > t$, it follows from \ref{item a} that $W_j \in \add P$, so that the exact sequence \eqref{ker alpha j} implies that $\ker \alpha_j = P_j \oplus \Sigma^{-j+t} \ker \alpha_t$. \label{b ii}
    \end{enumerate}
    
    \item \label{item c} We now show that we may assume that $s = t$. If $s > t$, then, by \eqref{splitting s 2} and \ref{b ii},
    \[ \cE(X, W_s) \cong \cE(X, \ker \alpha_{s-1}) = \cE(X, P_{s-1} \oplus \Sigma^{-s+1+t} \ker \alpha_t). \]
    Since $\cE(X, W_s)$ is projective $\Lambda$-module, so is $\cE(X, P_{s-1} \oplus \Sigma^{-s+1+t} \ker \alpha_t)$. It follows that $W_s$, $P_{s-1} \oplus \Sigma^{-s+1+t} \ker \alpha_t \in \add X$. Hence, additive equivalence \ref{useful lemma} \eqref{yoneda} implies that $W_s \cong P_{s-1} \oplus \Sigma^{-s+1+t} \ker \alpha_t$ in $\cE$. 
    
    Since $W_s \in \add P$ by \ref{item a}, it must be that $\Sigma^{-s+1+t} \ker \alpha_t = 0$ in $\cD$. Because $\Sigma$ is an automorphism of $\cD$, this implies that $\ker \alpha_t = 0$ in $\cD$. Thus, the exact sequence \eqref{ker alpha j} for $j = t$ induces an isomorphism $W_t \cong \ker \alpha_{t-1}$ in $\cD$. 
    
    Lifting this to $\cE$ using \ref{lift of stable iso}, $Q \oplus W_t \cong Q' \oplus \ker \alpha_{t-1}$ in $\cE$ with  $Q, Q' \in \add P$. Since $Q \oplus W_t \in \add_\cE X$, it follows that $\ker \alpha_{t-1} \in \add_\cE X$. Therefore, $K_{t-1} = \cE(X, \ker \alpha_{t-1})$ is projective. Whence, we may truncate the projective resolution at $K_{t-1}$ and, thus, may assume $s = t$.

    \item  \label{item d} We claim that we may assume $W_t = P_t \oplus \Sigma^{-t+1} X_i$ in $\cE$ for $P_t \in \add P$. To see this, note that it is a direct a consequence of \ref{item c} that we may take $W_t = \ker \alpha_{t-1}$ in $\cE$. Moreover, \ref{b i} implies that $\ker \alpha_{t-1} \cong P_t \oplus \Sigma^{-t+1} X_i$ in $\cE$ for $P_t \in \add P$. Hence, the claim follows.

    \item \label{item e} Since $W_t \in \add_\cD X$ by construction, $\Sigma^{-t+1} X_i \in \add_\cD X$. Moreover, $\Sigma^{-t+1} X_i$ must be indecomposable in $\cD$ because $X_i$ is indecomposable and $\Sigma$ is an automorphism. Thus, $\Sigma^{-t+1} X_i \cong X_j$ for some $j \in I$.
    \end{enumerate}

    By combining \ref{item a}, \ref{item c}, \ref{item d} and \ref{item e}, we may write \eqref{p res U} as 
    \begin{align} \label{p res U i}
    0 \to \cE(X, Q_t) \oplus \cE(X, \Sigma^{-t+1} X_i) \to \hdots \to \cE(X, Q_{1}) \to \cE(X, X_i) \to e_i \Lambda_\con \to 0
    \end{align}
    where $Q_t \in \add P$. Thus, (1) holds.
    
    To prove (2), note that \ref{item e} implies that $\Sigma^{-t+1} X_i \cong X_j$ in $\cD$ for some $j \in I$. Since $\Sigma$ is an automorphism, the association $\tau \colon i \mapsto j$ is a permutation.
\end{proof}

If we place additional assumptions on $\Lambda_\con$, we obtain finer control over the resolution. The assumptions that we place on $\Lambda_\con$ will imply that it is self-injective and, as a result, the permutation that we get in \ref{min res gen} is the Nakayama permutation of $\Lambda_\con$ (see \cref{app: self inj} for a brief overview of self-injective algebras and the Nakayama permutation).

\begin{setup} \label{spherical tensor setup}
    Under \cref{spherical setup}, assume $t = d \colonequals \dim \cR$ with $d \geq 2$. Suppose further that $\dim_k \Lambda_\con < \infty$, and that there is a $\Lambda$-bimodule isomorphism $\Lambda_\con \Dtensor_{\Lambda} \omega_\Lambda \cong \Lambda_\con$ .
\end{setup}

\begin{cor} \label{min res cor}
    Under \cref{spherical tensor setup}, the following hold.
    
    \begin{enumerate}
        \item $\Lambda_\con$ is self-injective (for a brief overview of self-injective algebras, see \cref{app: self inj}),
        \item for $i \in I$, $e_i \Lambda_\con$ admits the following partially minimal projective resolution
  \[
     \begin{tikzcd}[column sep=0.8em]
     0 \arrow[r] &[0.8em] \cE(X, Q_{d}) \oplus \cE(X, X_{\sigma(i)}) \arrow[r] &[0.8em] \cE(X, Q_{d-1})  \arrow[d, phantom, ""{coordinate, name=Z}] \arrow[r] & \hdots \arrow[r] & \cE(X, Q_{1}) \arrow[dll, rounded corners, to path={ -- ([xshift=2ex]\tikztostart.east) |- (Z) [near end]\tikztonodes -| ([xshift=-2ex]\tikztotarget.west) -- (\tikztotarget)}] \\ 
     {} & {} & \cE(X, X_i) \arrow[r] &  e_i \Lambda_\con \arrow[r] & 0 
     \end{tikzcd}
\]
 where $Q_j \in \add P$ for all $1 \leq j \leq d$ and $\sigma$ is the Nakayama permutation of $\Lambda_\con$.
    \end{enumerate}
    
\end{cor}
\begin{proof}
    Since $\Lambda_\con$ is finite-dimensional, it is self-injective if and only if $\Ext^1_{\Lambda_\con}(\cS, \Lambda_\con) = 0$. This latter condition holds by the following chain of vector space isomorphisms
    \begin{align*}
         \Ext^1_{\Lambda_\con}(\cS, \Lambda_\con)  & \cong \Ext^1_{\Lambda}(\cS, \Lambda_\con) \tag{by \ref{F is faithful on ext1}} \\
         & \cong \Ext^1_{\Lambda}(\cS, \Lambda_\con \Dtensor_\Lambda \omega_\Lambda) \tag{by assumptions in \ref{spherical tensor setup}} \\
         & \cong D \Ext^{d-1}_\Lambda(\Lambda_\con, \cS) \tag{by Serre duality \eqref{serre duality}} \\
         & = 0 \tag{$\Lambda_\con$ is $d$-relatively spherical with $d \geq 2$.}
    \end{align*}
    Thus, (1) holds. Let $\sigma$ be the Nakayama permutatiom of $\Lambda_\con$.

    To prove (2), by \ref{min res gen} and since $t=d$, $e_i \Lambda_\con$ admits the partially minimal projective resolution
     \begin{equation} \label{pres d}
     \begin{tikzcd}[column sep=0.7em]
     0 \arrow[r] &[0.8em] \cE(X, Q_{d}) \oplus \cE(X, \Sigma^{-d+1} X_i) \arrow[r] &[0.8em] \cE(X, Q_{d-1})  \arrow[d, phantom, ""{coordinate, name=Z}] \arrow[r] & \hdots \arrow[r] & \cE(X, Q_{1}) \arrow[dll, rounded corners, to path={ -- ([xshift=2ex]\tikztostart.east) |- (Z) [near end]\tikztonodes -| ([xshift=-2ex]\tikztotarget.west) -- (\tikztotarget)}] \\ 
     {} & {} & \cE(X, X_i) \arrow[r] &  e_i \Lambda_\con \arrow[r] & 0 
     \end{tikzcd}       
     \end{equation}
    where $W_k \in \add P$ for all $1 \leq k \leq d$ and $\Sigma^{-d+1} X_i \cong X_j$ in $\cD$ for some $j \in I$. Hence, it suffices to prove that $j = \sigma(i)$.
    
     As in \ref{simples not}, let $S_j$ be the simple $\Lambda_\con$-module which is a quotient of $e_j \Lambda_\con$. By applying $\Hom_\Lambda(-, S_j)$ to the projective resolution \eqref{pres d}, we have that
    \begin{align*}
         \Hom_\Lambda( \cE(X, Q_d) \oplus \cE(X, \Sigma^{-d+1} X_i), S_j) & \cong \Ext^d_\Lambda(e_i \Lambda_\con, S_j) \tag{by partial minimality} \\
         & \cong D \Hom_{\Lambda}(S_j, e_i \Lambda_\con \Dtensor \omega_\Lambda) \tag{by Serre duality \eqref{serre duality}} \\
         & \cong D \Hom_{\Lambda}(S_j, e_i \Lambda_\con) \tag{by assumptions in \ref{spherical tensor setup}} \\
         & \cong \delta_{j \sigma(i)} S_j \tag{by \ref{Nakayama perm}}
    \end{align*}
    Consequently, 
    \begin{align*}
        &  \Hom_\Lambda( \cE(X, Q_d) \oplus \cE(X, \Sigma^{-d+1} X_i), S_{\sigma(i)}) =  \Hom_\Lambda( \cE(X, \Sigma^{-d+1} X_i), S_{\sigma(i)}) \neq 0
    \end{align*}
    Therefore, by \ref{projective cover of cS} (1), $\Sigma^{-d+1} X_i$ cannot be isomorphic to $X_k$ for $k \neq \sigma(i)$. Since $\Sigma^{-d+1} X_i  \cong X_j$ for some $j$, it must be that $\Sigma^{-d+1} X_i \cong X_{\sigma(i)}$ in $\cD$.
\end{proof}

\subsection{Periodic suspension} \label{periodic suspension section}

With the partially minimal projective resolution of $\Lambda_\con$ constructed, we are able to specify precisely when the action of the suspension functor is periodic up to additive closure.

\begin{definition}
    Let $Y \in \cD$ and $k \in \Z$. 
    \begin{enumerate}
        \item If $k >0$, we say that $Y$ is \textit{$k$-rigid} if  $\Ext^{i}_\cD(Y, Y) = 0$ for all $1 \leq i \leq k$.
        \item If $k < 0$, we say that $Y$ is \textit{$k$-rigid} if $\Ext^{i}_\cD(Y, Y) = 0$ for all $k \leq i \leq -1$. 
    \end{enumerate}
\end{definition}

\begin{theorem} \label{syz theorem}
    Under \cref{X setup}, fix $t \in \Z$ with $2 \leq t \leq d$. Then, the following are equivalent.
    \begin{enumerate}
        \item  $\Lambda_\con$ is perfect over $\Lambda$, and is $t$-relatively spherical in the sense of \ref{def rel sph}. \label{spherical}
        \item   $X$ is $(-t+2)$-rigid in $\cD$ and there exists a permutation $\tau$ of $I$ such that, for all $i \in I$, $\Sigma^{-t+1} X_i \cong X_{\tau(i)}$ in $\cD$. \label{finite order}
    \end{enumerate}
\end{theorem}
\begin{proof}
    We begin by proving \eqref{spherical} $\Rightarrow$ \eqref{finite order}. If $\Lambda_\con$ is $t$-relatively spherical and perfect over $\Lambda$, then it follows from \ref{min res gen}(2) that there exists a permutation $\tau$ of $I$ such that $X_{\tau(i)} \cong \Sigma^{-t+1} X_i$ for all $i \in I$.
    
    To see that $X$ is $(-t+2)$-rigid, fix $i \in I$ and note that via \eqref{ker alpha j} and \ref{b i} in the proof of \ref{min res gen}, there are exact sequences
    \begin{align}
       0 \to P_{j} \oplus \Sigma^{-j} X_i \to W_j \tow{\alpha_j} P_{j-1} \oplus \Sigma^{-j+1} X_i \to 0
    \end{align}
    for $1 < j < t$. Applying $\cE(X, -)$ to these induces following diagram where the first column is exact. 
        \[ \begin{tikzcd}
        0 \arrow[d] &[-3em] {} &[2em] {} &[-3em] 0 \arrow[d] \\
        \cE(X, P_{j} \oplus \Sigma^{-j} X_i) \arrow[d, "i_j"] & {} \arrow[r,equal] & {} & \cE(X, P_{j} \oplus \Sigma^{-j} X_i) \arrow[d, "i_j"] \\
        \cE(X, W_j) \arrow[d, "q_j"] & {} \arrow[r,equal] & {} & \cE(X, W_j) \arrow[d, "q_j"] \\
        \cE(X, P_{j-1} \oplus \Sigma^{-j+1} X_i) \arrow[d] & {} \arrow[r,equal] & {} &  \cE(X, P_{j-1} \oplus \Sigma^{-j+1} X_i) \arrow[d] \\
        \cE^1(X, P_{j} \oplus \Sigma^{-j} X_i) \arrow[d] & {} & {} & 0 \\
        \cE^1(X, W_j) & {} & {} &  {}
    \end{tikzcd}
    \]
    The second column is exact by \eqref{splitting n 2} for $1 < j < t$. Since $W_j \in \add P$ and is, therefore,  injective, $\Ext^1_\cE(X, W_j) = 0$. It thus follows that $\Ext^1_\cE(X, P_j \oplus \Sigma^{-j} X_i) = 0$ for $1 < j < t$. By \ref{frob-stable ext corresp}, if $\Ext^1$ in $\cE$ vanishes then so does $\Ext^1$ in $\cD$. Thus, 
    \[ 0 = \Ext^1_\cD(X, P_j \oplus \Sigma^{-j} X_i) = \Ext^1_\cD(X, \Sigma^{-j} X_i) \cong \Ext^{-j+1}_\cD(X, X_i) \]
    and so $\Ext^k_\cD(X, X_i) = 0$ for $-t+2 \leq j \leq -1$. Since this is true for all $i \in I$, we may conclude that $X$ is $(-t+2)$-rigid.

    For the implication \eqref{finite order} $\Rightarrow$ \eqref{spherical}, fix $i \in I$. Since $\cE$ has enough projectives, there are exact sequences
    \begin{align} \label{conflations for X_i}
        & 0 \to \Sigma^{-j-1} X_i \tow{\iota_j} Q_j \tow{p_j} \Sigma^{-j}  X_i \to 0
    \end{align}
     for $0 \leq j \leq t-2$ with $Q_j \in \add P$. Applying $\cE(X, -)$ to \eqref{conflations for X_i} yields exact sequences
     \begin{equation} \label{spliced p-res}
        \begin{tikzcd}
            0 \arrow[r] &  \cE(X, \Sigma^{-j-1} X_i) \arrow[r, "{\cE(X, \iota_j)}"] &   \cE(X, Q_j) \arrow[d, phantom,""{coordinate,name=Z}] \arrow[r, "{\cE(X, p_j)}"] & \cE(X, \Sigma^{-j} X_i) \arrow[dl, rounded corners, to path={ -- ([xshift=2ex]\tikztostart.east) |- (Z) [near end]\tikztonodes -| ([xshift=-2ex]\tikztotarget.west) -- (\tikztotarget)}] \\
            {} & {} &  \cE^1(X, \Sigma^{-j-1} X_i)  \arrow[r] & 0
        \end{tikzcd}
    \end{equation}
    because $Q_j$ is projective/injective and so $\Ext^1_\cE(X, Q_j) = 0$. By assumption, $X$ is $(-t+2)$-rigid so that 
    \[ 0 = \Ext^k_\cD(X, X_i) = \Ext^1_\cD(X, \Sigma^{k-1} X_i) \]
    for $-t+2 \leq k \leq -1$. Whence, again by \ref{frob-stable ext corresp}, $\cE(X, p_k)$ is an epi for all such $k$. Therefore, \eqref{spliced p-res} induces exact sequences
      \begin{align} 
        & 0 \to \cE(X, \Sigma^{-j-1} X_i) \tow{\cE(X, \iota_j)} \cE(X, Q_j) \tow{\cE(X, p_j)} \cE(X, \Sigma^{-j} X_i) \to 0 \label{spliced p-res j} \\
        & 0 \to \cE(X, \Sigma^{-1} X_i) \tow{\cE(X, \iota_0)} \cE(X, Q_0) \tow{\cE(X, p_0)} \cE(X,  X_i) \to C \to 0 \label{spliced p-res 0}
    \end{align}  
    for $1 \leq j \leq t-2$. Here, $C$ denotes the cokernel of $\cE(X, p_0)$. 
    
    We claim that $C \cong e_i \Lambda_\con$ as $\Lambda$-modules. It is clear that $\Img \cE(X, p_0) \subseteq e_i [\proj \cE]$. The inclusion $e_i [\proj \cE] \subseteq \Img \cE(X, p_0)$ follows from the fact that $p_0$ is a deflation so that any morphism from a projective to $X_i$ must factor through $p_0$. Hence, 
    \begin{align} \label{c is Acon}
        C \cong\cE(X, X_i)/ e_i [\proj \cE] \cong e_i \Lambda_\con.
    \end{align}  
    Thus, by splicing \eqref{spliced p-res j} for $q \leq j \leq t-2$ and \eqref{spliced p-res 0}, we obtain the exact sequence,
    \begin{equation} \label{pres ei Lambdacon}
        \begin{tikzcd}
            0 \arrow[r] &  \cE(X, \Sigma^{-t+1} X_i) \arrow[r] & \cE(X, Q_{t-2}) \arrow[d, phantom,""{coordinate,name=Z}] \arrow[r] & \hdots \arrow[r] & \cE(X, Q_0)  \arrow[dll, rounded corners, to path={ -- ([xshift=2ex]\tikztostart.east) |- (Z) [near end]\tikztonodes -| ([xshift=-2ex]\tikztotarget.west) -- (\tikztotarget)}] \\
            {} & {} &  \cE(X,  X_i) \arrow[r] & e_i \Lambda_\con  \arrow[r] & 0
        \end{tikzcd}
    \end{equation}
    where $Q_j \in \add P$ for $0 \leq j \leq t-2$. By assumption, $\Sigma^{-t+1} X_i \cong X_{\tau(i)}$ so that $\Sigma^{-t+1} X_i \in \add X$. It follows that \eqref{pres ei Lambdacon} is a projective resolution of $e_i \Lambda_\con$. Hence, $e_i \Lambda_\con$ is perfect over $\Lambda$. Since this is true for all $i \in I$, $\Lambda_\con$ is perfect over $\Lambda$.

    Due to \ref{projective cover of cS}(2), applying $\Hom_\Lambda(-, \cS)$ to \eqref{pres ei Lambdacon}, yields that $\Ext^k_\Lambda(e_i \Lambda_\con, \cS)$ vanishes for all $k \neq 0, t$. For $k = t$, 
   \begin{align*}
        \Ext^t_\Lambda(e_i \Lambda_\con, S_j) & \cong \Hom_\Lambda(\cE(X, X_{\tau(i)}), S_j) \\
        & \cong \Hom_\Lambda( e_{\tau(i)} \Lambda, S_j) \\
        & \cong \delta_{\tau(i)j} S_{j} \tag{by \ref{projective cover of cS}(1).} 
   \end{align*}
   Thus, we may conclude that (1) holds.
\end{proof}

\begin{cor} \label{syz cor}
    Assume \cref{spherical tensor setup} with $d \geq 3$. Then, the following are equivalent
    \begin{enumerate}
        \item  $\Lambda_\con$ is perfect over $\Lambda$ and $d$-relatively spherical,  \label{spherical dimension}
        \item   $X$ is $(-d+2)$-rigid in $\cD$ and there exists a permutation $\tau$ of $I$ such that, for all $i \in I$, $\Sigma^{-d+1} X_i \cong X_{\tau(i)}$ in $\cD$. \label{finite order dimension}
    \end{enumerate}
      If these conditions are satisfied, then $\Lambda_\con$ is self-injective and $\tau = \sigma$, the Nakayama permutation of $\Lambda_\con$.
\end{cor}
\begin{proof}
    The equivalence \eqref{spherical dimension} $\Leftrightarrow$ \eqref{finite order dimension} is the $t=d$ case of \ref{syz theorem}. The fact $\Lambda_\con$ is self-injective follows from \ref{min res cor}(1). The fact that $\tau = \sigma$ follows by combining the $\Ext$ calculations in the proof of \ref{min res cor}(2) and the $\Ext$ calculations at the end of the proof of \ref{syz theorem}. 
\end{proof}

\begin{cor} \label{perfect on both sides} 
    Under \cref{X setup}, suppose that either of the equivalent assumptions in \ref{syz theorem} hold. Then, $\Lambda_\con$ is perfect as both a right and a left $\Lambda$-module, and $X$ is $(t-2)$-rigid.
\end{cor}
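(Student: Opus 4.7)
My plan has two independent parts, one for each claim in the corollary.

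\emph{Part 1: showing that $X$ is $(t-2)$-rigid.} This will be a formal consequence of the $(-t+2)$-rigidity of $X$ combined with the periodicity $\Sigma^{-t+1} X_i \cong X_{\tau(i)}$ from \ref{syz theorem}(2). For $1 \leq k \leq t-2$ and $i, j \in I$, applying the auto-equivalence $\Sigma^{-t+1}$ to both arguments gives
\[ \Ext^k_\cD(X_i, X_j) = \Hom_\cD(X_i, \Sigma^k X_j) \cong \Hom_\cD(X_{\tau(i)}, \Sigma^{k-t+1} X_{\tau(j)}) = \Ext^{k-t+1}_\cD(X_{\tau(i)}, X_{\tau(j)}). \]
As $k$ ranges over $\{1, \ldots, t-2\}$, the exponent $k-t+1$ ranges over $\{-t+2, \ldots, -1\}$, so the right-hand side vanishes by the $(-t+2)$-rigidity applied to indecomposable summands (the hypothesis $\Ext^m_\cD(X,X)=0$ for $-t+2 \leq m \leq -1$ forces $\Ext^m_\cD(X_a, X_b)=0$ for all $a, b \in I$ in that range, since $X \cong \bigoplus X_i^{a_i}$ in $\cD$). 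Summing with multiplicities gives $\Ext^k_\cD(X, X) = 0$ for $1 \leq k \leq t-2$, i.e.\ $X$ is $(t-2)$-rigid.

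\emph{Part 2: showing that $\Lambda_\con$ is perfect as a left $\Lambda$-module.} My plan is to apply the implication \ref{syz theorem}(2)$\Rightarrow$(1) to the opposite Frobenius exact category $\cE^\op$, still with the object $X$. I would first check that setups \ref{frob set up} and \ref{X setup} transfer to $\cE^\op$: the opposite of a Frobenius exact category is Frobenius (with the same projective--injective objects since $\proj \cE = \inj \cE$), so $P$ remains an additive progenerator; $\cD^\op$ is Krull--Schmidt whenever $\cD$ is; and the decomposition $X = P \oplus \bigoplus X_i^{a_i}$ with the $X_i$ indecomposable is unchanged. Using the standard identification $\Sigma_{\cD^\op} = \Sigma_\cD^{-1}$, a short computation yields
\[ \Ext^k_{\cD^\op}(X, X) = \Hom_{\cD^\op}(X, \Sigma_\cD^{-k} X) = \Hom_\cD(\Sigma_\cD^{-k} X, X) \cong \Hom_\cD(X, \Sigma_\cD^k X) = \Ext^k_\cD(X, X), \]
so the $(-t+2)$-rigidity of $X$ transfers verbatim to $\cD^\op$. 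Likewise $\Sigma_{\cD^\op}^{-t+1} X_i = \Sigma_\cD^{t-1} X_i \cong X_{\tau^{-1}(i)}$, so the periodicity condition holds in $\cD^\op$ with permutation $\tau^{-1}$. Hence \ref{syz theorem}(2) holds in $\cE^\op$, and the implication (2)$\Rightarrow$(1) gives that $\End_{\cE^\op}(X)_\con = \Lambda_\con^\op$ is perfect over $\Lambda^\op$, which is precisely the statement that $\Lambda_\con$ is perfect as a left $\Lambda$-module.

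The main obstacle is the careful bookkeeping involved in the opposite-category transfer: verifying that every hypothesis of \ref{frob set up}, \ref{X setup}, and \ref{syz theorem}(2) remains valid for $X \in \cE^\op$, and keeping the inverted suspension straight when translating the rigidity and periodicity conditions. Once these identifications are in place, both claims reduce to formal shift arguments and a single application of \ref{syz theorem} in the opposite category.
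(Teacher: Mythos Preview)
Your Part~1 has the right idea and the right conclusion, but the phrase ``applying the auto-equivalence $\Sigma^{-t+1}$ to both arguments'' does not produce the displayed formula: applying an autoequivalence to both slots of a $\Hom$ leaves the shift unchanged, yielding only the tautology $\Hom_\cD(X_{\tau(i)}, \Sigma^k X_{\tau(j)})$. What actually shifts the degree is substituting the periodicity $X_i \cong \Sigma^{t-1} X_{\tau(i)}$ into the \emph{first} argument only and then pulling the shift across, giving $\Hom_\cD(X_{\tau(i)}, \Sigma^{k-t+1} X_j)$ (note $X_j$, not $X_{\tau(j)}$, though this does not affect the vanishing). This is precisely how the paper argues, so once the slip is fixed your Part~1 coincides with the paper's proof.

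Your Part~2 is correct and takes a genuinely different route from the paper. The paper proves left-perfectness by hand: it applies $\cE(-,X)$ to the conflations $0 \to \Sigma^{j-1}X_i \to W_j \to \Sigma^j X_i \to 0$ for $1 \le j \le t-1$, uses $(-t+2)$-rigidity to obtain short exact sequences of left $\Lambda$-modules, and splices these into a length-$t$ projective resolution of each $\Lambda_\con e_i$. Your opposite-category argument is cleaner and more conceptual: the hypotheses of \ref{frob set up}, \ref{X setup}, and \ref{syz theorem}(2) are all self-dual (using $\Sigma_{\cD^\op}=\Sigma_\cD^{-1}$ and that $\cR$ is commutative), so a single invocation of \ref{syz theorem}(2)$\Rightarrow$(1) in $\cE^\op$ yields perfectness of $\Lambda_\con^\op$ over $\Lambda^\op$. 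The paper's direct approach has the advantage of producing an explicit left resolution, which feeds into the tilting computations of \S\ref{autoequivalence section} (for instance \ref{decreasing tilting one}); your approach trades that concreteness for brevity.
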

\begin{proof}
    In order to construct a projective resolution of $\Lambda_\con$ as a left $\Lambda$-module, recall that for $j \in \Z$ we have exact sequences in $\cE$,
    \[ 0 \to \Sigma^{j-1} X_i \tow{\iota_j} W_j \tow{p_j} \Sigma^{j} X_i \to 0 \]
    with $W_j \in \proj \cE$. We may apply $\cE(-, X)$ to these exact sequences so that, since $X$ is $(-t+2)$-rigid in $\cD$, the sequences
    \begin{align}
        &  0 \to \cE(\Sigma^{j} X_i, X) \to \cE(W_j, X) \to \cE(\Sigma^{j-1} X_i, X) \to 0 \label{s1 p} \\
        & 0 \to \cE(\Sigma X_i, X) \tow{\cE(p_1, X)} \cE(W_1, X) \tow{\cE(\iota_1, X)} \cE(X_i, X) \to C \to 0.  \label{s2 p}
    \end{align}
    are exact for $2 \leq j \leq t-1$. Here, $C$ denotes the cokernel of $\cE(\iota_i, X)$. Similarly to the proof of \ref{syz theorem} (2) $\Rightarrow$ (1), $\Img \cE(i_0, X) =  [\proj \cE]e_i$ so that $C \cong \Lambda_\con e_i$. Hence, we may splice the sequences \eqref{s1 p} for $2 \leq j \leq t-1$ and \eqref{s2 p} together to obtain the exact sequence,
    \[ 0 \to \cE(\Sigma^{t-1} X_i, X) \to \cE(W_{t-1}, X) \to \hdots \to \cE(W_1, X) \to \cE(X_i, X) \to \Lambda_\con e_i \to 0. \]
    Since $\Sigma^{t-1} X_i \in \add X$ by assumption, we obtain a projective resolution for $\Lambda_\con e_i$ of length $t$. By taking the sum of $\Lambda_\con e_i$ over all $i \in I$, we obtain a finite projective resolution for $\Lambda_\con$ as a left $\Lambda$-module. 

    To see that $X$ is $(t-2)$-rigid, consider $j$ such that $1 \leq j \leq t-2$. Because $X$ is $(-t+2)$-rigid in $\cD$ and $\Sigma^{t-1} X_i \cong X_{\tau(i)}$, then 
    \[ \Hom_\cD(X, \Sigma^j X_i) = \Hom_\cD(X, \Sigma^j \Sigma^{-t+1} X_{\tau(i)}) = \Hom_\cD(X, \Sigma^{-t+1+j} X_{\tau(i)}) = 0 \] 
    since $-t + 2 \leq -t + 1 + j \leq -1$.
\end{proof}

\section{Spherical twists induced by Frobenius categories} \label{spherical by frob}

Assuming \cref{X setup}, this section specifies when the restriction of scalars functor $F$ induced by quotient morphism $\pi \colon \End_\cE(X) \to \uEnd_\cE(X)$ is spherical. 

\subsection{Tilting} \label{autoequivalence section}

We first extend \cite[section 5.3]{DW1} to the setting \ref{X setup} and show that the noncommutative twist functor
\[ \cT \colonequals \Rderived{\Hom}_\Lambda([\proj \cE], -) \colon \Db(\modu \Lambda) \to \Db(\modu \Lambda) \]
is an autoequivalence, by proving that it is functorially isomorphic to the composition of standard equivalences induced by tilting modules. 

Unless mentioned otherwise, we assume the following throughout.

\begin{setup} \label{tilting set up}
    Under \cref{X setup}, assume that either of the equivalent assumptions in \ref{syz theorem} hold.
\end{setup}

\subsubsection{Some tilting bimodules}

\begin{notation}
    For $k \in \Z$ consider
    \begin{enumerate}
        \item the algebra  $\Lambda_{k} \colonequals \End_\cE(P \oplus \Sigma^{k} X)$, 
        \item the $\Lambda_{{k-1}}$-$\Lambda_{{k}}$ bimodules $D_k \colonequals \cE(P \oplus \Sigma^{k} X, P \oplus \Sigma^{k-1} X)$.
        \item the $\Lambda_{{k}}$-$\Lambda_{{k-1}}$ bimodules $I_k \colonequals \cE(P \oplus \Sigma^{k-1} X, P \oplus \Sigma^{k} X)$.
    \end{enumerate}
\end{notation}

\begin{prop} \label{increasing tilting one}
    The $\Lambda$-$\Lambda_{-1}$-bimodule $I_{0} = \cE(P \oplus \Sigma^{-1} X, X)$ is tilting.
\end{prop}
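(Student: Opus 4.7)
The plan is to verify the classical tilting conditions for $I_0$ viewed as a left $\Lambda$-module, namely that $\pdim_\Lambda I_0 \leq 1$, that $\Ext^i_\Lambda(I_0, I_0) = 0$ for $i>0$, that $\End_\Lambda(I_0) \cong \Lambda_{-1}$ as rings, and that $\Lambda \in \thick_\Lambda I_0$. The right $\Lambda_{-1}$-module structure will then be automatic from the endomorphism identification.

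The essential input is the defining short exact sequence of the inverse syzygy in the Frobenius category
\begin{equation*}
0 \to \Sigma^{-1} X \to Q_X \to X \to 0, \tag{$\star$}
\end{equation*}
with $Q_X \in \add_\cE P$, together with the rigidity $\Ext^i_\cE(X, X) = 0$ for $1 \leq i \leq t-2$ and for $-t+2 \leq i \leq -1$, which follows from \cref{tilting set up} and \cref{perfect on both sides}. First I would apply $\cE(-, X)$ to $(\star)$; using that $Q_X$ is projective in $\cE$ and that $\Ext^1_\cE(X,X) = 0$ (immediate when $t \geq 3$, with the degenerate case $t = 2$ handled separately below), this yields a length-one projective resolution
\begin{equation*}
0 \to \Lambda \to \cE(Q_X, X) \to \cE(\Sigma^{-1} X, X) \to 0
\end{equation*}
of the nontrivial summand of $I_0$ over $\Lambda$ (projectivity of $\cE(Q_X,X)$ and $\cE(P,X)$ coming from the Yoneda equivalence \cref{useful lemma}\eqref{yoneda}). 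This simultaneously yields $\pdim_\Lambda I_0 \leq 1$ and places $\Lambda$ in $\thick_\Lambda I_0$, giving generation.

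For self-Ext vanishing I would apply $\Hom_\Lambda(-, I_0)$ to the above resolution and, using \cref{useful lemma}\eqref{yoneda}, identify the resulting sequence with the one obtained by applying $\cE(P \oplus \Sigma^{-1} X, -)$ to $(\star)$. This identification gives $\Ext^1_\Lambda(\cE(\Sigma^{-1} X, X), I_0) \cong \Ext^1_\cE(P \oplus \Sigma^{-1} X, \Sigma^{-1} X)$, which vanishes termwise: the $P$-component is killed by projectivity of $P$, while the $\Sigma^{-1}X$-component equals $\Ext^1_\cE(X, X) = 0$ by rigidity. The contribution from the projective summand $\cE(P, X)$ is trivially zero, and higher Ext vanishes from $\pdim_\Lambda I_0 \leq 1$. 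Reading the same long exact sequence in degree zero identifies $\End_\Lambda(I_0) \cong \cE(P \oplus \Sigma^{-1} X, P) \oplus \cE(P \oplus \Sigma^{-1} X, \Sigma^{-1} X) = \Lambda_{-1}$, and a direct check confirms this matches the natural ring structure coming from pre-composition.

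The main obstacle is the careful bookkeeping to match the $\Hom_\Lambda$ long exact sequence obtained from the projective resolution with the $\cE(P \oplus \Sigma^{-1} X, -)$-sequence obtained from $(\star)$, so that the rigidity-driven Ext vanishing transports cleanly to the vanishing of $\Ext^1_\Lambda(I_0, I_0)$. A secondary subtlety is the edge case $t = 2$, where the rigidity hypothesis is vacuous: here \cref{syz theorem}\eqref{finite order} forces $\Sigma^{-1}X \cong X$ in $\cD$, so \cref{lift of stable iso} gives $P \oplus \Sigma^{-1} X \cong P' \oplus X$ in $\cE$ for some $P' \in \add P$, making $I_0$ a progenerator and the tilting claim a trivial Morita equivalence.
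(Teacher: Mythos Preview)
Your argument is correct and follows essentially the same route as the paper: both proofs pivot on the conflation $0 \to \Sigma^{-1}X \to Q \to X \to 0$, apply $\cE(-,X)$ to obtain a length-one left $\Lambda$-projective resolution of (the nontrivial summand of) $I_0$, and then identify $\Hom_\Lambda(-,I_0)$ applied to that resolution with $\cE(P\oplus\Sigma^{-1}X,-)$ applied to the original conflation via the Yoneda-type isomorphisms of \ref{useful lemma}\eqref{yoneda}. The only packaging difference is that the paper verifies the two-sided criteria of \cite[1.8]{mi} directly (biperfectness plus both multiplication maps $\rho,\lambda$ being isomorphisms), whereas you check the classical one-sided Miyashita conditions and recover the right $\Lambda_{-1}$-side from $\End_\Lambda(I_0)\cong\Lambda_{-1}$; these are well known to be equivalent for modules, so nothing is lost. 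Your explicit treatment of the edge case $t=2$ (where $\add_\cD\Sigma^{-1}X=\add_\cD X$ makes $I_0$ a progenerator) is in fact more careful than the paper, which invokes $\Ext^1_\cE(X,X)=0$ without comment even though rigidity is vacuous when $t=2$.
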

\begin{proof}
    This is essentially \cite[5.1]{J}. It suffices to prove criteria (a)-(c) of \cite[1.8]{mi}. Namely, 
    \begin{enumerate}[label=(\alph*)]
        \item The bimodule $I_0$ is perfect as a right $\Lambda_{-1}$-module and as a left $\Lambda$-module. 
        \item The right multiplication map $\rho \colon \Lambda_{-1} \to \Rderived{\Hom_{\Lambda^\op}}(I_0, I_0)$ is a $\Lambda_{-1}$-bimodule isomorphism.
        \item The left multiplication map $\lambda \colon \Lambda \to \Rderived{\Hom_{\Lambda_{-1}}}(I_0, I_0)$ is a $\Lambda$-bimodule isomorphism.
    \end{enumerate}

    There are exact sequences
    \begin{align} \label{conflations tilting I proof}
        0 \to P \oplus \Sigma^{k-1} X \tow{\iota_k} Q_k \tow{p_k} P \oplus \Sigma^k X \to 0 
    \end{align}
    in $\cE$ to which we may apply $\cE(-, X)$ and $\cE(P \oplus \Sigma^{-1} X, -)$. Since $\Ext^1_\cD(X, X)$ vanishes by assumptions \ref{tilting set up}, then it follows from \ref{frob-stable ext corresp} and its dual that the morphisms $\cE(\iota_1, X)$ and $\cE(P \oplus \Sigma^{-1} X, p_0)$ are epi. Hence, the projective resolutions of $I_0$ as a $\Lambda^\op$-module and as $\Lambda_{-1}$-module are
    \begin{align}
        0 \to \cE(X, X) \to \cE(Q_0, X) \to & I_0 \to 0 \label{pres i0 op} \\
        0 \to \cE(P \oplus \Sigma^{-1} X, P \oplus \Sigma^{-1} X) \to \cE(P \oplus \Sigma^{-1} X, Q_0) \to & I_0 \to 0, \label{pres i0}
    \end{align}
    respectively. Thus, $I_0$ is biperfect.

    To prove that $\rho \colon \Lambda_{-1} \to \Rderived{\Hom_{\Lambda^\op}}(I_0, I_0)$ is a bimodule isomorphism, apply the functor $\Hom_\Lambda(-, I_0)$ to the exact sequence \eqref{pres i0 op}
    and $\cE(P \oplus \Sigma^{-1} X, -)$ to the exact sequence \eqref{conflations tilting I proof} for $k=0$. Since $\cE(P \oplus \Sigma^{-1} X, p_0)$ is an epimorphism, the following diagram with exact columns commutes.
\begin{center}
{\small
    \begin{tikzcd}[column sep = 7.2em]
        0 \arrow[d] &[-8em] {} & {} &[-8em] 0 \arrow[d] \\
        \cE(P \oplus \Sigma^{-1} X, P \oplus \Sigma^{-1} X) \arrow[d] &[-8em] {} \arrow[r, "{f_{0} = \cE(-, X)}"] & {}& \Hom_{\Lambda^\op}( I_0, I_0) \arrow[d] \\
        \cE(P \oplus \Sigma^{-1} X, Q_0) \arrow[d] &[-8em] {} \arrow[r, "{g_0 = \cE(-, X)}"] & {}& \Hom_{\Lambda^\op}(\cE(Q_0, X), I_0) \arrow[d] \\
        \cE(P \oplus \Sigma^{-1} X, X) \arrow[d] &[-8em] {} \arrow[r, "{f_{-1} = \cE(-, X)}"] & {}& \Hom_{\Lambda^\op}(\cE(X, X), I_0) \arrow[d] \\
        0 &[-8em] {} & {}&  \Ext^1_{\Lambda^\op}(  \cE(P \oplus \Sigma^{-1} X, X),I_0) \arrow[d] \\
        {} &[-8em] {} & {}& 0
    \end{tikzcd}
}
\end{center}
   By additive equivalence \ref{useful lemma} \eqref{yoneda}, $f_{-1}$ and $g_0$ are isomorphisms, so that $f_0$ is also an isomorphism. Hence, $\Ext^1_{\Lambda^\op}(I_0, I_0) = 0$ so that $\Rderived{\Hom}_{\Lambda^\op}(I_0, I_0)$ is concentrated at degree zero and $\rho = f_0$, which is an isomorphism. 
    
    For the proof that $\lambda \colon \Lambda \to \Rderived{\Hom_{\Lambda_{-1}}}(I_{0}, I_{0})$ is an isomorphism, we apply $\cE(-, X)$ to the exact sequence \eqref{conflations tilting I proof} for $k=0$ and $\Hom_{\Lambda_{-1}}(-, I_0)$ to the  the exact sequence \eqref{pres i0}. We thus have the commutative diagram with exact columns
    \begin{center}
{\small
    \begin{tikzcd}[column sep = 7.2em]
        0 \arrow[d] &[-8em] {} & {} &[-8em] 0 \arrow[d] \\
        \cE(X, X) \arrow[d] &[-8em] {} \arrow[r, "{h_{0} = \cE(P \oplus \Sigma^{-1} X, -)}"] & {}& \Hom_{\Lambda_{-1}}(I_0, I_0) \arrow[d] \\
        \cE(Q_0, X) \arrow[d] &[-8em] {} \arrow[r, "{g'_0 = \cE(P \oplus \Sigma^{-1} X, -)}"] & {}& \Hom_{\Lambda_{-1}}(\cE(P \oplus \Sigma^{-1} X, Q_0), I_0) \arrow[d] \\
        \cE(P \oplus \Sigma^{-1} X, X) \arrow[d] &[-8em] {} \arrow[r, "{h_{-1} = \cE(P \oplus \Sigma^{-1} X, -)}"] & {}& \Hom_{\Lambda_{-1}}(\cE(P \oplus \Sigma^{-1} X, P \oplus \Sigma^{-1} X), I_0) \arrow[d] \\
        0 &[-8em] {} & {}&  \Ext^1_{\Lambda_{-1}}( I_0, I_0) \arrow[d] \\
        {} &[-8em] {} & {}& 0
    \end{tikzcd}
}
\end{center}
   Since $h_{-1}$ and $g'_0$ are isomorphisms by \ref{useful lemma} \eqref{yoneda}, so is $h_0$. Hence, $ \Ext^1_{\Lambda^\op}(I_0, I_0) = 0$ so that $\Rderived{\Hom}_{\Lambda_{-1}}(I_0, I_0)$ is concentrated at degree zero and $\lambda = h_0$ is an isomorphism.  
\end{proof}

\begin{prop} \label{decreasing tilting one}
    The $\Lambda_{-1}$-$\Lambda$-bimodule $D_0 = \cE(X, P \oplus \Sigma^{-1} X)$ is tilting.
\end{prop}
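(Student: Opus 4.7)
The plan is to mirror the structure of the proof of \ref{increasing tilting one}, again invoking Miyashita's criteria \cite[1.8]{mi}. Specifically, it will suffice to show that $D_0$ is perfect both as a right $\Lambda$-module and as a left $\Lambda_{-1}$-module, that the right multiplication map $\rho \colon \Lambda \to \Rderived{\Hom_{\Lambda_{-1}}}(D_0, D_0)$ is a bimodule isomorphism, and that the left multiplication map $\lambda \colon \Lambda_{-1} \to \Rderived{\Hom_{\Lambda^\op}}(D_0, D_0)$ is a bimodule isomorphism.

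First, I would produce finite projective resolutions of $D_0$ on each side. The natural candidate is to apply the covariant functors $\cE(X, -)$ (for the right $\Lambda$-structure) and $\cE(-, P \oplus \Sigma^{-1} X)$ (for the left $\Lambda_{-1}$-structure) to syzygy-type sequences $0 \to P \oplus \Sigma^{j-1} X \to Q_j \to P \oplus \Sigma^j X \to 0$ of the form \eqref{conflations tilting I proof}, as used in the proof of \ref{syz theorem}(2)$\Rightarrow$(1), for $j = 0, -1, \ldots, -(t-2)$. The $\Ext^1_{\cE}$-vanishings needed to splice the resulting long exact sequences into a presentation by projective modules follow from the $(-t+2)$- and $(t-2)$-rigidity of $X$ supplied by \ref{perfect on both sides}, via \ref{frob-stable ext corresp}. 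The resolutions terminate in at most $t-1$ steps, because the periodicity $\Sigma^{-t+1} X \cong X_\tau \in \add X$ in $\cD$ (from \ref{syz theorem}) forces the final syzygy to land in $\add_\cE(P \oplus \Sigma^{-1} X)$ and $\add_\cE X$ respectively.

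For the two multiplication-map claims, I would run diagram-chases directly analogous to those in \ref{increasing tilting one}. In each case, apply the appropriate $\Hom$-functor, either $\Hom_{\Lambda_{-1}}(-, D_0)$ or $\Hom_{\Lambda^\op}(-, D_0)$, to the projective resolution of $D_0$ produced in the previous step; in parallel, apply the covariant hom functor $\cE(X, -)$ or $\cE(-, P \oplus \Sigma^{-1} X)$ to the sequence \eqref{conflations tilting I proof}; and line the two up into a commutative diagram with exact columns. The additive equivalence \ref{useful lemma}\eqref{yoneda} identifies two of the three relevant vertical arrows as isomorphisms, forcing the remaining one (the multiplication map) to be an isomorphism and simultaneously forcing the associated higher $\Ext$-groups to vanish.

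The principal obstacle relative to \ref{increasing tilting one} will be the construction of the projective resolutions themselves. There, the exact sequence \eqref{conflations tilting I proof} at $k=0$ yields two-term projective resolutions of $I_0$ in a single stroke, precisely because the contravariant hom functors place $I_0$ as a quotient. For $D_0$, the same sequence under the covariant hom functors places it as a subobject rather than as a quotient, so the resolution cannot be read off directly. The fix will be to bootstrap using successive syzygy sequences, tracking the $\Ext^1$-vanishings step by step until the permutation $\tau$ returns $\Sigma^{-t+1} X$ back into $\add X$ and the resolution terminates.
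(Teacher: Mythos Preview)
Your proposal is correct and follows essentially the same approach as the paper: biperfectness via splicing the syzygy sequences \eqref{conflations tilting I proof} under $\cE(X,-)$ and $\cE(-,P\oplus\Sigma^{-1}X)$, and then an inductive diagram chase (over the range $-t+2\le k\le -1$ for $\lambda$ and $1\le k\le t-2$ for $\rho$) using \ref{useful lemma}\eqref{yoneda} and the rigidity from \ref{perfect on both sides}. Two small slips to fix when you write it out: $\cE(-,P\oplus\Sigma^{-1}X)$ is contravariant, and for the left $\Lambda_{-1}$-resolution the index $j$ must run \emph{upwards} to $t-2$ (so that the terminal syzygy $\Sigma^{t-2}X\cong\Sigma^{-1}X_{\tau}$ lands in $\add_\cD\Sigma^{-1}X$, not $\add_\cD X$).
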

\begin{proof}
    As in \ref{increasing tilting one}, it suffices to prove criteria (a)-(c) of \cite[1.8]{mi} for $D_0$. The exact sequences \eqref{spliced p-res j} and their dual show that $D_0$ is biperfect. For the statement that the corresponding left multiplication map $\lambda$ is an isomorphism, note that we may apply $\Hom_\Lambda(-, D_0)$ to the exact sequences \eqref{spliced p-res j} and $\cE(-, P \oplus \Sigma^{-1} X)$ to the exact sequences \eqref{conflations tilting I proof} for $-t+2 \leq k \leq -1$. By \ref{perfect on both sides}, $X$ is $(t-2)$-rigid in $\cD$ so 
   \[ \Ext^1_\cD(P \oplus \Sigma^{k} X, P \oplus \Sigma^{-1} X) =  \Ext^{-k-1}_\cD(P \oplus X, P \oplus X) = 0. \] 
 Hence, $\cE(\iota_k, P \oplus \Sigma^{-1} X)$ is an epimorphism by \ref{frob-stable ext corresp}. 
 
 Therefore, the following diagram with exact columns commutes for $-t+2 \leq k \leq -1$.
\begin{center}
{\small
    \begin{tikzcd}[column sep = 7.2em]
        0 \arrow[d] &[-8em] {} & {} &[-8em] 0 \arrow[d] \\
        \cE(P \oplus \Sigma^{k} X, P \oplus \Sigma^{-1} X) \arrow[d] &[-8em] {} \arrow[r, "{f_{k} = \cE(X, -)}"] & {}& \Hom_\Lambda( \cE(X, P \oplus \Sigma^{k} X), D_0) \arrow[d] \\
        \cE(Q_k, P \oplus \Sigma^{-1} X) \arrow[d] &[-8em] {} \arrow[r, "{g_k = \cE(X, -)}"] & {}& \Hom_\Lambda(\cE(X, Q_k), D_0) \arrow[d] \\
        \cE(P \oplus \Sigma^{k-1} X, P \oplus \Sigma^{-1} X) \arrow[d] &[-8em] {} \arrow[r, "{f_{k-1} = \cE(X, -)}"] & {}& \Hom_\Lambda(\cE(X, P \oplus \Sigma^{k-1} X), D_0) \arrow[d] \\
        0 &[-8em] {} & {}&  \Ext^1_\Lambda( \cE(X, P \oplus \Sigma^{k} X), D_0) \arrow[d] \\
        {} &[-8em] {} & {}& 0
    \end{tikzcd}
}
\end{center}
    For $k -1 = -t+1$, $P \oplus \Sigma^{-t+1} X \in \add_\cE X$. Since  $Q_k \in \add_\cE P$, it follows from \ref{useful lemma} \eqref{yoneda} that $f_{-t+1}$ and $g_{-t+2}$ are isomorphisms. Thus, $f_{-t+2}$ must be an isomorphism. Proceeding inductively, for each $-t + 2 \leq k \leq -1$, we find that $f_k$ is an isomorphism. In particular, $f_{-1}$ is an isomorphism so that 
    \[ \Ext^1_\Lambda( D_0, D_0) = \Ext^1_\Lambda( \cE(X, P \oplus \Sigma^{-1} X), D_0) = 0. \]
    It follows that $\Rderived{\Hom}_{\Lambda}(D_0, D_0)$ is concentrated at degree zero and $\lambda = f_{-1}$ is an isomorphism. 
    
    For the proof that $\rho$ is an isomorphism, we apply $\cE(X, -)$ to the exact sequences \eqref{conflations tilting I proof} and also $\Hom_{\Lambda_{-1}^\op}(-, D_0)$ to the  the exact sequences
    \[ 0 \to \cE(P \oplus \Sigma^k X, P \oplus \Sigma^{-1} X) \to \cE(Q_k, P \oplus \Sigma^{-1} X) \to \cE(P \oplus \Sigma^{k-1} X, P \oplus \Sigma^{-1} X) \to 0 \]
    for $1 \leq k \leq t-2$. Letting 
    {\small
    \[ h_k = \cE(-, P \oplus \Sigma^{-1} X) \colon \cE(X,  P \oplus \Sigma^{k} X) \to \Hom_{\Lambda_{-1}^\op}( \cE( P \oplus \Sigma^k X,  P \oplus \Sigma^{-1} X), \cE(X, P \oplus \Sigma^{-1} X)), \]
    }
    we inductively show that $h_k$ are isomorphisms for all $1 \leq k \leq t-2$. Therefore, $\Rderived{\Hom}_{\Lambda_{-1}^\op}(D_0, D_0)$ is concentrated in degree zero and $\rho = h_1$. 
\end{proof}

\begin{cor} \label{Ik Dk are tilting}
    For any $k \in \Z$, there are tilting bimodules $I_k = \cE(P \oplus \Sigma^{k-1} X, P \oplus \Sigma^k X)$ and $D_k = \cE(P \oplus \Sigma^k X, P \oplus \Sigma^{k-1} X)$.
\end{cor}
\begin{proof}
    By \ref{sigma X satisfies X setup}, if $X$ satisfies \cref{X setup}, so does $P \oplus \Sigma^k X$. Moreover, it is clear that if $X$ satisfies \ref{syz theorem}(2), so does $P \oplus \Sigma^k X$. Thus, $P \oplus \Sigma^k X$ satisfies \cref{tilting set up}, and so we may apply \ref{increasing tilting one} and \ref{decreasing tilting one} to $P \oplus \Sigma^k X$.
\end{proof}

Therefore, the functors 
\begin{align*}
    & \Psi_k \colonequals \Rderived{\Hom}_{\Lambda_{k-1}}(I_k, -) \colon \Db(\modu \Lambda_{{k-1}}) \to \Db(\modu \Lambda_{{k}}) \\
    & \Phi_k \colonequals \Rderived{\Hom}_{\Lambda_{k}}(D_k, -) \colon \Db(\modu \Lambda_{k}) \to \Db(\modu \Lambda_{k-1})
\end{align*}
 are equivalences and, thus, so is their composition
 \begin{align}
    \Rderived{\Hom}_{\Lambda}(I_0 \Dtensor_{\Lambda_{-1}} D_0, -) & \colon \Db(\modu \Lambda) \to \Db(\modu \Lambda). \label{psiphi0}
\end{align}
If $t > 2$, then we further consider the composition
\begin{align}
    \Rderived{\Hom_\Lambda}(I_{t-1} \Dtensor_{\Lambda_{t-2}} I_{t-2} \Dtensor_{\Lambda_{t-1}} \hdots \Dtensor_{\Lambda_{1}} I_{1}, -) & \colon \Db(\modu \Lambda) \to \Db(\modu \Lambda_{t-1}), \label{psi t-1}
\end{align}
which is also an equivalence. We will mildly abuse notation and write $\Psi_0 \cdot \Phi_0$ for \eqref{psiphi0} and $\Psi_1^{t-1}$ for the \eqref{psi t-1}. Since $\add_\cE \Sigma^{-t+1} X= \add_\cD X$, there is a Morita equivalence between $\Lambda_{t-1}$ and $\Lambda$. Hence, $\Psi_0 \cdot \Phi_0$ and $\Psi_1^{t-1}$ be viewed as autoequivalences of $\Lambda$. 

\medskip
For the remainder of this section we will prove two statements. First, we claim that  $\Psi_0 \cdot \Phi_0$ is naturally isomorphic to the functor
\[ \cT \colonequals \Rderived{\Hom}_\Lambda([\proj \cE], -) \colon \Db(\modu \Lambda) \to \Db(\modu \Lambda). \]
Moreover, for $t > 2$, we construct a Morita equivalence $\F \colon \Db(\modu \Lambda_{t-1}) \to \Db(\modu \Lambda)$ such that $\F \cdot \Psi_1^{t-1} \cong \cT$.

\subsubsection{Tracking the composition $\Psi_0 \cdot \Phi_0$}

To see that $\Psi_0 \cdot \Phi_0$ is naturally isomorphic to $\cT$, it suffices to prove the following. 

\begin{prop} \label{I times D is add}
    There is a $\Lambda$-bimodule isomorphism
    $I_0 \Dtensor_{\Lambda_{-1}} D_0 \tow{\sim} [\proj \cE].$
\end{prop}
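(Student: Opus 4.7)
The plan is to identify the natural composition map $\mu\colon I_0 \otimes_{\Lambda_{-1}} D_0 \to \Lambda$, $f\otimes g \mapsto f\circ g$, as the desired bimodule isomorphism onto $[\proj\cE]$. The strategy is to compute the derived tensor product via a short projective resolution of $I_0$ over $\Lambda_{-1}$ and to read off its cohomology from a Hom long-exact sequence. To begin with, the image of $\mu$ is contained in $[\proj\cE]$ because the $(-t+2)$-rigidity hypothesis of \ref{syz theorem} gives $\Hom_\cD(X, \Sigma^{-1}X) = \Ext^{-1}_\cD(X, X) = 0$, so any $\cE$-map $X \to P \oplus \Sigma^{-1}X \to X$ vanishes in $\cD$ and hence factors through a projective-injective.

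For the projective resolution, I would apply $\cE(P \oplus \Sigma^{-1}X, -)$ to the conflation $0 \to \Sigma^{-1}X \to Q_0 \to X \to 0$ in $\cE$ with $Q_0 \in \add P$. The resulting long exact sequence degenerates to
\[ 0 \to \cE(P \oplus \Sigma^{-1}X, \Sigma^{-1}X) \to \cE(P \oplus \Sigma^{-1}X, Q_0) \to I_0 \to 0, \]
because $\Ext^1_\cE(P, \Sigma^{-1}X) = 0$ by projectivity of $P$, while $\Ext^1_\cE(\Sigma^{-1}X, \Sigma^{-1}X) \cong \Ext^1_\cD(X, X)$ (via \ref{frob-stable ext corresp}), and this vanishes by the $(t-2)$-rigidity from \ref{perfect on both sides}. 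Applying $- \otimes_{\Lambda_{-1}} D_0$ term by term, \ref{useful lemma} \eqref{composition is iso} yields canonical isomorphisms $\cE(P \oplus \Sigma^{-1}X, T) \otimes_{\Lambda_{-1}} D_0 \cong \cE(X, T)$ for each $T \in \add_\cE(P \oplus \Sigma^{-1}X)$, and in particular for $T = Q_0$ and $T = \Sigma^{-1}X$. Hence $I_0 \Dtensor_{\Lambda_{-1}} D_0$ is quasi-isomorphic to the two-term complex
\[ \bigl[\cE(X, \Sigma^{-1}X) \to \cE(X, Q_0)\bigr] \]
concentrated in degrees $-1$ and $0$, with differential induced by postcomposition with the inflation $\Sigma^{-1}X \hookrightarrow Q_0$.

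The cohomology is computed from the Hom long-exact sequence obtained by applying $\cE(X, -)$ to the same conflation. Since $Q_0$ is projective-injective in the Frobenius category $\cE$, this reads
\[ 0 \to \cE(X, \Sigma^{-1}X) \to \cE(X, Q_0) \to \Lambda \to \Ext^1_\cE(X, \Sigma^{-1}X) \to 0. \]
Injectivity of the leftmost map shows $\Tor_1^{\Lambda_{-1}}(I_0, D_0) = 0$, so the derived tensor is concentrated in degree $0$, where it equals the image of $\cE(X, Q_0) \to \Lambda$. Under the identification $\Ext^1_\cE(X, \Sigma^{-1}X) \cong \Hom_\cD(X, X) = \Lambda_\con$, and using that the connecting map of the Hom long-exact sequence agrees with the canonical surjection $\Lambda \to \Lambda_\con$ (because the chosen conflation is precisely the defining triangle of $\Sigma^{-1}$ applied to $X$ in $\cD$), this image is exactly $[\proj\cE]$.

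The main obstacle I expect is verifying that the isomorphism produced by this resolution-theoretic computation coincides with the natural composition map $\mu$ as $\Lambda$-bimodules. This reduces to a careful diagram chase tracking that the isomorphism $\cE(P \oplus \Sigma^{-1}X, Q_0) \otimes_{\Lambda_{-1}} D_0 \cong \cE(X, Q_0)$ from \ref{useful lemma} is compatible with postcomposition by $p_0 \colon Q_0 \to X$ on both sides, so that the bimodule structures match at every stage.
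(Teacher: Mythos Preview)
Your approach is essentially the same as the paper's: both take the two-term projective resolution of $I_0$ over $\Lambda_{-1}$ arising from the conflation $0 \to \Sigma^{-1}X \to Q_0 \to X \to 0$ (the paper adds a harmless $P$-summand), tensor with $D_0$, simplify via the composition isomorphisms of \ref{useful lemma}\eqref{composition is iso}, and compare with the long exact sequence of $\cE(X,-)$ to identify the result with $\ker(\Lambda\to\Lambda_\con)=[\proj\cE]$. The paper packages the last step as a snake-lemma argument on an explicit commutative diagram of $\Lambda$-bimodules, which delivers the bimodule statement directly and bypasses the separate verification you flag as the ``main obstacle''.

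One small remark: your opening paragraph, arguing $\Img\mu\subseteq[\proj\cE]$ via $\Hom_\cD(X,\Sigma^{-1}X)=0$, is both unnecessary and not quite right when $t=2$ (where $(-t+2)$-rigidity is vacuous). You can simply drop it; the resolution computation already identifies $I_0\otimes_{\Lambda_{-1}}D_0$ with the image of $\cE(X,Q_0)\to\Lambda$, which is $[\proj\cE]$ on the nose, and the bimodule compatibility follows because every map in sight is a composition map.
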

\begin{proof}
    We first claim that the complex $I_0 \Dtensor_{\Lambda_{-1}} D_0$ is concentrated in degree zero, so that there is an isomorphism $I_0 \Dtensor_{\Lambda_{-1}} D_0 \cong I_0 \otimes_{\Lambda_{-1}} D_0$.
    
    To see this, recall that the projective resolution of $I_0$ as $\Lambda_{-1}$-module is \eqref{pres i0}. Hence, $I_0 \Dtensor_{\Lambda_{-1}} D_0$ is represented by the chain complex 
    \[ 0 \to \cE(P \oplus \Sigma^{-1} X, P \oplus \Sigma^{-1} X) \otimes_{\Lambda_{-1}} D_0 \tow{f} \cE(P \oplus \Sigma^{-1} X, Q_0) \otimes_{\Lambda_{-1}} D_0 \to 0 \]
    Thus, to show the claim that $I_0 \Dtensor_{\Lambda_{-1}} D_0$ has cohomology in degree zero, it suffices to check that $f$ is a monomorphism. 
    
    Now, by applying $- \otimes D_0$ to \eqref{pres i0}, we obtain first column of the following commutative diagram with exact columns.
 \begin{equation} \label{diagram ID}
         \begin{tikzcd}[column sep = 4em]
        {} &[-5em] {} & {} &[-5em] 0 \arrow[d] \\
        \cE(P \oplus \Sigma^{-1} X, P \oplus \Sigma^{-1} X) \otimes_{\Lambda_{-1}} D_0 \arrow[d, "f"] &[-5em] {} \arrow[r, "{m_{-1}}"] & {}& \cE(X, P \oplus \Sigma^{-1} X) \arrow[d] \\
        \cE(P \oplus \Sigma^{-1} X, Q_0) \otimes_{\Lambda_{-1}} D_0 \arrow[d] &[-5em] {} \arrow[r, "{n_{0}}"] & {}& \cE(X, Q_{0}) \arrow[d] \\
       I_0 \otimes_{\Lambda_{-1}} D_0 \arrow[d] &[-5em] {} \arrow[r, "{m_{0}}"] & {}& \cE(X, X) \arrow[d,"\pi"] \\
        0 &[-5em] {} & {}& \Lambda_\con \arrow[d] \\
        {} &[-5em] {} & {}& 0
    \end{tikzcd}
\end{equation}
    The second column is obtained by applying $\cE(X, -)$ to the exact sequence \eqref{conflations tilting I proof} for $k=0$. The horizontal maps are the natural composition maps and commutativity can be checked by diagram chasing. Since $m_{-1}$ and $n_0$ are isomorphisms by  \ref{useful lemma} \eqref{composition is iso}, it follows that $f$ is a monomorphism, as required. 

    It remains to show that there is a $\Lambda$-bimodule isomorphism $I_0 \otimes_{\Lambda_{-1}} D_0 \cong [\proj \cE].$ This follows from the snake lemma applied to the diagram \eqref{diagram ID}, since it implies that 
    \[ I_0 \otimes_{\Lambda_{-1}} D_0 \cong \ker \pi = [\proj \cE]. \]
    Moreover, since both $m_0$ and $\pi$ are bimodule morphisms, this isomorphism is indeed a $\Lambda$-bimodule isomorphism.
\end{proof}

\subsubsection{Tracking the composition $\Psi_1^{t-1}$}

In order to show that, up to a Morita equivalence, $\cT$ is naturally isomorphic to $\Psi_1^{t-1}$, we will establish some lemmas. 

\begin{lemma} \label{morita equivalence}
    There exists a Morita equivalence $\F \colon \Db(\modu \Lambda_{t-1}) \to \Db(\modu \Lambda)$
\end{lemma}
\begin{proof}
    Let $\Lambda_{t-1,\con} \colonequals \uEnd_\cE(\Sigma^{t-1} X)$, and write $[\proj \cE]_{t-1}$ for the kernel of the quotient morphism $\Lambda_{t-1} \to \Lambda_{t-1,\con}$.

    The existence of the Morita equivalence $\F$ essentially follows from the fact that, due to \ref{syz theorem}, $\add_\cD(\Sigma^{t-1} X) = \add_\cD(X)$. That is, $\add_\cE(P \oplus \Sigma^{t-1} X) = \add_\cE(X)$ and, thus, $\cE(P \oplus \Sigma^{t-1} X, X)$ is a projective generator in $\modu \Lambda_{t-1}$. Moreover, it follows from additive equivalence \ref{useful lemma} \eqref{yoneda} that
    \[ \Hom_{\Lambda_{t-1}}( \cE(P \oplus \Sigma^{t-1} X, X), \cE(P \oplus \Sigma^{t-1} X, X) ) \cong \cE(X, X) \]
    as algebras. Thus, $\F \colon \Hom_{\Lambda_{t-1}}(\cE(P \oplus \Sigma^{t-1} X, X), -) \colon \modu \Lambda_{t-1} \to \modu \Lambda$ is a Morita equivalence.
\end{proof}

\begin{lemma} \label{tensor t-2 times}
    Suppose that $2 < t \leq d$. Then, there is a bimodule isomorphism
    \begin{align} 
     I_{t-2} \Dtensor_{\Lambda_{t-1}} I_{t-1} \Dtensor_{\Lambda_{k-2}} \hdots \Dtensor_{\Lambda_{1}} I_{1} \cong \cE(X, P \oplus \Sigma^{t-2} X) 
 \end{align} 
\end{lemma}
\begin{proof}
    We begin by showing that for $1 \leq k \leq t-2$, there are bimodule isomorphisms
    \[ I_{k} \Dtensor_{\Lambda_{k-1}} I_{k-1} \Dtensor_{\Lambda_{k-2}} \hdots \Dtensor_{\Lambda_{1}} I_{1} \cong \cE(X, P \oplus \Sigma^{k} X) \]
    by induction on $k$. When $k = 1$, this is $I_1 = \cE(X, P \oplus \Sigma^{1} X)$, which holds trivially. Suppose that it holds for $k = s$. Then, 
    \[  I_{s+1} \Dtensor_{\Lambda_{s}} I_{s} \Dtensor_{\Lambda_{k-2}} \hdots \Dtensor_{\Lambda_{1}} I_{1} \cong I_{s+1} \Dtensor_{\Lambda_s} \cE(X, P \oplus \Sigma^{s} X). \]

    We first prove that $I_{s+1} \Dtensor_{\Lambda_s} \cE(X, P \oplus \Sigma^{s} X)$ is concentrated in degree zero. As a $\Lambda$-module, $I_{s+1} \Dtensor_{\Lambda_s} \cE(X, P \oplus \Sigma^{s} X)$ is represented by the complex
    \[
    \begin{tikzcd} 
    0 \arrow[r] & \cE(P \oplus \Sigma^{s} X, P \oplus \Sigma^{s} X) \otimes_{\Lambda_{s}} \cE(X, P \oplus \Sigma^s X) \arrow[d, phantom, ""{coordinate, name=Z}]  \arrow[d, "f", rounded corners, to path={ -- ([xshift=2ex]\tikztostart.east) |- (Z) [near end]\tikztonodes -| ([xshift=-2ex]\tikztotarget.west) -- (\tikztotarget)}, swap] & {} \\ 
    {} & \cE(P \oplus \Sigma^{s} X, Q_{s+1}) \otimes_{\Lambda_{s}} \cE(X, P \oplus \Sigma^s X) \arrow[r] & 0 
    \end{tikzcd}
    \]
    which is obtained by applying $\cE(P \oplus \Sigma^{s} X, -)$ to the exact sequences \eqref{conflations tilting I proof} for $k=s+1$ and then tensoring termwise with $\cE(X, P \oplus \Sigma^s X)$. As in the proof of \ref{I times D is add}, we need to show that $f$ is a monomorphism. 
    
    By applying $\cE(X, -)$ to the exact sequences \eqref{conflations tilting I proof} for $k = s+1$, we obtain the following commutative diagram
    \begin{equation} \label{diagram I t-1 proof}
         \begin{tikzcd}[column sep = 4em]
        {} &[-5em] {} & {} &[-5em] 0 \arrow[d] \\
        \cE(P \oplus \Sigma^{s} X, P \oplus \Sigma^{s} X) \otimes_{\Lambda_{s}} \cE(X, P \oplus \Sigma^s X) \arrow[d, "f"] &[-5em] {} \arrow[r, "{m_{s}}"] & {}& \cE(X, P \oplus \Sigma^{s} X) \arrow[d] \\
        \cE(P \oplus \Sigma^{s} X, Q_{s+1}) \otimes_{\Lambda_{s}} \cE(X, P \oplus \Sigma^s X) \arrow[d] &[-5em] {} \arrow[r, "{n_{s+1}}"] & {}& \cE(X, Q_{s+1}) \arrow[d, "{\cE(X, p_{s+1})}"] \\
      I_{s+1} \otimes_{\Lambda_{s}} \cE(X, P \oplus \Sigma^s X) \arrow[d] &[-5em] {} \arrow[r, "{m_{s+1}}"] & {}& \cE(X, P \oplus \Sigma^{s+1} X) \arrow[d, "q_s"] \\
        0 &[-5em] {} & {}& \Ext^1_\cE(X, P \oplus \Sigma^{s} X) \arrow[d] \\
        {} &[-5em] {} & {}& 0
    \end{tikzcd}
\end{equation}
where the second column is exact. Now, $m_s$ and $n_{s+1}$ are isomorphisms by \ref{useful lemma} \eqref{composition is iso}, hence $f$ is injective, proving the claim. Namely, that $I_{s+1} \Dtensor_{\Lambda_s} \cE(X, P \oplus \Sigma^{s} X)$ has cohomology only in degree zero. Truncating in the category of bimodules 
\[ I_{s+1} \Dtensor_{\Lambda_s} \cE(X, P \oplus \Sigma^{s} X) \cong I_{s+1} \otimes_{\Lambda_s} \cE(X, P \oplus \Sigma^{s} X). \]

We next show that 
\[  I_{s+1} \otimes_{\Lambda_s} \cE(X, P \oplus \Sigma^{s} X) \cong \cE(X, P \oplus \Sigma^{s+1} X). \]
Since $X$ is $(-t+2)$-rigid in $\cD$, $\Ext^1_\cD(X, P \oplus \Sigma^{s} X)$ vanishes for $0 \leq s \leq t-3$, which implies that $\cE(X, p_{s+1})$ is an epi by \ref{frob-stable ext corresp}. Therefore, $\Ext^1_\cE(X, \oplus \Sigma^{s} X)$ vanishes for $0 \leq s \leq t-3$. That is, $m_{s+1}$ is epi. Since $m_s$ and $n_{s+1}$ are isomorphisms, the snake lemma implies that $m_{s+1}$ is a mono, and, thus, an isomorphism. In fact,  $m_{s+1}$ is a bimodule morphism, and so
\[  I_{s+1} \Dtensor_{\Lambda_s} \cE(X, P \oplus \Sigma^{s} X) \cong \cE(X, P \oplus \Sigma^{s+1} X) \]
as bimodules whenever $1 \leq s+1 \leq t-2$.

We may thus conclude that 
 \[ I_{k} \Dtensor_{\Lambda_{k-1}} I_{k-1} \Dtensor_{\Lambda_{k-2}} \hdots \Dtensor_{\Lambda_{1}} I_{1} \cong \cE(X, P \oplus \Sigma^{k} X) \]
 for $1 \leq k \leq t-2$. In particular, 
 \begin{align*} 
     I_{t-2} \Dtensor_{\Lambda_{t-1}} I_{t-1} \Dtensor_{\Lambda_{k-2}} \hdots \Dtensor_{\Lambda_{1}} I_{1} \cong \cE(X, P \oplus \Sigma^{t-2} X) &\qedhere
 \end{align*} 
\end{proof}

\begin{prop} \label{I t-1 times is add}
    Suppose that $2 < t \leq d$. Then, 
    \[ \F \cdot \Psi_1^{t-1} \cong \cT \]
    where $\F$ is the Morita equivalence of \ref{morita equivalence}.
\end{prop}
\begin{proof}
    Since 
    \[\F \cdot \Psi^{t-1}_1 \cong \Rderived{\Hom_\Lambda}( \cE(P \oplus \Sigma^{t-1} X, X) \Dtensor_{\Lambda_{t-1}} I_{t-1} \Dtensor_{\Lambda_{t-2}} I_{t-2} \Dtensor_{\Lambda_{t-1}} \hdots \Dtensor_{\Lambda_{1}} I_{1}, -), \]
   it suffices to prove that
    \begin{equation} \label{bimod iso}
        [\proj \cE] \cong \cE(P \oplus \Sigma^{t-1} X, X) \Dtensor_{\Lambda_{t-1}} I_{t-1} \Dtensor_{\Lambda_{t-2}} I_{t-2} \Dtensor_{\Lambda_{t-1}} \hdots \Dtensor_{\Lambda_{1}} I_{1}
    \end{equation} 
    as $\Lambda$-bimodules.

 By \eqref{tensor t-2 times}, this reduces to proving that 
\begin{align} \label{almost projE}
    [\proj \cE] \cong \cE(P \oplus \Sigma^{t-1} X, X) \Dtensor_{\Lambda_{t-1}} I_{t-1} \Dtensor_{\Lambda_{t-2}} \cE(X, P \oplus \Sigma^{t-2} X)
\end{align}
 Consider the diagram \eqref{diagram I t-1 proof} for $s = t-2$. In this case, the group $\Ext^1_\cE(X, P \oplus \Sigma^s X)$ does not vanish, but the second column is still exact and the morphisms $m_s$ and $n_{s+1}$ are still isomorphisms. So, it follows that $I_{t-1} \Dtensor_{\Lambda_{t-2}}  \cE(X, P \oplus \Sigma^{t-2} X)$ is still concentrated in degree zero. That is, 
\[ I_{t-1} \Dtensor_{\Lambda_{t-2}}  \cE(X, P \oplus \Sigma^{t-2} X) \cong I_{t-1} \otimes_{\Lambda_{t-2}}  \cE(X, P \oplus \Sigma^{t-2} X).  \]

Moreover, since $\add_\cE(P \oplus \Sigma^{t-1} X) = \add_\cE(X)$, the bimodule $\cE(P \oplus \Sigma^{t-1} X, X)$ is projective on either side, and so the right hand side of \eqref{almost projE} is isomorphic to 
\[ M \colonequals \cE(P \oplus \Sigma^{t-1} X, X) \otimes_{\Lambda_{t-1}} I_{t-1} \otimes_{\Lambda_{t-2}}  \cE(X, P \oplus \Sigma^{t-2} X) \]
Furthermore, since  $\add_\cE(P \oplus \Sigma^{t-1} X) = \add_\cE(X)$, by \ref{useful lemma} \eqref{composition is iso}  the composition map induces an isomorphism
\[M \cong \cE(P \oplus \Sigma^{t-2} X, X) \otimes_{\Lambda_{t-2}} \cE(X, P \oplus \Sigma^{t-2} X).  \]
It thus remains to show that 
\[ [\proj \cE] \cong \cE(P \oplus \Sigma^{t-2} X, X) \otimes_{\Lambda_{t-2}} \cE(X, P \oplus \Sigma^{t-2} X) \] 
as $\Lambda$-bimodules. 

By applying $\cE( P \oplus \Sigma^{t-2} X, -)$ then $(-) \otimes_{\Lambda_{t-2}} \cE(X, P \oplus \Sigma^{t-2} X)$ to \eqref{conflations tilting I proof} for $k = 0$, and applying $\cE( P \oplus \Sigma^{t-2} X, -)$  to \eqref{conflations tilting I proof} for $k = 0$, we obtain the diagram
\[
        \begin{tikzcd}[column sep = 4em]
        {} &[-5em] {} & {} &[-5em] 0 \arrow[d] \\
        \cE(P \oplus \Sigma^{t-2} X, P \oplus \Sigma^{-1} X) \otimes_{\Lambda_{t-2}} \cE(X, P \oplus \Sigma^{t-2} X) \arrow[d] &[-5em] {} \arrow[r, "{h_{-1}}"] & {}& \cE(X, P \oplus \Sigma^{-1} X) \arrow[d, "{\cE(X, \iota_0)}"] \\
        \cE(P \oplus \Sigma^{t-2} X, Q_{0}) \otimes_{\Lambda_{t-2}} \cE(X, P \oplus \Sigma^{t-2} X) \arrow[d] &[-5em] {} \arrow[r, "{g_0}"] & {}& \cE(X, Q_{0}) \arrow[d, "{\cE(X, p_{0})}"] \\
       \cE(P \oplus \Sigma^{t-2} X, X) \otimes_{\Lambda_{t-2}} \cE(X, P \oplus \Sigma^{t-2} X) \arrow[d] &[-5em] {} \arrow[r, "{h_0}"] & {}& \cE(X, X) \arrow[d, "q_0"] \\
        0 &[-5em] {} & {}& \Ext^1_\cE(X, P \oplus \Sigma^{-1} X) \arrow[d] \\
        {} &[-5em] {} & {}& 0
    \end{tikzcd}
\]
of $\Lambda$-bimodules, where the columns are exact and the horizontal maps are the composition morphisms (Here, the first column is exact because $X$ is $(t-2)$-rigid by \ref{perfect on both sides}). 

Since $ \add_\cE(P \oplus \Sigma^{t-1} X) = \add_\cE(X)$, then $\add_\cE(P \oplus \Sigma^{t-2} X) = \add_\cE(P \oplus \Sigma^{-1} X)$. Hence, \ref{useful lemma} \eqref{composition is iso} implies that $h_{-1}$ and $g_0$ are isomorphisms. It follows from the snake lemma that $h_0$ is a mono. By commutativity of the diagram and exactness of its columns,
\[ \cE(P \oplus \Sigma^{t-2} X, X) \otimes_{\Lambda_{t-2}} \cE(X, P \oplus \Sigma^{t-2} X) \cong \Img h_0 \cong \Img \cE(X, p_0) \cong \ker q_0 \]
By \eqref{spliced p-res 0} and \eqref{c is Acon}, $q_0$ is the quotient map $q_0 \colon \Lambda \to \Lambda_\con$. Hence, 
\[ \cE(P \oplus \Sigma^{t-2} X, X) \otimes_{\Lambda_{t-2}} \cE(X, P \oplus \Sigma^{t-2} X) \cong [\proj \cE] \]
as $\Lambda$-bimodules, which concludes the proof.  
\end{proof}

\subsubsection{Establishing that $\cT$ is an equivalence}

\begin{theorem} \label{autoequivalence theorem}
    Under \cref{tilting set up}, the functor 
    \[ \cT = \Rderived{\Hom_\Lambda}([\proj \cE], -) \colon \Dcat(\Lambda) \to \Dcat(\Lambda) \]
    is an equivalence which preserves the bounded derived category of finitely generated modules. 
\end{theorem}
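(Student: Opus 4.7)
The plan is to exhibit $\cT$ as the composition $\Psi_0 \cdot \Phi_0$ of two equivalences coming from tilting bimodules, and then invoke the standard fact that tilting equivalences restrict to the bounded derived categories of finitely generated modules.

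First, by the derived tensor-hom adjunction (associativity of derived Hom under a bimodule tensor product), for any $M \in \Dcat(\Lambda)$ there is a functorial isomorphism
\[
\Rderived{\Hom_\Lambda}\bigl(I_0 \Dtensor_{\Lambda_{-1}} D_0, M\bigr) \;\cong\; \Rderived{\Hom_{\Lambda_{-1}}}\bigl(I_0, \Rderived{\Hom_\Lambda}(D_0, M)\bigr) \;=\; \Psi_0 \cdot \Phi_0(M).
\]
Combining this with the $\Lambda$-bimodule isomorphism $I_0 \Dtensor_{\Lambda_{-1}} D_0 \cong [\proj \cE]$ established in \ref{I times D is add} yields a natural isomorphism $\cT \cong \Psi_0 \cdot \Phi_0$ of endofunctors of $\Dcat(\Lambda)$.

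Next, \ref{increasing tilting one} and \ref{decreasing tilting one} show that $I_0$ is a tilting $\Lambda$-$\Lambda_{-1}$-bimodule and $D_0$ is a tilting $\Lambda_{-1}$-$\Lambda$-bimodule. By the standard theory of tilting bimodules (Rickard-Keller-Miyashita, e.g. \cite[1.8]{mi} which has been invoked already), the functors $\Psi_0$ and $\Phi_0$ are equivalences of the unbounded derived categories and further restrict to equivalences $\Db(\modu \Lambda) \to \Db(\modu \Lambda_{-1})$ and $\Db(\modu \Lambda_{-1}) \to \Db(\modu \Lambda)$ respectively; this is because a (biperfect) tilting bimodule has a bounded projective resolution of finitely generated modules on each side, so $\Rderived{\Hom}$ against it preserves boundedness and finite generation. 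Composing, $\Psi_0 \cdot \Phi_0$ is an autoequivalence of $\Dcat(\Lambda)$ which restricts to an autoequivalence of $\Db(\modu \Lambda)$. Transporting this along the natural isomorphism of the previous paragraph gives the desired conclusion for $\cT$.

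The main subtlety is ensuring that the formal manipulation $\cT \cong \Psi_0 \cdot \Phi_0$ is valid at the level of complexes (and not merely on objects): one must know that $I_0 \Dtensor_{\Lambda_{-1}} D_0$ is well-defined as a derived bimodule and that the associativity isomorphism is natural in $M$. This is automatic once one notes (as in the proof of \ref{I times D is add}) that the derived tensor product collapses to the underived tensor product and already carries its natural $\Lambda$-bimodule structure, so no further work is needed. With this in hand, everything else is formal: the isomorphism $\cT \cong \Psi_0 \cdot \Phi_0$ transfers both the equivalence property and the preservation of $\Db(\modu \Lambda)$ from $\Psi_0 \cdot \Phi_0$ to $\cT$, completing the proof.
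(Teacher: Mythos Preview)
Your proof is correct and follows essentially the same approach as the paper: both obtain the equivalence by identifying $\cT$ with $\Psi_0 \cdot \Phi_0$ via the bimodule isomorphism of \ref{I times D is add} and the tilting properties of $I_0$ and $D_0$. The only minor difference is in the argument for preservation of $\Db(\modu \Lambda)$: you transport this through the tilting factorization, whereas the paper argues directly that $[\proj \cE]$ is biperfect using the exact sequence $0 \to [\proj \cE] \to \Lambda \to \Lambda_\con \to 0$ and \ref{perfect on both sides}; both arguments are valid and equally short.
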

\begin{proof}
    The fact that $\cT$ is an equivalence follows by applying \ref{I times D is add} or \ref{I t-1 times is add}, and the fact that the bimodules $I_k$ and $D_k$ are tilting by \ref{Ik Dk are tilting}. 

    To see that $\cT$ restricts to an equivalence 
    \[ \cT = \Rderived{\Hom_\Lambda}([\proj \cE], -) \colon \Db(\modu \Lambda) \to \Db(\modu \Lambda), \]
    observe that $\Lambda_\con$ is biperfect as a $\Lambda$-module via \ref{perfect on both sides}. Therefore, the $\Lambda$-bimodule exact sequence
    \[ 0 \to [\proj \cE] \to \Lambda \to \Lambda_\con \to 0 \]
    implies that $[\proj \cE]$ is also biperfect. Thus, $\cT$ preserves bounded complexes. The fact that it preserves complexes of finitely generated modules is clear, since $\Lambda$ is module finite over a noetherian ring.
\end{proof}

\subsection{Spherical twist and cotwist} \label{tws and ctws}

    This section assumes \cref{tilting set up} throughout. We will describe the cotwist around the restriction of scalars functor $F$ associated to $\pi \colon \Lambda \to \Lambda_\con$. As a result, we will establish that $F$ is spherical. 
    
    Applying the results in \cref{twist res scalars} to the canonical quotient morphism $\pi \colon \Lambda \to \Lambda_\con$ induces the following diagram of adjoint pairs.
    \begin{equation}
    \begin{tikzcd}[column sep=10em]
        \Dcat(\Lambda_\con) \arrow[rr, "F  = - \otimes^{\Lderived{}} {}_{\Lambda_\con}  \Lambda_\con {}_\Lambda \cong \Rderived{\Hom_{\Lambda_\con}}{({}_\Lambda \Lambda_\con {}_{\Lambda_\con}, -)}" description, ""{name=F,above}] & {} & \arrow[ll, "F^{\mathrm{RA}} = \Rderived{\Hom_\Lambda}{( {}_{\Lambda_\con} \Lambda_\con {}_\Lambda, -)}"{name=RA}, bend left=12] \arrow[ll, "F^{\mathrm{LA}}  = - \otimes^{\Lderived{}} {}_\Lambda \Lambda_\con {}_{\Lambda_\con}"{name=LA}, swap, bend right=12] \Dcat(\Lambda)
        \arrow[phantom, from=LA, to=F, "\scriptstyle\boldsymbol{\perp}"description]
        \arrow[phantom, from=F, to=RA, "\scriptstyle\boldsymbol{\perp}"description]
    \end{tikzcd}
\end{equation}
    
    \begin{lemma} \label{F restricts}
    Under \cref{tilting set up}, the functors $F$, $F^{\LA}$ and $F^{\RA}$ preserve bounded complexes of finitely generated modules.
    \end{lemma}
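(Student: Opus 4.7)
The plan is to handle the three functors in turn, in each case reducing to the biperfectness of ${}_{\Lambda_\con}\Lambda_\con{}_\Lambda$ established in \ref{perfect on both sides} together with the module-finiteness of $\Lambda$ over the noetherian local ring $\cR$. The functor $F$ is immediate: since $\pi\colon \Lambda \to \Lambda_\con$ is surjective, $F = -\Dtensor_{\Lambda_\con}\Lambda_\con{}_\Lambda$ is just the underived restriction of scalars, and any $\Lambda_\con$-generating set generates over $\Lambda$ via $\pi$, so $F$ trivially preserves $\Db(\modu)$.

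For $F^{\LA} = -\Dtensor_\Lambda \Lambda_\con$, I would invoke \ref{perfect on both sides} to fix a finite resolution $P^\bullet \to {}_\Lambda\Lambda_\con$ by finitely generated projective left $\Lambda$-modules, so that $F^{\LA}(M)$ is represented by the bounded total complex of $M\otimes_\Lambda P^\bullet$ for any $M\in\Db(\modu \Lambda)$. Each term $M^j\otimes_\Lambda P^i$ is finitely generated over $\cR$, since both factors are module finite over the noetherian ring $\cR$; hence every cohomology group is finitely generated over $\cR$, and \emph{a fortiori} finitely generated as a $\Lambda_\con$-module (any $\cR$-generating set automatically generates over $\Lambda_\con$, since $\cR$ acts via the structure map $\cR \to \Lambda_\con$). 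An entirely parallel argument, using a finite resolution $Q^\bullet \to \Lambda_\con{}_\Lambda$ by finitely generated projective right $\Lambda$-modules and the complex $\Hom_\Lambda(Q^\bullet, N)$ in place of $M\otimes_\Lambda P^\bullet$, handles $F^{\RA} = \Rderived{\Hom_\Lambda}(\Lambda_\con, -)$.

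The only mild obstacle is bookkeeping around the $\Lambda_\con$-module structure on the output: the $\Lambda_\con$-action on each cohomology group should be induced from the bimodule structure of $\Lambda_\con$, not from a purely one-sided projective resolution. This is handled by observing that the right $\Lambda_\con$-action on $\Lambda_\con$ survives into each $P^i$ and $Q^i$ functorially (since tensoring or homming with $\Lambda_\con$ is naturally a functor of bimodules) and so descends to the cohomology, and then the previous paragraph supplies finite generation. Thus all three functors preserve $\Db(\modu)$.
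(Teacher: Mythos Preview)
Your argument is correct and rests on the same key input as the paper: biperfectness of $\Lambda_\con$ from \ref{perfect on both sides}. The paper simply invokes this and cites \cite[6.7]{DW1}, whereas you spell the mechanism out directly. Your second paragraph already contains the complete argument: compute via a finite one-sided projective resolution of $\Lambda_\con$, observe that each term of the resulting total complex is finitely generated over the noetherian base $\cR$, and then use that finite generation over $\cR$ implies finite generation over $\Lambda_\con$ (since $\cR \to \Lambda_\con$ is a ring map).

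Your third paragraph, however, is both unnecessary and slightly off. A one-sided projective resolution $P^\bullet \to {}_\Lambda\Lambda_\con$ does \emph{not} in general carry a termwise right $\Lambda_\con$-action, so the claim that ``the right $\Lambda_\con$-action survives into each $P^i$'' is not literally correct. The point you actually need, and already made, is that the forgetful functor $\Db(\modu\Lambda_\con) \to \Db(\modu\cR)$ is exact and reflects both boundedness and finite generation of cohomology; hence it suffices to check these properties after forgetting to $\cR$, where the one-sided resolution computes the answer. You should drop the third paragraph.
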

    \begin{proof}
    Since $\Lambda_\con$ is biperfect by \ref{perfect on both sides}, this is a consequence of \cite[6.7]{DW1}. 
    \end{proof}
    
    To describe the cotwist, we require the following lemma.

    \begin{lemma} \label{K}
    Under \cref{tilting set up}, $K \colonequals F^{\LA} \cdot F (\Lambda_\con) = {}_{\Lambda_\con} \Lambda_\con {}_\Lambda \otimes^{\Lderived{}} {}_\Lambda \Lambda_\con {}_{\Lambda_\con}$ has cohomology concentrated in degrees $0$ and $-t$. 
\end{lemma}
\begin{proof}
    Let $B = \Lambda_\con {}_\Lambda \otimes^{\Lderived{}} {}_\Lambda \Lambda_\con {}_{\Lambda_\con}$, which is just $K$ with the left module structure forgotten. Then, $B$ can be calculated by applying $ - \otimes {}_\Lambda \Lambda_\con {}_{\Lambda_\con}$ term-wise to the projective resolution obtained by summing \eqref{pres ei acon t} over all $i \in I$ with the correct multiplicities $a_i$:
\begin{align*}
    0 \to \cE(X, Q_{t}) \oplus \cE(X, Y_i) \to \cE(X, Q_{t-1}) \to \hdots \to \cE(X, Q_{1}) \to \cE(X, X') \to \Lambda_\con \to 0.
\end{align*}
    Since each $Q_j \in \add P$ for $1 \leq j \leq t$, then $\cE(X, Q_j) \in \add e_0 \Lambda$. That is, the tensor product $\cE(X, Q_j) \otimes {}_\Lambda \Lambda_\con {}_{\Lambda_\con}$ is a summand of $(e_0 \Lambda)^{k_j} \otimes {}_\Lambda \Lambda_\con {}_{\Lambda_\con}$ for some $k_j \in \N$. However, $e_0$ annihilates $\Lambda_\con$ so that 
    $(e_0 \Lambda \otimes {}_\Lambda \Lambda_\con {}_{\Lambda_\con})^{k_j} = 0.$
    Hence, $B^{j} = 0$ for $i \neq 0, -t$. Since $B$ and $K$ are quasi-isomorphic as $\Lambda_\con$-modules, it follows that
    \[ \cohom{j}(K) = \cohom{j}(B) = 0 \]
    for $i \notin \{-t, 0\}$.
\end{proof}

\begin{theorem} \label{summary theorem}
     Under \cref{tilting set up}, the following hold. 
     \begin{enumerate}
         \item The functor \label{T is twist}
         \begin{align*}
             \cT \colonequals \Rderived{\Hom}_\Lambda([\proj \cE], -) \colon \Dcat(\Lambda) \to \Dcat(\Lambda) 
         \end{align*}
         is the twist around $F$.  Moreover, the restriction 
         \[ \cT \colonequals \Rderived{\Hom}_\Lambda([\proj \cE], -) \colon \Db(\modu \Lambda) \to \Db(\modu \Lambda) \]
        is the twist around $F$ restricted to $\Db(\modu \Lambda_\con)$.
         \item The functor \label{C is cotwist}
        \begin{align*}
            \cC & =\Rderived{\Hom}_{\Lambda_\con}(\Tor^\Lambda_{t}(\Lambda_\con, \Lambda_\con), -)[-t-1] \\
            & \cong \Hom_{\Lambda_\con}( \cD(X, \Sigma^{-t+1} X), -)[-t-1],
        \end{align*} 
        is the cotwist around $F$.  Hence, it is an equivalence. Moreover, the restriction 
         \[ \cC =\Rderived{\Hom}_{\Lambda_\con}(\Tor^\Lambda_{t}(\Lambda_\con, \Lambda_\con), -)[-t-1] \colon \Db(\modu \Lambda_\con) \to \Db(\modu \Lambda_\con). \]
         is the twist around $F$ restricted to $\Db(\modu \Lambda_\con)$.
        \item The functor $F$ is spherical. \label{F is spherical}
     \end{enumerate}
\end{theorem}
\begin{proof}
        (1) The fact that $\cT$ is the twist is immediate from \ref{twist t res}. The restriction statement follows from \ref{F restricts}.

        (2) That $\cC =\Rderived{\Hom}_{\Lambda_\con}({}_{\Lambda_\con} \Tor^\Lambda_{t}(\Lambda_\con, \Lambda_\con) {}_{\Lambda_\con}, -)[-t-1]$ is an immediate consequence of \ref{K} and \ref{ctw res cohom in 2 degrees}. Further, the isomorphism $\cC \cong \Rderived{\Hom}_{\Lambda_\con}({}_{\Lambda_\con} \cD(X, \Sigma^{-t+1} X) {}_{\Lambda_\con}, -)[-t-1]$ follows from the $\Lambda_\con$-bimodule isomorphism $\Tor^\Lambda_{t}(\Lambda_\con, \Lambda_\con) \cong \cD(X, \Sigma^{-t+1} X)$ due to \cite[3.1(3)]{DUG}.

       To see that $\cC$ is an equivalence, notice that $\add_\cD X = \add_\cD \Sigma^{-t+1} X$, and so $ \cD(X, \Sigma^{t-1} X)$ is projective on either side and, moreover, $\add \cD(X, \Sigma^{t-1} X) = \add \Lambda_\con$. Hence, $\cD(X, \Sigma^{t-1} X)$ is a projective generator in $\modu \Lambda_\con$. It immediately follows from Morita theory that $\cC$ is an equivalence.

       (3) The cotwist is an equivalence by (2), and the twist is an equivalence by  \ref{autoequivalence theorem}. Hence, by the the $2$ out of $4$ property \ref{spherical criteria}, $F$ is spherical. 
\end{proof}

\begin{remark}
    The theorem \cite[3.1(3)]{DUG} quoted above is stated for the case where the algebra $\Lambda_\con = \uEnd_\cE(X)$ is the endomorphism algebra of a cluster tilting object in $\cE$. However, the proof works word for word using only the rigidity assumption.
\end{remark}

\begin{cor} \label{biperfect tor}
    Under \cref{tilting set up}, the $\Lambda_\con$-bimodule ${}_{\Lambda_\con} \Tor_t^\Lambda(\Lambda_\con, \Lambda_\con) {}_{\Lambda_\con}$ is projective as a right and left bimodule.
\end{cor}
\begin{proof}
    By the proof of \ref{summary theorem} \eqref{C is cotwist},  
     \[ \Tor^\Lambda_{t}(\Lambda_\con, \Lambda_\con) \cong \cD(X, \Sigma^{-t+1} X) \]
     Our assumptions imply that $\add_\cD X = \add_\cD \Sigma^{-t+1} X$, so that the statement follows.
\end{proof}

\begin{example} \label{nc twist first ex}
    Let $R$ be a commutative noetherian complete local Gorenstein ring with at worst isolated hypersurface singularities and $\dim R = 3$. Moreover, consider $M \in \CM R$ a maximal Cohen-Macaulay module with $\Ext^1_R(M, M) =0$, so that $\End_R(M) \in \CM R$. Assume further that $R$ is a summand of $M$.
    
    Write $\Lambda = \End_R(M)$.  Let $[\add R]$ be the ideal of $\Lambda$ consisting of maps $M \to M$ which factor through $\add R$, and  set $\Lambda_\con = \Lambda/[\add R] = \uEnd_R(M)$, $\Lambda_\con = \Lambda/[\add R] = \uEnd_R(M)$. We will consider the natural surjection $\pi \colon \Lambda \to \Lambda_\con$, and argue that it satisfies the assumptions of \ref{tilting set up}.

    Since $R$ is Gorenstein, then $\CM R$ is a Frobenius exact category satisfying \ref{frob set up}. The suspension functor of $\uCM R$ is the cosyzygy functor, which we denote as $\Omega^{-1}$. From our choice of $M$, it is straightforward to check that $M$ satisfies \ref{X setup}. Thus, it suffices that \ref{syz theorem}(2) holds. 

    Since $R$ is a hypersurface, then there is an isomorphism $\Omega^2 \cong \id$ of endofunctors of $\uCM R$ \cite[6.1]{Ei}. Thus, it suffices to show that $M$ is $-1$-rigid.
    
    Observe that $\Omega^2 \cong \id$ implies the isomorphism of $\Lambda_\con$-bimodules
    \[ \Ext_{\uCM R}^{-1}(M, M) = \uHom_R( M, \Omega^{-1} M) \cong \uHom_R( M, \Omega^{-1} \Omega^{2} M) = \uHom_R( M, \Omega M).  \]
   It follows from AR-duality (see e.g.\ \cite[\S 1]{BIKR} for more details) that
   \[ \uHom_R( M, \Omega M) = D \Ext^1_R(M, M) \]
   where the right hand side vanishes by choice of $M$.
    Thus, $M$ is $-1$-rigid and so \ref{syz theorem}(2) holds. 
    
     Whence, we may apply \ref{summary theorem}, and we may conclude that \ref{summary theorem} extends \cite[5.10]{DW1}. 
\end{example}

\section{Spherical twists induced by crepant contractions onto affine schemes} \label{crepant contractions affine section}

This section applies the theory developed in section \ref{spherical by frob} to obtain derived autoequivalences of schemes with at worst Gorenstein singularities. 

\subsection{Setting}

    In this section, we introduce the setting in which we will work. For this, it will be important to recall a handful of key definitions and well-known results. 

    \begin{definition} 
    Let $(\cX, \cO_\cX)$ be a ringed space. Recall that a complex of sheaves $\cF \in \Dcat(\qcoh \cX)$ is \textit{perfect} if there exists an open covering  $X = \bigcup_i U_i$ such that $\cF |_{U_i}$ for each $i$ is quasi-isomorphic to a bounded complex of sheaves which are summands of finite free $\cO_X |_{U_i}$-modules.
    \end{definition}

    \begin{definition} \label{tilting bundle def}
       Let $\cX$ be a Noetherian scheme. Recall that a \textit{tilting complex} on $\cX$ is a complex $\cV \in \Dcat(\qcoh \cX)$ such that
        \begin{enumerate}
            \item $\cV$ is perfect, 
            \item \label{generates tb} $\cV$ generates $\Dcat(\qcoh \cX)$ as a triangulated category with infinite direct sums,
            \item $\Hom_{\Dcat(\qcoh \cX)}(\cV, \cV[n]) = 0$ for all $n \neq 0$.
        \end{enumerate}
        If, moreover, $\cV$ is a vector bundle, then we say that $\cV$ is a \textit{tilting bundle}.
    \end{definition}

    \begin{prop} \label{equivalence by tilting}
        Let $\cX$ be a Noetherian scheme admitting a tilting complex $\cV$. Moreover, write $\Lambda = \End_{\Dcat(\qcoh \cX)}(\cV)$. Then, the functor
        \[ \Rderived{\Hom_{\Dcat(\qcoh \cX)}}(\cV, -) \colon \Dcat(\qcoh \cX) \to \Dcat(\Mod \Lambda) \]
        is an equivalence. If furthermore $\cX$ is quasi-projective, then the equivalence restricts to 
        \[ \Psi_{\cV} \colonequals \Rderived{\Hom_{\Dcat(\coh \cX)}}(\cV, -) \colon \Db(\coh \cX) \to \Db(\modu \Lambda) \]
    \end{prop}
    \begin{proof}
        The fact the $\Rderived{\Hom_{\Dcat(\qcoh \cX)}}(\cV, -)$ is an equivalence is \cite[7.6]{VdBH}. The fact that the equivalence restrict when $\cX$ is quasi-projective is proved in \cite[6.4, 6.11, 6.12]{Rou}. It is also a special case of \ref{nc derived functors bg}\eqref{res qcoh} below. 
    \end{proof}
    
    \begin{definition}
    \begin{enumerate}
        \item As in \cite[2.1]{DW4}, we say that a \textit{contraction} is a projective birational morphism $f \colon X \to Y$ between normal varieties over a field $k$ with $Y$ quasi-projective and $\Rderived{f_*} \cO_X = \cO_Y$. 
        \item In view of the definition above of a contraction, we define a \textit{complete local contraction} to be a projective birational morphism $f \colon X \to Y = \spec R$ between normal schemes over a field $k$ with $R$ complete local and $\Rderived{f_*} \cO_X = \cO_Y$. 
        \item Suppose that $X$ and $Y$ are Gorenstein schemes with canonical sheaves $\omega_X$ and $\omega_Y$, respectively. Recall that a (complete local) contraction $f \colon X \to Y$ is \textit{crepant} if $f^* \omega_Y \cong \omega_X$.
        \end{enumerate}
    \end{definition}
    
    In this section we will work within the following setup

    \begin{setup} \label{z local tilting setup}
        Let $f \colon X \to Y := \spec R$ be a crepant (complete local) contraction and assume that $X$ admits a tilting bundle $\cV$ containing $\cO_X$ as a summand. 
    \end{setup} 
    
\begin{notation}
    With $f$ as in \ref{z local tilting setup}, write $Z$ for the locus of points of $Y$ onto which $f$ is not an isomorphism. 
\end{notation}

  \begin{lemma}[{\cite[2.5]{DW4}}] \label{gorenstein setting}
        If $R$ is Gorenstein, and $f \colon X \to \spec R$ satisfies \ref{z local tilting setup}, then the natural map $\End_X(\cV) \to \End_R(f_* \cV)$ is an isomorphism. Moreover, $f_* \cV \in \CM R$. Therefore, there is an equivalence $\Psi_\cV \colon \Dcat(\qcoh X) \tow{\sim} \Dcat(\Mod \Lambda)$ where $\Lambda = \End_R(f_* \cV)$ is the endomorphism algebra of an object $f_* \cV \in \CM R$ in a Frobenius exact category.
    \end{lemma}

The following lemma will be useful to construct spherical twists on $\Db(\coh X)$. 

\begin{lemma} \label{spherical composed with equiv}
    Let $\cC$ and $\cD$ be triangulated categories which admit Morita enhancements $\cA$ and $\cB$, respectively. Let $S \in \Dcat(\cA \text{-} \cB)$ be biperfect, and suppose that there is an equivalence $E \colon \cB \to \cB'$ of DG-categories. Observe that one may view $E$  as a $\cB$-$\cB'$-bimodule and $E^{-1}$ as a $\cB'$-$\cB$-bimodule. 
    
    If $T$ is the twist around $S$, then $E^{-1} \Dtensor_\cB T \Dtensor_\cB E$ is the twist around $ES$. Moreover, if $C$ is the cotwist around $S$, then it is also the cotwist around $ES$. Hence, if $S$ is spherical, so is $ES$.
\end{lemma}
\begin{proof}
    As in \cite[2.1,2.2]{AL} there is an adjunction
    \[ (-) \Dtensor_\cA \ S \colon \Dcat(\cA \text{-} \cA) \to \Dcat(\cA \text{-} \cB) \dashv (-) \Dtensor_\cB \ R \colon \Dcat(\cA \text{-} \cB) \to \Dcat(\cA \text{-} \cA) \]
    which induces the Hom-space isomorphisms
    \[ \Phi_{(A, B)} \colon \Hom_{\Dcat(\cA \text{-} \cA)}(A, B \Dtensor_\cB R) \to \Hom_{\Dcat(\cA \text{-} \cB)}(A \Dtensor_\cA  S , B). \]
    that are natural for all $A \in \Dcat(\cA \text{-} \cA)$ and $B \in \Dcat(\cA \text{-} \cB)$. Using these isomorphisms, it is easy to check that there are adjoint pairs
    \[ (-) \Dtensor_\cA ES \colon \Dcat(\cA \text{-} \cA) \to \Dcat(\cA \text{-} \cB') \dashv (-) \Dtensor_{\cB'} E^{-1} \Dtensor_\cB R \colon \Dcat(\cA \text{-} \cB') \to \Dcat(\cA \text{-} \cA). \]
    To see this, observe that for each  $A \in \Dcat(\cA \text{-} \cA)$ and $C \in \Dcat(\cA \text{-} \cB')$, there are Hom-set isomorphisms
    \[ \Psi_{A, C} \colon \Hom_{\Dcat(\cA \text{-} \cA)}(A, C \Dtensor_{\cB'} E^{-1} \Dtensor_{\cB} R) \tow{\Phi_{(A, C \Dtensor_{\cB'} E^{-1})} \Dtensor_\cB E} \Hom_{\Dcat(\cA \text{-} \cB')}(A \Dtensor_\cA S \Dtensor_\cB E, C) \] 
    which are natural in $A$ and $C$. Let $RE^{-1} \colonequals  E^{-1} \Dtensor_\cB R$.
    
    Hence, as in \ref{funct cones}, the twist $T$ and $T'$ around $S$ and $ES$  are defined (up to isomorphism) as 
    \begin{align*}
        & T \colonequals \cone(\tr \colon SR \to \cB) \in \Dcat(\cB\text{-}\cB), \\
        & T' \colonequals \cone(\tr' \colon E^{-1} \Dtensor_\cB SR \Dtensor_{\cB} E \to \cB') \in \Dcat(\cB'\text{-}\cB'),
    \end{align*}
    respectively. Here,
    \begin{align*}
        \tr \colonequals \epsilon_{\cB}, \text{ and } \tr' \colonequals \epsilon'_{\cB'}
    \end{align*}
    where $\epsilon$ and $\epsilon'$ are the counits of the adjunctions $(- \Dtensor_\cA \ S,- \Dtensor_\cA \ R )$ and $(- \Dtensor_\cA \ ES, - \Dtensor_\cA \ RE^{-1} )$, respectively. 

    Observe, then, that if $\tr' = E^{-1} \Dtensor_\cB \ \tr \ \Dtensor_\cB \ E$, then the triangle 
    \[ SR \tow{\tr}\cB \to T \to^+ \]
    in $\Dcat(\cB \text{-} \cB)$ induces the triangle
    \[ E^{-1} \Dtensor_\cB SR \Dtensor_{\cB} E \tow{\tr'} \cB' \to E^{-1} \Dtensor_\cB T \Dtensor_{\cB} E \to^+ \]
    in $\Dcat(\cB \text{-} \cB)$, from which we may conclude that $T' \cong E^{-1} \Dtensor_\cB T \Dtensor_{\cB} E$, as required. Thus, it suffices to show that $\epsilon'_{\cB'} =  E^{-1} \Dtensor_{\cB} \epsilon_\cB  \Dtensor_{\cB} E$.
    
    The counit at $\cB'$ $\epsilon'_{\cB'}$ is specified by 
    \begin{align*}
        \epsilon'_{\cB'}&  = \Psi_{(\cB' \ \Dtensor_{\cB'} \ E^{-1} \Dtensor_\cB \ R, \ \cB')}(1_{\cB' \ \Dtensor_{\cB'} \ E^{-1} \Dtensor_\cB \ R}) \\
        & = \Psi_{(E^{-1} \ \Dtensor_\cB \ R, \ \cB')}(1_{E^{-1} \ \Dtensor_{\cB} \ R}) \\
        & = (\Phi_{(E^{-1} \ \Dtensor_\cB \ R, \ E^{-1})}(1_{E^{-1} \ \Dtensor_{\cB} \ R}) \Dtensor_\cB  E \\
        & = \epsilon_{E^{-1}} \Dtensor_{\cB} E. 
        \end{align*}
    However, it follows from \cite[p.12]{AL} that $\epsilon_{E^{-1}} = E^{-1} \Dtensor_{\cB} \epsilon_\cB$. Hence, 
    \[\epsilon'_{\cB'} = E^{-1} \Dtensor_{\cB} \epsilon_\cB  \Dtensor_{\cB} E \]
    as required. 
    
    The fact that the cotwist is the same follows from $\act = \act'$. 
    \end{proof}
\subsection{Complete local setting}

Assume now that $f \colon X \to Y = \spec \cR$ is a complete local contraction as in \ref{crepant setup} with $(\cR, \mathfrak{m})$ a complete local ring. In this context, let $\Lambda = \End_\cR(f_* \cP)$ and write $\Lambda_\con = \uEnd_\cR(f_* \cP)$.

\begin{remark} \label{properties of fP}
    Since $Y$ is affine and $f$ is crepant, it follows from \cite[2.5(1)]{DW4} that $f_* \cP \in \CM \cR$.
\end{remark}

\begin{lemma} \label{alg conditions hold}
   The category $\cE = \CM \cR$ satisfies \cref{frob set up} and the module $f_* \cP$ satisfies \cref{X setup}. Moreover, $\Lambda$ is a Gorenstein $\cR$-order (i.e.\@ there exists an isomorphism $\Lambda_\con \otimes \omega_\Lambda \cong \Lambda_\con$ of $\Lambda$-bimodules).
\end{lemma}
\begin{proof}
    Since $\cR$ is a complete local Gorenstein ring, $\CM \cR$ is a Krull-Schmidt Frobenius category with $\proj \cE = \add \cR$. It follows that $\CM \cR$ satisfies \ref{frob set up}. 

    By the definition of a relative tilting bundle, $\cP = \cO_X \oplus \cP_0$. Since $\CM \cR$ is Krull-Schmidt, $f_* \cP$ isomorphic to $R \oplus Q \oplus \bigoplus_{i = 1}^n M_i^{a_i}$ where $Q \in \add R$, $a_i \in \N$ with $a_i > 0$, and $M_i$ are pairwise non-isomorphic indecomposable modules in $\CM \cR$ which are not projective. It follows that $M_i$ are also indecomposable in $\uCM \cR$. Hence, $f_* \cP$ satisfies \ref{X setup}.

    Finally, we will argue that $\omega_\Lambda \cong \Lambda$ as $\Lambda$-bimodules. Well, since $f$ is crepant then, by \cite[4.8]{IW2}, $\Lambda = \End_\cR(f_* \cP) \in \CM \cR$. Because of this and because Cohen-Macaulay modules are reflexive, \cite[2.22(2)]{IW1} and \cite[3.8 (1)$\Rightarrow$(3)]{IR} imply that $\omega_\Lambda \cong \Lambda$ as $\Lambda$-bimodules.
\end{proof}

\begin{prop} \label{complete local equiv}
    Suppose that $\Lambda_\con$ is perfect over $\Lambda$ and $t$-relatively spherical for some $t \in \Z$ with $2 \leq t \leq d$. Then, $[\add \cR]$ is a tilting $\Lambda$-bimodule. Whence, $X$ admits the derived autoequivalence
    \begin{align}
       \Phi_{\cR} \colonequals \Psi_{\cP}^{-1} \cdot \Rderived{\Hom}_\Lambda([\add \cR], -) \cdot \Psi_{\cP} \colon \Db(\coh X) \tow{\sim} \Db(\coh X) 
    \end{align}
\end{prop}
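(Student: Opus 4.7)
The key observation is that $[\add \cR]$ is precisely $[\proj\cE]$ in the Frobenius setting $\cE = \CM \cR$, so this proposition is essentially a geometric reinterpretation of \ref{autoequivalence theorem}. The plan is therefore to verify that the hypotheses of \cref{tilting set up} are met, invoke the algebraic machinery already developed, and then transport the autoequivalence through $\Psi_\cP$.

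\textbf{Step 1: Verify \cref{tilting set up}.} By \ref{alg conditions hold}, the category $\cE = \CM \cR$ satisfies \cref{frob set up} and $X = f_*\cP$ satisfies \cref{X setup}. The assumption that $\Lambda_\con$ is perfect over $\Lambda$ and is $t$-relatively spherical with $2 \leq t \leq d$ is exactly condition \eqref{spherical} of \ref{syz theorem}. Hence the equivalent condition \eqref{finite order} also holds, and in particular $X$ satisfies the conditions of \cref{tilting set up}.

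\textbf{Step 2: $[\add \cR]$ is tilting as a $\Lambda$-bimodule.} Under \cref{tilting set up}, \ref{Ik Dk are tilting} shows that $I_0$ and $D_0$ are tilting bimodules (a $\Lambda$-$\Lambda_{-1}$-bimodule and a $\Lambda_{-1}$-$\Lambda$-bimodule respectively). By \ref{I times D is add}, there is a $\Lambda$-bimodule isomorphism $I_0 \Dtensor_{\Lambda_{-1}} D_0 \cong [\proj \cE] = [\add \cR]$. Since a derived tensor product of tilting bimodules is again tilting (the composition $\Phi_0 \cdot \Psi_0$ is an equivalence, and its left/right adjoints provide the required tilting structure), it follows that $[\add \cR]$ is a tilting $\Lambda$-bimodule.

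\textbf{Step 3: Assemble the autoequivalence.} By \ref{autoequivalence theorem}, the functor $\cT = \Rderived{\Hom}_\Lambda([\add \cR], -)$ is an autoequivalence of $\Dcat(\Lambda)$ which restricts to an autoequivalence of $\Db(\modu \Lambda)$. Since $Y = \spec \cR$ is affine, the sheaf of algebras $\cA = f_* \cEnd_X(\cP)$ corresponds to the ring $\Lambda$, so the relative tilting equivalence takes the form
\[
\Psi_\cP \colon \Db(\coh X) \tow{\sim} \Db(\modu \Lambda).
\]
Composing with $\cT$ on one side and $\Psi_\cP^{-1}$ on the other produces the autoequivalence
\[
\Phi_\cR = \Psi_\cP^{-1} \cdot \Rderived{\Hom}_\Lambda([\add \cR], -) \cdot \Psi_\cP \colon \Db(\coh X) \tow{\sim} \Db(\coh X),
\]
as required.

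\textbf{Main obstacle.} The substantive content is entirely contained in Steps 1 and 2: one must carefully match the Frobenius-categorical data (the projective ideal $[\proj \cE]$, the periodicity of $\Sigma$ up to $\add$) with the geometric data, and verify that Lemma \ref{alg conditions hold} really does supply every hypothesis of \cref{tilting set up}. Once this dictionary is in place, everything else is a formal invocation of results from \cref{setup and fun} and \cref{spherical by frob}; the tilting bimodule statement about $[\add \cR]$ follows without further work from \ref{Ik Dk are tilting} together with \ref{I times D is add}.
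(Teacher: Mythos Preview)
Your proof is correct and follows essentially the same approach as the paper. The paper's own argument is terser---it simply observes that \ref{alg conditions hold} places us in \cref{tilting set up} and then cites \ref{autoequivalence theorem} directly for both the tilting claim and the equivalence---whereas you unpack Step~2 by explicitly invoking \ref{Ik Dk are tilting} and \ref{I times D is add}; but this is exactly the content of the proof of \ref{autoequivalence theorem}, so the two arguments are the same in substance.
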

\begin{proof}    
    As $\Lambda_\con$ is perfect and relatively spherical, the pair $\cE = \CM \cR$ and $X = f_* \cP$ satisfies \cref{tilting set up}, by \ref{alg conditions hold}. Thus, \ref{autoequivalence theorem} implies that $[\add \cR]$ is a tilting bimodule and $\Phi_{\cR}$ is an equivalence.
\end{proof}

In order to describe the cotwist, the following lemma will be useful. 

\begin{lemma} \label{tor ext duality}
        Suppose that $\Lambda_\con$ is perfect over $\Lambda$ and $d$-relatively spherical. Then, there is a $\Lambda_\con$-bimodule isomorphism
        \[ {}_{\Lambda_{\con}} \Tor^\Lambda_t(\Lambda_{\con}, \Lambda_{\con}) {}_{\Lambda_{\con}}  \cong {}_{\Lambda_{\con}} \Ext_\Lambda^{d-t}(D \Lambda_{\con}, \Lambda_{\con}) {}_{\Lambda_{\con}} \]
\end{lemma}
\begin{proof}
    To avoid cluttered notation, let $B:= \Lambda_\con$. Consider the following isomorphisms,  
    \begin{align*}
        \Rderived{\Hom_\Lambda}({}_{B} B {}_{\Lambda} {}^\dagger , {}_{B} B {}_{\Lambda}) & \cong \Rderived{\Hom_\Lambda}({}_{B} B {}_{\Lambda} {}^\dagger, {}_{B} B {}_{\Lambda} \Dtensor  {}_{\Lambda} (\omega_\Lambda) {}_\Lambda  ) \tag{by \ref{alg conditions hold}}\\
        & \cong \Rderived{\Hom_\Lambda}({}_{B} B {}_{\Lambda}, {}_{B} B {}_{\Lambda}  {}^\dagger )^\dagger  \tag{by Serre duality \eqref{derived serre duality}}
    \end{align*}
    Moreover, it follows from the derived tensor-hom adjunction that
    \[ \Rderived{\Hom_\Lambda}({}_{B} B {}_{\Lambda}, {}_{B} \Rderived{\Hom_\cR}(B, \omega_\cR) {}_{\Lambda} ) \cong \Rderived{\Hom_\cR}({}_{B} B {}_{\Lambda} \Dtensor {}_{\Lambda} B {}_{B}, \omega_\cR ). \]
    Hence, 
    \begin{align*}
        \Rderived{\Hom_\Lambda}({}_{B} B {}_{\Lambda}, {}_{B} B {}_{\Lambda} {}^\dagger )^\dagger & \cong ({}_{B} B {}_{\Lambda} \Dtensor {}_{\Lambda} B {}_{B})^{\dagger \dagger} \\
        & \cong {}_{B} B {}_{\Lambda} \Dtensor {}_{\Lambda} B {}_{B}. \tag{since $(-)^\dagger$ is a duality}
    \end{align*}
    Thus, there is are bimodule isomorphisms
    \begin{align*}
         {}_{B} \Ext_\Lambda^{-t}(B^\dagger, B) {}_{B} & = \cohom{-t}( \Rderived{\Hom_\Lambda}({}_{B} B^\dagger {}_{\Lambda}, {}_{B} B {}_{\Lambda}))  \\
         & \cong \cohom{-t}({}_{B} B {}_{\Lambda} \Dtensor {}_{\Lambda} B {}_{B}) \\
         & = {}_{B} \Tor^\Lambda_t(B, B) {}_{B}. 
    \end{align*}

    Lemma \cite[3.6]{IR} states that there is a functorial isomorphism $(-)^\dagger \cong [-d] \cdot D$ on $\Db(\fl \cR)$. Hence, 
    \[ {}_{B} \Tor^\Lambda_t(B, B) {}_{B}  \cong  {}_{B} \Ext_\Lambda^{-t}(B^\dagger, B) {}_{B} \cong  {}_{B} \Ext_\Lambda^{d-t}(D B, B) {}_{B}. \qedhere\]
\end{proof}

\begin{prop} \label{complete local twist}
    Suppose that $\Lambda_\con$ is perfect over $\Lambda$ and $t$-relatively spherical for some $t \in \Z$ with $2 \leq t \leq d$. Then, 
    \[\Phi_{\cR} \colon \Db(\coh X) \to \Db(\coh X) \]
    is the spherical twist around the functor
    \[ \Psi^{-1}_\cP \cdot F \colon \Db(\modu \Lambda_\con) \to \Db(\coh X). \]
    The cotwist is 
    \begin{align} \label{ctw formula c local}
            \cC & =\Rderived{\Hom}_{\Lambda_\con}({}_{\Lambda_\con} \Tor^\Lambda_{t}(\Lambda_\con, \Lambda_\con) {}_{\Lambda_\con}, -)[-t-1].
    \end{align} 
    
    If $\dim_k \Lambda_\con < \infty$ and $t =d$, then the cotwist is naturally isomorphic to $[-d-1] \cdot \cN$, where $\cN$ is the Nakayama functor of $\Lambda_\con$. 
\end{prop}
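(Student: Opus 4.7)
The plan is to combine \cref{summary theorem} with the transport Lemma \ref{spherical composed with equiv}, using the equivalence $\Psi_\cP$. First I would verify that the hypotheses of \cref{tilting set up} are satisfied by the pair $(\cE = \CM\cR,\; X = f_*\cP)$: this is precisely the content of \ref{alg conditions hold} (which checks \ref{frob set up} and \ref{X setup}), together with the standing assumption that $\Lambda_\con$ is perfect and $t$-relatively spherical (which is the second condition of \ref{syz theorem}). Note also that $\proj \cE = \add \cR$, so the ideal $[\proj \cE]$ appearing in \cref{summary theorem} coincides with $[\add \cR]$ in our present notation.

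Under these hypotheses, \cref{summary theorem}\eqref{F is spherical} tells us that
\[
F = (-)\Dtensor_{\Lambda_\con}\Lambda_\con \colon \Db(\modu \Lambda_\con) \to \Db(\modu \Lambda)
\]
is spherical, with twist $\cT = \Rderived{\Hom}_\Lambda([\add\cR], -)$ and cotwist
\[
\cC = \Rderived{\Hom}_{\Lambda_\con}\bigl({}_{\Lambda_\con}\Tor^\Lambda_t(\Lambda_\con,\Lambda_\con){}_{\Lambda_\con},\, -\bigr)[-t-1].
\]
Now I would apply \ref{spherical composed with equiv} with $E = \Psi_\cP^{-1}$, taking $S = F$ viewed as a functor landing in $\Db(\modu \Lambda) \simeq \Db(\coh(Y,\cA))$ and transporting along $\Psi_\cP^{-1}$. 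The lemma immediately yields that $\Psi_\cP^{-1} \cdot F$ is spherical, that its twist is $\Psi_\cP^{-1} \cdot \cT \cdot \Psi_\cP = \Phi_\cR$, and that its cotwist coincides with $\cC$, proving the first assertion and the formula \eqref{ctw formula c local}. (A small point to check is that $\Phi_\cR$ really does preserve $\Db(\coh X)$ and not just $\Dcat(\coh X)$; this follows from the fact, established in \ref{autoequivalence theorem}, that $\cT$ preserves $\Db(\modu\Lambda)$.)

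For the second statement, assume $\dim_k \Lambda_\con < \infty$ and $t = d$. Here I would invoke \cref{finite cotwist} directly: the pair $(A,B) = (\Lambda,\Lambda_\con)$ satisfies \cref{sym inj set up} because $\Lambda$ is a Gorenstein $\cR$-order by \ref{alg conditions hold} and $\Lambda_\con$ is self-injective and finite dimensional (self-injectivity is the $t=d$ case of \ref{min res cor}(1), and finite dimensionality is hypothesized). Moreover $\Lambda_\con$ is perfect over $\Lambda$ by assumption, and the vanishing $\cohom{k}(\Lambda_\con \Dtensor_\Lambda \Lambda_\con) = 0$ for $k \neq 0, d$ is exactly \ref{K} with $t = d$. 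Thus \ref{finite cotwist} applies and gives $\cC \cong [-d-1]\cdot \cN$, where $\cN$ denotes the Nakayama autoequivalence of $\Lambda_\con$. Since $\cC$ is unchanged by transporting along $\Psi_\cP^{-1}$ (again by \ref{spherical composed with equiv}), this identification is the cotwist of $\Psi_\cP^{-1}\cdot F$ as desired.

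I expect no significant obstacle, since the work has already been done in \cref{symmetric alg} and \cref{spherical by frob}; the main task is the bookkeeping of matching hypotheses in the geometric setup with those of the abstract theorems, which \ref{alg conditions hold} essentially handles.
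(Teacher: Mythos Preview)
Your proposal is correct and follows essentially the same route as the paper: verify \cref{tilting set up} via \ref{alg conditions hold}, apply \cref{summary theorem} to obtain the twist and cotwist of $F$, then transport along $\Psi_\cP^{-1}$ using \ref{spherical composed with equiv}; for the finite-dimensional case, deduce self-injectivity from \ref{min res cor}(1) and conclude via \ref{finite cotwist}. One trivial slip: the perfect and $t$-relatively spherical hypothesis is condition~(1) of \ref{syz theorem}, not the second.
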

\begin{proof}
   The fact that $\cT = \Rderived{\Hom_\Lambda}([\add R], -) \colon \Db(\modu \Lambda) \to \Db(\modu \Lambda)$ is the twist around the functor $F = - \Dtensor_{\Lambda_\con} \Lambda_\con \colon \Db(\modu \Lambda_\con) \to \Db(\modu \Lambda)$ follows from  \ref{summary theorem}\eqref{T is twist}. Moreover, since the pair $\cE = \CM \cR$ and $X = f_* \cP$ satisfies \cref{tilting set up} by \ref{alg conditions hold}, we may apply \ref{summary theorem}\eqref{C is cotwist} to conclude that the cotwist around $F$ is given by the functor 
   \[ \Rderived{\Hom}_{\Lambda_\con}({}_{\Lambda_\con} \Tor^\Lambda_{t}(\Lambda_\con, \Lambda_\con) {}_{\Lambda_\con}, -)[-t-1].\]
   Hence, \ref{spherical composed with equiv} imply that $\Phi_{\cR}$ is the twist around $\Psi^{-1}_\cP \cdot F$ and that the cotwist around this functor is given by \eqref{ctw formula c local}. Since both the twist and cotwist are equivalences by \ref{summary theorem}, it follows that $\Psi^{-1}_\cP \cdot F$ is spherical. 

   It follows from \ref{tor ext duality} that there is a $\Lambda_\con$-bimodule isomorphism
   \[  \Tor^\Lambda_t(\Lambda_{\con}, \Lambda_{\con})  \cong {}_{\Lambda_{\con}}\Hom_{\Lambda}(D \Lambda_{\con}, \Lambda_{\con}). \]
   Combining this with \eqref{ctw formula c local} necessarily
   \[ \cC = \Rderived{\Hom_{B}}(\Hom_A(D B, B), -).\]
   
    Moreover, if $\dim_k \Lambda_\con < \infty$ and $t=d$, then \ref{min res cor}(1) implies that $\Lambda_\con$ is self-injective. For self-injective algebras, the functor $\Rderived{\Hom_{B}}(\Hom_A(D B, B), -)$ is naturally isomorphic to the Nakayama functor by \cite[3.3]{IS}. Whence, the cotwist is naturally isomorphic to $[-d-1] \cdot \cN$.
\end{proof}

We end this section by remarking that checking whether $\Lambda_\con$ is relatively spherical can be simplified in this setting. 

\begin{prop}
    The algebra $\Lambda_\con$ is $t$-relatively spherical if and only if $\Ext^j_\Lambda(\Lambda_\con, \cS) = 0$ for $0 < j < t$, $t < j < d$.
\end{prop}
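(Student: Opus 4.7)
The direction $(\Rightarrow)$ is immediate from Definition \ref{def rel sph}, so all the content is in the converse. The plan is to combine the partially minimal projective resolution machinery from \cref{partial min res section} with Serre duality for the Gorenstein order $\Lambda$. By \ref{alg conditions hold}, $\Lambda$ is a Gorenstein $\cR$-order with $\omega_\Lambda \cong \Lambda$, and by \ref{finitely many simples}, each simple $\Lambda_\con$-module has finite length over $\cR$; these two inputs are exactly what is needed to unlock the Serre-duality formula \eqref{serre duality} on perfect complexes paired against $\cS$.

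First I will take the partially minimal projective resolution $W_\bullet \to \Lambda_\con$ provided by \ref{existence_min_res} and write $W_j = \cE(X, V_j)$. By partial minimality, $\Ext^j_\Lambda(\Lambda_\con, \cS) \cong \Hom_\Lambda(W_j, \cS)$ for $j \geq 1$, and by \ref{projective cover of cS}(3) this vanishes iff $W_j \in \add(e_0 \Lambda)$, equivalently $V_j \in \add P$ in $\cE$. Hence the hypothesis translates into $V_j \in \add P$ for $0 < j < t$ and $t < j < d$. Repeating verbatim the inductive syzygy computation of step (b) of the proof of \ref{min res gen}, I obtain $K_j \equiv \Sigma^{-j} X$ in $\cD$ for $1 \leq j < t$, and $K_j \equiv \Sigma^{-j+t} K_t$ for $t \leq j \leq d-1$.

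To close the argument at $j \geq d$, I will pair the perfect truncation $C_n \colonequals [W_{n-1} \to \cdots \to W_0] \in \Kb(\proj \Lambda)$ against $\cS$ via Serre duality, obtaining $\Ext^k_\Lambda(C_n, \cS) \cong D\, \Ext^{d-k}_\Lambda(\cS, C_n)$. Combining this with the distinguished triangle $\cE(X, K_n)[n-1] \to C_n \to \Lambda_\con \to^+$, applied in both slots of $\Rderived{\Hom}$ and compared via their long exact sequences, the assumed middle-range vanishing trivialises the $\Lambda_\con$-contributions; the stable shape of $K_{d-1} \equiv \Sigma^{-d+t+1} K_t$ then pins down enough of $\cE(X, K_{d-1})$ to force the next term $V_d$ into $\add P$. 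Iterating propagates the $\add P$ condition to all $V_j$ with $j \geq d$ and $j \neq t$, which by \ref{projective cover of cS}(3) is precisely $\Ext^j_\Lambda(\Lambda_\con, \cS) = 0$ for $j \geq d$, $j \neq t$, yielding $t$-relative sphericality. The hard part will be the Serre-duality bookkeeping: the defining triangle for $C_n$ contributes to $\Rderived{\Hom}(-, \cS)$ and $\Rderived{\Hom}(\cS, -)$ with opposite shifts, and one must balance the vanishing of $\Ext^j(\Lambda_\con, \cS)$ against that of the dual $\Ext^{d-j}(\cS, \Lambda_\con)$ while continually verifying that the finite-length and perfectness hypotheses required by \eqref{serre duality} remain in force.
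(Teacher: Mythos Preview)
You have massively over-engineered this. The paper's proof is essentially one line: since $\Lambda$ is a Gorenstein $\cR$-order in the sense of \cite{IW1} (established in \ref{alg conditions hold}), the Auslander--Buchsbaum formula for Gorenstein orders \cite[2.16]{IW1} gives
\[
\pdim_\Lambda(\Lambda_\con) \leq \pdim_\Lambda(\Lambda_\con) + \depth_\cR(\Lambda_\con) = d,
\]
and hence $\Ext^j_\Lambda(\Lambda_\con,\cS)=0$ for all $j>d$. Combined with the hypothesised vanishing in the range $0<j<t$ and $t<j<d$, this is already the whole content of $t$-relative sphericality. No partially minimal resolution, no syzygy tracking, no Serre-duality bootstrap is needed: the only thing to check beyond the hypotheses is that the projective dimension is bounded by $d$, and that is a structural fact about modules over Gorenstein orders, independent of the specific vanishing pattern.

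Your proposed route also has a genuine gap at the point you flag as ``hard''. You claim that the stable shape of $K_{d-1}\cong\Sigma^{-d+t+1}K_t$ ``pins down enough of $\cE(X,K_{d-1})$ to force $V_d\in\add P$'', but you have no control over $K_t$ in $\cD$: nothing in the hypotheses tells you $K_t\in\add_\cD X$ or that it has any particular form. The Serre duality you invoke only converts $\Ext^j(C_n,\cS)$ into $D\Ext^{d-j}(\cS,C_n)$, and the latter still carries contributions from $\Ext^\ast(\cS,K_{d-1})$ through the truncation triangle; you have not explained why those vanish. Without that, the long exact sequence argument stalls and there is no mechanism to propagate the $\add P$ condition past degree $d-1$. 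The Auslander--Buchsbaum bound bypasses this entirely by killing everything above degree $d$ in one stroke.
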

\begin{proof}
    Suppose that  $\Ext^j_\Lambda(\Lambda_\con, \cS) = 0$ for $0 < j < t$ and $t < j < d$. Then, $\Lambda_\con$ is $t$-relatively spherical if $\Ext^j_\Lambda(\Lambda_\con, \cS) = 0$ for $j > d$. Well, the proof of \ref{alg conditions hold} shows that $\Lambda \in \CM \cR$ and that $\omega_\Lambda$ is a projective $\Lambda$-module. Thus, $\Lambda$ is a Gorenstein $\cR$-order in the sense of \cite[1.6, 2.4]{IW1}. Whence, we may apply the Auslander-Buchsbaum formula for Gorenstein orders \cite[2.16]{IW1}
    \[ \pdim_\Lambda(\Lambda_\con) \leq \pdim_\Lambda(\Lambda_\con) + \depth_\cR(\Lambda_\con) = d. \]
    That is, necessarily, $\Ext^j_\Lambda(\Lambda_\con, \cS) = 0$ for $j > d$. 
\end{proof}

\begin{example} \label{BB example c local}
    Assume that $Y = \spec \cR$ has at worst  hypersurface singularities. Moreover, suppose that the exceptional locus of $f$ has codimension greater than one and that $f$ has fibres which are at most one-dimensional. Then, \cite[3.2.11]{VdB04} implies that $X$ has a tilting bundle $\cV = \cO_X \oplus \cP$. Let $\Lambda = \End_\cR(f_* \cV)$ and $\Lambda_\con = \uEnd_\cR(f_* \cV)$. 
    
    It is not hard to check that $\Ext^1_\cR(f_*\cP, f_*\cP) = 0$. Moreover, \cite[6.1]{Ei} implies that $\Omega^2 \cong \id$ on $\CM \cR$. It thus follows from \ref{syz theorem} that $\Lambda_\con$ is $3$-relatively spherical and perfect over $\Lambda$. Hence, we may apply \ref{complete local twist}. 

    Additionally, since the isomorphism is $\Omega^2 \cong \id$ is functorial and by the proof of \ref{summary theorem}\eqref{C is cotwist}, it follows that 
     \[ \Tor_t^\Lambda(\Lambda_\con, \Lambda_\con) \cong \uHom_\cR(f_* \cV, \Sigma^2 f_* \cV) \cong \uHom_\cR(f_* \cP, f_* \cP) = \Lambda_\con. \]
     as $\Lambda_\con$-bimodules. Hence, the cotwist is $1_{\Db(\modu \Lambda_\con)}[-4]$.
\end{example}

\subsection{Zariski local setting}

In this section assume that $f \colon X \to Y = \spec R$ is as in \ref{crepant setup} where $R$ need not be a complete local ring. In this context, let $\Lambda = \End_\cR(f_* \cP)$ and write $\Lambda_\con = \uEnd_R(f_* \cP)$.

\begin{notation}
    For each $x \in \maxspec R$, let $R_x$ denote the localisation of $R$ at $x$ and let $\cR_x$ denote the completion of $R_x$ at the unique maximal ideal. Moreover, given an $R$-module, let $M_x$ denote its localisation at $x$ and write $\widehat{M}$ for its completion. Recall that if $M$ is a finitely generated $R$-module, $\widehat{M} = M \otimes_R R_x \otimes_{R_x} \cR_x$. 
\end{notation}

\subsubsection{Constructing the equivalence}

The first goal of this section is to prove that the functor
 \begin{align}
       \Phi_R = \Psi_{\cP}^{-1} \cdot \Rderived{\Hom}_\Lambda([\add R], -) \cdot \Psi_{\cP} \colon \Db(\coh X) \tow{\sim} \Db(\coh X) 
\end{align}
is an equivalence if $\Lambda_\con$ is perfect over $\Lambda$ and relatively spherical.  This amounts to proving that the ideal $[\add R] \subseteq \Lambda$ is tilting. It is possible to reduce this fact to a statement in the complete local setting.

\begin{lemma} \label{c local implies z local pre}
    Let $A$ be a module finite $R$-algebra. Let $M$ be a finitely generated $A$-bimodule. If there is an $n$ such that for all $x \in \maxspec R$, $\pdim_{\widehat{A}}(\widehat{M}) \leq n$, $\pdim_{\widehat{A}^\op}(\widehat{M}) \leq n$, and $\widehat{M}$ is tilting, then $M$ is tilting.
\end{lemma}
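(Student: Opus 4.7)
The plan is to verify the three criteria of \cite[1.8]{mi} used repeatedly in this paper (biperfectness, and that the two multiplication maps $\rho \colon A \to \Rderived{\Hom_{A^\op}}(M, M)$ and $\lambda \colon A \to \Rderived{\Hom_A}(M, M)$ are bimodule isomorphisms) for $M$ as an $A$-bimodule, by reducing each condition to the analogous statement for $\widehat{M}$ at every $x \in \maxspec R$, which holds by hypothesis.

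First I would handle biperfectness. Since $A$ is module-finite over the noetherian local $R$ and $M$ is finitely generated as an $A$-module, projective dimension is detected by completion at closed points, so
\[ \pdim_A M = \sup_{x \in \maxspec R} \pdim_{\widehat{A}_x}(\widehat{M}_x) \leq n, \]
and symmetrically $\pdim_{A^\op} M \leq n$. A finitely generated module of finite projective dimension over a noetherian ring admits a finite resolution by finitely generated projectives, so $M \in \Kb(\proj A) \cap \Kb(\proj A^\op)$.

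Next I would tackle the multiplication map $\rho$. Form the cone $C = \cone(\rho)$ in $\Dcat(A \otimes_\Z A^\op)$; the goal is to show $\cohom{k}(C) = 0$ for all $k$. Because $M$ is now known to be perfect over $A^\op$, the cohomologies of $\Rderived{\Hom_{A^\op}}(M, M)$ are finitely generated over $R$, so each $\cohom{k}(C)$ is a finitely generated $R$-module. A finitely generated $R$-module is zero iff its completion at every maximal ideal vanishes, and by flatness of completion this further reduces to showing $\cohom{k}(\widehat{C}_x) = 0$ for all $x$. The flat base-change isomorphism
\[ \widehat{\Rderived{\Hom_{A^\op}}(M, M)}_x \;\cong\; \Rderived{\Hom_{\widehat{A}^\op_x}}(\widehat{M}_x, \widehat{M}_x), \]
valid because $M$ is perfect over $A^\op$ and $R \to \widehat{R}_x$ is flat, identifies $\widehat{C}_x$ with the cone of the analogous right-multiplication map for $\widehat{M}_x$ over $\widehat{A}_x$. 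That cone vanishes because $\widehat{M}_x$ is assumed tilting. The argument for $\lambda$ is identical after exchanging $A$ and $A^\op$, and uses the perfectness of $M$ over $A$ from the first step.

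The only genuinely subtle point is the flat base-change isomorphism for $\Rderived{\Hom}$; this is a standard consequence of biperfectness together with flatness of the completion map $R \to \widehat{R}_x$, which is why the biperfectness step must be established first. Once that is in place, the vanishing of $\cohom{k}(C)$ is a routine combination of faithfulness of completion on finitely generated $R$-modules and the tilting hypothesis on $\widehat{M}$.
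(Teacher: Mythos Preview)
Your proposal is correct and follows essentially the same route as the paper: verify the three conditions of \cite[1.8]{mi} by reducing each to the completions $\widehat{M}_x$ via flat base change for $\Ext$/$\Rderived{\Hom}$, using biperfectness (established first) to justify that base change. The paper spells out the biperfectness step more explicitly (truncating a chosen resolution and showing the $n$th syzygy is projective by testing $\Ext^1$ after completion) and splits the argument for $\lambda$ into ``$\Ext^i=0$ for $i\neq 0$'' plus ``$\cohom{0}(\lambda)$ is an isomorphism'', whereas you package both into vanishing of $\cone(\lambda)$; these are cosmetically different but the same argument. One small slip: $R$ is not assumed local in this section (only commutative noetherian), but nothing in your argument actually uses locality.
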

\begin{proof}
   We will show that $M$ is tilting because it satisfies (a)-(c) of \cite[1.8]{mi}. For (a), to see that $M$ is perfect as a right $A$-module, let 
   \[ \hdots \to P_n \tow{p_n} P_{n-1} \tow{p_{n-1}} \hdots \to P_0 \to M \to 0 \]
  be a projective resolution $p \colon P \to M \to 0$ in $\modu A$. For finitely generated modules over noetherian rings, localisation and completion are exact functors, and so the complex $\widehat{p} \colon \widehat{P} \to \widehat{M} \to 0$ is  exact for all $x \in \maxspec R$. Moreover, it is clear that if $P_i$ is a summand of $A^k$ for some $k \in \Z$, then $\widehat{P}_i = P_i \otimes_R \cR_x$ is a summand of $A^k \otimes_R \cR_x = \widehat{A}^k$. Hence, $\widehat{P}_i$ is a projective $\widehat{A}$-module, and we may conclude that $\widehat{p} \colon \widehat{P} \to \widehat{M} \to 0$ is a projective resolution of $\widehat{M}$. 
  
  Because $\pdim_{\widehat{A}}(\widehat{M}) \leq n$, necessarily $\ker \widehat{p}_{n}$ is a projective $\widehat{A}$-module. Equivalently (because localisation and completion are exact), $\widehat{\ker p}_{n}$ is a projective $\widehat{A}$-module  for all $x \in \maxspec R$. That is, 
    \[ \Ext^1_{A}(\ker p_{n}, N) \otimes_R \cR_x = \Ext^1_{\widehat{A}}(\widehat{\ker p}_{n}, \widehat{N}) = 0 \]
    for all $N \in \modu A$ by \cite[2.15]{DW1}. Since $\Ext^1_A(\ker p_n, N) \otimes_R \cR_x$ vanishes for all $x \in \maxspec R$, it must be that $\Ext^1_A(\ker p_{n}, N) = 0$. Consequently,  $\ker p_{n}$ is a finitely generated projective $A$-module, and we may truncate $p \colon P \to M \to 0$ to a projective resolution of length $n$. That is, $\pdim_A(M) \leq n$. The proof that $\pdim_{A^\op}(M) \leq n$ is similar, and so it is omitted. 

   To prove (b), we will show that the natural right multiplication map $\lambda \colon A \to \Rderived{\Hom_A}(M, M)$ is a bimodule isomorphism in the derived category. First, since $\widehat{M}$ is tilting for all $x \in \maxspec R$, then the natural right multiplication map $\widehat{A} \to \Rderived{\Hom_{\widehat{A}}}(\widehat{M}, \widehat{M})$ is a quasi-isomorphism. Thus, for $i \neq 0$,
   \[ 0 = \Ext^i_{\widehat{A}}(\widehat{M}, \widehat{M}) = \Ext^i_A(M, M) \otimes_R \cR_x. \]
   Since $\Ext^i_A(M, M) \otimes_R \cR_x$ vanishes for all $x \in \maxspec R$, necessarily $\Ext^i_A(M, M) = 0$ for $i \neq 0$. Therefore, it suffices to show that the degree zero multiplication map $\cohom{0}(\lambda) \colon A \to \Hom_A(M, M)$ is an isomorphism. 
   
   Well, $\widehat{M}$ is tilting so that the natural multiplication map $\gamma_x \colon \widehat{A} \to \Hom_{\widehat{A}}(\widehat{M}, \widehat{M})$ is an isomorphism. It easy to check that $\cohom{0}(\lambda) \otimes_R \cR_x = \gamma_x$, and so $\cohom{0}(\lambda) \otimes_R \cR_x$ is an isomorphism for all $x \in \maxspec R$. Thus, $\cohom{0}(\lambda)$ must be an isomorphism, as required. The dual argument works to show that (c) holds, that is, the natural left multiplication map is an isomorphism. 
\end{proof}

Since $[\add R] \otimes_R R_x \otimes_{R_x} \cR_x \cong [\add \cR_x]$ as $\widehat{\Lambda}$-bimodules by \cite[2.16]{DW1}, we obtain the following corollary.

\begin{cor} \label{c local implies z local}
    If $[\add \cR_x]$ is tilting for all closed points $x \in Z$, then $[\add R]$ is tilting.
\end{cor}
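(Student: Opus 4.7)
The plan is to invoke Lemma~\ref{c local implies z local pre} with $A = \Lambda$ and $M = [\add R]$, after checking that each completion of $[\add R]$ at a closed point is tilting and that a uniform bound on projective dimension is available. Note that the remark just before the statement already supplies the key identification $\widehat{[\add R]}_x \cong [\add \cR_x]$ of $\widehat{\Lambda}_x$-bimodules, so the problem reduces to controlling what happens at each closed point $x \in \maxspec R$.

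I would then split into two cases. If $x \in Z$, then $[\add \cR_x]$ is tilting by hypothesis, and we are done for this $x$. If $x \notin Z$, then $f$ is an isomorphism in a Zariski neighbourhood of $x$, so $\cP$ restricts to a vector bundle on $\spec R_x$ and, after completion, $\widehat{f_*\cP}_x$ becomes a finitely generated free $\cR_x$-module. In particular $\widehat{f_*\cP}_x \in \add \cR_x$, so every $\widehat{\Lambda}_x$-endomorphism of $\widehat{f_*\cP}_x$ factors through $\add \cR_x$; consequently $[\add \cR_x] = \widehat{\Lambda}_x$, which is trivially tilting over itself.

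Finally I need a uniform $n$ with $\pdim_{\widehat{\Lambda}_x}(\widehat{[\add R]}_x) \leq n$ and $\pdim_{\widehat{\Lambda}_x^{\op}}(\widehat{[\add R]}_x) \leq n$ for all closed $x$. The argument in \ref{alg conditions hold} showing $\omega_\Lambda \cong \Lambda$ as bimodules is purely local and transfers to the present setting (it uses only crepancy of $f$ and that $f_*\cP \in \CM \cR$ locally), so $\Lambda$ is a Gorenstein $R$-order of dimension $d$. The Auslander--Buchsbaum formula for Gorenstein orders \cite[2.16]{IW1} then forces any finitely generated $\widehat{\Lambda}_x$- (resp.\ $\widehat{\Lambda}_x^{\op}$-) module of finite projective dimension to have projective dimension at most $d$. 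Since by the two cases above each $\widehat{[\add R]}_x$ is tilting, it has finite projective dimension on each side, and hence is bounded uniformly by $n = d$. All hypotheses of Lemma~\ref{c local implies z local pre} are satisfied, and the conclusion follows.

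The main obstacle is the uniform projective-dimension bound: without the Gorenstein-order structure one would have no a~priori control on $\pdim$ as $x$ varies, and one could imagine these growing with $x$. The crepancy assumption is precisely what rescues the argument, via the bimodule isomorphism $\omega_\Lambda \cong \Lambda$ that unlocks Auslander--Buchsbaum globally. The case $x \notin Z$ is routine once one observes that local freeness of $\cP$ upgrades to global freeness of $\widehat{f_*\cP}_x$ on passing to the completion.
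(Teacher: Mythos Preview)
Your proposal is correct and follows essentially the same route as the paper: reduce to Lemma~\ref{c local implies z local pre}, split into $x \in Z$ (hypothesis) and $x \notin Z$ (where $[\add \cR_x] = \widehat{\Lambda}_x$), and use the Gorenstein-order Auslander--Buchsbaum formula for the uniform bound $n = d$. The only cosmetic difference is that for $x \notin Z$ the paper cites \cite[4.4]{DW4} to get $\widehat{\Lambda}_{\con} = 0$, whereas you argue directly from local freeness of $f_*\cP$; and the paper applies the Gorenstein-order property to each completion $\widehat{\Lambda}_x$ rather than to $\Lambda$ itself, but these amount to the same thing.
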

\begin{proof}
    The goal is to apply  \ref{c local implies z local pre} to $[\add R]$ since $[\add R] \otimes_R R_x \otimes_{R_x} \cR_x \cong [\add \cR_x]$. Therefore, we need to check that the assumptions of \ref{c local implies z local pre} are satisfied. 
    
    If $x \in Z$ and $[\add \cR_x]$ is a tilting module, then $\pdim_{\widehat{\Lambda}} [\add \cR_x] < \infty$ and $\pdim_{\widehat{\Lambda}^\op} [\add \cR_x] < \infty$. We claim that $\pdim_{\widehat{\Lambda}} [\add \cR_x] \leq d$ and $\pdim_{\widehat{\Lambda}^\op} [\add \cR_x] \leq d$ for all $x \in Z$.

    First, note that the pair $\cE = \CM \cR_x$ and $f_* \cP \otimes_R \cR_x$ are in the situation of \ref{alg conditions hold}, so that $\widehat{\Lambda} = \End_{\cR_x}(\widehat{f_* \cP})$ is a Gorenstein $\cR$-order. Thus, we may apply the Auslander-Buchsbaum formula for Gorenstein orders \cite[2.16]{IW1} to conclude that
    \[ \pdim_{\widehat{\Lambda}} [\add \cR_x] \leq \pdim_{\widehat{\Lambda}} [\add \cR_x] + \depth_{\cR_x}([\add \cR_x]) = d \]
    (Here, we use the fact that $R$ is equicodimensional). The dual argument works to prove that $\pdim_{\widehat{\Lambda}^\op} [\add \cR_x] \leq d$. 
   
    Therefore, it suffices to prove that $\pdim_{\widehat{\Lambda}}([\add \cR_x]) \leq d$, $\pdim_{\widehat{\Lambda}^\op}([\add \cR_x]) \leq d$, and $[\add \cR_x]$ is tilting for all $x \notin Z$. This is an immediate consequence of \cite[4.4]{DW4} since this result implies that for $x \notin Z$, $\widehat{\Lambda}_\con = 0$. Therefore, because $[\add \cR_x]$ is the kernel of the quotient $\widehat{\Lambda} \to \widehat{\Lambda}_{z, \con}$, it follows that $[\add \cR_x] \cong \widehat{\Lambda}$, and clearly this satisfies the desired properties.
\end{proof}

The following shows that the relatively spherical property can also be checked complete locally.

\begin{lemma} \label{can check on stalks}
    The following are equivalent
    \begin{enumerate}
        \item $\Lambda_\con$ is t-relatively spherical and perfect over $\Lambda$,
        \item $\widehat{\Lambda}_{z, \con}$ is t-relatively spherical and perfect over $\widehat{\Lambda}_z $ for all closed points $z \in Z$.
    \end{enumerate}
\end{lemma}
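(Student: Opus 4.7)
The plan is to reduce each of the two conditions --- perfectness of $\Lambda_\con$ over $\Lambda$ and $t$-relatively spherical --- to the corresponding condition at each completed stalk $\widehat{\Lambda}_z$ with $z \in Z$. Two inputs drive everything: $\cite[2.15]{DW1}$, which gives $\Ext^i_\Lambda(M,N)\otimes_R \cR_z \cong \Ext^i_{\widehat{\Lambda}_z}(\widehat{M}_z,\widehat{N}_z)$ for finitely generated $\Lambda$-modules $M,N$; and $\cite[4.4]{DW4}$, which ensures $\widehat{\Lambda}_{z,\con}=0$ for closed points $z\notin Z$, so nothing needs checking at such points.

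For the direction $(1)\Rightarrow(2)$, a finite projective resolution of $\Lambda_\con$ localizes and completes to a finite projective resolution of $\widehat{\Lambda}_{z,\con}$, giving perfectness. The $\Ext$-vanishing for simples of $\widehat{\Lambda}_{z,\con}$ follows from $\cite[2.15]{DW1}$ since every simple of $\widehat{\Lambda}_{z,\con}$ is $\mathfrak{m}_z$-torsion.

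For $(2)\Rightarrow(1)$, I would argue in two steps. For perfectness, I adapt the proof of \cref{c local implies z local pre}: take a projective resolution $P_\bullet \to \Lambda_\con$ in $\modu\Lambda$ and observe that its completion at any closed point $z$ is a projective resolution of $\widehat{\Lambda}_{z,\con}$. For $z\in Z$, perfectness of $\widehat{\Lambda}_{z,\con}$ combined with the Auslander--Buchsbaum formula for the Gorenstein order $\widehat{\Lambda}_z$ (as used in \cref{c local implies z local}) bounds $\pdim_{\widehat{\Lambda}_z}(\widehat{\Lambda}_{z,\con})\leq d$; for $z\notin Z$ the bound is trivial. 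Hence $\ker(P_d\to P_{d-1})$ satisfies $\Ext^1_\Lambda(\ker,N)\otimes_R\cR_z=0$ for every closed $z$ and every $N\in\modu\Lambda$, forcing $\Ext^1_\Lambda(\ker,N)=0$ and thus $\ker$ projective. For relatively spherical, note that every simple $\Lambda_\con$-module $S$ is finite-dimensional over its residue field, supported at a unique closed point $z\in\maxspec R$, and since $\Lambda_\con$ is supported on $Z$ we have $z\in Z$. As $S$ is already $\mathfrak{m}_z$-torsion, $\widehat{S}_z\cong S$ and $\cite[2.15]{DW1}$ yields
\[
\Ext^k_\Lambda(\Lambda_\con,S)\;\cong\;\Ext^k_\Lambda(\Lambda_\con,S)\otimes_R\cR_z\;\cong\;\Ext^k_{\widehat{\Lambda}_z}(\widehat{\Lambda}_{z,\con},S),
\]
which vanishes for $k\neq 0,t$ by hypothesis. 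Since the simples of $\Lambda_\con$ are partitioned by their supports in $Z$ and at each $z\in Z$ they correspond bijectively to simples of $\widehat{\Lambda}_{z,\con}$, this establishes $t$-relatively spherical for $\Lambda_\con$.

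The main obstacle I anticipate is the bookkeeping around simples: verifying that every simple $\Lambda_\con$-module is finite length over $R$ with support at a single closed point of $Z$, and that completion at $z$ sets up the expected bijection between simples of $\Lambda_\con$ supported at $z$ and simples of $\widehat{\Lambda}_{z,\con}$ (the complete local case being \cref{finitely many simples}). Once this is in place, both implications follow directly from $\cite[2.15]{DW1}$ together with the vanishing outside $Z$, and the perfectness argument is a straightforward extension of \cref{c local implies z local pre}.
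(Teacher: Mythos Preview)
Your proposal is correct and follows essentially the same approach as the paper: both directions use \cite[2.15]{DW1} to pass $\Ext$-groups to completions, \cite[4.4]{DW4} to handle closed points outside $Z$, and the argument of \cref{c local implies z local pre} for perfectness. The only minor differences are that the paper bounds the projective dimension by $t$ directly (since $t$-relatively spherical plus perfect forces $\pdim = t$ in the complete local case) whereas you bound it by $d$ via Auslander--Buchsbaum, and your explicit worry about matching simples of $\Lambda_\con$ with simples of $\widehat{\Lambda}_{z,\con}$ is a point the paper leaves implicit in the line $\Ext^j_{\widehat{\Lambda}}(\widehat{\Lambda}_{z,\con},\cS_z)\cong\Ext^j_\Lambda(\Lambda_\con,\cS)\otimes\cR_z$.
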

\begin{proof}
        The key to this proof is that $\widehat{\Lambda}_\con \cong \widehat{\Lambda}/[\add \cR_x]$ for all $x \in \maxspec R$, due to \cite[2.16]{DW1}.

        The implication $(1) \Rightarrow (2)$ is clear. Suppose that $(2)$ holds. Then, for $j \neq 0, t$ and $z \in Z$
        \[0 = \Ext^j_{\widehat{\Lambda}}( \widehat{\Lambda}_{z, \con}, \cS_z ) \cong \Ext^j_{\Lambda}(\Lambda_\con, \cS) \otimes \cR_z. \]
       Moreover, $\widehat{\Lambda}_{x, \con} = 0$ for $x \notin Z$ \cite[4.4]{DW4}. Hence, $\Ext^j_{\Lambda}(\Lambda_\con, \cS)$ vanishes on all closed points, and so it must be zero. 

       Moreover, since $(2)$ holds, $\pdim_{\widehat{\Lambda}_z}(\widehat{\Lambda}_{z, \con}) = t$ for all $z \in Z$. Hence, the same argument to prove (a) in the proof of \ref{c local implies z local pre} works to show that $\pdim_{\Lambda}(\Lambda_\con) \leq t$. 
\end{proof}

\begin{prop} \label{zariski local equiv}
    Suppose that $\Lambda_\con$ perfect over $\Lambda$ and $t$-relatively spherical for some $t \in \Z$ with $2 \leq t \leq d$. Then there is a derived autoequivalence
    \begin{align}
       \Phi_R = \Psi_{\cP}^{-1} \cdot \Rderived{\Hom}_\Lambda([\add R], -) \cdot \Psi_{\cP} \colon \Db(\coh X) \tow{\sim} \Db(\coh X) 
    \end{align}
\end{prop}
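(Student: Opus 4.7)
The plan is to reduce the Zariski local statement to the complete local statement \ref{complete local equiv}, using the two reduction lemmas \ref{can check on stalks} and \ref{c local implies z local} that have just been set up.

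First, I would show that $[\add R]$ is a tilting $\Lambda$-bimodule. By \ref{c local implies z local}, it suffices to prove that $[\add \cR_z]$ is a tilting $\widehat{\Lambda}_z$-bimodule for each closed point $z \in Z$. For such $z$, the hypothesis that $\Lambda_\con$ is $t$-relatively spherical and perfect over $\Lambda$ transfers to $\widehat{\Lambda}_{z,\con}$ being $t$-relatively spherical and perfect over $\widehat{\Lambda}_z$ by \ref{can check on stalks}. After base-changing $f$ to the completion $\spec \cR_z$ (so that the complete local setup of \ref{complete local equiv} is satisfied, with the complete local tilting bundle $\cP \otimes_R \cR_z$), one directly applies \ref{complete local equiv} to conclude that $[\add \cR_z]$ is tilting over $\widehat{\Lambda}_z$. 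Feeding this back into \ref{c local implies z local} gives that $[\add R]$ is tilting over $\Lambda$.

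Once $[\add R]$ is tilting, the functor
\[ \Rderived{\Hom}_\Lambda([\add R], -) \colon \Db(\modu \Lambda) \tow{\sim} \Db(\modu \Lambda) \]
is an equivalence by standard tilting theory (e.g.\ \cite[1.8]{mi}). Since $\cP$ is a relative tilting bundle, $\Psi_\cP \colon \Db(\coh X) \to \Db(\coh(Y,\cA))$ is an equivalence, and $\cA = f_* \cEnd_X(\cP) = \Lambda$ when $Y = \spec R$, so $\Db(\coh(Y,\cA)) \simeq \Db(\modu \Lambda)$. Conjugating by $\Psi_\cP$ therefore produces the desired autoequivalence $\Phi_R$ of $\Db(\coh X)$.

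The main obstacle, and really the only point requiring care, is to verify that the complete local setup of \ref{crepant setup} is preserved under base change to $\spec \cR_z$ for $z \in Z$, so that \ref{complete local equiv} legitimately applies there. This is routine: crepancy, projectivity and the relative tilting property are all stable under flat base change to the completion at a closed point, and the relative tilting bundle on the formal fibre is $\cP \otimes_R \cR_z$. Everything else is just stringing together the three earlier results \ref{can check on stalks}, \ref{c local implies z local}, and \ref{complete local equiv}.
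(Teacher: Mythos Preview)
Your proposal is correct and follows essentially the same route as the paper: reduce to showing $[\add R]$ is tilting, use \ref{c local implies z local} to pass to completions at closed points of $Z$, transfer the relatively spherical and perfectness hypotheses via \ref{can check on stalks}, and then invoke \ref{complete local equiv}. The paper's proof is slightly terser and does not pause to discuss preservation of the setup under base change, but the logical structure is the same.
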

\begin{proof}
    To prove the statement, it suffices to show that the ideal $[\add R] \subseteq \Lambda$ is tilting which, by \ref{c local implies z local}, follows from the fact that $[\add_{\cR_x}]$ is tilting for all $x \in Z$. By \ref{can check on stalks} and the fact that completion and localisation are exact functors, it must be that $\widehat{\Lambda}_\con$ is perfect over $\widehat{\Lambda}$ and $t$-relatively spherical. Thus, \ref{complete local equiv} implies that $[\add_{\cR_x}]$ is tilting for all $x \in Z$.
\end{proof}

\subsubsection{Proving that the twist is spherical}

Consider the derived restriction of scalars functor given by $F \colonequals - \Dtensor_{\Lambda_\con} \Lambda_\con {}_\Lambda \colon \Db(\modu \Lambda_\con) \to \Db(\modu \Lambda)$. The goal for the remainder of this section is to show that if $\Lambda_\con$ is perfect over $\Lambda$ and $t$-relatively spherical, then $\Phi_{R} \colon \Db(\coh X) \to \Db(\coh X)$ is a spherical twist around the functor $\Psi^{-1}_\cP \cdot F \colon \Db(\modu \Lambda_\con) \to \Db(\coh X)$.

 \begin{lemma} \label{complete local tor}
    Fix $x \in \maxspec R$. There is a $\widehat{\Lambda}_\con$-bimodule isomorphism 
     \[ \Tor^{\Lambda}_{i}(\Lambda_\con, \Lambda_\con) \otimes \cR_x \cong \Tor^{\widehat{\Lambda}}_{i}(\widehat{\Lambda}_\con, \widehat{\Lambda}_\con) \]
 \end{lemma}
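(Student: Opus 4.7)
The plan is to compute both sides of the alleged isomorphism using the same projective resolution, and then to exploit the fact that completion is exact and well-behaved with respect to the tensor products involved.

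First I would choose a projective resolution $P_\bullet \to \Lambda_\con$ of the right $\Lambda$-module $\Lambda_\con$ such that each $P_i$ is a finitely generated $\Lambda_\con$-$\Lambda$-bimodule and each differential is a bimodule map; since $\Lambda$ is module-finite over the noetherian ring $R$ and $\Lambda_\con$ is a finitely generated bimodule, such a resolution exists (for example, the reduced bar resolution $B_\bullet(\Lambda_\con, \Lambda, \Lambda)$ furnishes one, whose terms are finitely generated as soon as we pass to a termwise finite replacement). Applying $- \otimes_\Lambda \Lambda_\con$ produces a complex $P_\bullet \otimes_\Lambda \Lambda_\con$ of $\Lambda_\con$-bimodules whose homology, by definition, is $\Tor^\Lambda_\bullet(\Lambda_\con, \Lambda_\con)$.

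Next, I would verify that the completion $\widehat{P}_\bullet \to \widehat{\Lambda}_\con$ is a projective resolution over $\widehat{\Lambda}$. Exactness follows because completion $(-) \otimes_R R_x \otimes_{R_x} \cR_x$ is exact on finitely generated $R$-modules (localisation is flat and $\cR_x$ is flat over $R_x$). Projectivity of each $\widehat{P}_i$ over $\widehat{\Lambda}$ follows from the fact that if $P_i$ is a summand of $\Lambda^{n_i}$, then $\widehat{P}_i$ is a summand of $\widehat{\Lambda}^{n_i}$. Hence $\Tor^{\widehat{\Lambda}}_\bullet(\widehat{\Lambda}_\con, \widehat{\Lambda}_\con)$ is computed by $\widehat{P}_\bullet \otimes_{\widehat{\Lambda}} \widehat{\Lambda}_\con$.

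The key identification is then the natural $\widehat{\Lambda}_\con$-bimodule isomorphism
\[
\widehat{P_i \otimes_\Lambda \Lambda_\con} \;\cong\; \widehat{P}_i \otimes_{\widehat{\Lambda}} \widehat{\Lambda}_\con,
\]
which is the standard base-change formula applied to the flat ring map $R \to \cR_x$ together with the fact that everything in sight is finitely generated: one factors the completion as tensoring with $\cR_x$ over $R$, commutes this past $\otimes_\Lambda$, and uses $\Lambda_\con \otimes_R \cR_x \cong \widehat{\Lambda}_\con$ and $P_i \otimes_\Lambda \widehat{\Lambda} \cong \widehat{P}_i$. These isomorphisms are natural in $P_i$ and preserve the bimodule structures, so they assemble to an isomorphism of complexes of $\widehat{\Lambda}_\con$-bimodules.

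Finally, since completion is exact on finite $R$-modules, it commutes with homology, so
\[
\Tor^\Lambda_i(\Lambda_\con, \Lambda_\con) \otimes_R \cR_x \;=\; \widehat{H_i(P_\bullet \otimes_\Lambda \Lambda_\con)} \;\cong\; H_i(\widehat{P}_\bullet \otimes_{\widehat{\Lambda}} \widehat{\Lambda}_\con) \;=\; \Tor^{\widehat{\Lambda}}_i(\widehat{\Lambda}_\con, \widehat{\Lambda}_\con),
\]
as $\widehat{\Lambda}_\con$-bimodules, which is the desired statement. The main obstacle is bookkeeping: one must ensure that the chosen resolution genuinely gives bimodule projectives (so the resulting $\Tor$ carries the full $\Lambda_\con$-bimodule structure) and that the natural completion-base-change isomorphism is itself a bimodule map, but both are routine once the finiteness hypotheses are in place.
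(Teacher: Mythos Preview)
Your argument is correct and is essentially the standard flat base-change argument for $\Tor$ that the paper invokes by citing \cite[3.2.10]{wei}. One small simplification: you do not need a resolution by genuine bimodule projectives to obtain the $\widehat{\Lambda}_\con$-bimodule structure, since the left $\Lambda_\con$-action on $\Tor^\Lambda_i(\Lambda_\con, \Lambda_\con)$ already comes from functoriality of $\Tor$ in the first variable; an ordinary resolution by finitely generated projective right $\Lambda$-modules suffices, and the remaining steps (exactness of completion on finite modules and compatibility with tensor) go through unchanged.
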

\begin{proof}
    The proof follows as in e.g.\@ \cite[3.2.10]{wei}.
\end{proof}

\begin{prop} \label{z local tw and ctw}
    If $\Lambda_\con$ is perfect over $\Lambda$ and $t$-relatively spherical then 
     \[ \Phi_{R} \colon \Db(\coh X) \to \Db(\coh X) \]
     is the twist around the functor 
     \[ \Psi^{-1}_\cP \cdot F \colon \Db(\modu \Lambda_\con) \to \Db(\coh X). \]
     The cotwist is 
     \begin{align} \label{ctw z local}
         \cC_{R} = \Rderived{\Hom}_{\Lambda_\con}({}_{\Lambda_\con} \Tor^\Lambda_{t}(\Lambda_\con, \Lambda_\con) {}_{\Lambda_\con}, -)[-t-1]. 
     \end{align}
\end{prop}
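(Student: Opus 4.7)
The plan is to identify the twist and cotwist around $F \colonequals (-)\Dtensor_{\Lambda_\con}\Lambda_\con{}_\Lambda$ directly, and then conjugate by the equivalence $\Psi_\cP$. Since $\Psi_\cP$ is a triangulated equivalence, \ref{spherical composed with equiv} ensures that the twist around $\Psi_\cP^{-1}\cdot F$ is $\Psi_\cP^{-1}\cdot \cT\cdot \Psi_\cP$, where $\cT$ denotes the twist around $F$, and that the cotwist around $\Psi_\cP^{-1}\cdot F$ agrees with the cotwist around $F$. Thus the whole calculation can be carried out at the level of the surjection $\pi \colon \Lambda \to \Lambda_\con$.

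The twist identification is immediate. Applied to $\pi$, \ref{twist t res} gives that the twist around $F$ is $\Rderived{\Hom_\Lambda}(\ker\pi,-)$, and $\ker\pi = [\add R]$ by construction. Conjugating by $\Psi_\cP$ produces $\Phi_R$, as claimed.

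For the cotwist, the strategy is to combine \ref{cotwist gen} with \ref{ctw res cohom in 2 degrees}: the latter yields the desired formula for $\cC_R$ provided $\Tor^\Lambda_k(\Lambda_\con,\Lambda_\con) = 0$ for all $k \notin \{0,t\}$. To verify this, I plan to pass to complete local stalks. By \ref{complete local tor},
\[ \Tor^\Lambda_k(\Lambda_\con,\Lambda_\con)\otimes_R\cR_x \cong \Tor^{\widehat{\Lambda}}_k(\widehat{\Lambda}_\con, \widehat{\Lambda}_\con) \]
for every $x \in \maxspec R$. For closed $x \in Z$, the hypotheses transfer complete locally by \ref{can check on stalks}, so \ref{alg conditions hold} places the pair $(\CM \cR_x, \widehat{f_*\cP})$ in the setting of \cref{tilting set up}; then \ref{K} forces the right-hand side to vanish outside degrees $0$ and $-t$. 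For $x \notin Z$, \cite[4.4]{DW4} gives $\widehat{\Lambda}_\con = 0$ and the vanishing is automatic. The main obstacle is the final local-to-global step: a finitely generated $R$-module whose completion at every closed point of $\spec R$ vanishes must itself be zero, which follows from faithful flatness of each $\cR_x/R_x$ together with the standard support argument on $\spec R$. Once the Tor vanishing is established, \ref{ctw res cohom in 2 degrees} delivers the stated formula for $\cC_R$, completing the proof.
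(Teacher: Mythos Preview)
Your proposal is correct and follows essentially the same approach as the paper: identify the twist via \ref{twist t res} and \ref{spherical composed with equiv}, then for the cotwist reduce via \ref{ctw res cohom in 2 degrees} to the Tor-vanishing statement, which is checked complete-locally using \ref{complete local tor} and \ref{K}, and then globalised. Your explicit case split between $x\in Z$ and $x\notin Z$ (invoking \cite[4.4]{DW4} for the latter) and your justification of the local-to-global step are slightly more detailed than the paper's terser argument, but the structure is the same.
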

\begin{proof}
    In light of \ref{twist t res}, the functor $\cT =\Rderived{\Hom_\Lambda}([\add R], -) \colon \Dcat(\Mod \Lambda) \to \Dcat(\Mod \Lambda)$ is the twist around $F = - \Dtensor_{\Lambda_\con} \Lambda_\con\colon \Dcat(\Mod \Lambda_\con) \to \Dcat(\Mod \Lambda)$. 

    Moreover, we claim that $F$, as well as its right and left adjoints, preserve bounded complexes of finitely generated modules. By \cite[6.7]{DW1}, it suffices to prove that $\Lambda_\con$ is biperfect. By assumption, $\pdim_\Lambda \Lambda_\con = n < \infty$. It follows from \ref{can check on stalks} that $\widehat{\Lambda}_{x, \con}$ is t-relatively spherical and perfect. By \ref{perfect on both sides}, this means that $\pdim_{\widehat{\Lambda}^\op} \widehat{\Lambda}_\con \leq t$ for all $x \in \maxspec R$. By the dual of proof of \ref{c local implies z local pre}, it must be that $\pdim_{\Lambda^\op} \Lambda_\con \leq t$. Consequently, $\Lambda_\con$ is biperfect.

    Thus, we conclude that the restriction of $\cT$ to $\Db(\modu \Lambda)$ is the twist around the restriction of $F$ to $\Db(\modu \Lambda_\con)$. As a result, the fact that $\Phi_R$ is the twist around $\Psi^{-1}_\cP \cdot F$ follows immediately from \ref{spherical composed with equiv}. 

    Observe, moreover, that the cotwist around $\Psi^{-1}_\cP \cdot F$ can be identified with the cotwist around $F$ by \ref{spherical composed with equiv}. Therefore, by \ref{ctw res cohom in 2 degrees}, the cotwist is as claimed if the complex $K = {}_{\Lambda_\con} \Lambda_\con {}_\Lambda \otimes^{\Lderived{}} {}_\Lambda \Lambda_\con {}_{\Lambda_\con}$ has cohomology concentrated in degrees $0$ and $-t$. We will show that these properties hold in this setup because they hold complete locally.

    Fix $x \in \maxspec R$. By \ref{complete local tor},
    \[ \cohom{-i}(K) \otimes \cR_x= \Tor^\Lambda_i(\Lambda_\con, \Lambda_\con) \otimes \cR_x = \Tor^{\widehat{\Lambda}}_{i}(\widehat{\Lambda}_\con, \widehat{\Lambda}_\con) \]
    Hence, \ref{K} implies that $\cohom{i}(K) \otimes \cR_x = 0$ for $i \neq 0, -t$. Since this is true for all $x \in \maxspec R$, it must be that $\cohom{i}(K) = 0$ for $i \neq 0, -t$. Thus, even in the Zariski local setting, we may specify the cotwist and dual cotwist as in \eqref{ctw z local}. 
\end{proof}

\medskip
 We next show that $ {}_{\Lambda_\con} \Tor^\Lambda_{t}(\Lambda_\con, \Lambda_\con) {}_{\Lambda_\con}$ is a tilting bimodule so that, by \ref{z local tw and ctw}, the cotwist is an equivalence. To do this, we will use \ref{c local implies z local pre}.

\begin{lemma} \label{tor is tilting z locally}
     Suppose that $\Lambda_\con$ is perfect and $t$-relatively spherical. Then, the $\Lambda_\con$-bimodule $\Tor^\Lambda_{t}(\Lambda_\con, \Lambda_\con)$ is tilting.
\end{lemma}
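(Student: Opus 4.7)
The plan is to apply Lemma \ref{c local implies z local pre} to the module-finite $R$-algebra $\Lambda_\con$ and its bimodule $M \colonequals \Tor^\Lambda_t(\Lambda_\con, \Lambda_\con)$, reducing the tilting statement to a complete local check at each closed point $x \in \maxspec R$. Fix such an $x$. By Lemma \ref{complete local tor}, the completion is identified as
\[ \widehat{M} \cong \Tor^{\widehat{\Lambda}}_t(\widehat{\Lambda}_\con, \widehat{\Lambda}_\con) \]
as a $\widehat{\Lambda}_\con$-bimodule, and by Lemma \ref{can check on stalks} the hypotheses transfer: $\widehat{\Lambda}_\con$ is perfect over $\widehat{\Lambda}$ and $t$-relatively spherical. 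Hence the Frobenius pair $(\CM \cR_x, \widehat{f_*\cP})$ satisfies \cref{tilting set up} via \ref{alg conditions hold}, so the complete local machinery of section \ref{spherical by frob} applies directly to $\widehat{M}$.

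With this in place the two inputs required by \ref{c local implies z local pre} come from the complete local results. First, Corollary \ref{biperfect tor} shows that $\widehat{M}$ is projective as both a left and a right $\widehat{\Lambda}_\con$-module, yielding the uniform projective-dimension bound $\pdim_{\widehat{\Lambda}_\con}(\widehat{M}) = 0 = \pdim_{\widehat{\Lambda}_\con^\op}(\widehat{M})$. Second, Theorem \ref{summary theorem}(2) gives that the cotwist
\[ \Rderived{\Hom}_{\widehat{\Lambda}_\con}(\widehat{M}, -)[-t-1] \colon \Db(\modu \widehat{\Lambda}_\con) \to \Db(\modu \widehat{\Lambda}_\con) \]
is an equivalence. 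Combined with the biprojectivity of $\widehat{M}$, this forces $\Hom_{\widehat{\Lambda}_\con}(\widehat{M}, -)$ to be a Morita equivalence, so the natural left and right multiplication maps $\widehat{\Lambda}_\con \to \End(\widehat{M})$ are isomorphisms. Together with biperfectness, Miyashita's criteria \cite[1.8]{mi} are satisfied and $\widehat{M}$ is a tilting $\widehat{\Lambda}_\con$-bimodule.

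With both hypotheses of \ref{c local implies z local pre} verified, that lemma delivers that $M$ is a tilting $\Lambda_\con$-bimodule. The proof is essentially a descent argument, so there is no single main obstacle; the closest subtlety is to confirm that the completion of $\Lambda_\con$ agrees with the contraction algebra $\widehat{\Lambda}_\con$ of the completed ambient algebra, which follows from \cite[2.16]{DW1}, and that \ref{alg conditions hold} continues to hold at each complete local stalk because the Gorenstein and crepancy hypotheses are preserved under completion.
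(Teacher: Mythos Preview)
Your proof is correct and follows essentially the same approach as the paper: reduce to the complete local case via \ref{c local implies z local pre}, identify the completion using \ref{complete local tor}, transfer the hypotheses via \ref{can check on stalks}, and invoke \ref{biperfect tor} for the uniform projective-dimension bound. The only minor difference is that the paper cites \ref{complete local twist} directly for the tilting of $\widehat{M}$, whereas you unpack this by going through \ref{summary theorem}(2) and the Morita argument explicitly; both routes are equivalent.
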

\begin{proof}
    Set $T = {}_{\Lambda_\con} \Tor^\Lambda_{t}(\Lambda_\con, \Lambda_\con) {}_{\Lambda_\con}$. Then,  $\widehat{T} = \Tor^{\widehat{\Lambda}}_{t}(\widehat{\Lambda}_\con, \widehat{\Lambda}_\con)$ by \ref{complete local tor}. It follows from \ref{can check on stalks} that $\widehat{\Lambda}_\con$ is $t$-relatively spherical and perfect, and so \ref{complete local twist} implies that $\widehat{T}$ is tilting for all $x \in \maxspec R$. Note, moreover that $\widehat{T}$ is projective on either side by \ref{biperfect tor}. Hence, \ref{c local implies z local pre} implies that $T$ is tilting.
\end{proof}

In summary, we have proven the following theorem in this section. 

\begin{theorem} \label{z local twist}
     If $\Lambda_\con$ is $t$-relatively spherical and perfect over $\Lambda$, then 
     \[ \Phi_{R} \colon \Db(\coh X) \to \Db(\coh X) \]
     is a spherical twist around the functor 
     \[ \Psi^{-1}_\cP \cdot F \colon \Db(\modu \Lambda_\con) \to \Db(\coh X). \]
    The cotwist is 
    \begin{align} \label{ctw formula z local}
            \cC & =\Rderived{\Hom}_{\Lambda_\con}({}_{\Lambda_\con} \Tor^\Lambda_{t}(\Lambda_\con, \Lambda_\con) {}_{\Lambda_\con}, -)[-t-1].
    \end{align} 
\end{theorem}
\begin{proof}
    The fact that $\Phi_{R}$ is a spherical twist follows from \ref{zariski local equiv}, \ref{z local tw and ctw} and \ref{tor is tilting z locally}.
\end{proof}

\begin{example} \label{BB example}
    In this example we will show that \ref{z local twist} extends \cite[5.18]{BB} by dropping the one-dimensional fibre assumption on $f$ and the hypersurface singularity assumption on $Y$. 

    Assume that $Y = \spec \cR$ has at worst  hypersurface singularities. Moreover, suppose that the exceptional locus of $f$ has codimension greater than one and that $f$ has fibres which are at most one-dimensional. Similarly to \ref{BB example c local}, it follows from \cite[3.2.11]{VdB04} that $X$ has a tilting bundle $\cV = \cO_X \oplus \cP$. Let $\Lambda = \End_\cR(f_* \cV)$ and $\Lambda_\con = \uEnd_\cR(f_* \cV)$.

    By \ref{BB example c local}, $\widehat{\Lambda}_\con$ is $3$-relatively spherical and perfect over $\widehat{\Lambda}$ for all $x \in \maxspec R$. Therefore, by \ref{can check on stalks}, $\Lambda_\con$ is $3$-relatively spherical and perfect over $\Lambda$, and so we may apply \ref{z local twist}.

    Furthermore, \ref{z local twist} implies that the cotwist is $\cC=\Rderived{\Hom}_{\Lambda_\con}(T, -)[-4]$ where the bimodule $T = {}_{\Lambda_\con} \Tor^\Lambda_{t}(\Lambda_\con, \Lambda_\con) {}_{\Lambda_\con}$. It follows from \ref{BB example c local} and \ref{complete local tor} that $\widehat{T} = \widehat{\Lambda}_\con$. Hence, we may conclude that $T$ is a projective $\Lambda_\con$-module, whence
    \[ \cC = \Hom_{\Lambda_\con}(T, -)[-4]. \]
\end{example}

\section{Spherical twists induced by crepant contractions more generally} \label{crepant contractions section}

\subsection{Background on noncommutative schemes}

    We begin by recalling some basic theory about noncommutative schemes. Mostly, the point of this subsection is to set notation and also to convince the reader that noncommutative schemes behave somewhat similarly to commutative schemes.

\subsubsection{Noncommutative schemes and their modules}

\begin{definition}
\begin{enumerate}
    \item A \textit{noncommutative scheme} is a pair $(\cX, \cA)$ where $\cX$ is a scheme and $\cA$ is a sheaf of $\cO_\cX$-algebras which is quasi-coherent as an $\cO_\cX$-module. We will often abbreviate noncommutative scheme as NC scheme.
    \item A NC scheme $(\cX, \cA)$ is said to be \textit{quasi-projective} if the underlying scheme $\cX$ is quasi-projective. 
    \item Moreover, $(\cX, \cA)$ is said to be \textit{noetherian} if $\cX$ is a noetherian scheme and $\cA$ is a coherent $\cO_\cX$-module. 
    \item Given a NC scheme $(\cX, \cA)$, let $\Mod (\cX, \cA)$ denote the category of $\cA$-modules. Moreover, the category of quasi-coherent (resp. coherent) $\cA$-modules will be denoted as $\qcoh (\cX, \cA)$ (resp., $\coh (\cX, \cA)$).
    \item Given NC schemes $(\cX, \cA)$ and $(\cX, \cB)$, an \textit{$\cA$-$\cB$-bimodule} is a sheaf of $\cB \otimes_{\cO_X} \cA^\op$-modules on $\cX$.
    \item A \textit{morphism of NC schemes} $f \colon (\cX, \cA) \to (\cY, \cB)$ is a pair $(f_\cX, f^\#)$ where $f_\cX \colon \cX \to \cY$ is a morphism of schemes and $f^\# \colon f_\cX^{-1} \cB \to \cA$ is a morphism of $f_\cX^{-1} \cO_Y$-algebras.
\end{enumerate}
\end{definition}

\begin{remark}
    Due to standard sheaf theory, since $f_\cX^{-1}$ is left adjoint to $(f_\cX)_*$, the morphism $f^\# \colon f_\cX^{-1} \cB \to \cA$ induces a morphism $f^\flat \colon \cB \to (f_\cX)_* \cA$. 
\end{remark}

\begin{definition}
    Similarly to the commutative case, a morphism $f \colon (\cX, \cA) \to (\cY, \cB)$ of NC schemes induces the pullback and pushforward adjoint pair.

\[\begin{tikzcd}
     \Mod (\cX, \cA) \arrow[r, "f_*"{name=LA}, bend right = 4ex, swap] & \Mod (\cY, \cB). \arrow[l, "f^*"{name=RA}, bend right =4ex, swap]
     \arrow[phantom, from=LA, to=RA, "\scriptstyle\boldsymbol{\perp}"description]
\end{tikzcd}
\]
    \begin{enumerate}
        \item The functor
        \[ f_* \colon \Mod (\cX, \cA) \to \Mod (\cY, \cB) \]
        sends a $\cB$-module $\cF$ to the sheaf $f_* \cF$ whose sections on a open set $V \subset \cY$ are defined as
        \[ f_* \cF(V) = \cF( f^{-1} V). \]
        The $\cB$-module structure on this sheaf is induced by the morphism $f^\flat \colon \cB \to (f_\cX)_* \cA$. 
        \item The functor
    \[ f^* \colon \Mod (\cY, \cB) \to \Mod (\cX, \cA) \]
    sends a $\cB$-module $\cF$ to the tensor product $f^{-1} \cF \otimes_{f^{-1} \cB} \cA$. 
    \end{enumerate}
\end{definition}

Observe that we may restrict the functor $f^*$ to $f^* \colon \qcoh(\cX, \cB) \to \qcoh(\cX, \cA)$. If, further, $(\cX, \cA)$ and $(\cY, \cB)$ are noetherian, then $f^*$ restricts to $f^* \colon \coh(\cX, \cB) \to \coh(\cX, \cA)$. 

\begin{lemma} \label{category properties}
    Let $(\cX, \cA)$ be a noetherian NC scheme. Then, the categories $\Mod (\cX, \cA)$, $\qcoh (\cX, \cA)$ and $\coh(\cX, \cA)$ are abelian.
\end{lemma}
\begin{proof}
    The statement for $\Mod (\cX, \cA)$ is due to \cite[18.1.6(v)]{KS} (they actually prove the stronger result that $\Mod (\cX, \cA)$ is Grothendieck). The fact that $\qcoh (\cX, \cA)$ is abelian (and also Grothendieck) follows from \cite[5.6]{Ku}. Finally, $\coh(\cX, \cA)$ is abelian by \cite[Exercise 8.23]{KS}.
\end{proof}

Since $\qcoh (\cX, \cA)$ is abelian, so is the category $\chain( \qcoh (\cX, \cA) )$ of chain complexes of quasi-coherent $\cA$-modules. Moreover, we may also construct the derived category $\Dcat(\qcoh (\cX, \cA))$. 

\subsubsection{Resolution properties}

We are interested in derived functors between derived categories of NC schemes, and so it will be important to establish the existence of \textit{$K$-flat} and \textit{$K$-injective} resolutions (recalled in \cref{derived functors}) in $\qcoh (\cX, \cA)$ . The existence of \textit{$Lp$-resolutions} under certain settings will also play an important role. 

\begin{definition} Let $(\cX,\cA)$ be a NC scheme.
\begin{enumerate}
    \item An $\cA$-module $\cF$ is \textit{locally projective} if $\cF(U)$ is a projective $\cA(U)$-module for all affine open $U \subset \cX$.
    \item Let $\cF \in \Dcat(\qcoh (\cX, \cA))$. Then, an \textit{$Lp$-resolution} of $\cF$ is a quasi isomorphism $\cP \to \cF$ where $\cP$ is a $K$-flat complex of locally projective $\cA$-modules.
\end{enumerate}
\end{definition}

\begin{lemma} \label{resolution properties}
    Let $(\cX, \cA)$ be a noetherian NC scheme. Then,
    \begin{enumerate}
        \item Any complex in $\chain( \qcoh (\cX, \cA) )$ admits a $K$-injective resolution. \label{exists Kinj}
        \item Any complex in $\chain( \qcoh (\cX, \cA) )$ admits a $K$-flat resolution. \label{exists Kflat}
        \item If $(\cX, \cA)$ is quasi-projective, then any complex in $\chain( \qcoh (\cX, \cA) )$ admits an Lp-resolution.  \label{exists Lp}
    \end{enumerate}
\end{lemma}
\begin{proof}    
    \eqref{exists Kinj} follows from \cite[5.4]{AJS}, \eqref{exists Kflat} is \cite[3.9]{BDG}, and \eqref{exists Lp} is \cite[3.7, 3.10]{BDG}.
\end{proof}

\subsubsection{Derived functors}

Given NC schemes $(\cX, \cA)$ and $(\cX, \cB)$, let $\cF$ be a complex of $\cB$-$\cA$-bimodules and $\cG$ a complex of $\cA$-$\cB$-bimodules. In what follows, we present several technical lemmas, the point of which is to establish that the functors 
\begin{align*}
    \Rderived{\Hom_\cA}(\cF, -) & \colon \Dcat( \Mod (\cX, \cA) ) \to \Dcat( \Mod( \cB(\cX) ) \\
     \Rderived{\cHom_\cA}(\cF, -) & \colon \Dcat( \Mod (\cX, \cA) ) \to \Dcat( \Mod (\cX, \cB) )  \\
      (-) \Dtensor_\cA \cG & \colon \Dcat( \Mod (\cX, \cA) ) \to \Dcat( \Mod (\cX, \cB) ) 
\end{align*}
behave as expected. 

\begin{prop}[{\cite[section 6]{Sp}, \cite[3.12]{BDG}}] \label{nc derived functors bg}

Given NC schemes $(\cX, \cA)$ and $(\cX, \cB)$, let $\cF$ be a complex of $\cB$-$\cA$-bimodules and $\cG$ a complex of $\cA$-$\cB$-bimodules.

\begin{enumerate}
    \item  The derived functors
\begin{align}
    \Rderived{\Hom_\cA}(\cF, -) & \colon \Dcat( \Mod (\cX, \cA) ) \to \Dcat( \Mod( \cB(\cX) ) \label{nc global derived hom} \\
     \Rderived{\cHom_\cA}(\cF, -) & \colon \Dcat( \Mod (\cX, \cA) ) \to \Dcat( \Mod (\cX, \cB) ) \label{nc local derived hom} \\
      (-) \Dtensor_\cA \cG & \colon \Dcat( \Mod (\cX, \cA) ) \to \Dcat( \Mod (\cX, \cB) ) \label{nc derived tensor}
\end{align}
    exist. \label{existence of f}
    \item If $\cF$ is a complex of bimodules which are locally finitely presented as $\cA$-modules and  $\cG$ is a complex of quasi-coherent bimodules, then the functors \eqref{nc local derived hom} and \eqref{nc derived tensor} restrict to the derived category of quasi-coherent modules. \label{res qcoh}
    \item Given $\cM \in \Dcat(\qcoh(\cX, \cA))$, the complexes $\Rderived{\Hom_\cA}(\cF, \cM)$ and $\Rderived{\cHom_\cA}(\cF, \cM)$ can be calculated by using a $K$-injective resolution of $\cM$. Moreover, the complex $\Rderived{\cHom_\cA}(\cF, \cM)$ can also be calculated using an Lp-resolution of $\cF$, if it exists. \label{how to compute h}
    \item The complex $\cM \Dtensor_\cA \cG$ can be computed by using a $K$-flat resolution of $\cM$. \label{how to compute t}
    \item There are natural isomorphisms
    \begin{align}
        & \Rderived{\Hom_\cB}(\cF \Dtensor_\cA \cG, - ) \cong \Rderived{\Hom_\cA}(\cF,\Rderived{\cHom_\cB}(\cG, - ) ) \\
        & \Rderived{\cHom_\cB}(\cF \Dtensor_\cA \cG, - ) \cong \Rderived{\cHom_\cA}(\cF,\Rderived{\cHom_\cB}(\cG, - ) ).
    \end{align}
    which induce the adjunction $- \Dtensor_\cA \cG \dashv \Rderived{\cHom}_{\cB}(\cG, -)$. 
\label{adjunction}
\end{enumerate}
\end{prop}
\begin{proof}
    \eqref{existence of f}, \eqref{how to compute h}, \eqref{how to compute t} and \eqref{adjunction} are \cite[3.12]{BDG}. The proof of the fact that the functors \eqref{nc global derived hom}, \eqref{nc local derived hom} and \eqref{nc derived tensor} preserve quasi-coherence under the assumptions of \eqref{res qcoh} is the same as in the commutative case, which is standard. 
\end{proof}

\begin{definition} Recall that an object $\cF \in \Dcat(\qcoh (\cX, \cA))$ is called \textit{perfect} if there exists an affine open covering  $X = \bigcup_i U_i$ such that $\cF |_{U_i}$ for each $i$ is quasi-isomorphic to a bounded complex of sheaves which are summands of finite free $\cA |_{U_i}$-modules.
\end{definition}

\begin{prop} \label{res to coh}
    Let $(\cX, \cA)$ and $(\cX, \cB)$ be quasi-projective noetherian NC schemes. Suppose that $\cF$ is a complex of coherent $\cB$-$\cA$-bimodules which is perfect as complex of $\cA$-modules. Moreover, suppose that $\cG$ is a complex of $\cA$-$\cB$-bimodules which is perfect as a complex of $\cA^\op$-modules. Then, the functors \eqref{nc global derived hom}, \eqref{nc local derived hom} and \eqref{nc derived tensor} restrict to 
    \begin{align}
    \Rderived{\Hom_\cA}(\cF, -) & \colon \Db( \coh (\cX, \cA) ) \to \Db( \modu \cB(\cX) ) \label{nc global derived hom Db} \\
     \Rderived{\cHom_\cA}(\cF, -) & \colon \Db( \coh (\cX, \cA) ) \to \Db( \coh (\cX, \cB) ) \label{nc local derived hom Db} \\
      (-) \Dtensor_\cA \cG & \colon \Db( \coh (\cX, \cA) ) \to \Db( \coh (\cX, \cB) ) \label{nc derived tensor Db}
\end{align}
\end{prop}
\begin{proof}
    Since $\cX$ is quasi-projective, it follows from \ref{resolution properties} \eqref{exists Lp} that $\coh (\cX, \cA \otimes \cB^\op)$ and $\coh (\cX, \cA)$ have enough Lp-modules and so the bounded complex $\cF$ admits an Lp-resolution $\cQ' \to \cF$ as a $\cA$-module and an Lp-resolution $\cQ \to \cF$ as an $\cA \otimes \cB^\op$-module. Since $\cF$ is perfect as an $\cA$-module, we may assume that $\cQ'$ is bounded. Let $\cM \in \Db( \coh (\cX, \cA) )$. 
    
    We would first like to show that $\Rderived{\cHom_\cA}(\cF, \cM)$ is a bounded complex of coherent $\cB$-modules. To see that it is bounded, note that, as a complex of vector spaces, $\Rderived{\cHom_\cA}(\cF, \cM) \cong \cHom_\cA^*(\cQ', \cM)$. Since both $\cQ'$ and $\cM$ are bounded, so is $\cHom_\cA^*(\cQ', \cM)$. Hence, necessarily,  $\Rderived{\cHom_\cA}(\cF, \cM)$ is bounded as a complex of $\cB$-modules.

    Next, we would like to show that $\Rderived{\cHom_\cA}(\cF, \cM)$ is a complex of coherent modules. The question is local, so we may assume that $\cX = \spec R$ where, by assumption, $R$ is a noetherian ring, and $\cA = A^{\sim}$, $\cB = B^\sim$ with $A$, $B$ finitely presented $R$-algebras. Note that $A$ and $B$ are noetherian, since they are module finite over a noetherian ring. 
    
    Since $\cQ$ and $\cM$ are coherent, then there are module $Q \in \modu A \otimes_R B^\op$ and $M \in \modu A$ with $\cQ = Q^\sim$ and $\cM = M^\sim$. So, coherence of $\Rderived{\cHom_\cA}(\cF, \cM) = \cHom_\cA^*(\cQ, \cM)$ as a complex $\cB$-modules is equivalent to $\Hom^*_A(Q, M)$ being a complex of finitely presented $B$-modules. Since $M$ is bounded and $A$ and $B$ are noetherian, it suffices to show that $\Hom_A(U, N)$ is a finitely generated $B$-module if $U \in \modu A \otimes_R B^\op$ and $N \in \modu A$. 

    Since $A$, $B$ and $A \otimes_R B^\op$ are module finite over $R$, then finite generation of a module over $B$ and over $A \otimes_R B^\op$ is equivalent to finite generation over $R$. Since $R$ is noetherian, the standard argument in the commutative case implies that $\Hom_A(U, N)$ is finitely generated over $R$. Therefore, it is finite over $B$, which concludes the proof.
    
    The proofs of \eqref{nc global derived hom Db} and \eqref{nc derived tensor Db} are analogous to the above proof, and so are omitted.
\end{proof}

This section applies the theory developed in section \ref{spherical by frob} to obtain derived autoequivalences of schemes with at worst Gorenstein singularities.

\subsection{Setting}

   Consider a morphism $f \colon X \to Y$ of noetherian schemes. Recall that a bundle $\cP$ on $X$ is \textit{tilting relative to $Y$} in the sense of e.g.\@ \cite[2.3(2)]{DW4} if there is an equivalence
    \begin{align}
        \Psi_{\cP} \colonequals \Rderived{f_*}\Rderived{\cHom}_X(\cP, -) \colon \Db(\coh X) \tow{\sim} \Db(\coh(Y, \cA))
    \end{align}
    where $\cA \colonequals \Rderived{f_*} \cEnd_X(\cP) = f_* \cEnd_X(\cP)$ is a sheaf of $\cO_Y$-algebras and $\Db(\coh(Y, \cA))$ is the bounded derived category of modules over $\cA$. 
  
Throughout, we will work within the following setup.

\begin{setup} \label{crepant setup}
    Let $f \colon X \to Y$ be a crepant (complete local) contraction. Assume further that $Y$ is a Gorenstein $d$-fold with $d \geq 2$, and that $X$ admits a relative tilting bundle $\cP$ containing $\cO_X$ as a summand. 
\end{setup}

\begin{notation}
    With $f$ as in \ref{crepant setup}, write $Z$ for the locus of points of $Y$ onto which $f$ is not an isomorphism. 
\end{notation}

\begin{lemma}[{\cite[2.5]{DW4}}] \label{DW end iso over base}
    Under the setup \ref{crepant setup}, the natural map $f_* \cEnd_X(\cP) \to \cEnd_Y(f_* \cP)$ is an isomorphism, and so $\cP$ induces a derived equivalence 
    \begin{align*}
        \Psi_{\cP} \colonequals \Rderived{f_*}\Rderived{\cHom}_X(\cP, -) \colon \Db(\coh X) \tow{\sim} \Db(\coh(Y, \cA))
    \end{align*}
    where $\cA = \cEnd_Y(f_* \cP)$.
\end{lemma}

\subsection{The sheaf of contraction algebras}

Given a crepant contraction $f \colon X \to Y$ satisfying the general setup \ref{crepant setup}, this section recalls the construction of the noncommutative enhancement $(Y, \cA_\con)$. The NC scheme $(Y, \cA_\con)$ was first constructed in \cite{DW4} and is an invariant associated to $f$ which retains much information about the contraction $f$ and the category $\Dcat(\qcoh X)$. We may view $(Y, \cA_\con)$ as the global analogue of the contraction algebra introduced by \cite{DW1} and recalled in \cref{crepant contractions affine section}.

\begin{notation} \label{sheafy notation} Under  the general setup \ref{crepant setup},  
    \begin{enumerate}
        \item Let $\cQ \colonequals f_* \cP$.
        \item Write $\cA$ for the sheaf of $\cO_Y$-algebras 
        \[ \cA \colonequals \Rderived{f_*} \cEnd_X(\cP) = f_* \cEnd_X(\cP) \cong \cEnd_Y(\cQ) \]
        where the last isomorphism is \ref{DW end iso over base}.
        \item As in \cite[2.8]{DW4}, let $\cI$ be the ideal sheaf of $\cA$ specified by the rule: For every open subset $j \colon V \hookrightarrow Y$, 
        \[ \cI(V) = \{ s \in \End_V(\cQ |_V) \mid s_v \colon \cQ_v \to \cQ_v \in [\proj \cO_{Y, v}] \, \forall v \in V \} \]
        where $[\proj \cO_{Y, v}]$ denotes the ideal of $\End_{\cO_{Y, v}}(\cQ_v)$ consisting of maps which factor through $\proj \cO_{Y, v}$.
        \item The sheaf $\cA_\con \colonequals \cA / \cI$ is called the sheaf of contraction algebras in \cite[2.12]{DW4}.
        \item For an affine open subset $j \colon U = \spec R \hookrightarrow Y$ of $(Y, \cO_Y)$, write $\Lambda_U \colonequals \cA(U)$. It follows that  $\cI(U) = [\proj R]$ \cite[2.10]{DW4} and $\cA_\con(U) = \Lambda_U / [\proj R] \colonequals (\Lambda_\con)_U$ \cite[2.15]{DW4}. 
    \end{enumerate}
\end{notation}

We will be interested in the NC schemes $(Y, \cA)$ and $(Y, \cA_\con)$. 

\begin{lemma} \label{noetherian}
    The NC schemes $(Y, \cA)$ and $(Y, \cA_\con)$ are noetherian. 
\end{lemma}
\begin{proof}
    By assumption, $Y$ is a noetherian scheme. To see that $\cA$ and $\cA_\con$ are coherent as $\cO_Y$-modules, we may assume that $Y = \spec R$ for a noetherian ring $R$. Then, we may view $f_* \cP$ as an $R$-module which is finitely generated because $\cP$ is coherent and $f$ is proper. 
    
    Since $f_* \cP$ is finitely generated over $R$ and $R$ is noetherian, $\cA = \cEnd_R(f_* \cP) = \Lambda^\sim$ where $\Lambda = \End_R(f_* \cP)$. Thus, $\Lambda = \End_R(f_* \cP)$ is also finitely generated. Similarly, $\cA_\con = \cA / \cI = \Lambda_\con^\sim$ where $\Lambda_\con = \Lambda / [\proj R]$ is finitely generated since $\Lambda$ is. 

    Therefore, we conclude that $\cA$ and $\cA_\con$ are coherent as $\cO_Y$-modules.
\end{proof}

An interesting property of the noncommutative scheme $(Y, \cA_\con)$ is that the contraction theorem \cite[2.16]{DW4} implies that $\cA_\con$ is supported on the singular locus $Z$ of $f$. As a consequence, we often reduce questions about $(Y, \cA_\con)$ to questions about stalks $(\cA_\con)_z$ for $z \in Z$. 

\subsection{The twist}

\begin{prop}
    There are adjoint pairs 
    $(G^\LA, G)$, $(G, G^\RA)$ as described below in \eqref{res-ext diagram sheafy}.
\begin{equation} \label{res-ext diagram sheafy}
    \begin{tikzcd}[column sep=8em]
        \Dcat(\qcoh (Y, \cA_\con) ) \arrow[rr, "G  = - \otimes^{\Lderived{}} {}_{\cA_\con}  \cA_\con \cong \Rderived{\Hom_{\cA_\con}}{(\cA_\con, -)}" description, ""{name=G,above}] & {} & \arrow[ll, "G^{\mathrm{RA}} = \Rderived{\Hom_\cA}{(\cA_\con, -)}"{name=RA}, bend left=12] \arrow[ll, "G^{\mathrm{LA}}  = - \otimes^{\Lderived{}}_\cA \cA_\con"{name=LA}, swap, bend right=12] \Dcat(\qcoh (Y, \cA) )
        \arrow[phantom, from=LA, to=F, "\scriptstyle\boldsymbol{\perp}"description]
        \arrow[phantom, from=F, to=RA, "\scriptstyle\boldsymbol{\perp}"description]
    \end{tikzcd}
\end{equation}
Moreover, if $\cA_\con$ is perfect as a $\cA$-module, then $G^\LA$, $G$ and $G^\RA$ restrict to the bounded derived categories of coherent sheaves.
\end{prop}
\begin{proof}
    This follows from \ref{nc derived functors bg}, \ref{res to coh} and \ref{noetherian}.
\end{proof}

Our goal is to describe the twist around the functor $G$.

\medskip
Recall that by the construction of $\cA$ and $\cA_\con$, there is an exact sequence
\begin{align} \label{sheafy Acon seq}
    0 \to \cI \tow{\iota} \cA \tow{\pi} \cA_\con \to 0 
\end{align}
of $\cA$-bimodules. 

\begin{lemma} \label{perfect I}
    If $\cA_\con$ is perfect as an $\cA$-module, so is $\cI$.
\end{lemma}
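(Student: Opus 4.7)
The plan is to exploit the short exact sequence \eqref{sheafy Acon seq} of $\cA$-bimodules, which gives a distinguished triangle
\[ \cI \to \cA \to \cA_\con \to^+ \]
in $\Dcat(\qcoh(Y, \cA))$. The key general fact is that the full subcategory of perfect complexes in $\Dcat(\qcoh(Y, \cA))$ is a (thick) triangulated subcategory, so whenever two of the three objects in a distinguished triangle are perfect, so is the third.

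First I would note that $\cA$ is trivially perfect as an $\cA$-module, since it is free of rank one. By assumption $\cA_\con$ is perfect. Thus from the triangle above, $\cI$ is perfect as well.

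The one thing to justify is that perfection is stable under cones in this noncommutative setting. This is essentially local: perfection is defined via an affine open cover $Y = \bigcup U_i$, and on each $U_i$, writing $A = \cA(U_i)$, the sequence becomes a short exact sequence of $A$-modules
\[ 0 \to \cI(U_i) \to A \to \cA_\con(U_i) \to 0. \]
Since both $A$ (free of rank one) and $\cA_\con(U_i)$ admit bounded resolutions by finitely generated summands of free $A$-modules, a standard mapping cone / horseshoe argument produces such a resolution for $\cI(U_i)$. Since this holds on each $U_i$ of the covering, $\cI$ is perfect as an $\cA$-module. No step here is a substantive obstacle; the argument is routine once the triangle and the thickness of perfect complexes are in place.
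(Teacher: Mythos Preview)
Your proof is correct and takes essentially the same approach as the paper: both reduce to the affine situation and use the short exact sequence $0 \to \cI \to \cA \to \cA_\con \to 0$ together with the trivial perfection of $\cA$ to conclude. The paper is simply terser, phrasing the local step as ``finite projective dimension of $\Lambda_\con$ implies finite projective dimension of $[\add R]$'' rather than invoking thickness of perfect complexes explicitly.
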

\begin{proof}
    We may assume $Y = \spec R$ with $\cA = \Lambda^\sim$, $\cA_\con = \Lambda_\con^\sim$ and $\cI = [\add R]^\sim$. Then, $\cI$ is perfect if and only if it has finite projective dimension. By assumption, $\Lambda_\con$ is quasi-isomorphic to a bounded complex $P$ of projective $\Lambda$-modules and, thus, has finite projective dimension as a $\Lambda$-module.
    
    The equation \eqref{sheafy Acon seq} reduces to 
    \[ 0 \to [\add R] \to \Lambda \to \Lambda_\con \to 0, \]
    so that finite projective dimension of $\Lambda_\con$ implies finite projective dimension of $[\add R]$. 
\end{proof}

\begin{lemma} \label{twist cd sheafy}
There is a commutative diagram
\[
\begin{tikzcd}[column sep =4em]
    \Rderived{\cHom_\cA}(\cA_\con, -) \arrow[r, "{\Rderived{\cHom_\cA}(\pi, -)}"] \arrow[d, "\sim"{anchor=north, rotate=90}] & \Rderived{\cHom_\cA}( \cA, -) \arrow[d, "\sim"{anchor=north, rotate=90}] \\
   \Rderived{\cHom_\cA}(\cA_\con, -) \Dtensor_{\cA_\con} \cA_\con \arrow[r, "\epsilon"] &  1_{\Dcat((Y, \cA))} 
\end{tikzcd}
\]
of endofunctors on $\Dcat(\qcoh (Y, \cA))$. Here, $\epsilon$ is the counit of the adjunction induced by \ref{nc derived functors bg}\eqref{adjunction}.
\end{lemma}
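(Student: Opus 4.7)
The plan is to imitate the proof of Lemma \ref{twist cd} in the sheafy context. First I would construct natural isomorphisms
\begin{align*}
    m & \colon \Rderived{\cHom_\cA}(\cA_\con, -) \Dtensor_{\cA_\con} \cA_\con \tow{\sim} \Rderived{\cHom_\cA}(\cA_\con, -), \\
    n & \colon \Rderived{\cHom_\cA}(\cA, -) \tow{\sim} 1_{\Dcat(\qcoh(Y, \cA))},
\end{align*}
defined, after choosing a $K$-injective resolution $\cM \to \cJ$ of the argument, by the sheafy multiplication isomorphism $\cHom_\cA(\cA_\con, \cJ^j) \otimes_{\cA_\con} \cA_\con \to \cHom_\cA(\cA_\con, \cJ^j)$ in each degree $j$, and by termwise evaluation at the section $1 \in \cA$ composed with the inverse of the injective resolution, respectively. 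These are manifest sheafy lifts of the natural transformations constructed in the proof of \ref{twist cd}, and naturality in $\cM$ is verified by the same diagram chase.

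It then remains to verify commutativity of the induced square with $m^{-1}$ and $n$ as the vertical arrows. Since we are comparing two natural transformations of complexes of quasi-coherent $\cA$-modules, the question is local on $Y$, so it suffices to check it on each affine open $j \colon U = \spec R \hookrightarrow Y$. Under the equivalence $\qcoh(U, \cA |_U) \simeq \Modu \Lambda_U$ with $\Lambda_U = \cA(U)$, the surjection $\pi \colon \cA \to \cA_\con$ restricts to the ring surjection $\Lambda_U \twoheadrightarrow \Lambda_U / [\add R] = \cA_\con(U)$ recorded in \ref{sheafy notation}. By Lemma \ref{adjunction is local}, the counit $\epsilon$ restricts to the counit of the corresponding derived tensor-hom adjunction over $\Lambda_U$, and by their termwise construction $m$ and $n$ restrict to the natural isomorphisms appearing in the proof of Lemma \ref{twist cd}. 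The local square is therefore precisely the diagram of Lemma \ref{twist cd} for the surjection $p = \pi |_U$ applied to $\cM |_U$, which commutes.

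The main obstacle I anticipate is the bookkeeping required to confirm that each of $m$, $n$ and $\epsilon$ genuinely restricts to its affine analogue. For $\epsilon$ this is exactly the content of \ref{adjunction is local}. For $m$ and $n$ it reduces to the compatibility of the sheafy multiplication isomorphism and evaluation at $1$ with restriction, which is immediate from their termwise definition. The only remaining technicality is that restricting a $K$-injective resolution of $\cM$ to $U$ remains a valid object for computing $\Rderived{\cHom_{\cA |_U}}$; this is standard on a quasi-compact separated noncommutative scheme and is implicit in \ref{nc derived functors bg}\eqref{how to compute h}.
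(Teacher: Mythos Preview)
Your proposal is correct and follows essentially the same strategy as the paper: construct the sheafy analogues $m$ and $n$ of the multiplication and evaluation-at-$1$ isomorphisms, then reduce commutativity to the affine case and invoke Lemma~\ref{twist cd}.

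There is one tactical difference worth noting. You handle the counit by appealing to Lemma~\ref{adjunction is local} to say that $\epsilon$ restricts to the affine counit, and then you must separately address whether the restriction of a $K$-injective resolution to $U$ is adequate for computing $\Rderived{\cHom_{\cA|_U}}$. The paper instead writes $\epsilon_\cM = j_\cM^{-1}\cdot\gamma_\cM$ with $\gamma_\cM$ an explicit chain map (the sheafified evaluation), so that the square to be checked is a diagram of honest morphisms of complexes of sheaves; equality of such morphisms is a genuinely local question, and on affines it is literally the top square in the proof of Lemma~\ref{twist cd}. This sidesteps both the appeal to Lemma~\ref{adjunction is local} and the $K$-injective restriction issue. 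Your route is fine, but the paper's chain-level reduction is slightly more economical.
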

\begin{proof}
    Let $\cM \in \Dcat(\qcoh (Y, \cA))$ and let $j_\cM \colon \cM \to \cJ$ be a $K$-injective resolution of $\cM$. Then, \begin{align*}
        & \Rderived{\cHom_\cA}(\cA_\con, \cM) = \cHom_\cA^*(\cA_\con, \cJ), \\
        & \Rderived{\cHom_\cA}(\cA_\con, -) \Dtensor_{\cA_\con} \cA_\con =  \tot( \cHom_\cA^*(\cA_\con, \cJ) \otimes_{\cA_\con} \cA_\con).
    \end{align*}
    Consider the isomorphisms of chain complexes 
    \begin{align*}
        m & \colon  \tot( \cHom_\cA^*(\cA_\con, \cJ) \otimes_{\cA_\con} \cA_\con) \to \cHom_\cA^*(\cA_\con, \cJ) \\
        \ev_1 & \colon \cHom_\cA^*(\cA, \cJ) \to \cJ
    \end{align*}
    which are natural in $\cJ$. We may view $m$ as the sheafification of the morphism which at every open set $U$ acts as the natural multiplication morphism 
    \[ \tot( \Hom_{\cA |_U}^*(\cA_\con |_U, \cJ |_U) \otimes_{\cA_\con(U)} \cA_\con(U)) \to  \Hom_{\cA |_U}^*(\cA_\con |_U, \cJ |_U) \]
    Moreover, $\ev_1$ is the morphism which at every open set $U$ acts as the natural evaluation morphism at the identity section of $\cA(U)$.
    \[ \Hom_{\cA | U}^*(\cA |_U, \cJ |_U) \to \cJ(U) \]

    Note, moreover, that $\epsilon_\cM = j_\cM \cdot \gamma_\cM$ where $\gamma_\cM$ is the natural evaluation map which defines the tensor-hom adjunction in the homotopy category of chain complexes of $\cA$-modules. Hence, the commutativity of the diagram in the statement of the lemma is equivalent to commutativity for all $\cJ$ of the following diagram.
    \[
\begin{tikzcd}[column sep =4em]
     \cHom_\cA^*(\cA_\con, \cJ) \arrow[r, "{\Rderived{\cHom_\cA}(\pi, -)}"] \arrow[d, "m"', "\sim"{anchor=north, rotate=90}] &  \cHom_\cA^*(\cA, \cJ) \arrow[d, "\ev_1"', "\sim"{anchor=north, rotate=90}] \\
     \tot( \cHom_\cA^*(\cA_\con, \cJ) \otimes_{\cA_\con} \cA_\con) \arrow[d, "\gamma_\cM"'] \arrow[r, "\gamma_\cM", dashed] &  \cJ \arrow[d, "j_\cM"'] \\
     \cJ \arrow[r, "j_\cM^{-1}"] & \cM
\end{tikzcd}
\]
    Whence, it suffices to check commutativity of the top square, which only involves morphisms of chain complexes. So, it is enough to check that the diagram commutes affine locally, which is just \ref{twist cd}.
\end{proof}

\begin{cor} \label{twist t res sheafy}
    The functor
    \[ \cT \colonequals \Rderived{\cHom}_\cA(\cI, -) \colon \Dcat(\qcoh (Y, \cA) ) \to \Dcat(\qcoh (Y, \cA) ) \]
    is the twist around $G$. If, moreover, $\cA_\con$ is perfect as an $\cA$-module, then $\cT$ restricts to the bounded derived category of coherent $\cA$-modules.
\end{cor}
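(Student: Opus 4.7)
The plan is to mirror the proof of Corollary \ref{twist t res} in the global noncommutative setting, using the sheafy analogue \ref{twist cd sheafy} of \ref{twist cd} as the key input. First, I would apply $\Rderived{\cHom_\cA}(-, \cM)$ for each $\cM \in \Dcat(\qcoh(Y, \cA))$ to the short exact sequence of $\cA$-bimodules
\[ 0 \to \cI \tow{\iota} \cA \tow{\pi} \cA_\con \to 0 \]
from \eqref{sheafy Acon seq}. This sequence is a triangle in $\Dcat(\qcoh(Y, \cA \otimes_{\cO_Y} \cA^\op))$, so applying $\Rderived{\cHom_\cA}(-, \cM)$ produces a functorial triangle
\[ \Rderived{\cHom_\cA}(\cA_\con, -) \tow{\Rderived{\cHom_\cA}(\pi, -)} \Rderived{\cHom_\cA}(\cA, -) \tow{\Rderived{\cHom_\cA}(\iota, -)} \Rderived{\cHom_\cA}(\cI, -) \tow{+} \]
of endofunctors on $\Dcat(\qcoh(Y, \cA))$.

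Next, I would invoke \ref{twist cd sheafy} to rewrite the first two terms in this triangle as $GG^{\mathrm{RA}}$ and $1_{\Dcat(\qcoh(Y, \cA))}$ respectively, with the connecting morphism identified with the counit $\epsilon$ of the adjunction $G \dashv G^{\mathrm{RA}}$. Concretely, pasting the commutative square of \ref{twist cd sheafy} onto the above triangle yields a commutative diagram in which the top row is the triangle above and the bottom row is
\[ GG^{\mathrm{RA}} \tow{\epsilon} 1_{\Dcat(\qcoh(Y, \cA))} \tow{u} \Rderived{\cHom_\cA}(\cI, -) \tow{+} \]
with $u$ defined so that the right-hand square commutes. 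Since the top row is a triangle and the two leftmost vertical arrows are isomorphisms, the bottom row is also a triangle. By \ref{twist def}, this precisely exhibits $\cT = \Rderived{\cHom_\cA}(\cI, -)$ as the twist around $G$.

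For the final claim, I would appeal to \ref{perfect I}, which shows that if $\cA_\con$ is perfect as a (right) $\cA$-module, then so is $\cI$. Since $(Y, \cA)$ is a quasi-projective noetherian NC scheme by \ref{noetherian} and the hypothesis that $Y$ is quasi-projective, \ref{res to coh}\eqref{nc local derived hom Db} (applied with $\cB = \cA$ and $\cF = \cI$) shows that $\Rderived{\cHom_\cA}(\cI, -)$ restricts to a functor $\Db(\coh(Y, \cA)) \to \Db(\coh(Y, \cA))$. No step is really an obstacle here: the only subtlety is checking that the counit identification of \ref{twist cd sheafy} is the genuine counit of the derived adjunction, which was already resolved in the preceding lemma using \ref{adjunction is local} to reduce to the affine case.
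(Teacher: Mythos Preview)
Your proposal is correct and follows essentially the same approach as the paper: the paper's proof simply says the first claim is the global analogue of \ref{twist t res} (which is precisely the argument you spell out, using \ref{twist cd sheafy} in place of \ref{twist cd}), and then invokes \ref{perfect I} and \ref{res to coh} for the restriction statement, exactly as you do. One minor inaccuracy: the proof of \ref{twist cd sheafy} does not use \ref{adjunction is local}; it reduces to the affine case directly by noting the relevant square involves only chain maps, but this does not affect your argument since you only use \ref{twist cd sheafy} as a black box.
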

\begin{proof}
    The fact that $\cT$ is the twist around $G$ is just the global analogue of \ref{twist t res}. 
    
    If $\cA$ is perfect, then so is $\cI$ by \ref{perfect I}. Thus,  $\cT$ restricts to the bounded derived category of coherent $\cA$-modules by \ref{res to coh}.
\end{proof}
    
\begin{cor} \label{twist geometric}
    Suppose that $\cA_\con$ is perfect as an $\cA$-module. Then, the functor
    \[ \Psi_\cP^{-1} \cdot \cT \cdot \Psi_\cP \colon \Db(\coh X) \to \Db(\coh X)  \]
    is the twist around $\Psi_\cP^{-1} \cdot G$.
\end{cor}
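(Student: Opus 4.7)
The plan is to invoke Lemma~\ref{spherical composed with equiv} in a purely formal manner. The prior Corollary~\ref{twist t res sheafy} already identifies $\cT = \Rderived{\cHom}_\cA(\cI, -)$ as the twist around $G \colon \Dcat(\qcoh(Z,\cA_\con)) \to \Dcat(\qcoh(Y,\cA))$, and the hypothesis that $\cA_\con$ is perfect as an $\cA$-module ensures (by Corollary~\ref{twist t res sheafy} together with Proposition~\ref{res to coh} applied to $\cI$, whose perfectness follows from Lemma~\ref{perfect I}) that both $G$ and $\cT$ restrict to the bounded derived categories of coherent modules. So the twist $\cT \colon \Db(\coh(Y,\cA)) \to \Db(\coh(Y,\cA))$ and the functor $G \colon \Db(\coh(Z,\cA_\con)) \to \Db(\coh(Y,\cA))$ are available with the correct domains and codomains for the lemma to apply.

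The only additional ingredient needed is a triangulated equivalence on the target category $\Db(\coh(Y,\cA))$, and this is precisely what the relative tilting bundle $\cP$ provides: by the definition of relative tilting recalled at the start of Section~\ref{crepant contractions section}, the functor $\Psi_\cP \colon \Db(\coh X) \to \Db(\coh(Y,\cA))$ is an equivalence, so $E \colonequals \Psi_\cP^{-1} \colon \Db(\coh(Y,\cA)) \to \Db(\coh X)$ is a triangulated equivalence.

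Applying Lemma~\ref{spherical composed with equiv} with $S = G$ and $E = \Psi_\cP^{-1}$, the conclusion is immediate: $E \cdot \cT \cdot E^{-1} = \Psi_\cP^{-1} \cdot \cT \cdot \Psi_\cP$ is the twist around the composed functor $E \cdot S = \Psi_\cP^{-1} \cdot G$, which is exactly the claim. There is no genuine obstacle; the conceptual content of the proof is entirely contained in the functoriality of the triangle defining the twist under conjugation by a triangulated equivalence, which is already encoded in Lemma~\ref{spherical composed with equiv}.
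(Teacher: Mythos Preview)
Your proposal is correct and follows essentially the same approach as the paper: the paper's proof is the single line ``This is immediate from \ref{spherical composed with equiv} and \ref{twist t res sheafy},'' and you have simply unpacked those two citations with the care needed to check that domains and codomains line up after restricting to bounded coherent categories.
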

\begin{proof}
    This is immediate from \ref{spherical composed with equiv} and \ref{twist t res sheafy}.
\end{proof}

\subsection{Equivalence criterion}

 The goal of this subsection is to prove that the twist $\cT$ is an equivalence when the NC scheme $(Y, \cA_\con)$ satisfies certain spherical assumptions. In order to do so, the following technical lemma will be essential.

\begin{lemma} \label{adjunction is local}
    Let $(\cX, \cA)$ and $(\cX, \cB)$ be noncommutative schemes. Let $i \colon U \hookrightarrow \cX$ an open subset. Let $\cF \in \Dcat( \qcoh(\cX, \cB \otimes_{\cO_X} \cA^\op))$ be a complex of bimodules which are locally finitely presented as $\cA$-modules. Consider the adjunctions induced by \ref{nc derived functors bg}\eqref{adjunction}
    \begin{align}
        - \Dtensor_\cA \cF & \dashv \Rderived{\cHom}_{\cB}(\cF, -), \label{adj 1} \\
        - \Dtensor_{i^*\cA} i^*\cF & \dashv \Rderived{\cHom}_{i^*\cB}(i^*\cF, -) \label{adj 2}
    \end{align}
    and let $\epsilon^\cX$ and $\eta^\cX$ denote the counit and unit of the adjunction \eqref{adj 1}, respectively. Similarly, let $\epsilon^U$ and $\eta^U$ denote the counit and unit of the adjunction \eqref{adj 2}. Then, $i^*(\epsilon^\cX_\cF) = \epsilon^U_{i^*\cF}$ and $i^*(\eta^\cX_\cF) = \eta^U_{i^*\cF}$.
\end{lemma}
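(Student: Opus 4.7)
The plan is to show that both the unit $\eta^\cX$ and counit $\epsilon^\cX$ pull back along $i^*$ to the analogous data on $U$ via the universal property of adjunctions. The argument reduces to establishing two natural base-change isomorphisms: one for the derived tensor, $i^*(- \Dtensor_\cA \cF) \cong i^*(-) \Dtensor_{i^*\cA} i^*\cF$, and one for the derived sheaf Hom, $i^*\Rderived{\cHom}_\cB(\cF, -) \cong \Rderived{\cHom}_{i^*\cB}(i^*\cF, i^*(-))$. Once these are in place, uniqueness of the unit and counit in an adjoint pair forces the compatibility.

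First, for the tensor base change, I would use a K-flat resolution $\cP \to \cF$ in $\qcoh(\cX, \cB \otimes_{\cO_X} \cA^{\op})$, whose existence is granted by \ref{resolution properties}\eqref{exists Kflat}. Because $i$ is flat, $i^*$ is exact and preserves K-flat complexes, so $i^*\cP \to i^*\cF$ remains a K-flat resolution on $U$; hence both sides of the tensor base change are represented by the same underived complex $i^*(-) \otimes_{i^*\cA} i^*\cP$, giving the natural isomorphism.

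Second, for the Hom base change, I would choose a K-injective resolution $\cG \to \cI$ of the target in $\qcoh(\cX, \cB)$ via \ref{resolution properties}\eqref{exists Kinj}. Local finite presentation of $\cF$ as an $\cA$-module together with flatness of $i$ yields the underived isomorphism $i^*\cHom_\cB(\cF, \cI) \cong \cHom_{i^*\cB}(i^*\cF, i^*\cI)$. To see that the right-hand side still computes $\Rderived{\cHom}$ on $U$, one exploits that $i_*$ (being right adjoint to the exact $i^*$) preserves K-injectivity and that $i^*i_* \cong \id$ for an open immersion; this allows one to pass from $i^*\cI$ to a genuine K-injective resolution on $U$ without changing the Hom complex up to quasi-isomorphism.

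With both base-change isomorphisms established, I would conclude via uniqueness: the transported adjunction $(- \Dtensor_{i^*\cA} i^*\cF) \dashv \Rderived{\cHom}_{i^*\cB}(i^*\cF, -)$ on $U$ coincides with the adjunction \eqref{adj 2}, and a diagram chase through the explicit evaluation and coevaluation maps — in the spirit of \ref{standard counit} — then shows that $i^*\epsilon^\cX = \epsilon^U$ and $i^*\eta^\cX = \eta^U$ under these identifications. The main obstacle is the derived Hom base change: while the underived statement follows immediately from local finite presentation of $\cF$ and flatness of $i$, upgrading it to the derived setting requires careful handling of the failure of $i^*$ to preserve K-injectivity, and this is the delicate technical step of the argument.
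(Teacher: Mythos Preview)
Your outline is sound and would work, but the paper takes a more direct route that sidesteps the uniqueness argument entirely. Rather than establishing the two base-change isomorphisms separately and then invoking uniqueness of adjoints, the paper exploits the fact that the derived tensor--Hom adjunction isomorphism
\[
\Phi^{(\cN,\cM)} \colon \Rderived{\cHom_\cA}(\cN,\Rderived{\cHom_\cB}(\cF,\cM)) \xrightarrow{\ \sim\ } \Rderived{\cHom_\cB}(\cN \Dtensor_\cA \cF, \cM)
\]
is itself an isomorphism of complexes of \emph{sheaves}. Hence its global sections over $\cX$ and over $U$ are automatically compatible with restriction, and taking $\cohom{0}$ of each gives precisely the Hom-set isomorphisms defining the adjunctions \eqref{adj 1} and \eqref{adj 2}. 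The base-change identifications for $\Rderived{\cHom}$ and $\Dtensor$ under $i^*$ are then simply quoted (from \cite[18.4.6, 18.5.2]{KSc}), and chasing the identity through the resulting commuting square yields $\epsilon^\cX|_U = \epsilon^U$ directly. Your approach is more modular---separating the base-change lemmas from the adjunction compatibility---and you correctly flag the delicate step (failure of $i^*$ to preserve $K$-injectives, with the $i_*$/$i^*i_*\cong\id$ workaround). The paper's approach buys economy: by staying at the sheaf level it never has to confront that $K$-injectivity issue explicitly, nor argue via uniqueness, since the compatibility with restriction is built into the sheaf structure of $\Phi$ from the start.
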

\begin{proof}
    We prove the statement $i^*(\epsilon^\cX_\cF) = \epsilon^U_{i^*\cF}$, and omit the analogous proof of $i^*(\eta^\cX_\cF) = \eta^U_{i^*\cF}$.

    Write $\cO(\cX)$ for the collection of open sets of $\cX$. Let $\cM, \cN \in \Dcat(\qcoh(\cX, \cB))$, and fix a $K$-injective resolution $j \colon \cM \to \cJ$  of $\cM$. Moreover, let $p \colon \cQ \to \cF$ be a $K$-flat resolution of $\cF$ as a $\cA$-$\cB$-module. From  \ref{nc derived functors bg}\eqref{adjunction}, there is an isomorphism 
    \[ \begin{tikzcd}
         \Rderived{\cHom_\cA}(\cN,\Rderived{\cHom_\cB}(\cF, \cM ) ) \arrow[r, "\sim"'] \arrow[d, equals] & \Rderived{\cHom_\cB}(\cN \Dtensor_\cA \cF, \cM ) \arrow[d, equals] \\
         \cHom^*_\cA(\cN, \cHom^*_\cB(\cQ, \cJ ) ) \arrow[r, "\sim"', "{\Phi^{(\cN, \cM)}}"] & \cHom^*_\cB(\tot(\cN  \Dtensor_\cA \cQ), \cJ ) 
    \end{tikzcd}
    \]
    where $\Phi^{(\cN, \cM)}$ is an isomorphism of complexes of sheaves. Therefore,
    \[ \Phi^{(\cN, \cM)} = (\Phi^{(\cN, \cM)}_V)_{V \in \cO(\cX)} \]
    where each $\Phi^{(\cN, \cM)}_V$ is a map of complexes
    
    \begin{align*}
        \Phi^{(\cN, \cM)}_V \colon \Hom^*_{\cA |_V}(\cN |_V ,\cHom^*_\cB(\cQ |_V, \cJ |_V ) ) \to \Hom^*_{\cB |_V}(\tot(\cN |_V  \Dtensor_{\cA |_V} \cQ |_V), \cJ |_V ),
    \end{align*}
    and these maps are compatible with restriction. That is, the diagram
    \begin{equation} \label{sheaf hom adj}
        \begin{tikzcd}
        \Hom^*_{\cA}(\cN ,\cHom^*_\cB(\cQ, \cJ ) ) \arrow[r, "{\Phi^{(\cN, \cM)}_\cX}"] \arrow[d, "(-) |_U"] & \Hom^*_{\cB}(\tot(\cN  \Dtensor_{\cA} \cQ), \cJ) \arrow[d, "(-) |_U"] \\
        \Hom^*_{\cA |_U}(\cN |_U ,\cHom^*_\cB(\cQ |_U, \cJ |_U ) ) \arrow[r, "{\Phi^{(\cN, \cM)}_U}"] & \Hom^*_{\cB |_U}(\tot(\cN |_U  \Dtensor_{\cA |_U} \cQ |_U), \cJ |_U )
        \end{tikzcd}
    \end{equation}
    commutes. Moreover, 
    \begin{align*}
        \Rderived{\Hom_\cA}(\cF, \cM) |_U & = \Rderived{\Hom_{\cA |_U}}(\cF |_U, \cM  |_U), \tag{by \cite[18.4.6]{KSc}} \\
        (\cN \Dtensor_\cA \cF) |_U & = \cN |_U \Dtensor_{\cA |_U} \cF |_U. \tag{by  \cite[18.5.2]{KSc}}
    \end{align*}
    With these two equations in mind, note that the morphisms $\cohom{0}(\Phi^{(\cN, \cM)}_X)$ and $\cohom{0}(\Phi^{(\cN, \cM)}_U)$ induce the Hom-set isomorphisms defining the adjunctions \eqref{adj 1} and \eqref{adj 2}, respectively. Therefore, when the complex $\cN = \Rderived{\cHom_\cB}(\cF, \cM )$, then $\epsilon^\cX_\cF = \cohom{0}(\Phi^{(\cN, \cM)}_X)(\id)$ and $\epsilon^U_{i^*\cF} = \cohom{0}(\Phi^{(\cN, \cM)}_U)(\id)$.
    
    For $\cN = \Rderived{\cHom_\cB}(\cF, \cM )$, the diagram \eqref{sheaf hom adj} is

    \medskip
    \adjustbox{scale=0.95,center}{%
        \begin{tikzcd}
        \Hom^*_{\cA}(\cHom^*_\cB(\cQ, \cJ) ,\cHom^*_\cB(\cQ, \cJ ) ) \arrow[r, "\Phi^{(\cN, \cM)}_\cX"] \arrow[d, "(-) |_U"] & \Hom^*_{\cB}(\tot(\cHom^*_\cB(\cQ, \cJ)  \Dtensor_{\cA} \cQ), \cJ) \arrow[d, "(-) |_U"] \\
        \Hom^*_{\cA |_U}(\cHom^*_{\cB |_U}(\cQ |_U, \cJ |_U) ,\cHom^*_{\cB |_U}(\cQ |_U, \cJ |_U ) ) \arrow[r, "\Phi^{(\cN, \cM)}_U"] & \Hom^*_{\cB |_U}(\tot(\cHom^*_{\cB |_U}(\cQ |_U, \cJ |_U)  \Dtensor_{\cA |_U} \cQ |_U), \cJ |_U )
        \end{tikzcd}}
    
    \medskip
    Thus, 
    \[ \epsilon^\cX_\cF |_U =  \cohom{0}(\Phi^{(\cN, \cM)}_X)(\id) |_U = \cohom{0}(\Phi^{(\cN, \cM)}_U)(\id |_U) = \cohom{0}(\Phi^{(\cN, \cM)}_U)(\id) = \epsilon^U_{i^*\cF} \]
    as required.
\end{proof}

\begin{definition}
    Similarly to \cite[5.6]{DW4}, we will say that $\cA_\con$ is $t$-relatively spherical if for all closed points $z \in Z$,
    \[ \Ext^i_{\widehat{\cA}_z}(\widehat{\cA}_{\con, z}, \cS) = 0 \]
    for all $i \neq 0,t$. Here, $\cS$ is the direct sum of all simple $\widehat{\cA}_{\con,z}$ modules. 
\end{definition}

\begin{theorem} \label{global equiv theorem}
   If $\cA_\con$ is perfect in $\Db(\coh(Y, \cA))$ and $t$-relatively spherical, then 
    \[ \Rderived{\cHom_\cA}(\cI, -) \colon \Db(\modu \cA) \to \Db(\modu \cA) \]
    is an equivalence. Consequently,
       \begin{align}
       \Phi_X = \Psi_{\cP}^{-1} \cdot \Rderived{\Hom}_\cA(\cI, -) \cdot \Psi_{\cP} \colon \Db(\coh X) \tow{\sim} \Db(\coh X) 
    \end{align}
    is an autoequivalence of $\Db(\coh X)$.
\end{theorem}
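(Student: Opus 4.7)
The plan is to deduce the equivalence globally by restricting to an affine cover, where \ref{zariski local equiv} applies, and then using \ref{adjunction is local} to lift iso-ness of the unit and counit from the pieces of the cover to the whole of $Y$. Fix a finite affine cover $\{U_i = \spec R_i\}$ of $Y$. Writing $\Lambda_i \colonequals \cA(U_i)$ and $\Lambda_{i,\con} \colonequals \cA_\con(U_i)$, we have $\cA |_{U_i} \cong \Lambda_i^\sim$, $\cA_\con |_{U_i} \cong \Lambda_{i,\con}^\sim$, and $\cI |_{U_i} \cong [\add R_i]^\sim$ by \ref{sheafy notation}. Under the standard affine equivalence $\coh(U_i, \cA |_{U_i}) \simeq \modu \Lambda_i$, the restriction of $\Rderived{\cHom_\cA}(\cI, -)$ to $U_i$ corresponds to $\Rderived{\Hom_{\Lambda_i}}([\add R_i], -)$.

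Next, I would verify that the global hypotheses descend to each $U_i$. Perfectness of $\cA_\con$ in $\Db(\coh(Y, \cA))$ restricts to perfectness of $\Lambda_{i,\con}$ over $\Lambda_i$. For the $t$-relatively spherical property, the definition for $\cA_\con$ is formulated stalkwise at closed points of $Z$, and each such stalk coincides with the completion of $\Lambda_{i,\con}$ at the corresponding closed point of $U_i \cap Z$. By \ref{can check on stalks}, this is equivalent to $\Lambda_{i,\con}$ being $t$-relatively spherical in the sense of \ref{def rel sph}.

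With the local hypotheses in hand, \ref{zariski local equiv} implies that $\Rderived{\Hom_{\Lambda_i}}([\add R_i], -)$ is an equivalence on $\Db(\modu \Lambda_i)$, and hence the unit $\eta^{U_i}$ and counit $\epsilon^{U_i}$ of the adjunction
\[ (-) \Dtensor_{\cA |_{U_i}} \cI |_{U_i} \dashv \Rderived{\cHom_{\cA |_{U_i}}}(\cI |_{U_i}, -) \]
are isomorphisms. Applying \ref{adjunction is local} to the open inclusions $U_i \hookrightarrow Y$, the unit $\eta^Y$ and counit $\epsilon^Y$ of the corresponding global adjunction restrict to $\eta^{U_i}$ and $\epsilon^{U_i}$. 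Since being an isomorphism of sheaves is local on the base, $\eta^Y$ and $\epsilon^Y$ are themselves isomorphisms, so $\Rderived{\cHom_\cA}(\cI, -)$ is an equivalence. Conjugating by the tilting equivalence $\Psi_\cP$ then produces the autoequivalence $\Phi_X$ of $\Db(\coh X)$.

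The main obstacle I expect is bookkeeping around the identification of stalks and the compatibility of the local and global adjunctions, ensuring that the definition of $t$-relatively spherical for $\cA_\con$ (via completions at closed points of $Z$) genuinely matches the hypotheses of \ref{zariski local equiv} for each $\Lambda_{i,\con}$. The characterisation \ref{can check on stalks} is what makes this bridge clean, and \ref{adjunction is local} is what reduces the equivalence assertion to a sheaf-theoretic check on an open cover.
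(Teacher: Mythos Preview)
Your proposal is correct and follows essentially the same route as the paper: descend the hypotheses to an affine cover via \ref{can check on stalks}, apply \ref{zariski local equiv} on each piece, and use \ref{adjunction is local} to conclude that the global unit (and counit) are isomorphisms because they are so locally. The only cosmetic differences are that the paper chooses the cover witnessing perfectness of $\cA_\con$ rather than an arbitrary one, and checks only the unit by taking its cone and showing it vanishes on each $U_i$; your version checking both unit and counit is equally valid.
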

\begin{proof}
    First, we show that $(Y, \cA)$ admits an affine open cover $\{U_i = \spec R_i \to Y \}$ such that the functor $\Rderived{\cHom_\cA}(\cI, -) |_{U_i}$ is an equivalence. Well, recall that by \cref{zariski local equiv}, the functor $\Rderived{\cHom_\cA}(\cI, -) |_{U_i} \cong \Rderived{\Hom}_{\Lambda_{U_i} }([\add R_i], -)$ is an equivalence if $(\Lambda_\con)_{U_i}$ is perfect as a $\Lambda_{U_i}$-module and $t$-relatively spherical. 

    Well, since $\cA_\con$ is assumed to be perfect in $\Db(\coh(Y, \cA))$, there exists an open affine cover $\{ j_i \colon U_i = \spec R_i \to Y \}$ such that $\cA_\con |_{U_i}$ is perfect as a $\cA |_{U_i}$-module for all $i$. Equivalently, since each $U_i$ is affine, $(\Lambda_\con)_{U_i}$ is perfect as a $\Lambda_{U_i}$-module. 
        
    Moreover, since the definition of $t$-relatively spherical is stalk-local, the condition that $\cA_\con$ is $t$-relatively spherical implies $\cA_\con |_{U_i}$ is $t$-relatively spherical. That is, for $z \in U_i$, 
    \[ ( \cA_\con|_{U_i} )_z = ((\widehat{\Lambda}_\con)_{U_i})_z. \]
    Hence, $(\Lambda_\con)_{U_i}$ is $t$-relatively spherical by \ref{can check on stalks}. Thus,  by the proof of \ref{zariski local equiv}, we may conclude that the functor $\Rderived{\cHom_\cA}(\cI, -) |_{U_i} \cong \Rderived{\Hom}_{\Lambda_{U_i} }([\add R_i], -)$ is an equivalence for all $U_i$ in the affine open cover.
    
    We next show that this implies that the functor $\Rderived{\cHom_\cA}(\cI, -)$ is an equivalence. Notice that it is an equivalence if and only if the unit $\eta \colon 1_\cA \to \Rderived{\cHom}_\cA(\cI, - \Dtensor_\cA \cI)$ and the counit $\epsilon \colon \Rderived{\cHom}_\cA(\cI, -)  \Dtensor_\cA \cI \to 1_\cA$ are an isomorphisms. Since the proofs that $\eta$ and $\epsilon$ are isomorphisms are very similar, we prove the statement for $\eta$ and omit the proof for $\epsilon$. 
    
    Observe that $\eta$ is an isomorphism if and only if $\eta_a$ is an isomorphism for all $a \in \Db(\modu \cA)$. To show that this latter condition holds, consider the triangle
    \[ a \tow{\eta_a} \Rderived{\cHom}_\cA(\cI, a \Dtensor_\cA \cI) \to K_a \to^+ \]
    in $\Db(\coh (Y, \cA) )$. We will show that $K_a = 0$.
    
    Since $U_i$ is affine, the inclusion $j_i \colon U_i \to Y$ is flat so that $j_i^*$ is an exact functor. We thus have the following triangle 
    \[ j_i^*a \tow{j_i^* \eta_a} j_i^*\Rderived{\cHom}_\cA(\cI, a \Dtensor_\cA \cI) \to j_i^* (K_a) \to^+ \]
     Now, by \ref{adjunction is local}, this triangle is the same as 
    \[ j_i^*a \tow{\eta_{j_i^*a}} \Rderived{\cHom}_\cA(j_i^*\cI, j_i^*a \Dtensor_\cA j_i^*(\cI) ) \to j_i^* (K_a) \to^+ \]
    Where $\eta_{j_i^*a}$ is the unit of the Zariski local adjunction 
    \begin{align*}
        & - \Dtensor_\cA j_i^*\cI \dashv \Rderived{\cHom}_{\cA |_{U_i}}(j_i^*\cI, -).
    \end{align*}
   However, the functor $\Rderived{\cHom}_{\cA |_U}(j^*\cI, -) = \Rderived{\cHom}_{\cA |_{U_i}}([\add R_i], -)$ is an equivalence  by the above. It follows that $j_i^*(K_a) = 0$. Since $K_a$ is zero on every $U_i$ and $\{U_i\}_i$ is an open cover of $Y$, necessarily $K_a = 0$. Thus, $\eta$ is an isomorphism.
\end{proof}

\appendix

\section{Chasing the evaluation map} \label{dth adj}

Let $A, B$ be rings such that there is a ring morphism $p \colon A \to B$. In this section, we compute the counit of the derived tensor-Hom adjunction by diagram chasing the identity along the isomorphism
\[  \scriptstyle
\begin{tikzcd}[column sep=1.5em]
    \Hom_{\Dcat(B)}(\Rderived{ \Hom_{A} }(B, -), \Rderived{ \Hom_{A} }(B, -) ) \arrow[r, "\sim"] &
    \Hom_{\Dcat(A )}(\Rderived{ \Hom_{A} }(B, -) \Dtensor_{B}  B, -) 
\end{tikzcd}
\]
which defines the derived tensor-hom adjunction. 

\begin{lemma} \label{hom complex and total tensor adj}
    Let $L \in \Kcat(B)$, $M \in \Kcat(A \otimes_\Z B^\op)$ and $N \in \Kcat(A)$.
    There is a canonical isomorphism
    \[ \phi \colon \Hom_B^*(L, \Hom^*_A(M, N) ) \tow{\sim} \Hom^*_A( \tot(L \otimes_B M), N). \]
\end{lemma}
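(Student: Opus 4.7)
The plan is to build $\phi$ degree by degree from the classical tensor-hom adjunction and to show the resulting family assembles into a chain map with a componentwise inverse. Unwinding the definitions of the Hom- and total tensor complexes, a degree $k$ element of $\Hom_B^*(L, \Hom^*_A(M, N))$ is a family $(f^{p,q,s})_{p+q+s=k}$ with $f^{p,q,s} \colon L^{-p} \to \Hom_A(M^{-q}, N^s)$ a $B$-module map, while a degree $k$ element of $\Hom^*_A(\tot(L \otimes_B M), N)$ is a family $(g^{p,q,s})_{p+q+s=k}$ with $g^{p,q,s} \colon L^{-p} \otimes_B M^{-q} \to N^s$ an $A$-module map. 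Under the classical tensor-hom adjunction, each $f^{p,q,s}$ corresponds to the unique $\tilde f^{p,q,s}$ with $\tilde f^{p,q,s}(l \otimes m) = f^{p,q,s}(l)(m)$, and this correspondence is bijective in each component. We set $\phi(f)^{p,q,s} \colonequals \tilde f^{p,q,s}$.

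The main obstacle is verifying that $\phi$ intertwines the two differentials, which is a bookkeeping exercise with Koszul signs. Using the differentials recalled in the excerpt, one expands both $d \cdot \phi(f)$ and $\phi(d \cdot f)$ as tuples indexed by $(p,q,s)$, with summands coming from postcomposition with $d_N$ and from precomposition with $d_L \otimes 1$ and $1 \otimes d_M$ (resp.\ with $d_L$ and with the induced differential on $\Hom^*_A(M, N)$). The sign $(-1)^{p}$ arising from the Koszul rule in $d_{\tot(L \otimes M)}$ is precisely the one needed to match the sign $(-1)^{k+1}$ on the inner $\Hom$-differential after the adjunction shifts degrees by $|p|$ between the two sides; the outer sign $(-1)^{k+1}$ is common to both formulas. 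A direct componentwise comparison then shows both expressions agree on each summand $L^{-p} \otimes_B M^{-q} \to N^s$, so $\phi$ is a chain map.

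Finally, the inverse $\psi$ of $\phi$ is constructed by the same recipe with the inverse classical adjunction applied componentwise: given $g = (g^{p,q,s})$, set $\psi(g)^{p,q,s}(l)(m) \colonequals g^{p,q,s}(l \otimes m)$. Bijectivity in each component yields $\phi \cdot \psi = 1$ and $\psi \cdot \phi = 1$ as chain maps, and naturality in $L$, $M$, $N$ is inherited from the naturality of the underlying adjunction on modules. Thus $\phi$ is the claimed canonical isomorphism of complexes.
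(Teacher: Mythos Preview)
Your proof is correct and follows essentially the same approach as the paper: both construct $\phi$ degree by degree as the product of the classical tensor-hom adjunction isomorphisms $\Hom_B(L^{-p}, \Hom_A(M^{-q}, N^s)) \cong \Hom_A(L^{-p} \otimes_B M^{-q}, N^s)$, and then verify that the resulting map commutes with the differentials. The paper simply declares the differential check ``straightforward'' where you sketch the Koszul sign bookkeeping, so your write-up is if anything slightly more explicit.
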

\begin{proof}
    Let $\alpha$ be a degree $j$ element in $\Hom_B^*(L, \Hom^*_A(M, N) )$. Then, 
    \[ \alpha \in \prod_{p+q = j}  \Hom_B(L^{-p}, \Hom^q_A(M, N) ) = \prod_{p+r+s = j} \Hom_B(L^{-p}, \Hom_A(M^{-r}, N^s) ). \]
    Hence, we may write $\alpha = (\alpha^{p, r, s})$. By the standard tensor-hom adjunction there are isomorphisms
    \[ \phi^{p, r, s} \colon \Hom_B(L^{-p}, \Hom_A(M^{-r}, N^s)) \tow{\sim} \Hom_A(L^{-p} \otimes_B M^{-r}, N^s). \]
    Hence, there is an isomorphism $ \phi^j = \prod_{p+r+s = j}  \phi^{p, r, s}$:
    \begin{align*}
        \Hom_B^*(L, \Hom^*_A(M, N) )^j = & \prod_{p+r+s = j} \Hom_B(L^{-p}, \Hom_A(M^{-r}, N^s) ) \\ 
        \tow[\phi^{j}]{\sim} & \prod_{p+r+s = j} \Hom_A(L^{-p} \otimes_B M^{-r}, N^s) \\
         = & \prod_{k+s = j} \Hom_A( \bigoplus_{p+r = k} L^{-p} \otimes_B M^{-r}, N^s) \\
         = & \Hom_A^*( \tot(L \otimes_B M), N)^j 
    \end{align*}
    It is straightforward to check that $\phi = (\phi^j)$ respects the differentials so that it is indeed a chain complex isomorphism. 
\end{proof}

\begin{lemma} \label{cohom of hom complex}
    Let $L, M \in \Kcat(B)$, then there is an isomorphism
    \[ \cohom{j}(\Hom^*(L, M) ) \tow{f} \Hom_{\Kcat(B)}(L, M[j]). \]
\end{lemma}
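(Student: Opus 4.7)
The plan is to unpack the definitions of the hom complex and of $\Kcat(B)$, and show that a degree $j$ cocycle of $\Hom^*(L,M)$ is precisely a chain map $L \to M[j]$ while a degree $j$ coboundary is precisely a null-homotopic chain map $L \to M[j]$. I would begin by recalling from the paper's conventions that a degree $j$ element $\alpha \in \Hom^*_B(L,M)^j$ is a family $\alpha = (\alpha^p)$ with $\alpha^p \colon L^{-p} \to M^{j-p}$, together with the differential
\[ d^j(\alpha) = d_M \cdot \alpha + (-1)^{j+1} \alpha \cdot d_L. \]
Under the identification of a degree $j$ graded map $L \to M$ with a degree $0$ graded map $L \to M[j]$ (using the suspension conventions $M[j]^n = M^{n+j}$ with $d_{M[j]} = (-1)^j d_M$), the condition $d^j(\alpha) = 0$ is exactly the condition that $\alpha$ commutes with the differentials, i.e.\ that it is a chain map $L \to M[j]$.

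Next I would treat coboundaries. If $\alpha = d^{j-1}(\beta)$ for some $\beta$ of degree $j-1$, then under the same identification $\beta$ corresponds to a degree $-1$ map $L \to M[j]$ (or equivalently a chain homotopy), and the formula $\alpha = d_M \cdot \beta + (-1)^{j} \beta \cdot d_L$ becomes exactly the defining relation for $\alpha$ to be null-homotopic as a chain map $L \to M[j]$. Assembling these two observations gives a well-defined map
\[ f \colon \cohom{j}(\Hom^*(L,M)) \to \Hom_{\Kcat(B)}(L, M[j]), \qquad [\alpha] \mapsto [\alpha], \]
which is surjective (every chain map $L \to M[j]$ arises from a cocycle by the above correspondence) and injective (null-homotopic chain maps are coboundaries).

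The argument is almost entirely bookkeeping about signs and suspension conventions, so the main obstacle is simply to ensure that the sign conventions used in the paper's definition of $d^k(f) = d_M \cdot f + (-1)^{k+1} f \cdot d_L$ match the conventions used for $M[j]$, so that the identification of cocycles with chain maps into the shift is literally correct; once the signs are reconciled, the remainder is formal.
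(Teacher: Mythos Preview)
Your proposal is correct and follows the same approach as the paper. In fact, the paper's own proof simply declares the isomorphism well known and records the explicit map $[\beta] \mapsto (\beta^{p,q})$ without verifying the cocycle/chain-map and coboundary/null-homotopy correspondences; your version supplies exactly those details (and your explicit acknowledgment of the sign bookkeeping is appropriate, since the identification does depend on the shift convention).
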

\begin{proof}
    This isomorphism is well known. Explicitly, given a cocycle $\beta = (\beta^{p, q})$ in the product $\prod_{p+q = j} \Hom_R(L^{-p}, M^q)$, the isomorphism  maps the equivalence class $[\beta]$ to the chain morphism $L \to M[j]$ in $\Kcat(B)$ defined by the data $(\beta^{p, q})$.
\end{proof}

\begin{lemma} \label{hom is k injective}
        Let $I \in \Kcat(A)$ be a $K$-injective complex, and let $P$ be a complex of $A$-$B$-bimodules which is $K$-flat as a complex of $B^\op$-modules. Then, $\Hom_A^*(P, I)$ is a $K$-injective complex in $\Kcat(B)$. 
\end{lemma}
\begin{proof}
    Let $M$ be an acyclic complex. Then, 
    \begin{align*}
        \Hom_{\Kcat(B)}(M, \Hom_A^*(P, I)) & \cong \cohom{0}(\Hom^*_B(M, \Hom_A^*(P, I)) \tag{\text{by \ref{cohom of hom complex} }} \\
        & \cong \cohom{0}(\Hom^*_A(\tot(M \otimes P), I) ) \tag{\text{by \ref{hom complex and total tensor adj}}} \\
        & \cong  \Hom_{\Kcat(A)}(\tot(M \otimes P), I). \tag{\text{by \ref{cohom of hom complex}}}
    \end{align*}
    Now, since $M$ is acyclic and $P$ is $K$-flat as a $B^\op$-module, it must be that $\tot(M \otimes P)$ is acyclic. Since $I$ is $K$-injective, then $\Hom_{K(A)}(\tot(M \otimes P), I) = 0$ by definition. Hence, by the definition of $K$-injective, $\Hom_A^*(P, I)$ is a $K$-injective complex. 
\end{proof}

\begin{lemma} \label{Kcat and Dcat hom iso}
    Let $L, J \in \Dcat(B)$ such that $J$ is $K-$injective. Then, there is an isomorphism 
    \[\Hom_{\Kcat(B)}(L, J) \tow{g} \Hom_{\Dcat(B)}(L, J). \]
\end{lemma}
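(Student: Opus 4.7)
The plan is to exploit the universal property of the localization $\Dcat(B) = \Kcat(B)[\text{qis}^{-1}]$ together with the defining property of $K$-injective complexes, namely that for any quasi-isomorphism $s \colon M \to M'$ in $\Kcat(B)$, precomposition induces a bijection
\[ s^{*} \colon \Hom_{\Kcat(B)}(M', J) \tow{\sim} \Hom_{\Kcat(B)}(M, J). \]
This follows by applying $\Hom_{\Kcat(B)}(-, J)$ to the triangle $M \to M' \to \cone(s) \to^{+}$ and noting that $\cone(s)$ is acyclic, so that $\Hom_{\Kcat(B)}(\cone(s), J[k]) = 0$ for all $k$ by the very definition of $K$-injectivity.

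First I would define the map $g$ as the natural map sending a homotopy class $[f] \colon L \to J$ to the equivalence class of the roof $L = L \tow{f} J$ in $\Dcat(B)$. To verify surjectivity, I would invoke the calculus of fractions: any morphism in $\Hom_{\Dcat(B)}(L, J)$ is represented by a roof $L \xleftarrow{s} L' \tow{f} J$ with $s$ a quasi-isomorphism. By the bijectivity of $s^{*}$ recalled above, there exists a unique $\tilde{f} \in \Hom_{\Kcat(B)}(L, J)$ with $\tilde{f} \cdot s = f$ in $\Kcat(B)$. The morphism $\tilde{f}$ and the original roof then define the same class in $\Dcat(B)$, witnessed by the common refinement $L'$ with maps $s \colon L' \to L$ and $\id \colon L' \to L'$.

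For injectivity, suppose that $f_1, f_2 \in \Hom_{\Kcat(B)}(L, J)$ satisfy $g([f_1]) = g([f_2])$. By the definition of equality of roofs in the localization, there exists a quasi-isomorphism $s \colon L' \to L$ with $f_1 \cdot s = f_2 \cdot s$ in $\Kcat(B)$. Applying the injectivity of $s^{*}$ yields $f_1 = f_2$ in $\Kcat(B)$, so $[f_1] = [f_2]$.

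The only mild technical point, which does not really constitute an obstacle, is being careful that the calculus of fractions is well-defined on $\Kcat(B)$ (so that roofs make sense); this is classical and can be invoked from the standard references on derived categories. Everything else is a formal consequence of the key bijection $s^{*}$, which is essentially the content of $K$-injectivity.
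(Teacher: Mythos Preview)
Your proposal is correct and follows exactly the approach the paper has in mind: the paper defines the same map $g$ (sending $\alpha$ to the roof $L = L \tow{\alpha} J$) and simply declares the isomorphism well-known, whereas you supply the standard verification via calculus of fractions and the defining property of $K$-injectivity. There is nothing to correct.
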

\begin{proof}
    The isomorphism $g \colon \Hom_{\Kcat(R)}(L, J) \to \Hom_{\Dcat(R)}(L, J)$ is well-known. It maps a morphism $L \tow{\alpha} J$ in $\Hom_{\Kcat(R)}(L, J)$ to the roof $L = L \tow{\alpha} J$ in $\Hom_{\Dcat(R)}(L, J)$.
\end{proof}

\begin{prop} \label{derived adjunction}
    Let $L \in \Kcat(B)$, $M \in \Kcat(A \otimes_\Z B^\op)$ and $N \in \Kcat(A)$. Then, there is an isomorphism
    \[ \phi^0_{L,M,N} \colon \Hom_{\Dcat(B)}(L, \Rderived{\Hom_A}(M, N) ) \tow{\sim} \Hom_{\Dcat(A)}(L \Dtensor_B M, N) \]
    which is natural in $L,M,N$.
\end{prop}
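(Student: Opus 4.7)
The plan is to assemble the isomorphism from the chain-level adjunction (Lemma A.2) together with the bridging results (Lemmas A.3--A.5), following the standard strategy for proving derived adjunctions via K-injective and K-flat resolutions.

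First I would fix once and for all a $K$-injective resolution $N \to I$ of $N$ in $\Kcat(A)$ and a resolution $P \to M$ in $\Kcat(A \otimes_{\Z} B^{\op})$ which is $K$-flat as a complex of $B^{\op}$-modules. Using Definitions A.1 and A.2 (these are the displays preceding \ref{derived hom def} and \ref{derived tensor def}) together with the fact that $\Hom^*_A(-,I)$ sends quasi-isomorphisms in $\Kcat(A)$ into quasi-isomorphisms when $I$ is $K$-injective, I would identify $\Rderived{\Hom_A}(M,N)$ with $\Hom^*_A(P,I)$ in $\Dcat(B)$, and $L \Dtensor_B M$ with $\tot(L \otimes_B P)$ in $\Dcat(A)$. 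The key structural observation, provided by \ref{hom is k injective}, is that $\Hom^*_A(P,I)$ is itself $K$-injective in $\Kcat(B)$, which is what allows us to pass between $\Kcat$ and $\Dcat$ on the Hom-in-$\Dcat(B)$ side.

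With this setup, the isomorphism is built as the composition
\begin{align*}
    \Hom_{\Dcat(B)}\!\bigl(L, \Rderived{\Hom_A}(M,N)\bigr)
    &\cong \Hom_{\Dcat(B)}\!\bigl(L, \Hom^*_A(P,I)\bigr) \\
    &\cong \Hom_{\Kcat(B)}\!\bigl(L, \Hom^*_A(P,I)\bigr) \tag{by \ref{Kcat and Dcat hom iso} + \ref{hom is k injective}}\\
    &\cong \cohom{0}\!\bigl(\Hom^*_B(L, \Hom^*_A(P,I))\bigr) \tag{by \ref{cohom of hom complex}}\\
    &\cong \cohom{0}\!\bigl(\Hom^*_A(\tot(L\otimes_B P), I)\bigr) \tag{by \ref{hom complex and total tensor adj}}\\
    &\cong \Hom_{\Kcat(A)}\!\bigl(\tot(L\otimes_B P), I\bigr) \tag{by \ref{cohom of hom complex}}\\
    &\cong \Hom_{\Dcat(A)}\!\bigl(\tot(L\otimes_B P), I\bigr) \tag{by \ref{Kcat and Dcat hom iso}, $I$ $K$-injective}\\
    &\cong \Hom_{\Dcat(A)}\!\bigl(L \Dtensor_B M, N\bigr).
\end{align*}
Each individual step is natural in all three variables (Lemma A.2 is a chain isomorphism natural in $L,P,I$; Lemma A.3 is the natural identification of $\cohom{0}$ of a Hom-complex with homotopy classes; Lemma A.5 is the natural localisation isomorphism). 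The only subtlety is that $P$ and $I$ depend on $M$ and $N$ respectively, but this can be handled by the standard argument that any two choices of $K$-injective (resp.\ $K$-flat) resolutions are related by a homotopy equivalence, so $\phi^0_{L,M,N}$ is independent of these choices and natural in morphisms of $M$ and $N$.

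The main potential obstacle is not any single step but rather verifying that the chain-level adjunction $\phi$ of \ref{hom complex and total tensor adj} actually descends to the derived level with the asserted naturality; concretely this requires \ref{hom is k injective} to promote $\Hom^*_A(P,I)$ to a $K$-injective complex in $\Kcat(B)$. Once that is in hand, everything else is the formal composition above, and setting $\phi^0_{L,M,N}$ equal to this composite gives the desired natural isomorphism, completing the proof.
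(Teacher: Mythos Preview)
Your proof is correct and follows essentially the same route as the paper: both build $\phi^0_{L,M,N}$ as the composite of the identification $\Rderived{\Hom_A}(M,N)\simeq \Hom^*_A(P,I)$, the passage $\Dcat\leftrightarrow\Kcat$ via \ref{Kcat and Dcat hom iso} (using \ref{hom is k injective} on the $B$-side), the bridge $\Kcat\leftrightarrow\cohom{0}$ via \ref{cohom of hom complex}, and the chain-level adjunction $\cohom{0}(\phi)$ from \ref{hom complex and total tensor adj}. The only cosmetic difference is that the paper records $\Rderived{\Hom_A}(M,N)=\Hom^*_A(M,I)$ by Definition~\ref{derived hom def} and then inserts the explicit quasi-isomorphism $\Hom^*_A(q_M,I)$ as a separate step, whereas you fold this into a single identification; your added remarks on naturality and independence of resolutions are a welcome clarification that the paper leaves implicit.
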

\begin{proof}
        Let $q_M \colon P \to M$ be a bimodule resolution of $M$ which is $K$-flat as a complex of $B^\op$-modules. Then, the statement follows from the diagram
        \[
        \begin{tikzcd}[column sep=7em]
            \Hom_{\Dcat(B)}(L, \Rderived{\Hom_A}(M, N) ) \arrow[d, equals] \arrow[r, "\psi^0_{L, M, N}"] & \Hom_{\Dcat(A)}(L \Dtensor_B M, I) \arrow[d, equals] \\
            \Hom_{\Dcat(B)}(L, \Hom_A^*(M, I) ) \arrow[d, "\sim"'{anchor=south, rotate=90}, "{\Hom_{\Dcat(B)}(L, \Hom_A^*(q_M, I) )}"] & \Hom_{\Dcat(A)}(\tot(L \otimes P), I) \arrow[d, equals] \\
            \Hom_{\Dcat(B)}(L, \Hom_A^*(P, I) ) \arrow[d, "\sim"'{anchor=south, rotate=90}, "g", "{\text{by \ref{Kcat and Dcat hom iso} and \ref{hom is k injective}}}"'{xshift=-1em}] & \Hom_{\Dcat(A)}(\tot(L \otimes P), I) \arrow[d, "\sim"'{anchor=north, rotate=90}, "g", "{\text{by \ref{Kcat and Dcat hom iso}}}"'{xshift=1em}, swap] \\
            \Hom_{\Kcat(B)}(L, \Hom_A^*(P, I) ) \arrow[d, "\sim"'{anchor=south, rotate=90}, "f^{-1}", "{\text{by \ref{cohom of hom complex}}}"'{xshift=-1em}] & \Hom_{\Kcat(A)}(\tot(L \otimes P), I) \arrow[d, "\sim"'{anchor=north, rotate=90}, "f^{-1}", "{\text{by \ref{cohom of hom complex}}}"'{xshift=1em}, swap]\\
            \cohom{0}( \Hom^*_{B}(L, \Hom_A^*(P, I) ) \arrow[r, "\sim"', "\cohom{0}(\phi)", "{\text{by \ref{hom complex and total tensor adj}}}"'{xshift=2em}] &  \cohom{0}( \Hom^*_{A}(\tot(L \otimes P), I )
        \end{tikzcd}
        \]
        where $\psi^0_{L,M,N}$ is defined so that the diagram commutes.
\end{proof}
 
As in \ref{derived adjunction}, let  $M \in \Kcat(A \otimes_\Z B^\op)$ and choose a bimodule resolution $P \to M$ of $M$ that is $K$-flat as a $B^\op$-module. Then, there is a quasi-isomorphism 
\[ s \colon \Rderived{ \Hom_{A} }(M, N) \Dtensor_{B}  M \to  \Hom_A^*(P, I) \Dtensor_{B}  M.\] 

\begin{lemma} \label{standard counit gen}
     The counit $\epsilon$ of the adjunction $(-) \Dtensor_B M \dashv \Rderived{\Hom_A}(M, -)$ has component at $N \in \Dcat(A)$
       \[ \epsilon_N \colon \Rderived{ \Hom_{A} }(M, N) \Dtensor_{B}  M \to N \]
      given as the composition $j_N^{-1} \cdot \gamma_N \cdot s$. Here, $j_N \colon N \to I$ is a $K$-injective resolution of $N$ and $\gamma_N = (\gamma_N^j)$ is a chain map 
      \[ \gamma_N^j \colon \bigoplus_{p+q=j} ( \prod_{r+s=p} \Hom_A(P^{-r}, I^s) )\otimes_B P^{q} \to I^j \]
      sending $\alpha \otimes a \in \Hom_A(P^{-r}, I^s) \otimes P^{q}$ to zero if $-r \neq q$ and sending an element $\alpha \otimes a$ in $\Hom_A(P^{q}, I^j) \otimes P^{q}$ to $\alpha(a) \in I^j$.
\end{lemma}
\begin{proof}
     The component $\epsilon_N$ of the counit is the image of the identity under the isomorphism $\phi^0_{\Rderived{ \Hom_{A} }(M, N), M, N}$ defined in \ref{derived adjunction}. That is, $\epsilon_N = j^{-1}_N \cdot \psi^0_{\Rderived{ \Hom_{A} }(M, N), M, N}(\id)$. Moreover, observe that since $\psi^0_{L, M, N}$ is natural in $L$, the following diagram commutes.
   \[\begin{tikzcd}[column sep=6.5em]
            \Hom_{\Dcat(B)}(\Hom_A^*(P, I), \Rderived{\Hom_A}(M, N) ) \arrow[d, "{\Hom_{\Dcat(B)}(\Hom_A^*(q_M, I), \Rderived{\Hom_A}(M, N) )}"] \arrow[r, "\psi^0_{\Hom_A^*(P, I), M, N}"] & \Hom_{\Dcat(A)}(\Hom_A^*(P, I) \Dtensor_B M, I) \arrow[d, "{\Hom_{\Dcat(A)}(s, I)}"] \\
            \Hom_{\Dcat(B)}(\Rderived{\Hom_A}(M, N), \Rderived{\Hom_A}(M, N) )  \arrow[r, "{\psi^0_{\Rderived{\Hom_A}(M, N), M, N}}"] & \Hom_{\Dcat(A)}(\Rderived{\Hom_A}(M, N) \Dtensor_B M, I) 
        \end{tikzcd}
        \]
        where the vertical arrows arrows are quasi-isomorphisms. Hence, $\psi^0_{\Rderived{ \Hom_{A} }(M, N), M, N}(\id)$ equals
       \begin{align*}
          & \Hom_{\Dcat(A)}(s, I) \cdot \psi^0_{\Hom_A^*(P, I), M, N} \cdot \Hom_{\Dcat(B)}(\Hom_A^*(q_M, I)^{-1}, \Rderived{\Hom_A}(M, N))(\id) \\
           & = \Hom_{\Dcat(A)}(s, I) \cdot \psi^0_{\Hom_A^*(P, I), M, N}(\Hom_A^*(q_M, I)^{-1}) \\
           & = \psi^0_{\Hom_A^*(P, I), M, N}(\Hom_A^*(q_M, I)^{-1}) \cdot s.
       \end{align*} 
        That is, $\epsilon_N = j^{-1}_N \cdot \psi^0_{\Hom_A^*(P, I), M, N}(\Hom_A^*(q_M, I)^{-1}) \cdot s$. Hence it suffices to show that 
        \[\psi^0_{\Hom_A^*(P, I), M, N}(\Hom_A^*(q_M, I)^{-1}) = \gamma_N. \]
        
         Consider the diagram
          \[
        \begin{tikzcd}[column sep=7em]
            \Hom_{\Dcat(B)}(\Hom_A^*(P, I), \Rderived{\Hom_A}(M, N) ) \arrow[d, equals] \arrow[r, "\psi^0_{\Hom_A^*(P, I), M, N}"] & \Hom_{\Dcat(A)}(\Hom_A^*(P, I) \Dtensor_B M, I) \arrow[d, equals] \\
            \Hom_{\Dcat(B)}(\Hom_A^*(P, I), \Hom_A^*(M, I) ) \arrow[d, "\sim"'{anchor=south, rotate=90}, "{\Hom_{\Dcat(B)}(\Hom_A^*(P, I), \Hom_A^*(q_M, I) )}"] & \Hom_{\Dcat(A)}(\tot(\Hom_A^*(P, I) \otimes P), I) \arrow[d, equals] \\
            \Hom_{\Dcat(B)}(\Hom_A^*(P, I), \Hom_A^*(P, I) ) \arrow[d, "\sim"'{anchor=south, rotate=90}, "g", "{\text{by \ref{Kcat and Dcat hom iso} and \ref{hom is k injective}}}"'{xshift=-1em}] & \Hom_{\Dcat(A)}(\tot(\Hom_A^*(P, I) \otimes P), I) \arrow[d, "\sim"'{anchor=north, rotate=90}, "g", "{\text{by \ref{Kcat and Dcat hom iso}}}"'{xshift=1em}, swap] \\
            \Hom_{\Kcat(B)}(\Hom_A^*(P, I), \Hom_A^*(P, I) ) \arrow[d, "\sim"'{anchor=south, rotate=90}, "f^{-1}", "{\text{by \ref{cohom of hom complex}}}"'{xshift=-1em}] & \Hom_{\Kcat(A)}(\tot(\Hom_A^*(P, I) \otimes P), I) \arrow[d, "\sim"'{anchor=north, rotate=90}, "f^{-1}", "{\text{by \ref{cohom of hom complex}}}"'{xshift=1em}, swap]\\
            \cohom{0}( \Hom^*_{B}(\Hom_A^*(P, I), \Hom_A^*(P, I) ) \arrow[r, "\sim"', "\cohom{0}(\phi)", "{\text{by \ref{hom complex and total tensor adj}}}"'{xshift=2em}] &  \cohom{0}( \Hom^*_{A}(\tot(\Hom_A^*(P, I) \otimes P), I )
        \end{tikzcd}
        \]
        Then, it is clear that 
        \[ \Hom_{\Dcat(B)}(\Hom_A^*(M, I), \Hom_A^*(q_M, I) )(\Hom_A^*(q_M, I)^{-1}) = \id. \]
        So that $f^{-1} \cdot g(\id) = [\beta]$, where 
        \[ \beta = (\beta^j) \in \Hom_B^*(\Hom^*_A(P, I), \Hom^*_A(P, I))^0 = \prod_j \Hom_B(\Hom^*_A(P, I)^j, \Hom^*_A(P, I)^j) \]
        is the product over $j$ of identity maps $\beta_j \colon \Hom^*_A(P, I)^j \to \Hom^*_A(P, I)^j$. That is, 
         \[ \beta^j \colon \prod_{r + s = j} \Hom_A(P^{-r}, I^s) \to \prod_{k+n =j} \Hom_A(P^{-k}, I^n) \]
         is the product of identity maps $\beta^j = (\beta^{r, s} \colon \Hom_A(P^{-r}, I^s) \to \Hom_A(P^{-r}, I^s))$. We may thus rewrite $\beta = (\beta^{j, r, s})$.
         
        It follows from \ref{hom complex and total tensor adj} that
        \[ \cohom{0}(\phi)([\beta]) = [\phi(\beta)] = [\phi( (\beta^{j, r, s}))] = [(\phi^{-j, r, s}( \beta^{j, r, s})] \]
        where 
        \[ \phi^{-j, r, s}( \beta^{j, r, s}) \in \Hom_A(\Hom_A^*(P^{-r}, I^s) \otimes P^{-r}, I^s) \]
        is the map sending $\alpha \otimes a \mapsto \beta^{j, r, s}(\alpha)(a) = \alpha(a)$. Thus, 
        \[  \phi(\beta)^j \in \Hom_A( \bigoplus_{q + n = j} (\prod_{r + s = q} \Hom_A(P^{-r}, I^s)) \otimes P^q, I^j) \]
        sends $\alpha \otimes a \in \Hom_A(P^{-r}, I^s) \otimes P^{q}$ to zero if $-r \neq q$ and sends $\alpha \otimes a \in \Hom_A(P^{q}, I^j) \otimes P^{q}$ to $\alpha(a) \in I^j$. 
        
        Mapping $\cohom{0}( \phi([\beta]) )$ across $f$, we find that it corresponds to the chain map $\gamma_N$. For $g^{-1}(\gamma_N)$, we just view this map in the derived category.

        In summary,  the counit at $N$ is 
        \[ \epsilon_N = j^{-1}_N \cdot \psi^0_{\Hom_A^*(P, I), M, N}(\Hom_A^*(q_M, I)^{-1}) \cdot s\]
        where $\psi^0_{\Hom_A^*(P, I), M, N}(\Hom_A^*(q_M, I)^{-1})$ is represented in the derived category by the chain map $\gamma_N$, which concludes the proof.
\end{proof}

\begin{cor} \label{standard counit B}
     The counit $\epsilon$ of the adjunction $(-) \Dtensor_B B \dashv \Rderived{\Hom_A}(B, -)$ has component at $a \in \Dcat(A)$
       \[ \epsilon_a \colon \Rderived{ \Hom_{A} }(B, a) \Dtensor_{B}  B \to a \]
      given as the composition $j_a^{-1} \cdot \gamma_a$, where $j_a \colon a \to I$ is a $K$-injective resolution of $a$ and $\gamma_a$ is the chain map 
     \[ \gamma_a = (\gamma^j_a) \colon \tot( \Hom_{A}^*( B , I)  \otimes_{B}  B ) \to I \]
     which at degree $j$ is specified by the evaluation map
     \[ \gamma^j_a \colon \Hom_{A}(B, I^j) \otimes_{B} B \to I^j \]
     sending $f \otimes x \mapsto f(x)$.
\end{cor}
\begin{proof}
    This follows from \ref{standard counit gen} by noting that $B$ is flat as  $B^\op$-module, hence we need not take a $K$-flat resolution of $B$.
\end{proof}

  \section{Semi-perfect and self-injective algebras} \label{app: self inj}

    This appendix recalls some classic results about semi-perfect and self-injective algebras. 

    \subsection{Semi-perfect algebras}
    
    \begin{definition}[{\cite[4.1]{HK14}}] \label{semi perf def}
        Recall that an algebra $A$ is \textit{semi-perfect} if any of the following equivalent conditions hold. 
        \begin{enumerate}
            \item The category of finitely generated projective $A$-modules is Krull-Schmidt.
            \item As a module over itself, $A$ admits a direct sum decomposition $A = \bigoplus_{i = 1}^n P_i$ where $P_i$ have local endomorphism rings.
            \item Every simple $A$-module admits a projective cover.
            \item Every finitely generated $A$-module admits a projective cover.
        \end{enumerate}
    \end{definition}

    Let $A$ be a semi-perfect algebra and consider its direct sum decomposition $A = \bigoplus_{i \in I} P_i$ as in the definition above. Since $P_i$ have local endomorphism rings, they are indecomposable projective $A$-modules. Moreover, note that $\{ P_i\}_{i \in I} $ are all of the indecomposable projective $A$-modules up to isomorphism. 

    \begin{lemma} \label{semi perfect gives finitely many simples}
    If $A$ is a semi-perfect algebra, then it has finitely many simple modules.
    \end{lemma}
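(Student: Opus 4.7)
The plan is to leverage the decomposition $A = \bigoplus_{i \in I} P_i$ into indecomposable projectives with local endomorphism rings that was recalled immediately before the lemma. First, I would argue that the index set $I$ is finite: since $A$ is a (cyclic) right module over itself generated by $1$, and $1$ has finite support in the direct sum, only finitely many of the $P_i$ can contribute to $A = 1 \cdot A$, forcing $|I| < \infty$.

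Next, I would show that the assignment $P_i \mapsto P_i / \radd(P_i)$ produces a complete list of simple $A$-modules. The key inputs here are that $\End_A(P_i)$ is local, which implies $P_i$ has a unique maximal submodule (namely $\radd(P_i) = P_i \cdot \radd(A)$) and hence $S_i \colonequals P_i/\radd(P_i)$ is simple. Conversely, any simple $A$-module $S$ is annihilated by $\radd(A)$, so it is naturally a simple module over the semisimple ring $A/\radd(A) = \bigoplus_i P_i/\radd(P_i)$, and therefore $S \cong S_i$ for some $i \in I$. Combining this surjection $I \epi \{\text{simples}\}/\!\cong$ with the finiteness of $I$ from the previous step yields the result.

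I do not anticipate a main obstacle here — this is a classical fact about semi-perfect rings. The only subtlety worth stating carefully is the step identifying simple $A$-modules with simple $A/\radd(A)$-modules, since this is the bridge that lets the Wedderburn-type decomposition of $A/\radd(A)$ control the simples. Alternatively, one could bypass this bridge entirely by invoking the defining property of semi-perfect rings directly: every simple module admits a projective cover, and a projective cover of a simple module must be indecomposable, hence isomorphic to some $P_i$; the uniqueness of projective covers then yields the injection $\{\text{simples}\}/\!\cong \hookrightarrow I$.
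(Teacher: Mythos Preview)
Your proposal is correct. Your alternative route at the end --- invoking projective covers of simples and their uniqueness --- is precisely the paper's argument: the paper cites \cite[3.6]{HK14} to get that every simple $S$ has a projective cover $P_i \to S$, and then shows the map $S \mapsto P_i$ is injective by arguing that two maximal submodules $\ker\pi, \ker\pi'$ of the same $P_i$ must coincide (since their sum would otherwise be $P_i$, contradicting essentiality). Your primary route via the semisimple quotient $A/\radd(A)$ is a mild variant: it trades the essentiality argument for the observation that simples are modules over $A/\radd(A) \cong \bigoplus_i S_i$, which is perhaps more self-contained. You are also more explicit than the paper about why $|I|<\infty$ (the paper tacitly assumes this from the direct-sum decomposition of $A$), which is a small improvement in exposition.
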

    \begin{proof}
        It  follows from the definition of semi-perfect algebras and from \cite[3.6]{HK14} that given a simple $A$-module $S$, there is an $i$ such that $\pi \colon P_i \to S$ is the projective cover of $S$. We claim that there is a bijection between isomorphism classes of indecomposable projectives and isomorphism classes of simples. 
    
        Suppose that $S$ and $S'$ are simple modules which admit projective covers $\pi \colon P_i \to S$ and $\pi' \colon P_i \to S'$. Then, $S \cong P_i / \ker \pi$ and $S' \cong P_i / \ker \pi'$ so that $\ker \pi$ and $\ker \pi'$ are maximal submodules of $P_i$. Therefore, either $\ker \pi = \ker \pi'$ or $\ker \pi + \ker \pi' = P_i$. In the first case, it follows that $S \cong S'$. In the latter case, since both $\ker \pi$ and $\ker \pi'$ are essential by definition of projective covers, it must be that $\ker \pi = \ker \pi' = 0$ so that $S \cong S'$.  
    \end{proof}

    \begin{notation}
        We let $S_i$ be the simple $A$-module whose projective cover is $P_i$. When $A$ is finite-dimensional, let $I_i$ be the indecomposable injective whose socle is $S_i$.
    \end{notation}
    
   \subsection{Self-injective algebras}
    
    An algebra $A$ ifs said to be \textit{self-injective} if it is injective as a module over itself. When $A$ is Noetherian, there is a well-known characterisation of self-injective algebras. 

    \begin{prop}[{See e.g. \cite[4.2.4]{wei}}]
        The following are equivalent.
        \begin{enumerate}
            \item $A$ is Noetherian and self-injective,
            \item All projective $A$-modules are injective,
            \item All injective $A$-modules are projective.
        \end{enumerate}
    \end{prop}
    
    Further, When $A$ is finite dimensional then self-injectivity is equivalent to the property that finitely generated indecomposable projective modules coincide with finitely generated indecomposable injectives \cite[IV 3.7]{FroAlg}.
    
    Consider a finite-dimensional algebra $A$ viewed as the path-algebra of a quiver $Q$ with relations $I$. Then, self-injectivity of $A$ can be thought of as a symmetry condition on $(Q, R)$, since we want the class of indecomposable injective modules to match the class of indecomposable projective modules.

    \begin{example} \label{self inj examples} Let $Q_1$ be the quiver
        \[
        \begin{tikzcd}
            Q_1: & {}& 0 \arrow[dr, "x_0"] & {} \\
            {} & 2 \arrow[ur, "x_2"] & {} & 1 \arrow[ll, "x_1"]
        \end{tikzcd}
        \]
        Consider $Q_1$ with relations $R_1 = \langle x_0 x_1 x_2, x_1 x_2 x_0, x_2 x_0 x_1 \rangle$. Then, the indecomposable projective and injective representations are, up to isomorphism,
        
        \[ \begin{tikzcd}[sep=small]
            I_2 \cong P_0: &[-2em] {}& k \arrow[dr, "1"] & {} \\
            {} & k \arrow[ur, "0"] & {} & k \arrow[ll, "1"]
        \end{tikzcd}
        \begin{tikzcd}[sep=small]
           I_0 \cong P_1: &[-2em] {} & k \arrow[dr, "0"] & {} \\
           {} &  k \arrow[ur, "1"] & {} & k \arrow[ll, "1"]
        \end{tikzcd}
        \begin{tikzcd}[sep=small]
            I_1 \cong P_2: &[-2em] {}& k \arrow[dr, "1"] & {} \\
            {} & k \arrow[ur, "1"] & {} & k. \arrow[ll, "0"]
        \end{tikzcd}
        \]
        Thus, the path algebra of $Q_1$ subject to relations $R_1$ is self-injective. 
    \end{example}
    
    \begin{definition} \label{Nakayama perm}
        Let $A$ be a finite-dimensional self-injective algebra. Then, $P_i$ is also an indecomposable injective $A$-module. It follows that its socle is a simple module which, by \ref{semi perfect gives finitely many simples}, must be $S_j$ for some $j \in I$. The permutation of $I$ defined by $\sigma \colon i \mapsto j$ is called the \textit{Nakayama permutation} of $A$.
    \end{definition}
        
    \begin{example}
    Consider the example \ref{self inj examples}. Then, since $P_0 \cong I_2$, the socle of $P_0$ is $S_2$. That is, $\sigma(0) = 2$. Similarly, $\sigma(1) = 0$ and $\sigma(2) = 1$. 
    \end{example}
    
    When $A$ is is self-injective and finite dimensional, then the category $\modu A$ of finitely generated modules over $A$ is equipped with an automorphism 
    \[ \cN \colonequals - \otimes_A D A \colon \modu A \to \modu A \]
    called the \textit{Nakayama functor}. Here, the dual $D A = \Hom_k(A, k)$ is equipped with its usual bimodule structure: for $f \in D(A)$ and $x \in A$, $(a \cdot f \cdot b) (x) = f(bxa)$. The Nakayama functor has the quasi-inverse \cite[3.3]{IS}
    \[ \cN^{-1} \colonequals - \otimes_A \Hom_A(DA, A) \colon \modu A \to \modu A. \]
    Since $A$ is self-injective, then injective modules are projective modules. It thus follows that $\cN$ and $\cN^{-1}$ are exact and extend to automorphisms of $\Db(\modu A)$.     

\section{Frobenius exact categories} 

A \textit{Frobenius exact category} $\cE$ is an exact category which has enough projective and injective objects and is such that $\proj \cE = \inj \cE$. For more details, see e.g.\ \cite[I.2]{Happel}. This appendix recalls some well-known results about Frobenius categories. 

 Let $\cA$ be an additive category. An \textit{ideal} $\cI$ of $\cA$ consists of subgroups $\cI(a, b) \subseteq \Hom_\cA(a, b)$ such that for any morphism $f \colon a \to b \in \cI(a, b)$ and any pair of morphisms $\alpha \colon a' \to a$ and $\beta \colon b \to b'$ in $\cA$, then $\beta \cdot f \cdot \alpha \in \cI(a', b')$.

Given an Frobenius exact category $\cE$ and objects $a, b \in \cE$, let $[\proj \cE]$ be the ideal of $\cE$ defined by the subgroups $[\proj \cE]_{a,b} \subseteq \Hom_\cE(a, b)$ consisting of all maps which factor through a projective object.  The \textit{stable category} of $\cE$, denoted $\cD$, is the category whose objects are the same objects as $\cE$ and whose morphisms between objects $a, b \in \uE$ are specified by 
    \[ \Hom_{\uE}(a, b) = \uHom_\cE(a, b) \colonequals \Hom_\cE(a, b)/[\proj \cE]_{a,b}. \]
It turns out that $\cD$ carries a triangulated structure, with suspension functor $\Sigma$. 

    \begin{lemma}[{\cite[2.2]{DUG}}] \label{dugas lemma}
        For any map $f \colon x \to y \in \cE$, there exists an object $z \in \cE$ and a morphism $p \colon q \to y \in \cE$ where $q \in \proj \cE$ such that 
        \[ 0 \to z \tow{g} x \oplus q \tow{(f, p)} y \to 0 \]
        is an admissible exact sequence in $\cE$ that induces a triangle
       \[ z \tow{\underline{g}} x \tow{\underline{f}} y \to^+ \]
       in $\cD$.
    \end{lemma}

    This lemma has the following useful corollary. 
    
    \begin{cor} \label{lift of stable iso}
        Suppose that $x, y \in \cD$ are isomorphic. Then, there exists $p_0, p_1 \in \proj \cE$ such that $p_0 \oplus x \cong p_1 \oplus y$ in $\cE$. 
    \end{cor} 

      \begin{lemma} [{\cite[I.2.3,I.2.7]{Happel}, see also \ref{dugas lemma}}] \label{triangle-confl corresp}
    Let $\cE$ be a Frobenius exact category. Then, an admissible sequence 
        \[ 0 \to a \tow{u} b \tow{v} c \to 0\]
        in $\cE$ gives rise to a triangle 
        \[ a \tow{\underline{u}} b \tow{\underline{v}} c \to^+ \]
    in $\cD$. Moreover, every triangle in $\cD$
        \[ x \tow{\underline{u}} y \tow{\underline{v}}  z \to^+ \]
        lifts to an admissible sequence in $\cE$
        \[0 \to x \tow{u'} y \oplus p \tow{(u, \pi)} z \to 0 \]
       where $p \in \proj \cE$ and $u'$ is equivalent to $u$ in $\cD$.
    \end{lemma}

    \begin{lemma} \label{frob-stable ext corresp}
        Let $x, y \in \cE$. 
        \begin{enumerate}
            \item \label{if} If $\Ext^1_\cE(x, y) = 0$, then 
        \[ \Ext^1_{\cD}(x, y) \colonequals \Hom_{\cD}(x, \Sigma y) = 0. \] 
        \item Given an admissible sequence
        \[ 0 \to y \tow{u} z \tow{v} x \to 0 \]
        in $\cE$ and an object $a \in \cE$ such that $\Hom_{\uE}(a, v)$ is epi, then $\Hom_{\cD}(a, v)$ is epi. \label{only if}
        \end{enumerate}
    \end{lemma}
    \begin{proof}
        To prove \eqref{if}, suppose that $\Ext^1_\cE(x, y) = 0$ and let $\underline{w} \colon x \to \Omega^{-1} y$ be a morphism in $\uE$. Then, there is a triangle
        \[ y \tow{\underline{u}} z \tow{\underline{v}} x \tow{\underline{w}} \Omega^{-1} y \]
        in $\uE$ which by \ref{triangle-confl corresp} lifts to an admissible exact sequence
        \[ 0 \to y \tow{u'} p \oplus z \tow{(f, v)} x \to 0 \]
        with $\underline{u'} = \underline{u}$. Since $\Ext^1_\cE(x, y) = 0$, this sequence splits. In particular, $\underline{u'} = \underline{u}$ splits in $\uE$. Whence, the triangle defined by the morphism $\underline{w} \colon x \to \Omega^{-1} y$ splits, forcing $\underline{w} \colon x \to \Omega^{-1} y$ to be zero. We may thus conclude that $\Ext^1_{\uE}(x, y) = 0$.

        For statement \eqref{only if}, let $a$ be an object in $\cE$ and consider an admissible sequence in $\cE$
        \[ 0 \to y \tow{u} z \tow{v} x \to 0 \]
        such that $\Hom_{\uE}(a, v)$ is epi. We claim that $\Hom_\cE(a, v) \colon \Hom_\cE(a, z) \to \Hom_\cE(a, x)$ is epi.

        Let $f \in \Hom_\cE(a, x)$. Since $\Hom_{\uE}(a, v)$ is an epimorphism, there is a $\underline{g} \in \Hom_{\uE}(a, z)$ such that $\underline{v} \cdot \underline{g} = \underline{f}$. Equivalently, $\alpha = v \cdot g - f \colon a \to x$ factors through a projective $p \in \proj \cE$. Note that $f = v \cdot g - \alpha$. We will show that $\alpha$ factors through $v$. 

        Indeed, consider the following diagram
        \[ \begin{tikzcd}
            z \arrow[r, "v", two heads] & x & \arrow[l, "\alpha", swap] \arrow[dl, "\lambda"] a \\
            {} & p \arrow[u, "q"] \arrow[ul, "\gamma", dashed] & {}
        \end{tikzcd}\]
        where the arrow $\gamma$ exists because $v$ is an admissible epi and $p$ is projective. Here, the inner triangles commute by construction. Hence, 
        \[ (v \cdot \gamma) \cdot \lambda = q \cdot \lambda = \alpha \] 
        Thus, we may rewrite $f = v\cdot g - v\cdot \gamma \cdot \lambda = \Hom_\cE(a, v)(g - \gamma \cdot \lambda)$ proving that $\Hom_\cE(a, v)$ is epi. 
    \end{proof}
    
\printbibliography

\end{document}